\documentclass[12pt]{amsart}

%
%

\usepackage[colorlinks=true, pdfstartview=FitV, linkcolor=blue, citecolor=blue, urlcolor=blue, breaklinks=true]{hyperref}
\usepackage{
  amsmath,
  amsfonts,
  amssymb,
  amsthm,
  amscd,
  comment,
  paralist,
  etoolbox,
  mathtools,
  enumitem,
  mathdots
}
\usepackage[usenames,dvipsnames]{xcolor}
\usepackage{mathrsfs} 
\usepackage[labelformat=simple]{subcaption}

\usepackage[all]{xy}
\usepackage{tikz}
\usetikzlibrary{arrows,patterns}

\captionsetup[figure]{labelfont=normalfont,textfont=normalfont}
\captionsetup[subfigure]{labelfont=normalfont,textfont=normalfont}

%
%

\leftmargin=0in
\topmargin=0pt
\headheight=0pt
\oddsidemargin=0in
\evensidemargin=0in
\textheight=8.75in
\textwidth=6.5in
\parindent=0.5cm
\headsep=0.25in
\widowpenalty10000
\clubpenalty10000

%
%

\newcommand\Z{\mathbb{Z}}
\newcommand\Q{\mathbb{Q}}
\newcommand\R{\mathbb{R}}
\newcommand\N{\mathbb{N}}
\newcommand\kk{\Bbbk}

\newcommand\bb{\mathbf{b}}
\newcommand\bB{\mathbf{B}}
\newcommand\bc{\mathbf{c}}
\newcommand\bF{\mathbf{F}}
\newcommand\one{\mathbf{1}}

\newcommand\cB{\mathcal{B}}
\newcommand\cC{\mathcal{C}}
\newcommand\cH{\mathcal{H}}
\newcommand\tcH{\tilde{\mathcal{H}}}

\newcommand\fg{\mathfrak{g}}
\newcommand\fsl{\mathfrak{sl}}
\newcommand\fh{\mathfrak{h}}

\newcommand\gr{\mathrm{gr}}

\newcommand\op{\mathrm{op}}
\newcommand\rR{\mathrm{R}}
\newcommand\rL{\mathrm{L}}

\newcommand\sQ{\mathsf{Q}}
\newcommand\sx{\mathsf{x}}
\newcommand\sy{\mathsf{y}}
\newcommand\sz{\mathsf{z}}

\newcommand{\md}{\textup{-mod}}

\newcommand{\pmd}{\textup{-pmod}}

\newcommand\pr[1]{\prescript{r}{}{#1}}
\newcommand\pl[1]{\prescript{\ell}{}{#1}}

%
%

\DeclareMathOperator{\End}{End}

\DeclareMathOperator{\Hom}{Hom}

\DeclareMathOperator{\Ind}{Ind}
\DeclareMathOperator{\Mor}{Mor}
\DeclareMathOperator{\Res}{Res}
\DeclareMathOperator{\Sym}{Sym}

\DeclareMathOperator{\tr}{tr}
\DeclareMathOperator{\id}{id}
\DeclareMathOperator{\Ob}{Ob}

%
%

\newcommand\bluedot[1]{\filldraw[blue] #1 circle (2pt)}
\newcommand\dotlabel[1]{$\scriptstyle{#1}$}
\newcommand\regionlabel[1]{$\scriptstyle{#1}$}


\tikzset{anchorbase/.style={>=stealth,baseline={([yshift=-0.5ex]current bounding box.center)}}}

%
%

\newtheorem{theo}{Theorem}[section]
\newtheorem{prop}[theo]{Proposition}
\newtheorem{lem}[theo]{Lemma}
\newtheorem{cor}[theo]{Corollary}
\newtheorem{conj}[theo]{Conjecture}

\theoremstyle{definition}
\newtheorem{defin}[theo]{Definition}
\newtheorem{rem}[theo]{Remark}

\numberwithin{equation}{section}
\allowdisplaybreaks

\setcounter{tocdepth}{1}

%
%

\newtoggle{comments}
\newtoggle{details}
\newtoggle{detailsnote}



\toggletrue{detailsnote}   

\iftoggle{comments}{%
  \newcommand{\acomments}[1]{
    \ \\
    {\color{red}
      \textbf{AS:} #1
    }
    \ \\
    }
  \newcommand{\mcomments}[1]{
    \ \\
    {\color{red}
      \textbf{MM:} #1
    }
    \ \\
    }
}{%
  \newcommand{\acomments}[1]{}
  \newcommand{\mcomments}[1]{}
}

\iftoggle{details}{%
  \newcommand{\details}[1]{
      \ \\
      {\color{OliveGreen}
        \textbf{Details:} #1
      }
      \\
  }
}{%
  \newcommand{\details}[1]{}
}


\title[Higher level Heisenberg categorification]{Degenerate cyclotomic Hecke algebras and higher level Heisenberg categorification}

\author{Marco Mackaay}
\address[M.M.]{Departamento de Matem\'{a}tica, Universidade do Algarve and CAMGSD, Instituto Superior T\'{e}cnico}
\email{mmackaay@ualg.pt}

\author{Alistair Savage}
\address[A.S.]{Department of Mathematics and Statistics, University of Ottawa}
\urladdr{\href{http://alistairsavage.ca}{alistairsavage.ca}, \textrm{\textit{ORCiD}:} \href{https://orcid.org/0000-0002-2859-0239}{orcid.org/0000-0002-2859-0239}}
\email{alistair.savage@uottawa.ca}

\subjclass[2010]{20C08 (Primary), 18D10, 19A22 (Secondary)}
\keywords{Categorification, Heisenberg algebra, Hecke algebras, cyclotomic quotients, diagrammatic calculus}

\begin{document}

\begin{abstract}
  We associate a monoidal category $\cH^\lambda$ to each dominant integral weight $\lambda$ of $\widehat{\fsl}_p$ or $\fsl_\infty$.  These categories, defined in terms of planar diagrams, act naturally on categories of modules for the degenerate cyclotomic Hecke algebras associated to $\lambda$.  We show that, in the $\fsl_\infty$ case, the level $d$ Heisenberg algebra embeds into the Grothendieck ring of $\cH^\lambda$, where $d$ is the level of $\lambda$.  The categories $\cH^\lambda$ can be viewed as a graphical calculus describing induction and restriction functors between categories of modules for degenerate cyclotomic Hecke algebras, together with their natural transformations.  As an application of this tool, we prove a new result concerning centralizers for degenerate cyclotomic Hecke algebras.
\end{abstract}

\maketitle
\thispagestyle{empty}

\tableofcontents

%
\section{Introduction}
%

The Heisenberg algebra plays a fundamental role in many areas of mathematics and physics.  The universal enveloping algebra of the infinite-dimensional Heisenberg Lie algebra is the associative algebra with generators $p_n^\pm$, $n \in \N_+$, and $c$, and relations
\[
  p_n^+ p_m^+ = p_m^+ p_n^+,\quad
  p_n^- p_m^- = p_m^- p_n^-,\quad
  p_n^+ p_m^- = p_m^- p_n^+ + \delta_{n,m} c,\quad
  p_n^\pm c = c p_n^\pm,\quad
  n,m \in \N_+.
\]
(See Section~\ref{subsec:Heisenberg-algebra} for a more detailed treatment.)  On any irreducible representation, the central element $c$ acts by a constant.  For a positive integer $d$, the (associative) Heisenberg algebra $\fh_d$ of level $d$ is the quotient of this algebra by the ideal generated by $c-d$.

In \cite{Kho14}, Khovanov introduced a diagrammatic monoidal category $\cH$ that acts naturally on categories of modules for symmetric groups.  He proved that the Grothendieck ring of $\cH$ contains the level one Heisenberg algebra $\fh_1$ and conjectured that the two are actually equal.  This work has inspired an active area of research into Heisenberg categorification.  Replacing group algebras of symmetric groups by Hecke algebras of type $A$ led to the $q$-deformed categorification of \cite{LS13}, while replacing them by wreath product algebras led to categorifications of (quantum) lattice Heisenberg algebras in \cite{CL12, RS17}.

One remarkable feature of Khovanov's category is that degenerate affine Hecke algebras $H_n$ appear naturally in the endomorphism spaces of certain objects.  The group algebras of symmetric groups are level one cyclotomic quotients of these degenerate affine Hecke algebras, wherein the polynomial generators of $H_n$ are mapped to the Jucys--Murphy elements.  It is thus natural to conjecture that suitably modified versions of Khovanov's category should act on categories of modules for more general degenerate cyclotomic Hecke algebras.  These modified categories should categorify higher level Heisenberg algebras and should encode much of the representation theory of degenerate affine Hecke algebras and their cyclotomic quotients.

On the other hand, cyclotomic Hecke algebras and their degenerate versions have appeared in the context of categorification before.  In particular, Ariki's categorification theorem (\cite{Ari96,BK10}) relates the representation theory of these algebras to highest weight irreducible representations of affine Lie algebras of type $A$.  In addition, Brundan--Kleshchev (\cite{BK09,BK09b}) and Rouquier (\cite{Rou08}) have related cyclotomic Hecke algebras to the quiver Hecke algebras appearing in the Khovanov--Lauda--Rouquier categorification of quantum $\fsl_n$ (\cite{KL09,Rou08}).

In the current paper we define a family of diagrammatic $\kk$-linear monoidal categories $\cH^\lambda$ depending on a dominant integral weight $\lambda$ of $\fsl_\infty$ (when $\kk$ is of characteristic zero) or $\widehat{\fsl}_p$ (when $\kk$ is of characteristic $p > 0$).  When $\lambda$ is a fundamental weight (i.e.\ is of level one), the category $\cH^\lambda$ reduces to Khovanov's Heisenberg category $\cH$ (see Remark~\ref{rem:level1-Khovanov}).  One of our main results is Theorem~\ref{theo:main}, which states that, if $\kk$ is a field of characteristic zero, then we have an injective ring homomorphism
\[
  \fh_d \hookrightarrow K_0(\cH^\lambda),
\]
where $K_0(\cH^\lambda)$ denotes the Grothendieck ring of $\cH^\lambda$.  We conjecture that this homomorphism is also surjective.  In level one, this reduces to Khovanov's conjecture \cite[Conj.~1]{Kho14}.

Our next main result is Theorem~\ref{theo:action-functor}, which states that the category $\cH^\lambda$ acts naturally on categories of modules for the degenerate cyclotomic Hecke algebras $H_n^\lambda$, $n \in \N$, corresponding to $\lambda$.  The action arises from a collection of functors from $\cH^\lambda$ to categories of bimodules over the $H_n^\lambda$.  If $\kk$ is a field of characteristic zero, we prove in Theorem~\ref{theo:Fn-full} that these functors are full.  Thus, the categories $\cH^\lambda$ give a graphical calculus for bimodules over degenerate cyclotomic Hecke algebras.  As an application of the resulting computational tool, we prove in Corollary~\ref{cor:centralizers} that the centralizer of $H_n^\lambda$ in $H_{n+k}^\lambda$ under the natural inclusion of algebras $H_n^\lambda \otimes H_k \hookrightarrow H_{n+k}^\lambda$ is generated by $H_k$ and the center of $H_n^\lambda$.  This result, which seems to be new, is a generalization of a result of Olshanski on centralizers for group algebras of symmetric groups.

The constructions of the current paper suggest a number of applications and further research directions.  For example, we expect truncations of the $\cH^\lambda$ to be related to categorified quantum groups.  There should also exist $q$-deformations of the categories $\cH^\lambda$.  In addition, it would be interesting to examine the traces of the $\cH^\lambda$ and use them to construct diagrammatic pairings on spaces related to symmetric functions.  We expand on these ideas for further research in Section~\ref{sec:further-directions}.

We note that higher level Heisenberg algebras are also categorified by the categories $\cH_B$ of \cite{RS17} for suitable choices of the Frobenius algebra $B$ on which these categories depend.  The approach of the current paper is quite different, but there are relations between the two.  First, there is a natural filtration on the categories $\cH^\lambda$ introduced here.  The associated graded category is closely related to some special cases of the categories of \cite{RS17}.  See Proposition~\ref{prop:associated-graded-HB}.  In addition, we expect that the approaches of \cite{RS17} and the current paper can be unified, as we explain in Section~\ref{subsec:wreath-generalization}.  One aspect of the categories introduced in the current paper that differs substantially from previous Heisenberg categories is the introduction of \emph{dual dots} in planar diagrams.  These dots, which correspond algebraically to the duals of the elements $x_i$ of degenerate cyclotomic Hecke algebras under the natural trace form on those algebras, behave quite differently than the usual dots do.  This makes some computations substantially more difficult, requiring new techniques relying on a filtration on the category.  See Remark~\ref{rem:dualdots}.

The paper is organized as follows.  In Section~\ref{sec:diagrammatics} we introduce our main object of study, the diagrammatic categories $\cH^\lambda$, and study their morphism spaces in detail.  In Section~\ref{sec:cat-filtration} we define a filtration on these morphism spaces.  This filtration is used in Section~\ref{sec:Groth-group} to show that the higher level Heisenberg algebra maps to the Grothendieck ring of $\cH^\lambda$.  In Section~\ref{sec:DCHA} we recall the definition of the degenerate cyclotomic Hecke algebras and prove various results concerning them.  We use these results in Section~\ref{sec:action} to define the action of the category $\cH^\lambda$ on the category of modules for these algebras and use the action to prove injectivity of the map $\fh_d \to K_0(\cH^\lambda)$.  Then, in Section~\ref{sec:action-properties}, we prove some properties of the action.  In particular, we prove that the functors involved are full in the characteristic zero case, allowing us to deduce new facts about centralizers of degenerate cyclotomic Hecke algebras.  Finally, in Section~\ref{sec:further-directions} we discuss some further directions of research naturally suggested by the current paper.

\subsection*{Notation and conventions}

Throughout the paper, $\kk$ will denote a commutative ring.  In some places, we will assume that $\kk$ is a field of characteristic zero.  For the benefit of the reader, we will specify the assumptions on $\kk$ at the beginning of each section.  We let $I$ denote the image of $\Z$ in $\kk$ under the natural ring homomorphism $n \mapsto n \cdot 1$.  We identify $I$ with $\Z/p\Z$, where $p \ge 0$ is the characteristic of $\kk$.  We let $\N$ denote the set of nonnegative integers, and let $\N_+$ denote the set of positive integers.

\subsection*{Acknowledgments}

We would like to thank A.~Licata for helpful conversations and for sharing his preliminary notes on the topic of the current paper.  We would also like to thank J.~Brundan for useful discussions concerning degenerate affine Hecke algebras.  The second author was supported by a Discovery Grant from the Natural Sciences and Engineering Research Council of Canada.

\iftoggle{detailsnote}{
  \subsection*{Hidden details} For the interested reader, the tex file of the \href{https://arxiv.org/abs/1705.03066}{arXiv version} of this paper includes hidden details of some straightforward computations and arguments that are omitted in the pdf file.  These details can be displayed by switching the \texttt{details} toggle to true in the tex file and recompiling.
}{}

%
\section{The diagrammatic category} \label{sec:diagrammatics}
%

Throughout this section, $\kk$ is an arbitrary commutative ring.  Let $\fg = \widehat{\fsl}_p$ if the characteristic $p$ of $\kk$ is greater than zero, and let $\fg = \fsl_\infty$ if $p=0$.  Recall that $I \cong \Z/p\Z$ is the canonical image of $\Z$ in $\kk$.  For $i \in I$, let $\omega_i$ denote the $i$-th fundamental weight of $\fg$, and let $P_+ = \bigoplus_{i \in I} \N \omega_i$ denote the dominant weight lattice of $\fg$.  Fix $\lambda = \sum_{i \in I} \lambda_i \omega_i \in P_+$, $\lambda \ne 0$, and set $d = \sum_i \lambda_i$.

\subsection{Definition}

Let $\tcH^\lambda$ be the additive $\kk$-linear strict monoidal category defined as follows.  The set of objects is generated by two objects $\sQ_+$ and $\sQ_-$. Thus each object is a formal direct sum of objects of the form $\sQ_{\epsilon} := \sQ_{\epsilon_1} \dotsb \sQ_{\epsilon_k}$, where $\epsilon = (\epsilon_1,\dotsc,\epsilon_k)$ is a sequence whose entries are either $+$ or $-$ and we denote the tensor product by juxtaposition.  We denote the unit object by $\one :=\sQ_{\varnothing}$.

The space of morphisms between two objects $Q_\epsilon$ and $Q_{\epsilon'}$ is the $\kk$-algebra generated by suitable planar diagrams modulo local relations.  The diagrams consist of oriented compact one-manifolds immersed into the plane strip $\R \times [0,1]$, modulo isotopies fixing the boundary, and modulo certain local relations.  Strands are allowed to carry dots.  Only double intersections are allowed and the dots do not lie on them. The dots are allowed to move freely along the strands of the diagram as long as they do not cross double points.  The endpoints of the diagrams are considered to be located at $\{1,\dotsc,m\}\times\{0\}$ and $\{1,\dotsc,k\}\times \{1\}$, where $m$ and $k$ are the lengths of $\epsilon$ and $\epsilon'$ respectively.  (This is for the purposes of composition---we will not always draw our diagrams with these positions apparent.) The orientation of the one-manifold at the endpoints must agree with the signs in $\epsilon$ and $\epsilon'$. For example
\[
  \begin{tikzpicture}[anchorbase]
    \draw[->] (0,0) .. controls (0,1) and (1,1) .. (1,2);
    \draw[->] (1,0) .. controls (1,1) and (3,1) .. (3,0);
    \draw[->] (2,0) .. controls (2,1.5) and (-1,1.5) .. (-1,0);
    \draw[->] (2,2) arc(180:360:1);
    \bluedot{(0.915,1.5)};
    \bluedot{(-.3,0.97)};
    \draw (7,1) .. controls (7,1.5) and (6.3,1.5) .. (6.1,1);
    \draw (7,1) .. controls (7,.5) and (6.3,.5) .. (6.1,1);
    \draw (6,0) .. controls (6,.5) .. (6.1,1);
    \draw (6.1,1) .. controls (6,1.5) .. (6,2) [->];
    \bluedot{(6,0.3)};
    \draw[->] (4.3,1) arc(180:-180:.6);
    \bluedot{(3.88,1.5)};
    \bluedot{(2.5,1.14)};
  \end{tikzpicture}
\]
is an element of $\Hom_{\tcH^\lambda}(\sQ_{-+++-+},\sQ_{+-++})$.

Composition of morphisms is given by vertical stacking of diagrams (followed by rescaling of the vertical axis).  The monoidal structure on morphisms is given by horizontal juxtaposition of diagrams.

An endomorphism of $\one$ is a diagram with no endpoints.  A dot labeled $j \in \N$ will denote a strand with $j$ dots:
\[
  \begin{tikzpicture}[anchorbase]
    \draw[->] (0,0) to (0,2);
    \bluedot{(0,1)} node[anchor=east,color=black] {\dotlabel{j}};
  \end{tikzpicture}
  \ =\
  \begin{tikzpicture}[anchorbase]
    \draw[->] (1.5,0) to (1.5,2);
    \bluedot{(1.5,.5)};
    \bluedot{(1.5,.8)};
    \bluedot{(1.5,1.1)};
    \bluedot{(1.5,1.4)};
    \draw (2.5,1) node {$\Bigg\}\ j$ dots};
  \end{tikzpicture}
\]

Define $c_0=1$ and, for any $1\leq s\leq d$, let
\begin{equation} \label{eq:c-def}
  c_s := (-1)^s \det \left(
  \begin{tikzpicture}[anchorbase]
    \draw [->](0,0) arc (0:360:0.3);
    \bluedot{(-0.6,0)} node [anchor=east,color=black] {\dotlabel{d+j-i}};
  \end{tikzpicture}
  \right)_{i,j = 1,\dotsc,s}.
\end{equation}
Then, for $j \in \{0,\dotsc,d-1\}$, define the \emph{dual dots}
\begin{equation} \label{eq:dot-vee-def}
  \begin{tikzpicture}[anchorbase]
    \draw[->] (0,0) -- (0,1);
    \bluedot{(0,0.5)} node [anchor=east,color=black] {\dotlabel{j^\vee}};
  \end{tikzpicture}
  \ = \sum_{i=j}^{d-1}
  \begin{tikzpicture}[anchorbase]
    \draw[->] (0,0) -- (0,1);
    \bluedot{(0,0.5)} node [anchor=east,color=black] {\dotlabel{i-j}};
  \end{tikzpicture}
  \ \ c_{d-1-i}.
\end{equation}
(The reason for this definition of the dual dots will become apparent in Lemma~\ref{lem:circle-duality} below.)

The local relations are as follows:

\noindent\begin{minipage}{0.5\linewidth}
  \begin{equation} \label{rel:braid}
    \begin{tikzpicture}[anchorbase]
      \draw[->] (0,0) -- (1,1);
      \draw[->] (1,0) -- (0,1);
      \draw[->] (0.5,0) .. controls (0,0.5) .. (0.5,1);
    \end{tikzpicture}
    \ =\
    \begin{tikzpicture}[anchorbase]
      \draw[->] (0,0) -- (1,1);
      \draw[->] (1,0) -- (0,1);
      \draw[->] (0.5,0) .. controls (1,0.5) .. (0.5,1);
    \end{tikzpicture}
  \end{equation}
\end{minipage}%
\begin{minipage}{0.5\linewidth}
  \begin{equation} \label{rel:s-squared}
    \begin{tikzpicture}[anchorbase]
      \draw[->] (0,0) .. controls (0.5,0.5) .. (0,1);
      \draw[->] (0.5,0) .. controls (0,0.5) .. (0.5,1);
    \end{tikzpicture}
    \ =\
    \begin{tikzpicture}[anchorbase]
      \draw[->] (0,0) --(0,1);
      \draw[->] (0.5,0) -- (0.5,1);
    \end{tikzpicture}
  \end{equation}
\end{minipage}\par\vspace{\belowdisplayskip}

\begin{equation}  \label{rel:dotslide}
  \begin{tikzpicture}[anchorbase]
    \draw[->] (0,0) to (1,1);
    \draw[->] (1,0) to (0,1);
    \bluedot{(0.25,0.25)};
  \end{tikzpicture}
  \ -\
  \begin{tikzpicture}[anchorbase]
    \draw[->] (0,0) to (1,1);
    \draw[->] (1,0) to (0,1);
    \bluedot{(.75,.75)};
  \end{tikzpicture}
  \ =\
  \begin{tikzpicture}[anchorbase]
    \draw[->] (0,0) to (0,1);
    \draw[->] (0.5,0) to (0.5,1);
  \end{tikzpicture}
  \ =\
  \begin{tikzpicture}[anchorbase]
    \draw[->] (0,0) to (1,1);
    \draw[->] (1,0) to (0,1);
    \bluedot{(0.25,0.75)};
  \end{tikzpicture}
  \ -\
  \begin{tikzpicture}[anchorbase]
    \draw[->] (0,0) to (1,1);
    \draw[->] (1,0) to (0,1);
    \bluedot{(.75,.25)};
  \end{tikzpicture}
\end{equation}

\noindent\begin{minipage}{0.5\linewidth}
  \begin{equation} \label{rel:down-up-double-crossing}
    \begin{tikzpicture}[anchorbase]
      \draw[<-] (0,0) --(0,1);
      \draw[->] (0.5,0) -- (0.5,1);
    \end{tikzpicture}
    \ =\
    \begin{tikzpicture}[anchorbase]
      \draw[<-] (0,0) .. controls (0.5,0.5) .. (0,1);
      \draw[->] (0.5,0) .. controls (0,0.5) .. (0.5,1);
    \end{tikzpicture}
    \ + \sum_{j=0}^{d-1}\
    \begin{tikzpicture}[anchorbase]
      \draw[->] (0,1) -- (0,0.9) arc (180:360:.25) -- (0.5,1);
      \draw[<-] (0,0) -- (0,0.1) arc (180:0:.25) -- (0.5,0);
      \bluedot{(0.45,0.75)} node [anchor=west,color=black] {\dotlabel{j}};
      \bluedot{(0.05,0.25)} node [anchor=east,color=black] {\dotlabel{j^\vee}};
    \end{tikzpicture}
  \end{equation}
\end{minipage}%
\begin{minipage}{0.5\linewidth}
  \begin{equation} \label{rel:up-down-double-crossing}
    \begin{tikzpicture}[anchorbase]
      \draw[->] (0,0) .. controls (0.5,0.5) .. (0,1);
      \draw[<-] (0.5,0) .. controls (0,0.5) .. (0.5,1);
    \end{tikzpicture}
    \ =\
    \begin{tikzpicture}[anchorbase]
      \draw[->] (0,0) --(0,1);
      \draw[<-] (0.5,0) -- (0.5,1);
    \end{tikzpicture}
  \end{equation}
\end{minipage}\par\vspace{\belowdisplayskip}

\noindent\begin{minipage}{0.5\linewidth}
  \begin{equation} \label{rel:cc-bubble}
    \begin{tikzpicture}[anchorbase]
      \draw [->](0,0) arc (0:360:0.3);
      \bluedot{(-0.6,0)}  node [anchor=east,color=black] {\dotlabel{j}};
    \end{tikzpicture}
    \ = \
    \begin{cases}
      0 & \text{if } j < d-1, \\
      1 & \text{if } j=d-1, \\
      \sum_{i \in I} i \lambda_i & \text{if } j=d.
    \end{cases}
  \end{equation}
\end{minipage}%
\begin{minipage}{0.5\linewidth}
  \begin{equation} \label{rel:left-curl-zero}
    \begin{tikzpicture}[scale=0.5,anchorbase]
      \draw (0,0) .. controls (0,.5) and (.7,.5) .. (.9,0);
      \draw (0,0) .. controls (0,-.5) and (.7,-.5) .. (.9,0);
      \draw (1,-1) .. controls (1,-.5) .. (.9,0);
      \draw[->] (.9,0) .. controls (1,.5) .. (1,1);
    \end{tikzpicture}
    \ = 0
  \end{equation}
\end{minipage}\par\vspace{\belowdisplayskip}

\noindent We allow the dots to move freely along strands, in particular over cups and caps.  (Note that the second equality of \eqref{rel:dotslide} follows from the first, together with \eqref{rel:s-squared}.  However, we include it for ease of reference.)

\begin{rem} \label{rem:dualdots}
  Note that the dual dots do \emph{not} satisfy relation \eqref{rel:dotslide} in general.  In fact, the formulas for sliding a dual dot through a crossing are rather complicated.  This introduces a computational difficulty into the graphical calculus.  We will handle this difficulty by using a natural filtration on the morphism spaces of $\tcH^\lambda$ and performing some computations in the associated graded category.  See Section~\ref{sec:cat-filtration}.
\end{rem}

We let $\cH^\lambda$ be the idempotent completion (sometimes called the Karoubi envelope) of $\tcH^\lambda$.  Thus, the objects of $\cH^\lambda$ are direct sums of pairs $(\sQ_\epsilon, e)$, where $e \colon \sQ_\epsilon \to \sQ_\epsilon$ is an idempotent morphism.  Morphisms $(\sQ_\epsilon, e) \to (\sQ_{\epsilon'}, e')$ in $\cH^\lambda$ are morphisms $\alpha \colon \sQ_\epsilon \to \sQ_{\epsilon'}$ in $\tcH^\lambda$ such that $fe=f=e'f$.  Equivalently,
\[
  \Hom_{\cH^\lambda}((\sQ_\epsilon,e),(\sQ_{\epsilon'},e'))
  = e' \Hom_{\tcH^\lambda} (\sQ_\epsilon,\sQ_{\epsilon'}) e.
\]
The composition and monoidal structure of $\cH^\lambda$ are induced by those of $\tcH^\lambda$.  Identifying $\sQ_\epsilon$ with $(\sQ_\epsilon,\id_{\sQ_\epsilon})$, we view $\tcH^\lambda$ as a full subcategory of $\cH^\lambda$.

\subsection{Morphism spaces}

We now investigate the morphism spaces of $\cH^\lambda$ in some detail.

\begin{lem} \label{lem:higherNilHecke}
 We have
  \begin{equation}  \label{eq:mult_dotslide}
    \sum_{a+b=t-1}
    \begin{tikzpicture}[anchorbase]
      \draw[->] (0,0) to (0,1);
      \draw[->] (0.5,0) to (0.5,1);
      \bluedot{(0,0.5)} node[color=black,anchor=east] {\dotlabel{a}};
      \bluedot{(0.5,0.5)} node[color=black,anchor=west] {\dotlabel{b}};
    \end{tikzpicture}
    \ =\
    \begin{tikzpicture}[anchorbase]
      \draw[->] (0,0) to (1,1);
      \draw[->] (1,0) to (0,1);
      \bluedot{(0.25,0.25)} node[color=black,anchor=south east] {\dotlabel{t}};
    \end{tikzpicture}
    \ -\
    \begin{tikzpicture}[anchorbase]
      \draw[->] (0,0) to (1,1);
      \draw[->] (1,0) to (0,1);
      \bluedot{(.75,.75)} node[color=black,anchor=south east] {\dotlabel{t}};;
    \end{tikzpicture}
    \ =\
    \begin{tikzpicture}[anchorbase]
      \draw[->] (0,0) to (1,1);
      \draw[->] (1,0) to (0,1);
      \bluedot{(0.25,0.75)} node[color=black,anchor=south west] {\dotlabel{t}};;
    \end{tikzpicture}
    \ -\
    \begin{tikzpicture}[anchorbase]
      \draw[->] (0,0) to (1,1);
      \draw[->] (1,0) to (0,1);
      \bluedot{(.75,.25)} node[color=black,anchor=south west] {\dotlabel{t}};;
    \end{tikzpicture}\ .
  \end{equation}
\end{lem}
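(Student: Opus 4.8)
The plan is to prove the identity by induction on $t \geq 1$, using relation \eqref{rel:dotslide} as the base case and the standard ``nil-Hecke'' manipulation to pass from $t$ to $t+1$. First I would observe that \eqref{rel:dotslide} is exactly the statement for $t=1$: the left side $\sum_{a+b=0}$ reduces to a single crossing with no dots, which matches the right side of \eqref{eq:mult_dotslide} for $t=1$ since the dots labeled $1$ minus the dots labeled $1$ on the opposite ends are precisely the two uncrossed strands by \eqref{rel:dotslide}. Actually, more carefully, I would rewrite \eqref{rel:dotslide} as: (one dot on a strand before the crossing) $-$ (one dot on the corresponding strand after the crossing) $=$ (two parallel strands). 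This is the $t=1$ case.

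For the inductive step, suppose the identity holds for $t$. To get it for $t+1$, I would take the equation for $t$ and add one more dot on the \emph{bottom-left} strand of each diagram (i.e., multiply on the appropriate side by a single dot on one strand). On the left side, $\sum_{a+b=t-1}$ (with an extra dot on the left strand) becomes $\sum_{a+b=t}$ but missing the term with $a=0$; I would then add that missing term $\left(\text{a dot labeled } 0 \text{ on the left, } t \text{ on the right}\right)$ separately. On the right side, adding a dot before the crossing to $\begin{tikzpicture}[anchorbase]\draw[->] (0,0) to (1,1);\draw[->] (1,0) to (0,1);\bluedot{(0.25,0.25)} node[color=black,anchor=south east] {\dotlabel{t}};\end{tikzpicture} - \begin{tikzpicture}[anchorbase]\draw[->] (0,0) to (1,1);\draw[->] (1,0) to (0,1);\bluedot{(.75,.75)} node[color=black,anchor=south east] {\dotlabel{t}};\end{tikzpicture}$ and then applying \eqref{rel:dotslide} once more to commute that single dot across the crossing (picking up a correction term of two parallel strands with $t$ dots distributed) yields the diagram with $t+1$ dots before the crossing minus the one with $t+1$ dots after, plus exactly the leftover terms needed to match. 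The bookkeeping of which dot sits where is the only delicate point: I would carefully track that the ``extra'' boundary term produced by one application of \eqref{rel:dotslide} cancels against the $a=0$ term I added on the left.

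The second equality in \eqref{eq:mult_dotslide} (the version with dots on the \emph{top} rather than the \emph{bottom} of the crossing) follows by an identical argument using the second equality in \eqref{rel:dotslide}, or alternatively by composing the first equality with \eqref{rel:s-squared} to flip the crossing, exactly as the paper notes is possible for \eqref{rel:dotslide} itself. The main obstacle, such as it is, is purely notational: keeping the inductive hypothesis, the single extra dot, and the correction term from \eqref{rel:dotslide} aligned on the correct strands so that the telescoping is transparent. There is no conceptual difficulty beyond the one relation \eqref{rel:dotslide}; the result is genuinely a formal consequence of the nil-Hecke-type relations among the upward strands and does not involve the cyclotomic parameter $\lambda$, the dual dots, or any of the cup/cap relations.
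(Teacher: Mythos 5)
Your proposal is correct and is essentially the paper's own argument: the paper simply notes that \eqref{eq:mult_dotslide} follows by applying \eqref{rel:dotslide} $t$ times, and your induction on $t$ (with the correction term from one application of \eqref{rel:dotslide} supplying the missing $a=0$ summand) is just that iteration written out carefully. The handling of the second equality via the second half of \eqref{rel:dotslide} (or via \eqref{rel:s-squared}) also matches the paper's intent.
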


In \eqref{eq:mult_dotslide}, and throughout the paper, the notation $\sum_{a+b=t-1}$ means that we sum over all $a,b \in \N$ satisfying the condition $a+b=t-1$.

\begin{proof}
 The equations in \eqref{eq:mult_dotslide} are obtained by applying the ones in~\eqref{rel:dotslide} $t$ times.
\end{proof}

\begin{lem} \label{lem:det-expansion}
  Suppose $a_k$, $k \in \Z$, are elements of a commutative ring such that
  \[
    a_{-1} = 1,
    \quad \text{and} \quad
    a_k = 0 \quad \text{for } k < -1.
  \]
  Then, for $m \ge 1$, we have
  \[
    \sum_{s=0}^m (-1)^s a_{m-1-s} \det (a_{j-i})_{i,j=1,\dotsc,s} = 0.
  \]
\end{lem}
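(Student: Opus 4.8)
The identity to prove is a purely algebraic statement about a sequence $a_k$ in a commutative ring, normalized so that $a_{-1}=1$ and $a_k=0$ for $k<-1$, namely
\[
  \sum_{s=0}^m (-1)^s a_{m-1-s} \det(a_{j-i})_{i,j=1,\dotsc,s} = 0 \qquad (m\ge 1).
\]
The plan is to recognize the quantities $d_s := \det(a_{j-i})_{i,j=1,\dotsc,s}$ as the coefficients of the (formal) multiplicative inverse of the generating function built from the $a_k$. Concretely, I would set $A(t) = \sum_{k\ge 0} a_{k-1} t^k = t + a_0 t^2 + a_1 t^3 + \cdots$, so that $A(t)/t = \sum_{k\ge 0} a_{k-1}t^k$ has constant term $a_{-1}=1$ and hence is invertible in the power series ring. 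The matrix $(a_{j-i})_{i,j=1,\dotsc,s}$ is lower-triangular-ish with $1$'s on the diagonal shifted appropriately (entry $a_{j-i}$ vanishes when $j - i < -1$, i.e.\ below the first subdiagonal), and a standard identity expresses $\det(a_{j-i})_{i,j=1,\dotsc,s}$ as (up to sign) the coefficient of $t^s$ in the reciprocal power series $\bigl(\sum_{k\ge 0} a_{k-1} t^k\bigr)^{-1}$.

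The cleanest route is induction on $m$ together with the recursion that the determinants $d_s$ satisfy via cofactor expansion along the last column (or last row). Expanding $\det(a_{j-i})_{i,j=1,\dotsc,s}$ along the last column, each minor is again a determinant of the same Toeplitz form but of smaller size, and since $a_{j-i}=0$ for $j-i<-1$ only the top two entries of the last column contribute nontrivially in the relevant way; this yields a linear recursion of the shape $\sum_{r} (-1)^{?} a_{?} d_{s-r} = \delta_{s,0}$-type relation, which is exactly a truncation of the statement $\bigl(\sum a_{k-1}t^k\bigr)\bigl(\sum (-1)^s d_s t^s\bigr) = 1$. Matching the coefficient of $t^m$ on both sides of this generating-function identity gives precisely $\sum_{s=0}^m (-1)^s a_{m-1-s} d_s = 0$ for $m\ge 1$ (and $=1$ for $m=0$).

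The main obstacle is purely bookkeeping: getting the cofactor expansion of the Toeplitz-type determinant $\det(a_{j-i})_{i,j=1,\dotsc,s}$ right, keeping careful track of which entries vanish (the condition $a_k=0$ for $k<-1$ is what collapses the expansion to a short recursion) and which signs appear. Once the recursion $d_s = \sum_{r=1}^{s} (-1)^{r-1} a_{r-1} d_{s-r}$ (with $d_0=1$) is established — equivalently $\sum_{s=0}^m (-1)^s a_{m-1-s} d_s = 0$ for all $m\ge 1$ — the result is immediate. So I expect the write-up to consist of: (1) set up the determinants $d_s$ and state the vanishing-above-subdiagonal structure; (2) perform the Laplace/cofactor expansion to derive the recursion, or equivalently invoke the standard formula expressing $(-1)^s d_s$ as the $t^s$-coefficient of the reciprocal of $\sum_{k\ge 0} a_{k-1}t^k$; (3) read off the claimed identity as the vanishing of the $t^m$-coefficient of $\bigl(\sum a_{k-1}t^k\bigr)\cdot\bigl(\sum a_{k-1}t^k\bigr)^{-1} - 1$ for $m\ge 1$. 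An entirely self-contained alternative avoiding generating functions is a direct induction on $m$ using the same determinant recursion, which I would include if a power-series argument feels out of place in this context.
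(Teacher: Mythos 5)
Your proposal is correct and follows essentially the same route as the paper: the paper proves the identity by repeatedly expanding $\det(a_{j-i})_{i,j=1,\dotsc,m}$ by cofactors (along the first column, whose only nonzero entries are $a_0$ and $a_{-1}=1$), obtaining exactly the recursion $d_m = a_0 d_{m-1} - a_1 d_{m-2} + \dotsb$ that you describe, so the generating-function packaging is optional. One small correction: the line with only two nonzero entries is the first column or the last row, not the last column (whose entries $a_{s-1},\dotsc,a_0$ need not vanish), so the collapsing expansion you invoke should be carried out along one of those.
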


\begin{proof}
  If we compute the determinant $\det (a_{j-i})_{i,j=1,\dotsc,m}$ by repeatedly expanding along the first column, we get
  \begin{align*}
    \det (a_{j-i})_{i,j=1,\dotsc,m}
    &= a_0 \det (a_{j-i})_{i,j=1,\dotsc,m-1} - \det
    \begin{pmatrix}
      a_1 & a_2 & \cdots & a_{m-1} \\
      1 & a_0 & \cdots & a_{m-3} \\
      \vdots & \vdots & \ddots & \vdots \\
      0 & 0 & \cdots & a_0
    \end{pmatrix}
    \\
    &= a_0 \det (a_{j-i})_{i,j=1,\dotsc,m-1} - a_1 \det (a_{j-i})_{i,j=1,\dotsc,m-2} + \dotsb \\
    &= \sum_{s=0}^{m-1} (-1)^{m-1-s} a_{m-1-s} \det (a_{j-i})_{i,j=1,\dotsc,s}.
  \end{align*}
  The result follows.
\end{proof}

\begin{lem} \label{lem:circle-det-expand}
  For $m \in \N$, we have
  \begin{equation} \label{eq:det-expand}
    \sum_{t=0}^m c_t\
    \begin{tikzpicture}[anchorbase]
      \draw [->](0,0) arc (180:360:0.5);
      \draw (1,0) arc (0:180:0.5);
      \bluedot{(0.8,-0.4)} node [anchor=north,color=black] {\dotlabel{d-1+m-t}};
    \end{tikzpicture}
    \ = \delta_{m,0}.
  \end{equation}
\end{lem}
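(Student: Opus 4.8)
The plan is to recognize \eqref{eq:det-expand} as the diagrammatic shadow of the purely algebraic identity already established in Lemma~\ref{lem:det-expansion}. Every diagram occurring in \eqref{eq:det-expand} is an endomorphism of $\one$, and such diagrams are disjoint unions of closed components that can be slid around the plane past one another; hence they multiply commutatively, and I would carry out the whole argument inside the commutative ring $R := \End_{\tcH^\lambda}(\one)$ (this commutativity is also just Eckmann--Hilton for the unit object, but the graphical reason is the one we need to invoke the ring-theoretic lemma).

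For $k \in \Z$, let $a_k \in R$ be the circle of \eqref{rel:cc-bubble} carrying $d+k$ dots (and set $a_k := 0$ if $d+k<0$, a case that does not arise below since $\lambda \ne 0$ forces $d \ge 1$). Relation~\eqref{rel:cc-bubble} says exactly that $a_{-1}=1$ (the circle with $d-1$ dots) and $a_k = 0$ for $k < -1$ (circles with fewer than $d-1$ dots), so the family $(a_k)_{k\in\Z}$ satisfies the hypotheses of Lemma~\ref{lem:det-expansion}. Next I would match up the two ingredients of \eqref{eq:det-expand} with this notation: the circle appearing there carries $d-1+m-t = d+(m-1-t)$ dots, hence equals $a_{m-1-t}$; and by \eqref{eq:c-def} — read, for $t>d$, as the same determinant formula, which is the convention under which the statement holds for all $m\in\N$ — we have $c_t = (-1)^t\det(a_{j-i})_{i,j=1,\dots,t}$ for every $t\ge 0$, the case $t=0$ being the empty ($0\times 0$) determinant, which is $1 = c_0$. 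Substituting, the left-hand side of \eqref{eq:det-expand} becomes
\[
  \sum_{t=0}^m (-1)^t\, a_{m-1-t}\, \det(a_{j-i})_{i,j=1,\dots,t}.
\]

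For $m \ge 1$ this vanishes by Lemma~\ref{lem:det-expansion} (applied with $s=t$), and for $m=0$ it equals $c_0 a_{-1} = 1\cdot 1 = 1$; in either case it equals $\delta_{m,0}$, which finishes the proof. I do not expect a genuine obstacle: once the dot-count shift $k \mapsto d+k$ is recorded and the commutativity of $R$ is noted, the statement is a direct substitution into an identity already in hand. The only mildly delicate point is the bookkeeping around the index $t$ (and the convention for $c_t$ when $t$ exceeds $d$); everything else is formal.
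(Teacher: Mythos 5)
Your proposal is correct and takes essentially the same route as the paper: the paper's proof likewise handles $m=0$ directly from \eqref{rel:cc-bubble} and settles $m>0$ by applying Lemma~\ref{lem:det-expansion} with $a_k$ equal to the counterclockwise bubble carrying $d+k$ dots, which makes $c_t=(-1)^t\det(a_{j-i})$ and turns the left-hand side of \eqref{eq:det-expand} into exactly the vanishing sum of that lemma. Your explicit remarks on the commutativity of $\End_{\tcH^\lambda}(\one)$ and on extending $c_t$ by the determinant formula for $t>d$ (the only regime where the statement needs a convention, and one never used later since the applications have $m\le d$) just spell out points the paper leaves implicit.
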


\begin{proof}
  The case $m=0$ follows immediately from \eqref{rel:cc-bubble}.  The case $m > 0$ follows from Lemma~\ref{lem:det-expansion} with
  \[
    a_k =\
    \begin{tikzpicture}[anchorbase]
      \draw [->](0,0) arc (180:360:0.5);
      \draw (1,0) arc (0:180:0.5);
      \bluedot{(0.8,-0.4)} node [anchor=north,color=black] {\dotlabel{d+k}};
    \end{tikzpicture}\ . \qedhere
  \]
\end{proof}

\begin{lem} \label{lem:circle-duality}
  For any $i,j \in \{0,\dotsc,d-1\}$, we have
  \[
    \begin{tikzpicture}[anchorbase]
      \draw [->](0,0) arc (180:360:0.5);
      \draw (1,0) arc (0:180:0.5);
      \bluedot{(0.8,0.4)} node [anchor=west,color=black] {\dotlabel{i}};
      \bluedot{(0.8,-0.4)} node [anchor=north,color=black] {\dotlabel{j^\vee}};
    \end{tikzpicture}
    \ = \delta_{i,j}.
  \]
\end{lem}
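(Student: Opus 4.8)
The plan is to start from the definition \eqref{eq:dot-vee-def} of the dual dots, substitute it into the left-hand side, and reduce the resulting expression to a known identity. Concretely, replacing the dual dot labeled $j^\vee$ on the bottom of the circle by $\sum_{k=j}^{d-1} (\text{ordinary dot labeled } k-j)\, c_{d-1-k}$, and using the fact that dots slide freely along strands (in particular over the cup and cap) so that the $i$ dots on top and the $k-j$ dots on the bottom merge, I get
\[
  \begin{tikzpicture}[anchorbase]
    \draw [->](0,0) arc (180:360:0.5);
    \draw (1,0) arc (0:180:0.5);
    \bluedot{(0.8,0.4)} node [anchor=west,color=black] {\dotlabel{i}};
    \bluedot{(0.8,-0.4)} node [anchor=north,color=black] {\dotlabel{j^\vee}};
  \end{tikzpicture}
  \ = \sum_{k=j}^{d-1} c_{d-1-k}\
  \begin{tikzpicture}[anchorbase]
    \draw [->](0,0) arc (180:360:0.5);
    \draw (1,0) arc (0:180:0.5);
    \bluedot{(0.8,-0.4)} node [anchor=north,color=black] {\dotlabel{i+k-j}};
  \end{tikzpicture}\ .
\]
Reindexing by $t = d-1-k$ turns this into $\sum_{t=0}^{d-1-j} c_t\, (\text{circle with } d-1 + (i-j) - t \text{ dots})$, which is exactly the left-hand side of \eqref{eq:det-expand} from Lemma~\ref{lem:circle-det-expand} with $m = i-j$ — \emph{except} that the summation range is $t = 0,\dotsc,d-1-j$ rather than $t = 0,\dotsc,m$.

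So the key step is to reconcile these two summation ranges, and this splits into the two cases $i \ge j$ and $i < j$. When $i \ge j$, set $m = i - j \ge 0$; I claim the extra terms with $t > m$ all vanish. Indeed, for $t > m$ the circle carries $d-1+m-t$ dots with $d-1+m-t < d-1$, so it is zero by \eqref{rel:cc-bubble}; hence the sum over $t=0,\dotsc,d-1-j$ agrees with the sum over $t=0,\dotsc,m$, and Lemma~\ref{lem:circle-det-expand} gives $\delta_{m,0} = \delta_{i,j}$, as desired. When $i < j$, set $m = i-j < 0$; now \emph{every} term in the sum $\sum_{t=0}^{d-1-j}$ has a circle with $d-1+m-t \le d-1+m < d-1$ dots (using $t \ge 0$ and $m \le -1$), hence every term vanishes by \eqref{rel:cc-bubble}, giving $0 = \delta_{i,j}$ since $i \ne j$. (Here I should double-check the edge case $t$-range is nonempty, i.e. $d-1-j \ge 0$, which holds since $j \le d-1$.)

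The only mild subtlety — and the step I'd watch most carefully — is the bookkeeping of dot counts and summation indices after the reindexing, making sure the "$m \ge 0$" hypothesis of Lemma~\ref{lem:circle-det-expand} is genuinely available in the case $i \ge j$ and that the tail terms really do fall below the threshold $d-1$ of \eqref{rel:cc-bubble}. There is no deep obstacle here: once \eqref{eq:dot-vee-def} is unwound and the dots are slid together, the statement is a direct consequence of Lemma~\ref{lem:circle-det-expand} and the bubble relation \eqref{rel:cc-bubble}, with the two cases on the sign of $i-j$ handled separately as above.
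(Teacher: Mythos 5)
Your proposal is correct and follows essentially the same route as the paper's proof: substitute the definition \eqref{eq:dot-vee-def} of the dual dot, slide the dots together, and reduce via \eqref{rel:cc-bubble} and Lemma~\ref{lem:circle-det-expand} (the only difference being that you fold the case $i=j$ into the determinant-expansion case rather than treating $i\le j$ directly, which is immaterial). The bookkeeping point you flag is exactly the one the paper notes, namely that $i\le d-1$ gives $m=i-j\le d-1-j$, so the truncated sum really does coincide with the one in Lemma~\ref{lem:circle-det-expand}.
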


\begin{proof}
  We have
  \[
    \begin{tikzpicture}[anchorbase]
      \draw [->](0,0) arc (180:360:0.5);
      \draw (1,0) arc (0:180:0.5);
      \bluedot{(0.8,0.4)} node [anchor=west,color=black] {\dotlabel{i}};
      \bluedot{(0.8,-0.4)} node [anchor=north,color=black] {\dotlabel{j^\vee}};
    \end{tikzpicture}
    \ = \sum_{k=j}^{d-1} c_{d-1-k}\
    \begin{tikzpicture}[anchorbase]
      \draw [->](0,0) arc (180:360:0.5);
      \draw (1,0) arc (0:180:0.5);
      \bluedot{(0.8,-0.4)} node [anchor=north,color=black] {\dotlabel{k+i-j}};
    \end{tikzpicture}
  \]
  If $i \le j$, the result follows immediately from \eqref{rel:cc-bubble}.  Therefore, assume $i > j$.  Setting $m=i-j$ and $t=d-1-k$ in the above sum, it remains to prove that
  \begin{equation} \label{eq:circle-duality-sum}
    \sum_{t=0}^{d-1-j} c_t\
    \begin{tikzpicture}[anchorbase]
      \draw [->](0,0) arc (180:360:0.5);
      \draw (1,0) arc (0:180:0.5);
      \bluedot{(0.8,-0.4)} node [anchor=north,color=black] {\dotlabel{d-1+m-t}};
    \end{tikzpicture}
    \ = 0.
  \end{equation}
  By \eqref{rel:cc-bubble}, the terms in \eqref{eq:circle-duality-sum} with $t > m$ are equal to zero.  Since $i \le d-1$, we have $m \le d-1-j$.  Therefore, the result follows from Lemma~\ref{lem:circle-det-expand}.
\end{proof}

\begin{lem}\label{lem:idempotent}
  The equality \eqref{rel:down-up-double-crossing} is a decomposition of the identity morphism of $\sQ_- \sQ_+$ into $d+1$ orthogonal idempotents.  More precisely, the morphisms
  \[
    \begin{tikzpicture}[anchorbase]
      \draw[<-] (0,0) .. controls (0.5,0.5) .. (0,1);
      \draw[->] (0.5,0) .. controls (0,0.5) .. (0.5,1);
    \end{tikzpicture}
    \quad \text{and} \quad
    \begin{tikzpicture}[anchorbase]
      \draw[->] (0,1) -- (0,0.9) arc (180:360:.25) -- (0.5,1);
      \draw[<-] (0,0) -- (0,0.1) arc (180:0:.25) -- (0.5,0);
      \bluedot{(0.45,0.75)} node [anchor=west,color=black] {\dotlabel{j}};
      \bluedot{(0.05,0.25)} node [anchor=east,color=black] {\dotlabel{j^\vee}};
    \end{tikzpicture}
    \ ,\quad j=0,\dotsc,d-1,
  \]
  are orthogonal idempotents.
\end{lem}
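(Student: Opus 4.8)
The plan is to verify directly that each of the $d+1$ diagrams is idempotent and that distinct diagrams are orthogonal, and then invoke \eqref{rel:down-up-double-crossing} to conclude they sum to the identity of $\sQ_-\sQ_+$ (which is automatic once the individual properties hold). Write $e_{-1}$ for the first morphism (the crossing $\sQ_-\sQ_+ \to \sQ_+\sQ_-\to\sQ_-\sQ_+$ composed appropriately), and for $j\in\{0,\dots,d-1\}$ write $e_j$ for the ``cap–cup'' diagram carrying a dot labeled $j$ on the upward strand and a dual dot labeled $j^\vee$ on the downward strand. Each $e_j$ factors through $\one$: it is the composite of a cap (a morphism $\sQ_-\sQ_+\to\one$, with the dots inserted) followed by a cup ($\one\to\sQ_-\sQ_+$).

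First I would compute $e_j \circ e_k$ for $j,k \in \{0,\dots,d-1\}$. Stacking the two cap–cup diagrams produces, in the middle, a cup followed by a cap — that is, a circle — carrying a dot labeled $k$ and a dual dot labeled $j^\vee$; the remaining cap (on the bottom) and cup (on the top) survive, with a dot labeled $j$ and a dual dot labeled $k^\vee$ distributed on them (the dots slide freely over cups and caps, so their precise placement is irrelevant). By Lemma~\ref{lem:circle-duality}, the middle circle evaluates to the scalar $\delta_{j,k}$. Hence $e_j\circ e_k = \delta_{j,k}\, e_j$, which gives both idempotency of each $e_j$ and orthogonality of distinct $e_j$'s in one stroke. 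This is the technical heart of the argument, and it is exactly why the dual dots were defined as in \eqref{eq:dot-vee-def} — Lemma~\ref{lem:circle-duality} is the payoff.

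Next I would handle $e_{-1}$: idempotency $e_{-1}\circ e_{-1} = e_{-1}$ is immediate from \eqref{rel:s-squared} (the two crossings in the middle cancel), and orthogonality $e_{-1}\circ e_j = 0 = e_j\circ e_{-1}$ follows because composing a crossing with a cap–cup diagram produces a diagram containing a left curl (a cap or cup with a strand looping back through the crossing), which vanishes by \eqref{rel:left-curl-zero} — here one also uses \eqref{rel:up-down-double-crossing} or a rotation thereof to straighten the picture into the form of \eqref{rel:left-curl-zero}, possibly after sliding the dots past the curl. Finally, with all pairwise products computed, relation \eqref{rel:down-up-double-crossing} states precisely that $\id_{\sQ_-\sQ_+} = e_{-1} + \sum_{j=0}^{d-1} e_j$, so this is a genuine decomposition of the identity into $d+1$ mutually orthogonal idempotents, completing the proof.

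The main obstacle I anticipate is the $e_{-1}\circ e_j$ computation: one must be careful that the relevant curl really is of the exact type killed by \eqref{rel:left-curl-zero} (orientation and which of the up/down crossing relations applies), rather than a curl on the ``wrong'' strand, and that the dots present do not obstruct the simplification. The $e_j\circ e_k$ step, by contrast, should be a clean application of Lemma~\ref{lem:circle-duality} once the diagram is drawn carefully.
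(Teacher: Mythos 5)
Your proposal is essentially correct, and its core step is the same as the paper's: the computation $e_j\circ e_k=\delta_{j,k}e_j$ for the cap--cup diagrams via Lemma~\ref{lem:circle-duality} is exactly the heart of the paper's proof. Two points of comparison. First, a small citation slip: idempotency of the double-crossing $e_{-1}$ does not follow from \eqref{rel:s-squared}, which concerns two \emph{upward} strands; the two middle crossings in $e_{-1}\circ e_{-1}$ involve strands of opposite orientation (upward strand on the left, downward on the right at the interface), so the relation you need is \eqref{rel:up-down-double-crossing} -- which is precisely what the paper invokes. Your parenthetical ``the two crossings in the middle cancel'' is the right idea, just attached to the wrong equation number. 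Second, for the orthogonality of $e_{-1}$ with the $e_j$ you propose a direct diagrammatic argument producing a left curl and appealing to \eqref{rel:left-curl-zero}; this can be made to work (composing the cap of $e_j$ with the double crossing, or the double crossing with the cup of $e_j$, yields a counterclockwise kink encircling the cap resp.\ cup, and after sliding the dots away and rotating the local picture using isotopy invariance this vanishes), but it is exactly the step you flag as delicate, and the paper avoids it entirely: since \eqref{rel:down-up-double-crossing} says $e_{-1}=\id-\sum_j e_j$, orthogonality $e_{-1}e_j=e_j e_{-1}=0$ is immediate algebra once the $e_j$ are known to be orthogonal idempotents. So your route buys a self-contained diagrammatic verification at the cost of a curl-chasing argument that needs care with orientations and rotations, while the paper's route gets the same conclusion for free from the defining relation; adopting the latter would also let you drop the left-curl discussion altogether.
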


\begin{proof}
  By \eqref{rel:up-down-double-crossing},
  \begin{equation} \label{eq-lem-proof:up-down-double-crossing}
    \begin{tikzpicture}[anchorbase]
      \draw[<-] (0,0) .. controls (0.5,0.5) .. (0,1);
      \draw[->] (0.5,0) .. controls (0,0.5) .. (0.5,1);
    \end{tikzpicture}
  \end{equation}
  is an idempotent.  It also follows immediately from Lemma~\ref{lem:circle-duality} that the
  \begin{equation} \label{eq-lem-proof:cup-cap-idempotents}
    \begin{tikzpicture}[anchorbase]
      \draw[->] (0,1) -- (0,0.9) arc (180:360:.25) -- (0.5,1);
      \draw[<-] (0,0) -- (0,0.1) arc (180:0:.25) -- (0.5,0);
      \bluedot{(0.45,0.75)} node [anchor=west,color=black] {\dotlabel{j}};
      \bluedot{(0.05,0.25)} node [anchor=east,color=black] {\dotlabel{j^\vee}};
    \end{tikzpicture}
   \ ,\quad j=0,\dotsc,d-1,
  \end{equation}
  are orthogonal idempotents.  Finally, since, by \eqref{rel:down-up-double-crossing}, we have that \eqref{eq-lem-proof:up-down-double-crossing} is equal to the identity minus the sum of the elements in \eqref{eq-lem-proof:cup-cap-idempotents}, it follows that \eqref{eq-lem-proof:up-down-double-crossing} is orthogonal to each of the idempotents in \eqref{eq-lem-proof:cup-cap-idempotents}.
\end{proof}

\begin{cor} \label{cor:Q+Q-commutation-relation}
  In $\tcH^\lambda$, we have an isomorphism
  \[
    \left[
      \begin{tikzpicture}[anchorbase]
        \draw[<-] (0,0) -- (0.5,0.5);
        \draw[->] (0.5,0) -- (0,0.5);
      \end{tikzpicture}
      \
      \begin{tikzpicture}[anchorbase]
        \draw[<-] (0,0) -- (0,0.2) arc (180:0:.25) -- (0.5,0);
        \bluedot{(0.01,0.25)} node [anchor=east,color=black] {\dotlabel{j^\vee}};
      \end{tikzpicture}
      \ ,\ j=0,\dotsc,d-1
    \right]^T
    \colon
    \sQ_- \sQ_+ \cong \sQ_+ \sQ_- \oplus \one^{\oplus d}.
  \]
\end{cor}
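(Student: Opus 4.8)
The plan is to read the corollary as a repackaging of Lemma~\ref{lem:idempotent}: the morphism in the statement simply assembles the ``projections'' of $\sQ_-\sQ_+$ onto the $d+1$ summands cut out by the orthogonal idempotents in \eqref{rel:down-up-double-crossing}, and one checks that the matching ``inclusions'' furnish a two-sided inverse. Crucially, all the maps involved are honest morphisms of $\tcH^\lambda$, so no passage to the idempotent completion $\cH^\lambda$ is needed. To fix notation, let $s\colon \sQ_-\sQ_+ \to \sQ_+\sQ_-$ denote the single down-up crossing and $s'\colon \sQ_+\sQ_- \to \sQ_-\sQ_+$ the single crossing in the opposite direction; for $j \in \{0,\dots,d-1\}$ let $\cap_j\colon \sQ_-\sQ_+ \to \one$ be the cap carrying a dual dot $j^\vee$ and $\cup_j\colon \one \to \sQ_-\sQ_+$ the cup carrying an ordinary dot $j$. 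With this notation the morphism in the corollary is the column vector $F := [\,s,\ \cap_0,\ \dots,\ \cap_{d-1}\,]^{T}$, and I would propose $G := [\,s',\ \cup_0,\ \dots,\ \cup_{d-1}\,]$ as its inverse.

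The first step is to record the relevant section--retraction identities. Relation \eqref{rel:up-down-double-crossing} says exactly $s \circ s' = \id_{\sQ_+\sQ_-}$, while $s' \circ s$ is, by definition, the double-crossing morphism appearing as the first idempotent on the right-hand side of \eqref{rel:down-up-double-crossing}. Similarly, Lemma~\ref{lem:circle-duality} applied with equal indices gives $\cap_j \circ \cup_j = \id_{\one}$, while $\cup_j \circ \cap_j$ is the $j$-th cup--cap morphism in \eqref{rel:down-up-double-crossing}. Hence, invoking Lemma~\ref{lem:idempotent}, the endomorphisms $e_0 := s' \circ s$ and $e_{j+1} := \cup_j \circ \cap_j$ ($j = 0,\dots,d-1$) form a complete system of $d+1$ orthogonal idempotents of $\sQ_-\sQ_+$.

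The second step is a standard splitting argument, valid in any additive category. Suppose an object $X$ carries orthogonal idempotents $e_0,\dots,e_d$ with $\sum_k e_k = \id_X$, and that for each $k$ one is given an object $Y_k$ together with morphisms $p_k\colon X \to Y_k$ and $i_k\colon Y_k \to X$ satisfying $i_k p_k = e_k$ and $p_k i_k = \id_{Y_k}$. Then $[\,p_0,\dots,p_d\,]^{T}\colon X \to \bigoplus_k Y_k$ is an isomorphism with inverse $[\,i_0,\dots,i_d\,]$: one composite is $\sum_k i_k p_k = \sum_k e_k = \id_X$, and in the other composite the diagonal entries are $p_k i_k = \id_{Y_k}$, while the $(k,l)$-entry with $k \ne l$ vanishes because $e_k e_l = 0$ gives $i_k(p_k i_l)p_l = 0$ and composing on the left with $p_k$ and on the right with $i_l$ (using the retraction identities) forces $p_k i_l = 0$. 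Applying this with $X = \sQ_-\sQ_+$, $(Y_0,p_0,i_0) = (\sQ_+\sQ_-,s,s')$, and $(Y_{j+1},p_{j+1},i_{j+1}) = (\one,\cap_j,\cup_j)$ yields precisely the asserted isomorphism $\sQ_-\sQ_+ \cong \sQ_+\sQ_- \oplus \one^{\oplus d}$.

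Beyond this, the only thing needing attention is the diagrammatic bookkeeping: checking that the two pictures displayed in the corollary really are $s$ and $\cap_j$ (tracking the boundary orientations), and that $s'$ and $\cup_j$ are their correct partners. I do not anticipate a genuine obstacle. If one wishes to bypass the abstract cancellation step, the off-diagonal vanishing $s \circ \cup_j = 0 = \cap_j \circ s'$ can alternatively be obtained directly from the defining relations of $\tcH^\lambda$, but the argument above is shorter and makes transparent why the statement already holds in $\tcH^\lambda$ rather than only in its Karoubi envelope.
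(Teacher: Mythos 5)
Your proposal is correct and follows essentially the same route as the paper, which deduces the corollary directly from Lemma~\ref{lem:idempotent} (the orthogonal idempotent decomposition of $\id_{\sQ_-\sQ_+}$ given by \eqref{rel:down-up-double-crossing}, together with \eqref{rel:up-down-double-crossing} and Lemma~\ref{lem:circle-duality}); you merely make explicit the inverse morphism and the standard splitting argument that the paper leaves implicit.
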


\begin{proof}
  This follows immediately from Lemma~\ref{lem:idempotent}.
\end{proof}

\begin{cor} \label{cor:tcH-basic-objects}
  Every object of $\tcH^\lambda$ is isomorphic to a direct sum of objects of the form $\sQ_+^n \sQ_-^m$, $n,m \in \N$.
\end{cor}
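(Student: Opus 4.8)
The plan is to reduce the statement to a single object $\sQ_\epsilon$ and then repeatedly apply the commutation isomorphism of Corollary~\ref{cor:Q+Q-commutation-relation} to move all the $\sQ_+$-strands past all the $\sQ_-$-strands. Since, by definition, every object of $\tcH^\lambda$ is a finite direct sum of objects of the form $\sQ_\epsilon = \sQ_{\epsilon_1}\dotsb\sQ_{\epsilon_k}$ for sign sequences $\epsilon$, and since a direct sum of objects each of which is a direct sum of objects of the form $\sQ_+^n\sQ_-^m$ is again of that form, it suffices to treat a single $\sQ_\epsilon$.

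Given a sign sequence $\epsilon = (\epsilon_1,\dotsc,\epsilon_k)$, I would let $N(\epsilon)$ denote the number of pairs $i<j$ with $\epsilon_i = -$ and $\epsilon_j = +$, and induct on $N(\epsilon)$. If $N(\epsilon)=0$, then all the $+$ entries of $\epsilon$ precede all the $-$ entries, so $\sQ_\epsilon = \sQ_+^n\sQ_-^m$ and there is nothing to prove. If $N(\epsilon)\geq 1$, then $\epsilon$ is not sorted in this way, so there is an index $t$ with $\epsilon_t = -$ and $\epsilon_{t+1} = +$; writing $\epsilon = \epsilon' \cdot ({-},{+}) \cdot \epsilon''$ accordingly gives $\sQ_\epsilon = \sQ_{\epsilon'} \sQ_- \sQ_+ \sQ_{\epsilon''}$. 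Applying the monoidal functor $\sQ_{\epsilon'} \otimes (-) \otimes \sQ_{\epsilon''}$, which preserves direct sums, to the isomorphism $\sQ_-\sQ_+ \cong \sQ_+\sQ_- \oplus \one^{\oplus d}$ of Corollary~\ref{cor:Q+Q-commutation-relation} yields
\[
  \sQ_\epsilon \;\cong\; \sQ_{\epsilon'}\sQ_+\sQ_-\sQ_{\epsilon''} \;\oplus\; \bigl(\sQ_{\epsilon'}\sQ_{\epsilon''}\bigr)^{\oplus d}.
\]
Here $\sQ_{\epsilon'}\sQ_+\sQ_-\sQ_{\epsilon''} = \sQ_{\tilde\epsilon}$, where $\tilde\epsilon$ is $\epsilon$ with the adjacent entries at positions $t,t+1$ interchanged; a routine check of cases shows $N(\tilde\epsilon) = N(\epsilon)-1$. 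Likewise $\sQ_{\epsilon'}\sQ_{\epsilon''} = \sQ_{\hat\epsilon}$, where $\hat\epsilon$ is $\epsilon$ with positions $t$ and $t+1$ deleted; the inversions of $\hat\epsilon$ are exactly those inversions of $\epsilon$ involving neither position, so $N(\hat\epsilon) \leq N(\epsilon)-1$. The inductive hypothesis then applies to both summands, and the claim for $\sQ_\epsilon$ follows.

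I do not expect any genuine obstacle here: essentially all of the content sits in Corollary~\ref{cor:Q+Q-commutation-relation} (hence ultimately in relation \eqref{rel:down-up-double-crossing} and Lemma~\ref{lem:idempotent}), and what remains is purely organizational bookkeeping. The one point worth a moment's care is the choice of induction variable: one wants a quantity that strictly decreases both under the adjacent swap $\sQ_-\sQ_+ \rightsquigarrow \sQ_+\sQ_-$ and upon passing to the shorter object $\sQ_{\epsilon'}\sQ_{\epsilon''}$ that appears in the extra $\one^{\oplus d}$ summand. The inversion count $N(\epsilon)$ has exactly this property, which is why a single induction suffices and one need not also induct on the length of $\epsilon$.
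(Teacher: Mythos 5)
Your proof is correct and follows the paper's own route: the paper likewise reduces to a single $\sQ_\epsilon$ and invokes Corollary~\ref{cor:Q+Q-commutation-relation} by induction, leaving the bookkeeping implicit. Your choice of the inversion count $N(\epsilon)$ as the induction variable simply makes explicit the induction the paper takes for granted.
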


\begin{proof}
  By definition, objects of $\tcH^\lambda$ are sums of products of $\sQ_+$ and $\sQ_-$.  The result then follows from Corollary~\ref{cor:Q+Q-commutation-relation} by induction.
\end{proof}

We now deduce some other consequences of the local relations in $\cH^\lambda$.

\begin{lem} \label{lem:easy_cons}
  We have
  \begin{equation} \label{eq:dotted_leftcurl}
    \begin{tikzpicture}[anchorbase]
      \draw (0,0) .. controls (0,.5) and (.7,.5) .. (.9,0);
      \draw (0,0) .. controls (0,-.5) and (.7,-.5) .. (.9,0);
      \draw (1,-1) .. controls (1,-.5) .. (.9,0);
      \draw[->] (.9,0) .. controls (1,.5) .. (1,1);
      \bluedot{(0,0)} node[color=black,anchor=east] {\dotlabel{t}};
    \end{tikzpicture}
    \ =\
    \begin{cases}
      0 & \text{if } t < d, \\
      \ \begin{tikzpicture}[anchorbase]
        \draw[->] (0,0) to (0,0.5);
      \end{tikzpicture}
      & \text{if } t=d, \\
      \sum_{a=0}^{t-d}
      \begin{tikzpicture}[anchorbase]
        \draw [->](0,0) arc (180:360:0.5);
        \draw (1,0) arc (0:180:0.5);
        \bluedot{(0.8,-0.4)} node [anchor=north,color=black] {\dotlabel{d-1+a}};
        \draw[->] (1.5,-0.5) to (1.5,0.5);
        \bluedot{(1.5,0)} node[color=black,anchor=west] {\dotlabel{t-d-a}};
      \end{tikzpicture}
      & \text{if } t>d,
    \end{cases}
  \end{equation}

  \begin{equation} \label{eq:dots-vs-right-curls}
    \begin{tikzpicture}[anchorbase]
      \draw[->] (0,0) to (0,1);
      \bluedot{(0,0.5)} node[color=black,anchor=east] {\dotlabel{d}};
    \end{tikzpicture}
    \ =\
    \begin{tikzpicture}[anchorbase]
      \draw (0.5,0.5) .. controls (0.5,0.75) and (0.15,0.75) .. (0.05,0.5);
      \draw (0.5,0.5) .. controls (0.5,.25) and (0.15,.25) .. (0.05,0.5);
      \draw (0,0) .. controls (0,.25) .. (0.05,0.5);
      \draw[->] (0.05,0.5) .. controls (0,0.75) .. (0,1);
    \end{tikzpicture}
    \ -\
    \sum_{j=0}^{d-1}
    \begin{tikzpicture}[anchorbase]
      \draw[->] (0,0) to (0,1);
      \bluedot{(0,0.5)} node[color=black,anchor=east] {\dotlabel{j}};
    \end{tikzpicture}
    \ \ c_{d-j}.
  \end{equation}
\end{lem}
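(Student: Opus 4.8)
The plan is to establish \eqref{eq:dotted_leftcurl} first, essentially by one application of Lemma~\ref{lem:higherNilHecke}, and then to deduce \eqref{eq:dots-vs-right-curls} from it together with the crossing relations.

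\emph{For \eqref{eq:dotted_leftcurl}.} The left curl has a single self-intersection, and since the strand is oriented upward throughout, both passes through it point upward; thus it is an ordinary same-orientation crossing and Lemma~\ref{lem:higherNilHecke} applies there. I would draw the left curl with all $t$ dots on its loop and apply \eqref{eq:mult_dotslide} at this crossing. Of the two dotted-crossing diagrams on the right of \eqref{eq:mult_dotslide}, one is precisely the left curl with $t$ dots on the loop (the left side of \eqref{eq:dotted_leftcurl}), while the other has the $t$ dots on the outgoing strand, sitting above the undotted left curl, and so is $0$ by \eqref{rel:left-curl-zero}. In the summation term, resolving the self-crossing straightens the through-strand and converts the loop into a counterclockwise bubble displaced to one side, so each summand is (bubble with $a$ dots)$\,\cdot\,$(strand with $b$ dots). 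Hence, for $t\ge 1$,
\[
  \bigl[\text{left curl with } t \text{ dots}\bigr]
  \;=\;
  \sum_{a+b=t-1}\ \bigl[\text{bubble with } a \text{ dots}\bigr]\cdot\bigl[\text{strand with } b \text{ dots}\bigr]
\]
(the case $t=0$ being \eqref{rel:left-curl-zero} itself). By \eqref{rel:cc-bubble} the bubble with $a$ dots is $0$ for $a<d-1$ and $1$ for $a=d-1$, so only the terms with $a=d-1+a'$, $a'\ge 0$, survive, and these force $b=t-d-a'\ge 0$. This yields exactly the three asserted cases: an empty sum, hence $0$, when $t<d$; the bare strand when $t=d$ (the single term $a'=b=0$); and $\sum_{a'=0}^{t-d}\bigl[\text{bubble with }d-1+a'\text{ dots}\bigr]\cdot\bigl[\text{strand with }t-d-a'\text{ dots}\bigr]$ when $t>d$.

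\emph{For \eqref{eq:dots-vs-right-curls}.} This is the harder half, and I do not expect it to go through quite so mechanically, since the right curl is \emph{not} killed by any relation and resolving its self-crossing produces a \emph{clockwise} bubble, to which \eqref{rel:cc-bubble} does not directly apply. The plan is to use the cup/cap structure to rewrite the undotted right curl so that \eqref{rel:down-up-double-crossing} can be brought to bear: bending one of the strands meeting at the self-crossing through a short zig-zag (which is the identity up to planar isotopy) converts a neighbourhood of the crossing into a bigon of the down–up type appearing in \eqref{rel:down-up-double-crossing}. Applying \eqref{rel:down-up-double-crossing} there splits the right curl into a term in which the bigon is undone—evaluated via \eqref{eq:dotted_leftcurl} (notably the case $t=d$, which identifies a suitably dotted curl with the bare strand and is the source of the $\text{strand}(d)$ term on the left of \eqref{eq:dots-vs-right-curls})—plus the $d$ cup/cap correction terms of \eqref{rel:down-up-double-crossing}. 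In these corrections one unwinds the dual dots using \eqref{eq:dot-vee-def} and evaluates the resulting counterclockwise bubbles with \eqref{rel:cc-bubble} and Lemma~\ref{lem:circle-det-expand}; the determinantal identity of Lemma~\ref{lem:det-expansion} collapses the double sum that appears into $\sum_{j=0}^{d-1}\bigl[\text{strand with }j\text{ dots}\bigr]\cdot c_{d-j}$, which gives \eqref{eq:dots-vs-right-curls} after rearranging.

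The main obstacle is this second part: arranging the bends so that it is genuinely \eqref{rel:down-up-double-crossing}, rather than the trivial relation \eqref{rel:up-down-double-crossing}, that governs the exposed bigon, and then carrying out the bubble bookkeeping so that the sums really do telescope against the determinants $c_s$. By contrast, \eqref{eq:dotted_leftcurl} is immediate once one observes that the self-crossing of the left curl is an ordinary crossing and invokes \eqref{rel:left-curl-zero}.
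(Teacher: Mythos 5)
Your treatment of \eqref{eq:dotted_leftcurl} is exactly the paper's argument: apply \eqref{eq:mult_dotslide} at the self-crossing, kill the term with the dots pushed onto the outgoing strand using \eqref{rel:left-curl-zero}, and evaluate the resulting counterclockwise bubbles with \eqref{rel:cc-bubble}. That half is correct and needs no comment.

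For \eqref{eq:dots-vs-right-curls} there is a genuine gap, and it sits precisely at the step you flag as the ``main obstacle.'' In $\tcH^\lambda$ diagrams are taken up to planar isotopy, and creating or removing a bigon is \emph{not} an isotopy: it is governed by the relations \eqref{rel:s-squared}, \eqref{rel:up-down-double-crossing}, \eqref{rel:down-up-double-crossing}. The plain right curl has a single self-crossing, while a down--up bigon has two, so no zig-zag bend can ``convert a neighbourhood of the crossing into a bigon''; indeed, if an isotopy could produce a down--up bigon from parallel strands, the double crossing would equal the identity, contradicting \eqref{rel:down-up-double-crossing} itself (its $d$ correction terms are nonzero orthogonal idempotents by Lemma~\ref{lem:idempotent}). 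A second symptom of the same problem is that the $d$ dots on the left-hand side of \eqref{eq:dots-vs-right-curls} have no origin in your scheme: you start from the undotted right curl and hope the $t=d$ case of \eqref{eq:dotted_leftcurl} will generate the $d$-dotted strand, but that case converts a $d$-dotted \emph{left} curl into a \emph{bare} strand, so applied to a dotless diagram it can never produce a strand carrying $d$ dots. (Closing off \eqref{rel:down-up-double-crossing} with an undotted cup just returns a tautology: the double-crossing term closes to zero by \eqref{rel:left-curl-zero}.)

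The missing idea is the paper's closure trick, run in the opposite direction from yours: take the whole identity \eqref{rel:down-up-double-crossing}, add an upward strand on the left, cap the tops of the two leftmost strands with a right cap, and cup the bottoms of the two rightmost strands with a right cup carrying $d$ dots. The closed-off identity term is a zigzag with $d$ dots, i.e.\ the strand with $d$ dots; the closed-off double crossing contains a left curl with $d$ dots and hence equals the plain right curl by the $t=d$ case of \eqref{eq:dotted_leftcurl} (this is where that case is actually used, not on the ``bigon undone'' term); and each closed-off correction term is a strand with $j$ dots times a counterclockwise bubble carrying $j^\vee$ and $d$ dots, which \eqref{eq:dot-vee-def} and Lemma~\ref{lem:circle-det-expand} (with $m=d-j$) evaluate to $-c_{d-j}$. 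Rearranging gives \eqref{eq:dots-vs-right-curls}. So your list of ingredients is right, but the bigon and the $d$-dotted cup must be introduced together by closing off \eqref{rel:down-up-double-crossing}; they cannot be extracted from the bare right curl by bending.
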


\begin{proof}
  By \eqref{eq:mult_dotslide} and \eqref{rel:left-curl-zero}, we have
  \[
    \begin{tikzpicture}[anchorbase]
      \draw (0,0) .. controls (0,.5) and (.7,.5) .. (.9,0);
      \draw (0,0) .. controls (0,-.5) and (.7,-.5) .. (.9,0);
      \draw (1,-1) .. controls (1,-.5) .. (.9,0);
      \draw[->] (.9,0) .. controls (1,.5) .. (1,1);
      \bluedot{(0,0)} node[color=black,anchor=east] {\dotlabel{t}};
    \end{tikzpicture}
    \ = \sum_{b=0}^{t-1}\
    \begin{tikzpicture}[anchorbase]
      \draw [->](0,0) arc (180:360:0.5);
      \draw (1,0) arc (0:180:0.5);
      \bluedot{(0.8,-0.4)} node [anchor=north,color=black] {\dotlabel{b}};
      \draw[->] (1.5,-0.5) to (1.5,0.5);
      \bluedot{(1.5,0)} node[color=black,anchor=west] {\dotlabel{t-1-b}};
    \end{tikzpicture}\ .
  \]
  Then \eqref{eq:dotted_leftcurl} follows from \eqref{rel:cc-bubble}.

  Now, to all terms in \eqref{rel:down-up-double-crossing}, add an upward pointing strand to the left, then a right cap joining the tops of the two leftmost strands, and a right cup with $d$ dots joining the bottoms of the two rightmost strands.  This gives
  \begin{multline*}
    \begin{tikzpicture}[anchorbase]
      \draw[->] (0,0) to (0,1.1) arc(180:0:.3) to (0.6,0.9) arc(180:360:.3) to (1.2,2);
      \bluedot{(1.2,1)} node[color=black,anchor=east] {\dotlabel{d}};
    \end{tikzpicture}
    \ =\
    \begin{tikzpicture}[anchorbase]
      \draw[->] (-0.5,-0.5) .. controls (-0.5,1) and (-0.4,2) .. (0,2) .. controls (0.4,2) and (0.6,1.3) .. (0.6,1) .. controls (0.6,0.5) and (0,.3) .. (0,0) .. controls (0,-0.4)  and (1,-0.4) .. (1,0) .. controls (1,.3) and (0,0.5) .. (0,1) .. controls (0,1.3) and (1,1.6) .. (1,2);
      \bluedot{(1,0)} node[color=black,anchor=east] {\dotlabel{d}};
    \end{tikzpicture}
    \ + \sum_{j=0}^{d-1} \
    \begin{tikzpicture}[anchorbase]
      \draw[->] (0,0) .. controls (0,1) and (0,2) .. (0.3,2) .. controls (0.6,2) and (0.5,1.2) .. (1,1.2) .. controls (1.5,1.2) and (1.5,1.6) .. (1.5,2);
      \bluedot{(1.48,1.6)} node[color=black,anchor=west] {\dotlabel{j}};
      \draw[->] (1.5,0.5) arc(0:360:0.5);
      \bluedot{(1.4,0.17)} node[color=black,anchor=west] {\dotlabel{j^\vee}};
      \bluedot{(1.4,0.83)} node[color=black,anchor=west] {\dotlabel{d}};
    \end{tikzpicture}
    \stackrel{\eqref{eq:dotted_leftcurl}}{=}\
    \begin{tikzpicture}[anchorbase]
      \draw (2,1) .. controls (2,1.5) and (1.3,1.5) .. (1.1,1);
      \draw (2,1) .. controls (2,.5) and (1.3,.5) .. (1.1,1);
      \draw (1,0) .. controls (1,.5) .. (1.1,1);
      \draw[->] (1.1,1) .. controls (1,1.5) .. (1,2);
    \end{tikzpicture}
    \ + \sum_{j=0}^{d-1}
    \begin{tikzpicture}[anchorbase]
      \draw[->] (0,0) -- (0,2);
      \bluedot{(0,1)} node[color=black,anchor=east] {\dotlabel{j}};
    \end{tikzpicture}
    \left( \sum_{i=j}^{d-1} c_{d-1-i}\
      \begin{tikzpicture}[anchorbase]
        \draw [->](0,0) arc (180:360:0.5);
        \draw (1,0) arc (0:180:0.5);
        \bluedot{(0.8,-0.4)} node [anchor=north,color=black] {\dotlabel{d+i-j}};
      \end{tikzpicture}
    \right)
    \\
    =\
    \begin{tikzpicture}[anchorbase]
      \draw (2,1) .. controls (2,1.5) and (1.3,1.5) .. (1.1,1);
      \draw (2,1) .. controls (2,.5) and (1.3,.5) .. (1.1,1);
      \draw (1,0) .. controls (1,.5) .. (1.1,1);
      \draw[->] (1.1,1) .. controls (1,1.5) .. (1,2);
    \end{tikzpicture}
    \ + \sum_{j=0}^{d-1}
    \begin{tikzpicture}[anchorbase]
      \draw[->] (0,0) -- (0,2);
      \bluedot{(0,1)} node[color=black,anchor=east] {\dotlabel{j}};
    \end{tikzpicture}
    \left( \sum_{t=0}^{d-1-j} c_t\
      \begin{tikzpicture}[anchorbase]
        \draw [->](0,0) arc (180:360:0.5);
        \draw (1,0) arc (0:180:0.5);
        \bluedot{(0.8,-0.4)} node [anchor=north,color=black] {\dotlabel{2d-1-j-t}};
      \end{tikzpicture}
    \right)
    \ =\
    \begin{tikzpicture}[anchorbase]
      \draw (2,1) .. controls (2,1.5) and (1.3,1.5) .. (1.1,1);
      \draw (2,1) .. controls (2,.5) and (1.3,.5) .. (1.1,1);
      \draw (1,0) .. controls (1,.5) .. (1.1,1);
      \draw[->] (1.1,1) .. controls (1,1.5) .. (1,2);
    \end{tikzpicture}
    \ -\
    \sum_{j=0}^{d-1}
    \begin{tikzpicture}[anchorbase]
      \draw[->] (0,0) to (0,2);
      \bluedot{(0,1)} node[color=black,anchor=east] {\dotlabel{j}};
    \end{tikzpicture}
    \ \ c_{d-j},
  \end{multline*}
  where, in the third equality, we have made the substitution $t=d-1-i$, and the last equality follows from Lemma~\ref{lem:circle-det-expand} with $m=d-j$.
\end{proof}

\begin{rem} \label{rem:level1-Khovanov}
  If $\lambda=\omega_i$ is a fundamental weight, \eqref{eq:dots-vs-right-curls} implies that
  \[
    \begin{tikzpicture}[anchorbase]
      \draw[->] (0,0) to (0,1);
      \bluedot{(0,0.5)};
    \end{tikzpicture}
    \ =\
    \begin{tikzpicture}[anchorbase]
      \draw (0.5,0.5) .. controls (0.5,0.75) and (0.15,0.75) .. (0.05,0.5);
      \draw (0.5,0.5) .. controls (0.5,.25) and (0.15,.25) .. (0.05,0.5);
      \draw (0,0) .. controls (0,.25) .. (0.05,0.5);
      \draw[->] (0.05,0.5) .. controls (0,0.75) .. (0,1);
    \end{tikzpicture}
    \ -\
    \begin{tikzpicture}[anchorbase]
      \draw[->] (0,0) to (0,1);
    \end{tikzpicture}
    \ c_1
    =\
    \begin{tikzpicture}[anchorbase]
      \draw (0.5,0.5) .. controls (0.5,0.75) and (0.15,0.75) .. (0.05,0.5);
      \draw (0.5,0.5) .. controls (0.5,.25) and (0.15,.25) .. (0.05,0.5);
      \draw (0,0) .. controls (0,.25) .. (0.05,0.5);
      \draw[->] (0.05,0.5) .. controls (0,0.75) .. (0,1);
    \end{tikzpicture}
    \ +\
    \begin{tikzpicture}[anchorbase]
      \draw[->] (0,0) to (0,1);
    \end{tikzpicture}
    \
    \begin{tikzpicture}[anchorbase]
      \draw [->](0,0) arc (0:360:0.3);
      \bluedot{(-0.6,0)};
    \end{tikzpicture}
    =\
    \begin{tikzpicture}[anchorbase]
      \draw (0.5,0.5) .. controls (0.5,0.75) and (0.15,0.75) .. (0.05,0.5);
      \draw (0.5,0.5) .. controls (0.5,.25) and (0.15,.25) .. (0.05,0.5);
      \draw (0,0) .. controls (0,.25) .. (0.05,0.5);
      \draw[->] (0.05,0.5) .. controls (0,0.75) .. (0,1);
    \end{tikzpicture}
    \ + i\
    \begin{tikzpicture}[anchorbase]
      \draw[->] (0,0) to (0,1);
    \end{tikzpicture}
    \ .
  \]
  Thus, all dots can be eliminated.  In particular, the category $\tcH^{\omega_0}$ reduces to the category $\cH'$ defined by Khovanov in \cite{Kho14}.  By Proposition~\ref{prop:dot-shift-functor} below, it then follows that $\tcH^{\omega_i}$ is isomorphic to Khovanov's category for all $i \in I$.
\end{rem}

In the sequel we will use the term \emph{bubble} for clockwise or counterclockwise circles with or without dots.

\begin{lem}[Bubble reduction] \label{lem:bubble-conversion}
  Any clockwise bubble is equal to a linear combination of counterclockwise bubbles.
\end{lem}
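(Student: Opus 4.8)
The plan is to express a clockwise bubble carrying $t$ dots, namely
\[
  \begin{tikzpicture}[anchorbase]
    \draw [->](0,0) arc (0:360:0.3);
    \bluedot{(-0.6,0)}  node [anchor=east,color=black] {\dotlabel{t}};
  \end{tikzpicture}
  \qquad (t \ge 0),
\]
as a $\kk$-linear combination of counterclockwise bubbles, by reducing to the left-curl relation \eqref{rel:left-curl-zero} and its dotted consequence \eqref{eq:dotted_leftcurl}. The idea is to take a clockwise bubble with $t$ dots and ``open it up'' into a left curl on an upward strand: more precisely, consider the morphism of $\sQ_+$ obtained by capping a left curl with a counterclockwise cup-cap, so that the closed part of the diagram is exactly a dotted left curl. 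By \eqref{eq:dotted_leftcurl}, this dotted left curl vanishes when $t < d$, equals the identity strand when $t = d$, and for $t > d$ equals a sum of a counterclockwise bubble (with fewer than $t$ dots, times a $c$-coefficient) times a strand carrying fewer dots. Closing off the free strand at top and bottom with a clockwise cup and cap then converts both sides into bubbles, and the left-hand side becomes precisely the clockwise bubble with $t$ dots that we started with.

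Concretely, the key steps, in order, are: (i) start from \eqref{eq:dotted_leftcurl} read as an identity of endomorphisms of $\sQ_+$; (ii) for each $t$, attach a clockwise cap at the top and a clockwise cup at the bottom (i.e.\ close the single through-strand into a clockwise circle, possibly moving the remaining $t-d-a$ dots onto it), so that the left-hand side of \eqref{eq:dotted_leftcurl} becomes the clockwise bubble with $t$ dots; (iii) read off the resulting identity: for $t < d$ the clockwise bubble with $t$ dots is $0$; for $t = d$ it equals the value of the empty clockwise circle — but actually we will not need those low cases since, after closing, the ``identity strand'' closes to a clockwise circle with some number of dots, so we get a recursion expressing the clockwise $t$-dot bubble in terms of clockwise bubbles with fewer dots plus counterclockwise bubbles; (iv) combine with \eqref{rel:cc-bubble} and induct on $t$: the base cases $t=0,\dots,d-1$ of \emph{clockwise} bubbles are handled by the $t<d$ vanishing (after capping, the ``identity strand'' case does not arise), and the inductive step reduces a clockwise $t$-dot bubble, $t \ge d$, to counterclockwise bubbles and strictly-lower clockwise bubbles. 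It may be cleaner to instead attach a clockwise cap/cup directly to \eqref{eq:dotted_leftcurl} and simultaneously use Lemma~\ref{lem:circle-det-expand} to re-express the counterclockwise bubbles, but the structure of the induction is the same.

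I expect the main obstacle to be purely bookkeeping: closing up the diagrams in \eqref{eq:dotted_leftcurl} introduces several counterclockwise bubbles (from the $c_t$ and from the explicit counterclockwise circle in the $t > d$ case) whose dot-counts must be tracked carefully so that the induction on $t$ genuinely decreases, and one must be careful that when the through-strand carries $t-d-a$ dots and is then closed into a clockwise circle, the result is a clockwise bubble with $t-d-a < t$ dots — which drives the recursion. There is no deep idea here beyond \eqref{eq:dotted_leftcurl}, \eqref{rel:cc-bubble}, and isotopy invariance; the content is that every clockwise bubble is forced, by the left-curl relation, to lie in the span of counterclockwise bubbles. I would present it as a short induction on the number of dots, invoking Lemma~\ref{lem:easy_cons} for the crucial reduction.
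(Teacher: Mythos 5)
There is a genuine gap, and it sits exactly at the step you flag as mere bookkeeping. Your pivotal claim is that after attaching a clockwise cap and cup to the through-strand of \eqref{eq:dotted_leftcurl}, ``the left-hand side becomes precisely the clockwise bubble with $t$ dots.'' It does not: closing off the strand does not remove the self-crossing of the curl. The resulting closed diagram is the immersed figure-eight curve (one counterclockwise lobe, one clockwise lobe meeting at the crossing) --- isotopy cannot delete a single double point, and the only relations that remove one crossing are the curl relations themselves. Indeed, this figure-eight is the same diagram as the counterclockwise closure of a \emph{right} curl, and by \eqref{eq:dots-vs-right-curls} it evaluates to counterclockwise bubbles with $d+t$ and fewer dots, not to the clockwise $t$-dot bubble. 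Because of this misidentification, your recursion proves too much: its base case asserts that clockwise bubbles with fewer than $d$ dots vanish, which is false. For instance, taking $t=d$ in the paper's computation (or correctly closing up \eqref{eq:dotted_leftcurl}) gives that the undotted clockwise bubble equals a counterclockwise bubble with $2d$ dots plus $\sum_{j=0}^{d-1} c_{d-j}$ times counterclockwise bubbles with $d+j$ dots; the leading term is a free generator of $\End_{\tcH^\lambda}(\one)\cong\Pi$ (Proposition~\ref{prop:bubble-iso}), so this is nonzero.

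The idea can be repaired, but the repair is essentially the paper's proof. Closing \eqref{eq:dotted_leftcurl} clockwise gives, for $t>d$, an identity whose left side is the figure-eight with $t$ dots and whose right side is $\mathrm{cw}(t-d)$ plus products of counterclockwise bubbles with clockwise bubbles carrying strictly fewer dots (the $a=0$ term contributes the clockwise bubble, since the counterclockwise $(d-1)$-dot bubble is $1$ by \eqref{rel:cc-bubble}); note the bubble being solved for has $t-d$ dots, not $t$. To turn this into a reduction you must \emph{also} evaluate the figure-eight, via \eqref{eq:dots-vs-right-curls}, as a combination of counterclockwise bubbles --- that application of the right-curl relation is the missing crux, and it is exactly what the paper does when it computes $\sum_{a+b=t-1}$ (counterclockwise $a$-dot bubble)(clockwise $b$-dot bubble) using \eqref{eq:mult_dotslide}, kills the left-curl term by \eqref{rel:left-curl-zero}, and rewrites the surviving figure-eight by \eqref{eq:dots-vs-right-curls} before inducting on the number of dots. (Two smaller slips: \eqref{eq:dotted_leftcurl} carries no $c$-coefficients, and even after repair the outcome is a linear combination of \emph{products} of counterclockwise bubbles, handled by the induction, rather than single bubbles.)
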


\begin{proof}
  For any $t \in \N_+$, we have
  \begin{multline*}
    \sum _{a+b=t-1}
    \begin{tikzpicture}[anchorbase]
      \draw [->](0,0) arc (180:360:0.5);
      \draw (1,0) arc (0:180:0.5);
      \bluedot{(0.8,-0.4)} node[color=black,anchor=west] {\dotlabel{a}};
      \draw [<-](1.5,0) arc (180:360:0.5);
      \draw (2.5,0) arc (0:180:0.5);
      \bluedot{(2.3,-0.4)} node[color=black,anchor=west] {\dotlabel{b}};
    \end{tikzpicture}
    \ \stackrel{\eqref{eq:mult_dotslide}}{=}\
    \begin{tikzpicture}[anchorbase]
      \draw[-<] (1,1) .. controls (1,2) and (1.5,2) .. (2,1) .. controls (2.5,0) and (3,0) ..
      (3,1);
      \draw (3,1) .. controls (3,2) and (2.5,2) .. (2,1) .. controls (1.5,0) and (1,0) .. (1,1);
      \bluedot{(1.02,1.3)} node[color=black,anchor=east] {\dotlabel{t}};
    \end{tikzpicture}
    \ -\
    \begin{tikzpicture}[anchorbase]
      \draw[-<] (4,1) .. controls (4,2) and (4.5,2) .. (5,1) .. controls (5.5,0) and (6,0) ..
      (6,1);
      \draw (6,1) .. controls (6,2) and (5.5,2) .. (5,1) .. controls (4.5,0) and (4,0) .. (4,1);
      \bluedot{(5.18,1.3)} node[color=black, anchor=south] {\dotlabel{t}};
    \end{tikzpicture}
    \ \stackrel{\eqref{rel:left-curl-zero}}{=}\
    \begin{tikzpicture}[anchorbase]
      \draw[-<] (1,1) .. controls (1,2) and (1.5,2) .. (2,1) .. controls (2.5,0) and (3,0) ..
      (3,1);
      \draw (3,1) .. controls (3,2) and (2.5,2) .. (2,1) .. controls (1.5,0) and (1,0) .. (1,1);
      \bluedot{(1.02,1.3)} node[color=black,anchor=east] {\dotlabel{t}};
    \end{tikzpicture}
    \\
    \ \stackrel{\eqref{eq:dots-vs-right-curls}}{=} \
    \begin{tikzpicture}[anchorbase]
      \draw [->](0,0) arc (180:360:0.5);
      \draw (1,0) arc (0:180:0.5);
      \bluedot{(0.8,-0.4)} node[color=black,anchor=west] {\dotlabel{d+t}};
    \end{tikzpicture}
    + \ \sum_{j=0}^{d-1} c_{d-j}\
    \begin{tikzpicture}[anchorbase]
      \draw [->](0,0) arc (180:360:0.5);
      \draw (1,0) arc (0:180:0.5);
      \bluedot{(0.8,-0.4)} node[color=black,anchor=west] {\dotlabel{t+j}};
    \end{tikzpicture}
    \ .
  \end{multline*}
  Now, for $s \in \N$, take $t=d+s$.  By \eqref{rel:cc-bubble}, the above allows us to write a clockwise bubble with $s$ dots in terms of counterclockwise bubbles and clockwise bubbles with fewer than $s$ dots.  The result then follows by induction.
\end{proof}

\begin{lem}[Bubble slide] \label{lem:bubble_slide}
  For $t \ge 0$, we have
  \[
    \begin{tikzpicture}[anchorbase]
      \draw[->] (-0.3,-0.6) to (-0.3,0.6);
      \draw [->](0,0) arc (180:360:0.4);
      \draw (0.8,0) arc (0:180:0.4);
      \bluedot{(0.7,-0.3)} node [anchor=north,color=black] {\dotlabel{d-1+t}};
    \end{tikzpicture}
    \ =\
    \begin{tikzpicture}[anchorbase]
      \draw [->](0,0) arc (180:360:0.4);
      \draw (0.8,0) arc (0:180:0.4);
      \bluedot{(0.7,-0.3)} node [anchor=north,color=black] {\dotlabel{d-1+t}};
      \draw[->] (1.3,-0.6) to (1.3,0.6);
    \end{tikzpicture}
    \ -\
    \sum_{b=0}^{t-2} (t-1-b)\
    \begin{tikzpicture}[anchorbase]
      \draw [->](0,0) arc (180:360:0.4);
      \draw (0.8,0) arc (0:180:0.4);
      \bluedot{(0.7,-0.3)} node [anchor=north,color=black] {\dotlabel{d-1+b}};
      \draw[->] (1.3,-0.6) to (1.3,0.6);
      \bluedot{(1.3,0)} node [anchor=west,color=black] {\dotlabel{t-2-b}};
    \end{tikzpicture}\ .
  \]
  (When $t \le 1$, we interpret the sum as being empty.)
\end{lem}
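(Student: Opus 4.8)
A natural approach is direct: push the vertical strand through the bubble and resolve the resulting crossings. First note that there is nothing to prove when $t\le 1$. Indeed, by \eqref{rel:cc-bubble} a counterclockwise bubble carrying $d-1$ dots equals $1$ and one carrying $d$ dots equals the scalar $\sum_{i\in I} i\lambda_i$, so in these cases the bubble is a scalar multiple of the empty diagram and commutes past the vertical strand, while the sum on the right-hand side is empty. So assume $t\ge 2$.

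For $t\ge 2$ I would begin with the left-hand side and apply an isotopy pushing the vertical strand rightward through the interior of the bubble, past its centre. Since the dots in the statement lie on the upward-oriented part of the bubble's strand, near the two points where the vertical strand now crosses the bubble \emph{both} strands point upward; hence these are crossings of the kind governed by \eqref{rel:braid}--\eqref{rel:dotslide} and \eqref{rel:s-squared}, rather than by the ``mixed'' relation \eqref{rel:up-down-double-crossing} (it is precisely this that prevents the bubble from sliding past for free). Next I would use Lemma~\ref{lem:higherNilHecke} to slide all $d-1+t$ dots off the arc lying between the two crossings and past one of them. This produces a main term, in which the arc between the crossings is now undotted, so that \eqref{rel:s-squared} removes the double crossing and leaves exactly the vertical strand next to the counterclockwise bubble carrying $d-1+t$ dots on the other side --- the first term on the right-hand side --- together with a family of correction terms in which that crossing has been smoothed. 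In each correction term the smoothing reconnects the vertical strand to the cup and cap of the opened bubble, producing a dotted curl on a dotted vertical strand, which I would evaluate using Lemma~\ref{lem:easy_cons}; the low-degree closed bubbles that appear are then scalars by \eqref{rel:cc-bubble}. Every resulting term has the normalised shape of a counterclockwise bubble carrying $d-1+b$ dots placed next to a vertical strand carrying $t-2-b$ dots, for some $0\le b\le t-2$, the total number of dots having dropped by two.

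The remaining task is to verify that the coefficients assemble into the stated form. For fixed $b$, the terms contributing to the diagram with a bubble of $d-1+b$ dots arise from the index set in Lemma~\ref{lem:higherNilHecke} together with the summation range in Lemma~\ref{lem:easy_cons}, each with coefficient $1$; using the identity $t-1-b=\#\{(p,q)\in\N^2 : p+q=t-2-b\}$ one sees that these collapse to the single weighted sum $\sum_{b=0}^{t-2}(t-1-b)(\cdots)$. This bookkeeping --- keeping track of the orientations and of the precise distribution of dots across the smoothed crossing, so that the two nested summations combine correctly --- is the main obstacle. It is a finite combinatorial identity in the spirit of Lemma~\ref{lem:det-expansion}, and if one prefers it can be organised as an induction on $t$ in which the correction terms are put into normal form using the already-established smaller cases of the lemma.
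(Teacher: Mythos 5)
Your proposal is correct and follows essentially the same route as the paper's proof: pass the strand through the bubble, generate the correction terms by sliding the $d-1+t$ dots through an up--up crossing via Lemma~\ref{lem:higherNilHecke}, evaluate the resulting dotted left curls with Lemma~\ref{lem:easy_cons} (and \eqref{rel:cc-bubble}), and collapse the double sum to the coefficient $t-1-b$, exactly as in the paper. The one cosmetic imprecision is that creating the two crossings is not a mere isotopy but an application of the free relations \eqref{rel:s-squared} and \eqref{rel:up-down-double-crossing} (both cited by the paper at that step); since these carry no correction terms, your conclusion that all corrections come from the dot slides is unaffected.
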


\begin{proof}
  We have
  \begin{multline*}
    \begin{tikzpicture}[anchorbase]
      \draw [->](0,0) arc (180:360:0.4);
      \draw (0.8,0) arc (0:180:0.4);
      \bluedot{(0.7,-0.3)} node [anchor=north,color=black] {\dotlabel{d-1+t}};
      \draw[->] (1.3,-0.6) to (1.3,0.6);
    \end{tikzpicture}
    \ \stackrel{\substack{\eqref{rel:s-squared} \\ \eqref{rel:up-down-double-crossing}, \eqref{eq:mult_dotslide}}}{=} \
    \begin{tikzpicture}[anchorbase]
      \draw[->] (-0.3,-0.6) to (-0.3,0.6);
      \draw [->](0,0) arc (180:360:0.4);
      \draw (0.8,0) arc (0:180:0.4);
      \bluedot{(0.7,-0.3)} node [anchor=north,color=black] {\dotlabel{d-1+t}};
    \end{tikzpicture}
    \ + \sum_{a=0}^{d-2+t}\
    \begin{tikzpicture}[anchorbase]
      \draw (0,0) .. controls (0,.5) and (.7,.5) .. (.9,0);
      \draw (0,0) .. controls (0,-.5) and (.7,-.5) .. (.9,0);
      \draw (1,-1) .. controls (1,-.5) .. (.9,0);
      \draw[->] (.9,0) .. controls (1,.5) .. (1,1);
      \bluedot{(.2,0.34)} node [anchor=south, color=black] {\dotlabel{d-2+t-a}};
      \bluedot{(.99,-.5)} node [anchor=west, color=black] {\dotlabel{a}};
    \end{tikzpicture}
    \stackrel{\eqref{eq:dotted_leftcurl}}{=} \
    \begin{tikzpicture}[anchorbase]
      \draw[->] (-0.3,-0.6) to (-0.3,0.6);
      \draw [->](0,0) arc (180:360:0.4);
      \draw (0.8,0) arc (0:180:0.4);
      \bluedot{(0.7,-0.3)} node [anchor=north,color=black] {\dotlabel{d-1+t}};
    \end{tikzpicture}
    \ + \sum_{a=0}^{t-2}\
    \begin{tikzpicture}[anchorbase]
      \draw (0,0) .. controls (0,.5) and (.7,.5) .. (.9,0);
      \draw (0,0) .. controls (0,-.5) and (.7,-.5) .. (.9,0);
      \draw (1,-1) .. controls (1,-.5) .. (.9,0);
      \draw[->] (.9,0) .. controls (1,.5) .. (1,1);
      \bluedot{(.2,0.34)} node [anchor=south, color=black] {\dotlabel{d-2+t-a}};
      \bluedot{(.99,-.5)} node [anchor=west, color=black] {\dotlabel{a}};
    \end{tikzpicture}
    \\
    \stackrel{\substack{\eqref{eq:mult_dotslide} \\ \eqref{eq:dotted_leftcurl}}}{=} \
    \begin{tikzpicture}[anchorbase]
      \draw[->] (-0.3,-0.6) to (-0.3,0.6);
      \draw [->](0,0) arc (180:360:0.4);
      \draw (0.8,0) arc (0:180:0.4);
      \bluedot{(0.7,-0.3)} node [anchor=north, color=black] {\dotlabel{d-1+t}};
    \end{tikzpicture}
    \ + \sum_{a=0}^{t-2} \sum_{b=0}^{t-2-a}\
    \begin{tikzpicture}[anchorbase]
      \draw [->](0,0) arc (180:360:0.4);
      \draw (0.8,0) arc (0:180:0.4);
      \bluedot{(0.7,-0.3)} node [anchor=north, color=black] {\dotlabel{d-1+b}};
      \draw[->] (1.3,-0.6) to (1.3,0.6);
      \bluedot{(1.3,0)} node[anchor=west, color=black] {\dotlabel{t-2-b}};
    \end{tikzpicture}
    \ = \
    \begin{tikzpicture}[anchorbase]
      \draw[->] (-0.3,-0.6) to (-0.3,0.6);
      \draw [->](0,0) arc (180:360:0.4);
      \draw (0.8,0) arc (0:180:0.4);
      \bluedot{(0.7,-0.3)} node [anchor=north, color=black] {\dotlabel{d-1+t}};
    \end{tikzpicture}
    \ + \sum_{b=0}^{t-2} (t-1-b)\
    \begin{tikzpicture}[anchorbase]
      \draw [->](0,0) arc (180:360:0.4);
      \draw (0.8,0) arc (0:180:0.4);
      \bluedot{(0.7,-0.3)} node [anchor=north, color=black] {\dotlabel{d-1+b}};
      \draw[->] (1.3,-0.6) to (1.3,0.6);
      \bluedot{(1.3,0)} node[anchor=west, color=black] {\dotlabel{t-2-b}};
    \end{tikzpicture} \ ,
  \end{multline*}
  where the last equality follows by counting, in the penultimate expression, how many times we get the term corresponding to a fixed $b$.  The term with $b=0$ appears once for all $a=0,\dotsc,t-2$,  the term with $b=1$ appears once for all $a=0,\dotsc,t-3$, etc.
\end{proof}

Let $\Pi :=\kk[y_1,y_2,y_3,\ldots]$ be a polynomial algebra in countably many variables and consider the homomorphism of algebras
\begin{equation} \label{eq:bubble-iso}
  \psi_0 \colon \Pi \to \End_{\tcH^\lambda}(\one),\quad
  y_k \mapsto
  \begin{tikzpicture}[anchorbase]
    \draw [->](0,0) arc (180:360:0.5);
    \draw (1,0) arc (0:180:0.5);
    \bluedot{(0.8,-0.4)} node[color=black,anchor=west] {\dotlabel{d+k}};
  \end{tikzpicture}\ .
\end{equation}

\begin{prop} \label{prop:bubble-iso}
  The homomorphism $\psi_0$ is an isomorphism.
\end{prop}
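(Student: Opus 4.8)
The plan is to establish surjectivity and injectivity of $\psi_0$ separately; injectivity is where the real work lies.

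\emph{Surjectivity.} I would show that an arbitrary endomorphism of $\one$ — a $\kk$-linear combination of closed planar diagrams — can be rewritten as a polynomial in the counterclockwise bubbles $y_1, y_2, \dots$. The first step is to eliminate all crossings, by induction on their number. A closed diagram with at least one crossing has, after using the braid relation \eqref{rel:braid} to rearrange if necessary, an innermost curl or an innermost bigon; a curl is removed using \eqref{rel:left-curl-zero}, \eqref{eq:dotted_leftcurl} and \eqref{eq:dots-vs-right-curls}, and a bigon using \eqref{rel:s-squared}, \eqref{rel:up-down-double-crossing} or \eqref{rel:down-up-double-crossing}, each step producing only terms with strictly fewer crossings (the turnback terms of \eqref{rel:down-up-double-crossing} and the bubble terms of \eqref{eq:dots-vs-right-curls} contribute no crossing at the relevant spot). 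A crossingless closed diagram is a disjoint union of plain circles carrying dots, nested inside one another, i.e.\ a collection of nested bubbles. Evaluating from the innermost outward: a counterclockwise bubble with $j$ dots is, by \eqref{rel:cc-bubble}, a scalar when $j \le d$ and equals $y_{j-d}$ when $j > d$; a clockwise bubble is a polynomial in the $y_k$ by Lemma~\ref{lem:bubble-conversion}. A bubble can be slid out of any enclosing region using Lemma~\ref{lem:bubble_slide} — the correction terms are again bubbles carrying fewer dots, so a secondary induction on the number of dots closes this — and since $\End_{\tcH^\lambda}(\one)$ is commutative, the accumulated product of bubbles is visibly a polynomial in the $y_k$. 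Hence $\psi_0$ is surjective.

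\emph{Injectivity.} It remains to see that the monomials $y_{k_1} \cdots y_{k_r}$ with $k_i \ge 1$ — equivalently, nested families of counterclockwise bubbles — are $\kk$-linearly independent in $\End_{\tcH^\lambda}(\one)$. No argument using only the local relations can do this; one needs a representation in which the $\psi_0(y_k)$ are manifestly algebraically independent. I would invoke the action of $\cH^\lambda$ on the module categories of the degenerate cyclotomic Hecke algebras $H_n^\lambda$ (Theorem~\ref{theo:action-functor}): each functor $F_n$ sends the endomorphism $y_k$ of $\one$ to a specific central element of $H_n^\lambda$, namely (up to lower-order corrections) the $k$-th power sum in the polynomial generators $x_1, \dots, x_n$. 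For $n$ sufficiently large these central elements are algebraically independent over $\kk$, so a nontrivial polynomial relation among the $y_k$ would yield one among algebraically independent elements — impossible. Therefore $\psi_0$ is injective, hence an isomorphism.

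The main obstacle is injectivity: surjectivity is a somewhat lengthy but routine diagram-reduction, whereas linear independence of the bubble monomials cannot be seen diagrammatically and forces a logical dependence on the Hecke-algebra action developed later in the paper — so some care is needed in arranging the exposition, or in isolating just enough of the action to identify the central operators that the $\psi_0(y_k)$ act by.
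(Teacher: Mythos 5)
Your proposal follows the paper's own route: surjectivity by reducing closed diagrams to dotted counterclockwise bubbles via the bubble-reduction and bubble-slide lemmas, and injectivity deferred to the action functors $\bF_n$, exactly as in Proposition~\ref{prop:psi0-injective}, which rests on the power-sum computation of Proposition~\ref{prop:cc-power-sums}. One point needs tightening: since $H_n^\lambda$ is finite dimensional, its central elements are never algebraically independent for any fixed $n$, so the claim that ``for $n$ sufficiently large these central elements are algebraically independent over $\kk$'' cannot be taken literally. What is true, and what the argument needs, is the asymptotic statement: for a fixed nonzero $f \in \Pi$, the image $\bF_n(\psi_0(f))$ is nonzero once $n$ is sufficiently large; this requires knowing that enough symmetric polynomials in $x_1,\dotsc,x_n$ of bounded degree remain linearly independent in $Z(H_n^\lambda)$, which the paper extracts from \cite[Th.~1 and Th.~3.2]{Bru08} (the center equals the symmetric polynomials, with an explicit basis of monomial symmetric polynomials subject to a constraint involving $n$). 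With that justification supplied, your argument coincides with the paper's.
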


\begin{proof}
  It is clear from the above results, that any closed diagram can be converted into a linear combination of non-nested dotted counterclockwise bubbles.    Thus $\psi_0$ is surjective.  We will prove it is injective in Proposition~\ref{prop:psi0-injective}.
\end{proof}

Let $H_m$ be the degenerate affine Hecke algebra of rank $m$ (see Definition~\ref{def:degAHA}).  By \eqref{rel:braid}, \eqref{rel:s-squared}, and \eqref{rel:dotslide}, there exists a well-defined homomorphism
\begin{equation} \label{eq:Hn-elements-as-diagrams}
  H_m \to \End_{\tcH^\lambda}(\sQ_+^m)
\end{equation}
determined by sending $s_i$ to the diagram with only one crossing, between the $i$-th and \mbox{$(i+1)$-st} strand, and $x_j$ to a dot on the $j$-th strand of the identity endomorphism.  \emph{It is important to note that here we number strands from right to left}.  By placing bubbles in the far right region of a diagram, we obtain a homomorphism of algebras
\begin{equation} \label{eq:psi_m-def}
  \psi_m \colon H_m \otimes \Pi \to \End_{\tcH^\lambda}(\sQ_+^m).
\end{equation}

\begin{prop} \label{prop:DH-Q-iso}
  The homomorphism $\psi_m$ is an isomorphism.
\end{prop}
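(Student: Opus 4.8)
The plan is to show that $\psi_m \colon H_m \otimes \Pi \to \End_{\tcH^\lambda}(\sQ_+^m)$ is both surjective and injective, with the bulk of the work going into producing an explicit spanning set for the endomorphism space and then matching it up with a known basis of $H_m \otimes \Pi$.

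First I would establish surjectivity. A general endomorphism of $\sQ_+^m$ is (an isotopy class of) an oriented diagram all of whose endpoints point upward; such a diagram can be cut into pieces built from crossings, dots, cups, and caps. The strategy is to use the local relations to pull all cups and caps toward the right edge of the strip, where they become bubbles, and to move all bubbles into the far-right region. Concretely: any ``curl'' (a strand that forms a closed loop hooking back on a through-strand) can be removed using \eqref{rel:left-curl-zero}, \eqref{eq:dotted_leftcurl}, and \eqref{eq:dots-vs-right-curls}; any cup--cap pair on adjacent strands that is not already a bubble can be resolved using the double-crossing relations \eqref{rel:down-up-double-crossing} and \eqref{rel:up-down-double-crossing}, together with \eqref{rel:s-squared} and \eqref{rel:braid}, trading height for additional dots/crossings on the through-strands and additional bubbles; and \eqref{eq:dotted_leftcurl}/\eqref{rel:cc-bubble} control nested bubbles. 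After clearing all closed components onto the through-strands, what remains is a diagram with exactly $m$ through-strands carrying crossings and dots — i.e.\ the image of an element of $H_m$ — multiplied by closed bubbles in the far-right region, which by Lemma~\ref{lem:bubble-conversion}, Lemma~\ref{lem:bubble_slide}, and \eqref{rel:cc-bubble} lie in the image of $\psi_0(\Pi)$. Hence $\psi_m$ is surjective. (A cleaner inductive phrasing: by Corollary~\ref{cor:Q+Q-commutation-relation} and its proof, every diagram reduces to ``positive part times bubbles,'' reducing surjectivity of $\psi_m$ to that of $\psi_0$, which is Proposition~\ref{prop:bubble-iso}.)

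Next, injectivity. Both sides are free $\kk$-modules: $H_m$ has the Poincar\'e--Birkhoff--Witt-type basis $\{x_1^{a_1}\cdots x_m^{a_m}\, s_w : a_i \in \N,\ w \in S_m\}$, and $\Pi$ is a polynomial ring, so $H_m \otimes \Pi$ has an explicit $\kk$-basis. The idea is to exhibit a $\kk$-linear map (or a pairing, or a faithful module) that separates the images of these basis elements. The natural candidate is the action of $\tcH^\lambda$ on modules for the degenerate cyclotomic Hecke algebras $H_n^\lambda$ constructed later in the paper (Section~\ref{sec:action}): applying the relevant functor $\bF_n$ to an endomorphism of $\sQ_+^m$ gives an $(H_{n+m}^\lambda, H_n^\lambda)$-bimodule map, and the standard basis theorem for $H_{n+m}^\lambda$ (together with the description of $H_{n+m}^\lambda$ as a free module over $H_n^\lambda \otimes H_m$ for $n$ large) forces linear independence of the $\psi_m$-images. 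Alternatively, one argues that the filtration on morphism spaces from Section~\ref{sec:cat-filtration} has associated graded recovering a nil-Hecke-type algebra tensored with a polynomial ring, on which one has genuine bases, and lifts the independence statement. Either way, the map from $H_m \otimes \Pi$ onto $\End_{\tcH^\lambda}(\sQ_+^m)$ sends a basis to a linearly independent set, so it is an isomorphism.

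The main obstacle is the injectivity half. Surjectivity is a finite diagrammatic reduction using relations already in hand, but injectivity requires an \emph{external} faithful representation — the relations could a priori collapse more than one expects, and the bubble slide / dual-dot identities make purely diagrammatic normal-form arguments delicate (this is exactly the difficulty flagged in Remark~\ref{rem:dualdots}). I expect the cleanest route is to defer to the cyclotomic action: show that for each basis element of $H_m \otimes \Pi$ one can choose $n$ and a cyclotomic module on which the images act by visibly distinct operators, using the known faithfulness/basis results for $H_n^\lambda$. This is why the statement is positioned where it is — its proof is naturally completed only after the action functor and the structure of degenerate cyclotomic Hecke algebras have been developed, so here one proves surjectivity in full and records that injectivity follows from the later Proposition~\ref{prop:psi0-injective} and the faithfulness of the action.
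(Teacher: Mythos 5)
Your surjectivity argument is essentially the paper's: one reduces any diagram in $\End_{\tcH^\lambda}(\sQ_+^m)$ to permutation diagrams with dots at the top and dotted counterclockwise bubbles in the far-right region, using \eqref{eq:dots-vs-right-curls} to eliminate right curls, exactly as in \cite[Prop.~4]{Kho14}; and, like the paper, you defer injectivity until the action functors are available.

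The gap is in how you propose to get injectivity. Your main mechanism --- ``the standard basis theorem for $H_{n+m}^\lambda$ together with the description of $H_{n+m}^\lambda$ as a free module over $H_n^\lambda\otimes H_m$ for $n$ large forces linear independence'' --- does not work: $H_{n+m}^\lambda$ is finite-dimensional while $H_m$ is not, so $H_{n+m}^\lambda$ is never free over $H_n^\lambda\otimes H_m$, and for any \emph{fixed} $n$ the composite $H_m\otimes\Pi\xrightarrow{\psi_m}\End_{\tcH^\lambda}(\sQ_+^m)\xrightarrow{\bF_n}\End\big((m+n)_n\big)$ necessarily has a large kernel, since its target is finite-dimensional. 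What is actually needed, and what the paper proves in Proposition~\ref{prop:psim-injective}, is an asymptotic statement: for each fixed nonzero $z\in H_m\otimes\Pi$, the image of $z$ is nonzero for all sufficiently large $n$. The paper obtains this not from a basis theorem for $H_{n+m}^\lambda$ but by a further reduction: shift $\lambda$ to $\mu$ with $\mu_0\neq 0$ (as in Proposition~\ref{prop:dot-shift-functor}), compose with the surjection $H_{m+n}^\mu\twoheadrightarrow \kk S_{m+n}$ sending the $x_i$ to Jucys--Murphy elements, and identify the resulting composite with Khovanov's homomorphism $\psi'_{m,n}$, whose asymptotic injectivity is proved in \cite[\S4]{Kho14}. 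Your pairwise-separation phrasing (``choose $n$ and a module on which basis elements act by visibly distinct operators'') is likewise too weak, since one must rule out that a fixed nontrivial linear combination dies for every $n$; and the deferred reference should be Proposition~\ref{prop:psim-injective}, not Proposition~\ref{prop:psi0-injective}, which concerns $\psi_0$ only --- note also that the functors $\bF_n$ are full, not faithful, so ``faithfulness of the action'' is not an available input. Your alternative route via the filtration of Section~\ref{sec:cat-filtration} is not what the paper does and would still require an independent argument identifying the associated graded, so as written the injectivity half remains unproved.
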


\begin{proof}
  Just as in~\cite[Prop.~4]{Kho14}, it is straightforward to show that $\psi_m$ is surjective, because any diagram in $\End_{\tcH^\lambda}(\sQ_+^m)$ is equal to a linear combination of permutation diagrams with dots at the top and dotted counterclockwise bubbles in the far right region.  We use \eqref{eq:dots-vs-right-curls} to eliminate right curls (the dots of Khovanov's Heisenberg category).  We will prove $\psi_m$ is injective in Proposition~\ref{prop:psim-injective}.
\end{proof}

Following \cite[Prop.~5]{Kho14}, we can give an explicit basis of the hom-spaces in $\tcH^\lambda$.

\begin{defin}
  For two sign sequences $\epsilon, \epsilon'$, let $B(\epsilon, \epsilon')$ be the set of planar diagrams obtained in the following manner:
  \begin{itemize}
    \item We write the sequence $\epsilon$ at the bottom of the plan strip $\R \times [0,1]$, and we write the sequence $\epsilon'$ at the top of this strip.

    \item We match the elements of $\epsilon$ and $\epsilon'$ by oriented segments embedded in the strip.  The orientation of the endpoints of each segment must agree with the sign (i.e.\ be oriented up at a $+$ sign and down at a $-$ sign).  No two segments can intersect more than once, and no self-intersection or triple intersections are allowed.

    \item We place some number (possibly zero) of dots on each segment near its out endpoint (i.e.\ between its out endpoint and any intersections with other intervals).

    \item In the rightmost region of the diagram, we draw a finite number of counterclockwise disjoint nonnested circles with at least $d+1$ dots each.
  \end{itemize}
  The diagrams in $B(\epsilon,\epsilon')$ are parameterized by $k!$ possible matchings of the $2k$ oriented endpoints, a sequence of $k$ nonnegative integers determining the number of dots on each interval, and by a finite sequence of nonnegative integers determining the number of counterclockwise circles with various numbers of dots.
\end{defin}

Below is an example of an element of $B(-++--,+--+-+-)$.
\[
  \begin{tikzpicture}[anchorbase]
    \draw[<-] (0,0) .. controls (0,1) and (2,1) .. (2,0);
    \draw[->] (1,0) .. controls (1,1) and (4,1) .. (4,0);
    \draw[->] (0,3) .. controls (0,2) and (3,1) .. (3,0);
    \draw[<-] (-1,3) .. controls (-1,1) and (5,1) .. (5,3);
    \draw[->] (1,3) .. controls (1,1.5) and (4,1.5) .. (4,3);
    \draw[<-] (2,3) .. controls (2,2) and (3,2) .. (3,3);
    \draw[->] (6.5,2.3) arc(0:360:.5);
    \draw[->] (6.5,.7) arc(0:360:.5);
    \draw[->] (8,1.5) arc(0:360:.5);
    \bluedot{(-.7,2.27)} node[anchor=north,color=black] {\dotlabel{4}};
    \bluedot{(.2,.5)} node[anchor=south,color=black] {\dotlabel{5}};
    \bluedot{(2.1,2.5)} node[anchor=east,color=black] {\dotlabel{3}};
    \bluedot{(5.5,.7)} node[anchor=east,color=black] {\dotlabel{d+2}};
    \bluedot{(5.51,2.2)} node[anchor=east,color=black] {\dotlabel{d+7}};
    \bluedot{(7,1.5)} node[anchor=east,color=black] {\dotlabel{d+4}};
  \end{tikzpicture}
\]

\begin{prop} \label{prop:hom-space-basis}
  For any sign sequences $\epsilon, \epsilon'$, the set $B(\epsilon,\epsilon')$ is a $\kk$-basis of $\Hom_{\cH^\lambda} (\sQ_\epsilon,\sQ_{\epsilon'})$.
\end{prop}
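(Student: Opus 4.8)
The plan is to prove Proposition~\ref{prop:hom-space-basis} by following the strategy of \cite[Prop.~5]{Kho14}, splitting the argument into a spanning part and a linear independence part, with the hard work concentrated in the latter.

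First I would establish that $B(\epsilon,\epsilon')$ spans $\Hom_{\cH^\lambda}(\sQ_\epsilon,\sQ_{\epsilon'})$. Given an arbitrary diagram, one uses isotopy and the relations \eqref{rel:braid}, \eqref{rel:s-squared}, \eqref{rel:dotslide}, \eqref{rel:down-up-double-crossing}, \eqref{rel:up-down-double-crossing} to reduce it to a form in which all pairs of strands cross at most once and there are no self-intersections: crossings can be removed or combined using \eqref{rel:s-squared} and the double-crossing relations, at the cost of introducing lower-order diagrams (fewer crossings, cups/caps, or bubbles). Dots are slid toward the out-endpoint of each strand using \eqref{rel:dotslide} and the fact that dots move freely over cups and caps; whenever a dot passes a crossing one picks up a term with one fewer crossing, so this terminates. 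Right curls are eliminated using \eqref{eq:dots-vs-right-curls}, and any closed components are pushed to the rightmost region and expressed via Lemmas~\ref{lem:bubble-conversion}, \ref{lem:bubble_slide}, \ref{lem:circle-det-expand}, and \eqref{rel:cc-bubble} as linear combinations of non-nested counterclockwise bubbles each carrying at least $d+1$ dots (bubbles with $\le d$ dots are scalars or zero). Finally, the left-curl relation \eqref{rel:left-curl-zero}, together with \eqref{eq:dotted_leftcurl}, kills any remaining curls adjacent to the boundary. This shows every morphism is a $\kk$-linear combination of elements of $B(\epsilon,\epsilon')$.

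For linear independence, the key is to use the action of $\cH^\lambda$ on representations and/or a dimension count matching against a known basis. The cleanest route: by Corollary~\ref{cor:tcH-basic-objects} every object is a direct sum of objects $\sQ_+^n \sQ_-^m$, and by repeated use of Corollary~\ref{cor:Q+Q-commutation-relation} the hom-space $\Hom_{\cH^\lambda}(\sQ_\epsilon,\sQ_{\epsilon'})$ decomposes (compatibly with the spanning set $B(\epsilon,\epsilon')$, after sorting strands by type) into a direct sum of pieces each isomorphic to $\End_{\tcH^\lambda}(\sQ_+^r)$-type spaces tensored with bubble algebras; under the identification $B(\epsilon,\epsilon')$ matches a basis of $H_m \otimes \Pi$ via Proposition~\ref{prop:DH-Q-iso} (which supplies a basis of permutation diagrams with dots at the top and counterclockwise bubbles on the right). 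Concretely, one shows that a nontrivial linear relation among elements of $B(\epsilon,\epsilon')$ would, after adjoining cups and caps to both sides to turn all strands upward (using the isomorphisms of Corollary~\ref{cor:Q+Q-commutation-relation} and the fact that adjoining cups/caps is a bijection on the relevant bases up to lower terms), produce a nontrivial relation in some $\End_{\tcH^\lambda}(\sQ_+^m) \cong H_m \otimes \Pi$, contradicting Proposition~\ref{prop:DH-Q-iso}.

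The main obstacle is the linear independence direction, and within it the bookkeeping of ``lower-order terms'': when one bends strands up and down via the curl/crossing relations to reduce to the all-upward case, the relations \eqref{rel:down-up-double-crossing} and \eqref{eq:dots-vs-right-curls} introduce correction terms, and one must set up a filtration (by number of crossings, cups, caps, and total dot degree, as in Section~\ref{sec:cat-filtration}) with respect to which the transformation between $B(\epsilon,\epsilon')$ and the $H_m \otimes \Pi$ basis is unitriangular. Granting that Propositions~\ref{prop:DH-Q-iso} and \ref{prop:bubble-iso} are already in hand (their injectivity being deferred in the text to later propositions that rely on the action functor), the filtered-unitriangularity argument then upgrades surjectivity of the spanning map to bijectivity, completing the proof.
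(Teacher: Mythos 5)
Your proposal is correct and follows essentially the same route as the paper, which itself defers to the proof of \cite[Prop.~5]{Kho14}: spanning via the local relations, and linear independence by bending strands with adjunction and reducing through the decomposition of Corollary~\ref{cor:Q+Q-commutation-relation} to the case $\End_{\tcH^\lambda}(\sQ_+^m) \cong H_m \otimes \Pi$ of Proposition~\ref{prop:DH-Q-iso}, whose injectivity is indeed deferred to the action functors. The only cosmetic difference is that the paper organizes the bookkeeping for independence as an induction on the length and lexicographic order of sign sequences, whereas you phrase it as a filtered unitriangularity argument; these amount to the same thing.
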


\begin{proof}
  The proof is almost identical to that of \cite[Prop.~5]{Kho14} and so will be omitted.  (See also \cite[Prop.~3.11]{LS13}.)
  \details{
    It is straightforward to check that, using the defining local relations of $\cH^\lambda$, any element of $\Hom_{\cH^\lambda} (\sQ_\epsilon, \sQ_{\epsilon'})$ can be reduced to a direct sum of elements of $B(\epsilon,\epsilon')$.  One uses \eqref{rel:s-squared}, \eqref{rel:up-down-double-crossing}, and \eqref{rel:down-up-double-crossing} to remove double crossings, \eqref{rel:dotslide} to move dots to the ends of intervals (modulo simpler diagrams), Lemma~\ref{lem:bubble_slide} to move circles to the rightmost region, etc.

    It remains to show that $B(\epsilon, \epsilon')$ is linearly independent.  Moving the lower endpoints of a diagram up using cup diagrams, or moving the upper endpoints of a diagram down using cap diagrams, yields canonical isomorphisms
    \[
      \Hom_{\cH^\lambda} (\sQ_\epsilon, \sQ_{\epsilon'})
      \cong \Hom_{\cH^\lambda}(\one, \sQ_{\bar \epsilon \epsilon'})
      \cong \Hom_{\cH^\lambda} (\one, \sQ_{\epsilon' \bar \epsilon}),
    \]
    where $\bar \epsilon$ is the sequence $\epsilon$ with the order and all signs reversed.  It thus suffices to show that $B(\varnothing, \epsilon)$ is linearly independent for any sequence $\epsilon$ with $k$ pluses and $k$ minuses.  We prove this by induction on $k$ and (for each $k$) by induction on the lexicographic order (where $+ < -$) among length $2k$ sequences.  Proposition~\ref{prop:DH-Q-iso} implies the base cases $k=0,1$ and $\epsilon = +^k-^k$ for any $k$.  Now assume $\epsilon = \epsilon_1 - +\, \epsilon_2$ for some sequences $\epsilon_1$ and $\epsilon_2$. By the inductive hypothesis, $B(\varnothing, \epsilon_1 \epsilon_2)$ and $B(\varnothing, \epsilon_1 + -\, \epsilon_2)$ are linearly independent.  Corollary~\ref{cor:Q+Q-commutation-relation} gives a canonical isomorphism
    \[
      \sQ_{\epsilon_1 + -\, \epsilon_2} \oplus \sQ_{\epsilon_1 \epsilon_2}^{\oplus d}
      \cong \sQ_{\epsilon_1 - +\, \epsilon_2}.
    \]
    This isomorphism maps the sets $B(\varnothing, \epsilon_1 + -\, \epsilon_2)$ and $B(\varnothing, \epsilon_1 \epsilon_2)$ to subsets $B_1$ and $B_2, B_3, \dotsc, B_{d+1}$ of $\Hom_{\cH^\lambda}(\sQ_{\epsilon_1 - + \epsilon_2})$.  Let $B = B_1 \cup \dotsb \cup B_{d+1}$.  It is straightforward to check that $B(\varnothing, \epsilon_1 - +\, \epsilon_2)$ is linearly independent if and only if $B$ is.  Since we know $B$ is linearly independent by induction, we are done.
  }
\end{proof}

\begin{prop} \label{prop:dot-shift-functor}
  Fix $j \in I$ and define $\mu = \sum_{i \in I} \mu_i \omega_i \in P_+$ by $\mu_i = \lambda_{i-j}$ for $i \in I$. Then the categories $\cH^\lambda$ and $\cH^\mu$ are isomorphic.
\end{prop}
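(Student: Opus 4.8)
The plan is to build mutually inverse isomorphisms $\Phi\colon \tcH^\lambda \xrightarrow{\ \sim\ } \tcH^\mu$ and $\Psi\colon \tcH^\mu \xrightarrow{\ \sim\ } \tcH^\lambda$ between the pre-idempotent-completion categories; since an isomorphism of additive categories $F$ extends to an isomorphism of Karoubi envelopes via $(X,e)\mapsto(FX,Fe)$, this gives $\cH^\lambda \cong \cH^\mu$. I would take $\Phi$ to be the $\kk$-linear strict monoidal functor that is the identity on objects, fixes every crossing, cup, and cap, and sends a single dot on a strand of either orientation to $\bullet - j\,\id$; symmetrically $\Psi$ sends a single dot to $\bullet + j\,\id$ (this is the same construction applied to $\mu$ with shift $-j$, since $\lambda_i = \mu_{i+j}$). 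Formally one defines these on the free $\kk$-linear strict monoidal category on the generators and checks they descend to the relations; note that the prescription on dots is consistent with dots sliding over cups and caps. Once $\Phi$ and $\Psi$ are known to be well defined, both composites fix every generating morphism, hence equal the identity functors, so $\Phi$ is an isomorphism. One preliminary remark to record: reindexing $i \mapsto i+j$ gives $\sum_{i\in I} i\mu_i = \sum_{i\in I} i\lambda_i + jd$, and $\mu \in P_+\setminus\{0\}$ has the same level $d$ as $\lambda$, so $\cH^\mu$ is indeed defined.

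The work is to check that $\Phi$ respects the defining relations (the check for $\Psi$ is identical). Relations \eqref{rel:braid}, \eqref{rel:s-squared}, \eqref{rel:up-down-double-crossing}, \eqref{rel:left-curl-zero} and the isotopy relations involve no dots, and in \eqref{rel:dotslide} the two contributions $-j\,\id$ cancel across the equality, so in all these cases $\Phi$ sends the relation to the identically drawn relation of $\tcH^\mu$. For \eqref{rel:cc-bubble}, the binomial theorem applied to $(\bullet - j\,\id)^{\circ t}$ shows that $\Phi$ sends the bubble with $t$ dots to $\sum_{k=0}^t \binom tk (-j)^{t-k}$ times the bubble with $k$ dots; now \eqref{rel:cc-bubble} in $\tcH^\mu$ together with $\sum i\mu_i = \sum i\lambda_i + jd$ disposes of the cases $t < d-1$, $t = d-1$, $t = d$.

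The main obstacle is \eqref{rel:down-up-double-crossing}, the only relation involving dual dots, since by \eqref{eq:dot-vee-def} these are built from the $\lambda$-dependent determinantal constants $c_s$. The key idea is to bypass computing $\Phi(c_s)$ and instead pin down $\Phi$ of a dual dot through the pairing of Lemma~\ref{lem:circle-duality}. I would show that in $\tcH^\mu$,
\[
  \Phi\!\left(\
  \begin{tikzpicture}[anchorbase]
    \draw[->] (0,0) -- (0,1);
    \bluedot{(0,0.5)} node [anchor=east,color=black] {\dotlabel{j'^\vee}};
  \end{tikzpicture}
  \ \right)
  \ =\ \sum_{c=j'}^{d-1} \binom{c}{j'}\, j^{c-j'}\
  \begin{tikzpicture}[anchorbase]
    \draw[->] (0,0) -- (0,1);
    \bluedot{(0,0.5)} node [anchor=east,color=black] {\dotlabel{c^\vee}};
  \end{tikzpicture}\ ,
\]
the dual dots on the right being those of $\tcH^\mu$. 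Indeed, both sides lie in the $\End_{\tcH^\mu}(\one)$-span $V$ of the $0,1,\dots,(d-1)$-dotted strands, on which the ``circle pairing'' recording the counterclockwise bubble with $i$ dots placed opposite a given $v\in V$ ($i=0,\dots,d-1$) is non-degenerate, because by \eqref{rel:cc-bubble} its Gram matrix $\bigl(\text{bubble with }a+b\text{ dots}\bigr)_{a,b=0}^{d-1}$ is anti-triangular with $1$'s on the anti-diagonal, hence of determinant $\pm 1$. Since the proof of Lemma~\ref{lem:circle-duality} uses only \eqref{rel:cc-bubble} and isotopy — already verified for $\Phi$ — applying $\Phi$ to it shows the bubble with $\Phi(i\text{ dots})=\sum_a\binom ia(-j)^{i-a}(a\text{ dots})$ placed opposite $\Phi(j'^\vee)$ equals $\delta_{i,j'}$, and the identity $\binom bt\binom ta = \binom ba\binom{b-a}{t-a}$ shows the proposed right-hand side pairs the same way, so non-degeneracy yields the claim. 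Finally, $\Phi$ of the $t$-th summand on the right of \eqref{rel:down-up-double-crossing} expands as $\sum_{a,b=0}^{d-1}\binom ta(-j)^{t-a}\binom bt j^{b-t}$ times the cup-cap diagram with $a$ dots on the cap and $b^\vee$ on the cup; summing over $t$ and using $\sum_t \binom ta\binom bt (-j)^{t-a}j^{b-t} = \binom ba (j-j)^{b-a} = \delta_{a,b}$ collapses the total to $\sum_{a=0}^{d-1}$ of the corresponding $\tcH^\mu$ cup-cap diagrams. Hence $\Phi$ carries the right-hand side of \eqref{rel:down-up-double-crossing} to the right-hand side of \eqref{rel:down-up-double-crossing} in $\tcH^\mu$, i.e.\ to $\id_{\sQ_-\sQ_+} = \Phi(\id_{\sQ_-\sQ_+})$, so $\Phi$ is well defined. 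The same argument applies to $\Psi$, and $\Psi\circ\Phi = \id$, $\Phi\circ\Psi=\id$ on generators, completing the proof.
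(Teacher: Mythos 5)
Your proposal is correct and follows essentially the same route as the paper: the functor fixing cups, caps and crossings and sending a dot to a dot minus $j\,\id$, with the only delicate relation \eqref{rel:down-up-double-crossing} handled by characterizing the (images of the) dual dots through the circle pairing of Lemma~\ref{lem:circle-duality}, whose Gram matrix is unitriangular and which only involves \eqref{rel:cc-bubble}. Your explicit formula $\Phi(t^\vee)=\sum_{s\ge t}\binom{s}{t}j^{s-t}s^\vee$ and the binomial collapse of the cap--cup sum reproduce, in slightly different order, exactly the computation recorded in the paper's hidden details, while the paper's displayed proof phrases the same idea more abstractly (and omits the explicit inverse functor, which you supply).
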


\begin{proof}
  We claim that there is a functor $\Psi \colon \cH^\lambda \to \cH^\mu$ given by
  \begin{equation}\label{eq:imagedot}
    \begin{tikzpicture}[anchorbase]
      \draw[->] (0,0) -- (0,1);
      \bluedot{(0,0.5)};
    \end{tikzpicture}
    \ \mapsto\
    \begin{tikzpicture}[anchorbase]
      \draw[->] (0,0) -- (0,1);
      \bluedot{(0,0.5)};
    \end{tikzpicture}
    \ - j\
    \begin{tikzpicture}[anchorbase]
      \draw[->] (0,0) -- (0,1);
    \end{tikzpicture}
  \end{equation}
  and leaving all caps, cups and crossings unchanged.  It is clear that $\Psi$ is invertible and so it is enough to show that it is well-defined by proving that it preserves the local relations.  It preserves~\eqref{rel:braid}, \eqref{rel:s-squared}, \eqref{rel:up-down-double-crossing}, and~\eqref{rel:left-curl-zero}, because those relations do not involve dots (or dual dots).  Furthermore, it is straightforward to check that relation~\eqref{rel:dotslide} is also preserved.

  As for relation~\eqref{rel:cc-bubble}, note that
  \begin{equation}\label{eq:imagedots}
    \begin{tikzpicture}[anchorbase]
      \draw[->] (0,0) -- (0,1);
      \bluedot{(0,0.5)} node [anchor=east,color=black] {\dotlabel{t}};
    \end{tikzpicture}
    \ \mapsto \sum_{s=0}^{t}
    \begin{tikzpicture}[anchorbase]
      \draw[->] (0,0) -- (0,1);
      \bluedot{(0,0.5)} node [anchor=east,color=black] {\dotlabel{s}};
    \end{tikzpicture}
    \ (-1)^{t-s}\binom{t}{s} j^{t-s}.
  \end{equation}
  Therefore, we have
  \[
    \begin{tikzpicture}[anchorbase]
      \draw [->](0,0) arc (0:360:0.3);
      \bluedot{(-0.6,0)}  node [anchor=east,color=black] {\dotlabel{t}};
    \end{tikzpicture}
    \ \mapsto \ \sum_{s=0}^t
    \begin{tikzpicture}[anchorbase]
      \draw [->](0,0) arc (0:360:0.3);
      \bluedot{(-0.6,0)}  node [anchor=east,color=black] {\dotlabel{s}};
    \end{tikzpicture}
    \ (-1)^{t-s}\binom{t}{s} j^{t-s}
    \ = \
    \begin{cases}
      0 & \text{if } t < d-1, \\
      1 & \text{if } t=d-1, \\
      \sum_{i \in I} i \lambda_i & \text{if } t=d,
    \end{cases}
  \]
  where the case $t=d$ follows from
  \[
    \sum_{i\in I} i\mu_{i} - j d
    = \sum_{i\in I} i\lambda_{i-j} - j \sum_{i\in I}\lambda_i
    = \sum_{i\in I} (i+j)\lambda_{i} - j \sum_{i\in I}\lambda_i
    = \sum_{i\in I} i\lambda_i.
  \]
  (The other cases are immediate.)  This shows that~\eqref{rel:cc-bubble} is preserved by $\Psi$.

  It remains to prove that \eqref{rel:down-up-double-crossing} is preserved.  Let $V$ be the free $\Pi$-module spanned by upwards pointing strands with at most $d-1$ dots.  The $\Pi$-action is via placing bubbles to the right of the strand.  We have a $\Pi$-bilinear pairing $V \times V \to \Pi$ given by the usual vertical composition in our category (i.e.\ stacking diagrams), followed by closing off to the left.  If we choose the $\Pi$-basis
  \[
    \begin{tikzpicture}[anchorbase]
      \draw[->] (0,0) -- (0,1);
      \bluedot{(0,0.5)} node[anchor=east,color=black] {\dotlabel{i}};
    \end{tikzpicture}
    \ ,\quad i=0,1,\dotsc,d-1,
  \]
  then the matrix for the pairing is unitriangular.  By Lemma~\ref{lem:circle-duality}, the dual dots are uniquely obtained by inverting this matrix.  Since the pairing only involves \eqref{rel:cc-bubble}, which we have already shown is invariant under the functor, it follows that the sum in \eqref{rel:down-up-double-crossing} is preserved.  Hence \eqref{rel:down-up-double-crossing} is preserved under $\Psi$.

  \details{
    One can also verify via direct computation that \eqref{rel:down-up-double-crossing} is preserved as follows.  First of all, we claim that
    \begin{equation}\label{eq:imagec}
      c_s\mapsto \sum_{k=0}^{s} \binom{d-s+k}{k} c_{s-k}
    \end{equation}
    for any $0\leq s\leq d$. This can be proved by induction on $s$ after expanding the determinant which defines $c_s$, as in Lemma~\ref{lem:circle-det-expand}. We leave the details to the reader.

    Next note that, for any $0\leq t\leq d-1$, we have
    \begin{equation}\label{eq:recursivedualdots}
      \begin{tikzpicture}[anchorbase]
        \draw[->] (0,0) -- (0,1);
        \bluedot{(0,0.5)} node [anchor=east,color=black] {\dotlabel{t^\vee}};
      \end{tikzpicture}
      \ = \
      \begin{tikzpicture}[anchorbase]
        \draw[->] (0,0) -- (0,1);
      \end{tikzpicture}
      \ c_{d-(t+1)} \ + \
      \begin{tikzpicture}[anchorbase]
        \draw[->] (0,0) -- (0,1);
        \bluedot{(0,0.25)} node [anchor=west,color=black] {\dotlabel{(t+1)^\vee}};
        \bluedot{(0,0.65)};
      \end{tikzpicture}
    \end{equation}
    where in the second term on the righthand side the upper dot is not dual and, by convention, any diagram containing a dot labeled $d^\vee$ is zero.

    Now we will show by downwards induction on $t$ that, for any $0\leq t\leq d-1$, we have
    \begin{equation}\label{eq:imagedualdots}
      \begin{tikzpicture}[anchorbase]
        \draw[->] (0,0) -- (0,1);
        \bluedot{(0,0.5)} node [anchor=east,color=black] {\dotlabel{t^\vee}};
      \end{tikzpicture}
      \ \mapsto \sum_{s=t}^{d-1}
      \begin{tikzpicture}[anchorbase]
        \draw[->] (0,0) -- (0,1);
        \bluedot{(0,0.5)} node [anchor=east,color=black] {\dotlabel{s^\vee}};
      \end{tikzpicture}
      \ \binom{s}{t} j^{s-t}.
    \end{equation}
    By definition, we have
    \[
      \begin{tikzpicture}[anchorbase]
        \draw[->] (0,0) -- (0,1);
        \bluedot{(0,0.5)} node [anchor=east,color=black] {\dotlabel{(d-1)^\vee}};
      \end{tikzpicture}
      \ = \
      \begin{tikzpicture}[anchorbase]
        \draw[->] (0,0) -- (0,1);
      \end{tikzpicture}
      \ ,
    \]
    so the assertion holds for $t=d-1$.  By induction, we can then compute
    \begin{multline*}
      \begin{tikzpicture}[anchorbase]
        \draw[->] (0,0) -- (0,1);
      \end{tikzpicture}
      \ c_{d-(t+1)} \ + \
      \begin{tikzpicture}[anchorbase]
        \draw[->] (0,0) -- (0,1);
        \bluedot{(0,0.25)} node [anchor=west,color=black] {\dotlabel{(t+1)^\vee}};
        \bluedot{(0,0.65)};
      \end{tikzpicture}
      \ \mapsto \\
      \sum_{k=0}^{d-(t+1)} \binom{t+1+k}{k}
      \
        \begin{tikzpicture}[anchorbase]
          \draw[->] (0,0) -- (0,1);
        \end{tikzpicture}
        \ c_{d-(t+1+k)}
        \ + \
      \sum_{k=0}^{d-(t+1)} \binom{t+1+k}{k} \
     \left(
        \begin{tikzpicture}[anchorbase]
          \draw[->] (0,0) -- (0,1);
          \bluedot{(0,0.25)} node [anchor=east,color=black] {\dotlabel{(t+1+k)^\vee}};
          \bluedot{(0,0.65)} node [anchor=east,color=black] {};
        \end{tikzpicture}
        \ j^k - \
        \begin{tikzpicture}[anchorbase]
          \draw[->] (0,0) -- (0,1);
          \bluedot{(0,0.5)} node [anchor=east,color=black] {\dotlabel{(t+1+k)^\vee}};
        \end{tikzpicture}
        \ j^{k+1}
      \right)
    \end{multline*}
    which is obtained by applying~\eqref{eq:imagec} to $c_{d-(t+1)}$,~\eqref{eq:imagedot} to the dot, and~\eqref{eq:imagedualdots} to the dual dot.

    We then separate the two first terms with $k=0$ from the rest and reindex
    \[
      \sum_{k=0}^{d-(t+1)} \binom{t+1+k}{k}
      \begin{tikzpicture}[anchorbase]
        \draw[->] (0,0) -- (0,1);
        \bluedot{(0,0.5)} node [anchor=east,color=black] {\dotlabel{(t+1+k)^\vee}};
      \end{tikzpicture}
      \ j^{k+1} = \
      \sum_{k=1}^{d-(t+1)} \binom{t+k}{k-1}
      \begin{tikzpicture}[anchorbase]
        \draw[->] (0,0) -- (0,1);
        \bluedot{(0,0.5)} node [anchor=east,color=black] {\dotlabel{(t+k)^\vee}};
      \end{tikzpicture}
      \ j^{k}
    \]
    to obtain
    \begin{multline*}
      \begin{tikzpicture}[anchorbase]
        \draw[->] (0,0) -- (0,1);
      \end{tikzpicture}
      \ c_{d-(t+1)} + \
      \begin{tikzpicture}[anchorbase]
        \draw[->] (0,0) -- (0,1);
        \bluedot{(0,0.25)} node [anchor=east,color=black] {\dotlabel{(t+1)^\vee}};
        \bluedot{(0,0.65)} node [anchor=east,color=black] {};
      \end{tikzpicture}
      \ +
      \\
      \sum_{k=1}^{d-(t+1)} \binom{t+1+k}{k}
      \left(\
        \begin{tikzpicture}[anchorbase]
          \draw[->] (0,0) -- (0,1);
        \end{tikzpicture}
        \ c_{d-(t+1+k)} + \
        \begin{tikzpicture}[anchorbase]
          \draw[->] (0,0) -- (0,1);
          \bluedot{(0,0.25)} node [anchor=east,color=black] {\dotlabel{(t+1+k)^\vee}};
          \bluedot{(0,0.65)} node [anchor=east,color=black] {};
        \end{tikzpicture}
        \ j^k
      \right)
      \ - \sum_{k=1}^{d-(t+1)} \binom{t+k}{k-1}
      \begin{tikzpicture}[anchorbase]
        \draw[->] (0,0) -- (0,1);
        \bluedot{(0,0.5)} node [anchor=east,color=black] {\dotlabel{(t+k)^\vee}};
      \end{tikzpicture}
      \ j^{k}
      \\
      \stackrel{\eqref{eq:recursivedualdots}}{=}
      \begin{tikzpicture}[anchorbase]
        \draw[->] (0,0) -- (0,1);
        \bluedot{(0,0.5)} node [anchor=east,color=black] {\dotlabel{t^\vee}};
      \end{tikzpicture}
      \ + \sum_{k=1}^{d-(t+1)} \left(
      \binom{t+1+k}{k}- \binom{t+k}{k-1}\right)
      \begin{tikzpicture}[anchorbase]
        \draw[->] (0,0) -- (0,1);
        \bluedot{(0,0.5)} node [anchor=east,color=black] {\dotlabel{(t+k)^\vee}};
      \end{tikzpicture}
      \ j^k
    \end{multline*}
    By a well-known identity of binomial coefficients, we get
    \[
      \sum_{k=0}^{d-(t+1)} \binom{t+k}{k}
      \begin{tikzpicture}[anchorbase]
        \draw[->] (0,0) -- (0,1);
        \bluedot{(0,0.5)} node [anchor=east,color=black] {\dotlabel{(t+k)^\vee}};
      \end{tikzpicture}
      \ j^k = \sum_{s=t}^{d-1} \binom{s}{t}
      \begin{tikzpicture}[anchorbase]
        \draw[->] (0,0) -- (0,1);
        \bluedot{(0,0.5)} node [anchor=east,color=black] {\dotlabel{s^\vee}};
      \end{tikzpicture}
      \ j^{s-t},
    \]
    which proves~\eqref{eq:imagedualdots}.

    Now the fact that \eqref{rel:down-up-double-crossing} is preserved under the functor follows from the fact that
    \[
      \sum_{t=0}^{d-1}\
      \begin{tikzpicture}[anchorbase]
        \draw[->] (0,1) -- (0,0.9) arc (180:360:.25) -- (0.5,1);
        \draw[<-] (0,0) -- (0,0.1) arc (180:0:.25) -- (0.5,0);
        \bluedot{(0.45,0.75)} node [anchor=west,color=black] {\dotlabel{t}};
        \bluedot{(0.05,0.25)} node [anchor=east,color=black] {\dotlabel{t^\vee}};
      \end{tikzpicture}
      \ \mapsto \
      \ \sum_{t=0}^{d-1}\sum_{r=0}^{t}\sum_{s=t}^{d-1}
      \begin{tikzpicture}[anchorbase]
        \draw[->] (0,1) -- (0,0.9) arc (180:360:.25) -- (0.5,1);
        \draw[<-] (0,0) -- (0,0.1) arc (180:0:.25) -- (0.5,0);
        \bluedot{(0.45,0.75)} node [anchor=west,color=black] {\dotlabel{r}};
        \bluedot{(0.05,0.25)} node [anchor=east,color=black] {\dotlabel{s^\vee}};
      \end{tikzpicture}
      \ (-1)^{t-r}\binom{t}{r}\binom{s}{t}j^{s-r}
      \ =\
      \sum_{r=0}^{d-1}\
      \begin{tikzpicture}[anchorbase]
        \draw[->] (0,1) -- (0,0.9) arc (180:360:.25) -- (0.5,1);
        \draw[<-] (0,0) -- (0,0.1) arc (180:0:.25) -- (0.5,0);
        \bluedot{(0.45,0.75)} node [anchor=west,color=black] {\dotlabel{r}};
        \bluedot{(0.05,0.25)} node [anchor=east,color=black] {\dotlabel{r^\vee}};
      \end{tikzpicture}.
    \]
    The last equality follows from $\sum_{t=0}^{d-1} (-1)^{t-r} \binom{t}{r}\binom{s}{t}=\delta_{r,s}$, which can be shown  by using Newton's binomial formula twice to expand $((x-j)+j)^m$ in monomials of $x$ and $j$, for any $m\in\N$, and comparing it to $x^m$.
  }
\end{proof}

In light of Proposition~\ref{prop:dot-shift-functor}, it is natural to ask if the categories $\cH^\lambda$ and $\cH^\mu$ are isomorphic more generally for $\lambda$ and $\mu$ both of level $d$.  We do not currently know the answer to this question.  The explicit dependence on $\lambda$ appears only in the $j=d$ case of \eqref{rel:cc-bubble}.\footnote{After the current paper appeared, Brundan defined a Heisenberg category in \cite{Bru17} that omits the $j=d$ case of \eqref{rel:cc-bubble} and therefore depends only on the level $d$.}

%
\section{A filtration and the associated graded category} \label{sec:cat-filtration}
%

In Section~\ref{subsec:filtration-definition} we let $\kk$ be an arbitrary commutative ring, while in Section~\ref{subsec:symmetrizer-relations} we assume that $\kk$ is a field of characteristic zero.

\subsection{Definition} \label{subsec:filtration-definition}

We define a filtration on the hom-spaces of $\tcH^\lambda$ as follows.  We define
\[
  \deg\
  \begin{tikzpicture}[anchorbase]
    \draw[->] (0,-.5) arc (0:180:.5);
  \end{tikzpicture}
  \ = 1-d,
  \quad
  \deg\
  \begin{tikzpicture}[anchorbase]
    \draw[<-] (0,0) arc (180:360:.5);
  \end{tikzpicture}
  \ = d-1,
  \quad
  \deg\
  \begin{tikzpicture}[anchorbase]
    \draw[->] (0,0) -- (0,1);
    \bluedot{(0,.5)};
  \end{tikzpicture}
  \ = \deg\
  \begin{tikzpicture}[anchorbase]
    \draw[<-] (0,0) -- (0,1);
    \bluedot{(0,.5)};
  \end{tikzpicture}
  \ = 1.
\]
Right cups/caps and crossings are assigned degree zero.  This determines the degree of any planar diagram.  Then, for $\sx, \sy \in \Ob \cH^\lambda$ and $k \in \Z$, we define $\Hom_{\tcH^\lambda}^{\le k} (\sx,\sy)$ to be the span of the diagrams of degree less than or equal to $k$.  This yields a filtration
\[
  \dotsb \subseteq \Hom_{\tcH^\lambda}^{\le k-1}(\sx,\sy)
  \subseteq \Hom_{\tcH^\lambda}^{\le k}(\sx,\sy)
  \subseteq \Hom_{\tcH^\lambda}^{\le k+1}(\sx,\sy) \subseteq \dotsb
\]
of $\Hom_{\tcH^\lambda}(\sx,\sy)$.  This filtration of the hom-spaces clearly respects composition:
\[
  \Hom_{\tcH^\lambda}^{\le k}(\sy,\sz) \times \Hom_{\tcH^\lambda}^{\le \ell}(\sx,\sy)
  \to \Hom_{\tcH^\lambda}^{\le k + \ell}(\sx,\sz),\quad \sx,\sy,\sz \in \Ob \tcH^\lambda.
\]
We thus have the associated graded category $\tcH^\lambda_\gr$ defined by $\Ob \tcH^\lambda_\gr = \Ob \tcH^\lambda$, and
\[
  \Hom_{\tcH^\lambda_\gr} (\sx,\sy) = \gr \Hom_{\tcH^\lambda} (\sx,\sy),\quad \text{for all } \sx, \sy \in \Ob \tcH^\lambda_\gr,
\]
where $\gr \Hom_{\tcH^\lambda} (\sx,\sy)$ denotes the associated graded space.  The composition in $\tcH^\lambda_\gr$ is induced from the one in $\tcH^\lambda$.  We will use the symbol $\circeq$ to denote equality of morphisms in $\tcH^\lambda_\gr$.  The morphism spaces of $\tcH^\lambda_\gr$ are of course graded:
\[
  \Hom_{\tcH^\lambda_\gr}(\sx,\sy) = \bigoplus_{k \in \Z} \Hom_{\tcH^\lambda_\gr}^k(\sx,\sy),\qquad
  \Hom_{\tcH^\lambda_\gr}^k(\sx,\sy) := \Hom_{\tcH^\lambda}^{\le k}(\sx,\sy) / \Hom_{\tcH^\lambda}^{\le k-1}(\sx,\sy).
\]

\begin{lem} \label{lem:filtered-dimensions}
  If $d>1$, then
  \begin{gather}
    \End_{\tcH^\lambda}^{\le k}(\sQ_+^n \sQ_-^m) = 0 \quad \text{for } k < 0, \quad \text{and} \\
    \End_{\tcH^\lambda}^{\le 0}(\sQ_+^n \sQ_-^m) \cong \kk S_n \otimes \kk S_m.
  \end{gather}
\end{lem}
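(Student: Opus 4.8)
The plan is to read off everything from the explicit basis of Proposition~\ref{prop:hom-space-basis}, applied to $\epsilon=\epsilon'=+^n-^m$, together with the degree function. Write $B:=B(+^n-^m,+^n-^m)$, a $\kk$-basis of $\End_{\tcH^\lambda}(\sQ_+^n\sQ_-^m)=\Hom_{\cH^\lambda}(\sQ_{+^n-^m},\sQ_{+^n-^m})$. A diagram $D\in B$ is built from: $a$ turnback cap arcs at the bottom joining a $\sQ_+$-endpoint to a $\sQ_-$-endpoint, equally many ($a$, forced by the orientation matching) turnback cup arcs at the top joining a $\sQ_-$-endpoint to a $\sQ_+$-endpoint, $n-a$ upward and $m-a$ downward through-strands, some crossings, some dots, and counterclockwise non-nested bubbles carrying $\ell_1,\dots,\ell_r\ge d+1$ dots. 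Reading off the degree: crossings and through-strands are degree $0$; each bottom turnback is a rightward cap $\sQ_+\sQ_-\to\one$, hence degree $0$; each top turnback is the leftward cup $\one\to\sQ_+\sQ_-$, hence degree $d-1$; each dot is degree $1$; and a bubble with $\ell_k$ dots has degree $\ell_k-d+1\ge 2$. Therefore
\[
  \deg D=\bigl(\#\text{ dots on }D\bigr)+(d-1)\,a+\sum_{k=1}^r(\ell_k-d+1).
\]

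Since $d>1$, every summand above is $\ge 0$, so $\deg D\ge 0$; and $\deg D=0$ forces $a=0$ and forces $D$ to carry no dots and no bubbles, i.e.\ $D$ is a plain permutation diagram (a reduced braid diagram permuting the $n$ upward strands among the $\sQ_+$-positions together with one permuting the $m$ downward strands among the $\sQ_-$-positions; the two families can be drawn disjointly because their endpoints are unlinked along the boundary of the strip). There are exactly $n!\,m!$ such diagrams, indexed by $S_n\times S_m$, and they span a copy of $\kk S_n\otimes\kk S_m$: composing two permutation diagrams and reducing uses only \eqref{rel:s-squared} and \eqref{rel:braid} (and their downward-oriented analogues, derived from the upward ones via the adjunction cups and caps), which introduce no lower-order terms, so the composite is exactly the permutation diagram of the product; and the upward and downward parts commute, occupying disjoint strands. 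Thus $\sigma\otimes\tau\mapsto(\text{corresponding permutation diagram})$ is a well-defined algebra homomorphism $\kk S_n\otimes\kk S_m\to\End_{\tcH^\lambda}(\sQ_+^n\sQ_-^m)$, injective (its image is part of $B$). Consequently: the span of the degree-$<0$ elements of $B$ is $0$, and the span of the degree-$\le 0$ elements of $B$ is exactly the image of $\kk S_n\otimes\kk S_m$.

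The remaining — and decisive — point is that $\Hom^{\le k}_{\tcH^\lambda}(\sQ_+^n\sQ_-^m)$ actually coincides with the span of the degree-$\le k$ elements of $B$ when $k\le 0$. One inclusion is clear; the other requires bringing an arbitrary degree-$\le k$ diagram to normal form (as in the proof of Proposition~\ref{prop:hom-space-basis}) without leaving degree $\le k$. This is \emph{not} automatic: the basis is not compatible with the filtration in general, because eliminating a right curl via \eqref{eq:dots-vs-right-curls} replaces a diagram of degree $d-1$ by basis diagrams of degree $d$, and the dual dots misbehave similarly (Remark~\ref{rem:dualdots}). The key observation is that for $d>1$ these degree-raising moves cannot be needed in degree $\le 0$: a right curl or a top turnback already carries degree $d-1>0$, so any diagram requiring such a move has positive degree. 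I would make this precise by induction on $n+m$. The base case $n=m=0$ is the statement $\End^{\le k}_{\tcH^\lambda}(\one)=0$ for $k<0$ and $\End^{\le 0}_{\tcH^\lambda}(\one)=\kk$, which follows from the isomorphism $\End_{\tcH^\lambda}(\one)\cong\Pi=\kk[y_1,y_2,\dots]$ of Proposition~\ref{prop:bubble-iso} together with $\deg y_j=j+1\ge 2$ — using that a closed diagram of negative degree must contain a counterclockwise bubble with fewer than $d-1$ dots, which vanishes by \eqref{rel:cc-bubble}. The inductive step uses the commutation isomorphism $\sQ_-\sQ_+\cong\sQ_+\sQ_-\oplus\one^{\oplus d}$ of Corollary~\ref{cor:Q+Q-commutation-relation} to express a hom-space $\End_{\tcH^\lambda}(\sQ_+^n\sQ_-^m)$ in terms of ones with smaller $n+m$, tracking how degrees transform (the extra $\one^{\oplus d}$ summand enters with a degree shift, which is exactly why the $d>1$ hypothesis is needed). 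This compatibility check is the main obstacle; granting it, the two displayed identities in the statement follow immediately from the paragraph above.
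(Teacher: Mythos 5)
Your first two paragraphs are exactly the paper's proof: the paper disposes of this lemma with the single sentence that it ``follows immediately from Proposition~\ref{prop:hom-space-basis}'', i.e.\ it reads the filtration off the basis just as you do (bottom turnbacks of degree $0$, top turnbacks of degree $d-1$, dots of degree $1$, bubbles of degree $\ell_k-d+1\ge 2$, so all basis diagrams have degree $\ge 0$ and degree $0$ exactly for the permutation diagrams, which multiply as in $\kk S_n\otimes \kk S_m$). That bookkeeping is correct, and the point where you use $d>1$ (to force $a=0$ and to make bubble degrees strictly positive) is the right one.

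Where you part company with the paper is in flagging, and then not proving, the claim that $\End^{\le k}_{\tcH^\lambda}(\sQ_+^n\sQ_-^m)$ equals the span of the degree-$\le k$ basis elements when $k\le 0$. Your instinct that this is not a tautology is sound: the filtration is defined by arbitrary diagrams, and basis expansion can genuinely raise degree --- indeed, by \eqref{eq:dots-vs-right-curls} a right curl (degree $d-1$) has coefficient $1$ on the degree-$d$ basis element with $d$ dots, so the filtration does \emph{not} agree with the ``basis filtration'' in all degrees; the paper's one-line proof silently absorbs this point. But your proposed repair is only a sketch, and the parts you spell out do not hold up as written. For the base case, the isomorphism $\End_{\tcH^\lambda}(\one)\cong\Pi$ of Proposition~\ref{prop:bubble-iso} says nothing about which filtration piece a given closed diagram occupies (that is the same compatibility question again, not an input to it), its injectivity is only established later and in characteristic zero whereas the lemma is stated over an arbitrary commutative ring, and the assertion that a closed diagram of negative degree ``must contain a counterclockwise bubble with fewer than $d-1$ dots'' is false as stated: a component may wind counterclockwise more than once, or a counterclockwise circle may have other strands trapped inside it, and in neither case can \eqref{rel:cc-bubble} be applied on the nose. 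The inductive step via Corollary~\ref{cor:Q+Q-commutation-relation} is not carried out at all. So, measured against what you yourself identify as the decisive point, the proposal has an acknowledged gap. To close it, rerun the straightening behind Proposition~\ref{prop:hom-space-basis} on a single diagram of degree $\le 0$ and check that every move used (isotopy, \eqref{rel:s-squared}, \eqref{rel:braid}, \eqref{rel:up-down-double-crossing}, \eqref{rel:down-up-double-crossing}, dot slides, bubble slides, curl and bubble evaluations) is filtration-non-increasing, and that the only degree-raising rewrites (elimination of right curls via \eqref{eq:dots-vs-right-curls} and of clockwise bubbles via Lemma~\ref{lem:bubble-conversion}) are never forced in total degree $\le 0$; equivalently, sum the degree over components and show a component with negative contribution produces either a left curl or an evaluable counterclockwise circle. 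It is a short diagram chase, but it is precisely the content that neither your write-up nor, to be fair, the paper's one-line proof supplies.
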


\begin{proof}
  This follows immediately from Proposition~\ref{prop:hom-space-basis}.
\end{proof}

\begin{lem}
  We have the following equalities in $\tcH^\lambda_\gr$:
  \begin{equation} \label{eq:cc-bubble-slide-graded}
    \begin{tikzpicture}[anchorbase]
      \draw [->](0,0) arc (180:360:0.4);
      \draw (0.8,0) arc (0:180:0.4);
      \bluedot{(0.7,-0.3)} node [anchor=north,color=black] {\dotlabel{j}};
      \draw[->] (1.3,-0.6) to (1.3,0.6);
    \end{tikzpicture}
    \ \circeq\
    \begin{tikzpicture}[anchorbase]
      \draw[->] (-0.3,-0.6) to (-0.3,0.6);
      \draw [->](0,0) arc (180:360:0.4);
      \draw (0.8,0) arc (0:180:0.4);
      \bluedot{(0.7,-0.3)} node [anchor=north,color=black] {\dotlabel{j}};
    \end{tikzpicture}
    \ ,\quad j \in \N,
  \end{equation}

  \noindent\begin{minipage}{0.5\linewidth}
    \begin{equation} \label{eq:dotslide-graded}
      \begin{tikzpicture}[anchorbase]
        \draw[->] (0,0) to (1,1);
        \draw[->] (1,0) to (0,1);
        \bluedot{(0.25,0.25)} node[color=black,anchor=east] {\dotlabel{j}};
      \end{tikzpicture}
      \ \circeq\
      \begin{tikzpicture}[anchorbase]
        \draw[->] (0,0) to (1,1);
        \draw[->] (1,0) to (0,1);
        \bluedot{(.75,.75)} node[color=black,anchor=west] {\dotlabel{j}};
      \end{tikzpicture}
      \ ,\quad j \in \N,
    \end{equation}
  \end{minipage}%
  \begin{minipage}{0.5\linewidth}
    \begin{equation} \label{eq:dual-dotslide-graded}
      \begin{tikzpicture}[anchorbase]
        \draw[->] (0,0) to (1,1);
        \draw[->] (1,0) to (0,1);
        \bluedot{(0.25,0.25)} node[color=black,anchor=east] {\dotlabel{j^\vee}};
      \end{tikzpicture}
      \ \circeq\
      \begin{tikzpicture}[anchorbase]
        \draw[->] (0,0) to (1,1);
        \draw[->] (1,0) to (0,1);
        \bluedot{(.75,.75)} node[color=black,anchor=west] {\dotlabel{j^\vee}};
      \end{tikzpicture}
      \ ,\quad 0 \le j \le d-1.
    \end{equation}
  \end{minipage}\par\vspace{\belowdisplayskip}
\end{lem}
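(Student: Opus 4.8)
The plan is to derive each of the three graded identities from an \emph{exact} relation already available in $\tcH^\lambda$, by checking that the difference between the two morphisms in question lies in a strictly smaller filtration degree than each of them, so that they become equal in $\tcH^\lambda_\gr$. Throughout I will use that a counterclockwise bubble carrying $j$ dots has filtration degree $j-(d-1)$, that a dot has degree $1$ and a crossing degree $0$, and that a morphism of $\one$ drawn in the rightmost region of a diagram is central, in particular commuting with any crossing.

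For \eqref{eq:dotslide-graded}, the case $j=0$ is vacuous, so I would take $j\ge 1$ and apply Lemma~\ref{lem:higherNilHecke} with $t=j$. The two diagrams in \eqref{eq:dotslide-graded} — a crossing of two upward strands carrying $j$ dots near the source, respectively near the target, of one of the strands — both have degree $j$, and \eqref{eq:mult_dotslide} expresses their difference as a sum of diagrams consisting of two parallel strands carrying $a$ and $b$ dots with $a+b=j-1$, each of degree $j-1$. Hence the difference lies in $\Hom_{\tcH^\lambda}^{\le j-1}$, which is precisely \eqref{eq:dotslide-graded}.

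For \eqref{eq:cc-bubble-slide-graded}, if $j<d-1$ then both sides vanish by \eqref{rel:cc-bubble}, so I would write $j=d-1+t$ with $t\ge 0$ and invoke Lemma~\ref{lem:bubble_slide}. Its two leading terms — a vertical strand next to a counterclockwise bubble with $d-1+t$ dots, in the two orders — have degree $t$, whereas each correction term consists of a counterclockwise bubble with $d-1+b$ dots, of degree $b$, next to a strand carrying $t-2-b$ dots, of degree $t-2-b$, and so has degree $t-2$. Thus the correction terms lie in $\Hom_{\tcH^\lambda}^{\le t-1}$ and \eqref{eq:cc-bubble-slide-graded} follows; when $t\le 1$ the correction sum is empty and the relation is already exact.

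Finally, for \eqref{eq:dual-dotslide-graded} I would expand the dual dot via its definition \eqref{eq:dot-vee-def} as $\sum_{i=j}^{d-1}(\text{strand with }i-j\text{ dots})\, c_{d-1-i}$. Each $c_{d-1-i}\in\End_{\tcH^\lambda}(\one)$ is drawn in the rightmost region, hence commutes with the crossing, and $c_s\in\Hom_{\tcH^\lambda}^{\le s}(\one,\one)$: this follows by induction on $s$ from the recursion $c_m=-\sum_{t=0}^{m-1}c_t\cdot(\text{counterclockwise bubble with }d-1+m-t\text{ dots})$ extracted from Lemma~\ref{lem:circle-det-expand}, since the $k$-th such bubble has degree at most $k$. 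Sliding the dual dot through the crossing therefore changes the morphism by $\sum_{i=j}^{d-1}\delta_i\, c_{d-1-i}$, where $\delta_i\in\Hom_{\tcH^\lambda}^{\le i-j-1}$ by \eqref{eq:mult_dotslide}, so that $\delta_i c_{d-1-i}\in\Hom_{\tcH^\lambda}^{\le d-2-j}$ for every $i$. Since both sides of \eqref{eq:dual-dotslide-graded} have degree exactly $d-1-j$ — by Proposition~\ref{prop:hom-space-basis} the leading term is the crossing carrying $d-1-j$ ordinary dots, a basis diagram, and the further contributions coming from the $c_{d-1-i}$ with $i<d-1$ carry strictly fewer dots on the crossed strand and so cannot cancel it — this yields \eqref{eq:dual-dotslide-graded} in $\tcH^\lambda_\gr$. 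The hard part will be exactly this last identity: unlike the first two, which are bare degree counts layered on relations already in hand, it forces one to control the filtration degree of the complicated dual-dot expression, both to bound the error term (needing $\deg c_s\le s$) and to certify that the common degree of the two sides really is $d-1-j$, so that the congruence modulo $\Hom_{\tcH^\lambda}^{\le d-2-j}$ records an equality at the correct graded level.
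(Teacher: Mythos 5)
Your treatment of \eqref{eq:dotslide-graded} and \eqref{eq:cc-bubble-slide-graded} is exactly the paper's argument: both are degree counts layered on the exact relations \eqref{eq:mult_dotslide} and Lemma~\ref{lem:bubble_slide}, with the low cases disposed of by \eqref{rel:cc-bubble}, and both are correct as written.

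For \eqref{eq:dual-dotslide-graded} the paper deduces the statement from \eqref{eq:cc-bubble-slide-graded} \emph{and} \eqref{eq:dotslide-graded}, and your expansion-plus-degree-count is essentially that computation; however, the one step you assert without argument --- that ``each $c_{d-1-i}$ is drawn in the rightmost region, hence commutes with the crossing'' --- is precisely the point where \eqref{eq:cc-bubble-slide-graded} must be invoked. The definition \eqref{eq:dot-vee-def} is a local relation: expanding the dual dot sitting on the lower-left portion of the crossing places the closed diagrams $c_{d-1-i}$ in the region between the two strands, separated from the rightmost region by the second strand, whereas expanding the dual dot at the upper right places them in the rightmost region. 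Closed diagrams do not slide across strands on the nose (that is the content of Lemma~\ref{lem:bubble_slide}), so before comparing the two sides you must move the $c_{d-1-i}$ into a common region using \eqref{eq:cc-bubble-slide-graded}, which you have already proved and which only introduces terms of strictly lower filtration degree since $c_s \in \Hom_{\tcH^\lambda}^{\le s}(\one,\one)$. With that sentence added, your estimate $\delta_i\, c_{d-1-i} \in \Hom_{\tcH^\lambda}^{\le d-2-j}$ completes the proof. Two minor remarks: the bound $\deg c_s \le s$ can be read off directly from \eqref{eq:c-def}, since every nonvanishing permutation term of the determinant has total degree $s$, so the recursion from Lemma~\ref{lem:circle-det-expand} is not needed; and your final verification that both sides have degree exactly $d-1-j$ is superfluous, because equality of the degree-$(d-1-j)$ classes in $\tcH^\lambda_\gr$ only requires that the difference lie in $\Hom_{\tcH^\lambda}^{\le d-2-j}$, whether or not those classes are nonzero.
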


\begin{proof}
  For $j \le d$, \eqref{eq:cc-bubble-slide-graded} follows immediately from \eqref{rel:cc-bubble}.  For $j > d$, it follows from Lemma~\ref{lem:bubble_slide} after noting that the terms in the sum there have degree $t-2$, while a counterclockwise circle with $d-1+t$ dots has degree $t$.

  The relation~\eqref{eq:dotslide-graded} follows from \eqref{eq:mult_dotslide}.  Then \eqref{eq:dual-dotslide-graded} follows from \eqref{eq:cc-bubble-slide-graded} and \eqref{eq:dotslide-graded}.
\end{proof}

\subsection{Symmetrizer relations} \label{subsec:symmetrizer-relations}

Through this subsection, we assume $\kk$ is a field of characteristic zero.

For $n \in \N_+$ and $\mu$ a partition of $n$, we have the corresponding minimal idempotent $e_\mu \in \kk S_n \subseteq H_n$.  In particular,
\[
  e_{(n)} = \frac{1}{n!} \sum_{w \in S_n} w
\]
is the complete symmetrizer.  Via the homomorphism $\psi_n$ of \eqref{eq:psi_m-def}, we view $e_\mu$ as an element of $\End_{\cH^\lambda} (\sQ_+^n)$ and, via adjunction, as an element of $\End_{\cH^\lambda}(\sQ_-^n)$.   In the case $\mu = (n)$, we denote these idempotents by a white box labeled $n$ across $n$ strands.  We sometimes use a dashed strand to denote multiple strands, when the number of strands is clear from the diagram.
\[
  \begin{tikzpicture}[anchorbase]
    \draw (0,0) rectangle (2,.5) node[midway] {$n$};
    \draw[<-] (.4,0) to (.4,-0.5);
    \draw[<-] (.8,0) to (.8,-0.5);
    \draw[<-] (1.2,0) to (1.2,-0.5);
    \draw[<-] (1.6,0) to (1.6,-0.5);
    \draw[->] (.4,.5) to (.4,1);
    \draw[->] (.8,.5) to (.8,1);
    \draw[->] (1.2,.5) to (1.2,1);
    \draw[->] (1.6,.5) to (1.6,1);
  \end{tikzpicture}
  \qquad \qquad
  \begin{tikzpicture}[anchorbase]
    \draw (0,0) rectangle (1,.5) node[midway] {$n$};
    \draw[<-,dashed] (.5,0) to (.5,-0.5);
    \draw[->,dashed] (.5,.5) to (.5,1);
  \end{tikzpicture}
\]
Symmetrizers absorb crossings:
\begin{equation} \label{eq:symmetrizer-absorb-crossing}
  \begin{tikzpicture}[anchorbase]
    \draw (0,0) rectangle (2,.5) node[midway] {$n$};
    \draw[<-,dashed] (.4,0) to (.4,-0.5);
    \draw[<-] (.8,0) .. controls (.8,-0.25) and (1.2,-0.25) .. (1.2,-0.5);
    \draw[<-] (1.2,0) .. controls (1.2,-0.25) and (.8,-0.25) .. (.8,-0.5);
    \draw[<-,dashed] (1.6,0) to (1.6,-0.5);
    \draw[->,dashed] (.4,.5) to (.4,1);
    \draw[->] (.8,.5) to (.8,1);
    \draw[->] (1.2,.5) to (1.2,1);
    \draw[->,dashed] (1.6,.5) to (1.6,1);
  \end{tikzpicture}
  \ = \
  \begin{tikzpicture}[anchorbase]
    \draw (0,0) rectangle (2,.5) node[midway] {$n$};
    \draw[<-,dashed] (.4,0) to (.4,-0.5);
    \draw[<-] (.8,0) to (.8,-0.5);
    \draw[<-] (1.2,0) to (1.2,-0.5);
    \draw[<-,dashed] (1.6,0) to (1.6,-0.5);
    \draw[->,dashed] (.4,.5) to (.4,1);
    \draw[->] (.8,.5) to (.8,1);
    \draw[->] (1.2,.5) to (1.2,1);
    \draw[->,dashed] (1.6,.5) to (1.6,1);
  \end{tikzpicture}
  \ = \
  \begin{tikzpicture}[anchorbase]
    \draw (0,0) rectangle (2,.5) node[midway] {$n$};
    \draw[<-,dashed] (.4,0) to (.4,-0.5);
    \draw[<-] (.8,0) to (.8,-0.5);
    \draw[<-] (1.2,0) to (1.2,-0.5);
    \draw[<-,dashed] (1.6,0) to (1.6,-0.5);
    \draw[->,dashed] (.4,.5) to (.4,1);
    \draw[->] (.8,.5) .. controls (.8,.75) and (1.2,.75) .. (1.2,1);
    \draw[->] (1.2,.5) .. controls (1.2,.75) and (.8,.75) .. (.8,1);
    \draw[->,dashed] (1.6,.5) to (1.6,1);
  \end{tikzpicture}
\end{equation}
It also follows from \eqref{rel:braid} that symmetrizers pass through crossings (for either orientation of the solid strand):
\[
  \begin{tikzpicture}[anchorbase]
    \draw (0,0) rectangle (1,.5) node[midway] {$n$};
    \draw[<-,dashed] (.5,0) .. controls (0.5,-0.5) and (1.5,-0.5) .. (1.5,-1);
    \draw[->,dashed] (.5,.5) .. controls (0.5,1) and (-0.5,1) .. (-0.5,1.5);
    \draw (-0.5,-1) .. controls (-0.5,1.5) and (0,1) .. (1.5,1.5);
  \end{tikzpicture}
  \ =\
  \begin{tikzpicture}[anchorbase]
    \draw (0,0) rectangle (1,.5) node[midway] {$n$};
    \draw[<-,dashed] (.5,0) .. controls (0.5,-0.5) and (1.5,-0.5) .. (1.5,-1);
    \draw[->,dashed] (.5,.5) .. controls (0.5,1) and (-0.5,1) .. (-0.5,1.5);
    \draw (-0.5,-1) .. controls (1,-0.5) and (1.5,-1) .. (1.5,1.5);
  \end{tikzpicture}
\]

Corresponding to the idempotent $e_\mu$, for $\mu$ a partition of $n$, we have the object
\[
  \sQ_\pm^{\mu} := (\sQ_\pm^n, e_\mu) \in \Ob \cH^\lambda.
\]
By convention, we set $\sQ_\pm^\varnothing = \one$.

If $\bb = (b_1,b_2,\dotsc,b_\ell) \in \{0,1,\dotsc,d-1\}^\ell$, we define
\begin{gather*}
  \begin{tikzpicture}[anchorbase]
    \draw[<-,dashed] (0,1) -- (0,0);
    \bluedot{(0,0.5)} node[anchor=east, color=black] {\dotlabel{\bb}};
  \end{tikzpicture}
  \ = \
  \begin{tikzpicture}[anchorbase]
    \draw[<-] (0,1) -- (0,0);
    \bluedot{(0,0.5)} node[anchor=east, color=black] {\dotlabel{b_\ell}};
    \draw (0.7,0.5) node {$\cdots$};
    \draw[<-] (1.4,1) -- (1.4,0);
    \bluedot{(1.4,0.5)} node[anchor=west, color=black] {\dotlabel{b_2}};
    \draw[<-] (2.4,1) -- (2.4,0);
    \bluedot{(2.4,0.5)} node[anchor=west, color=black] {\dotlabel{b_1}};
  \end{tikzpicture},
  \qquad
  \begin{tikzpicture}[anchorbase]
    \draw[<-,dashed] (0,1) -- (0,0);
    \bluedot{(0,0.5)} node[anchor=east, color=black] {\dotlabel{\bb^\vee}};
  \end{tikzpicture}
  \ = \
  \begin{tikzpicture}[anchorbase]
    \draw[<-] (0,1) -- (0,0);
    \bluedot{(0,0.5)} node[anchor=east, color=black] {\dotlabel{b_\ell^\vee}};
    \draw (0.7,0.5) node {$\cdots$};
    \draw[<-] (1.4,1) -- (1.4,0);
    \bluedot{(1.4,0.5)} node[anchor=west, color=black] {\dotlabel{b_2^\vee}};
    \draw[<-] (2.4,1) -- (2.4,0);
    \bluedot{(2.4,0.5)} node[anchor=west, color=black] {\dotlabel{b_1^\vee}};
  \end{tikzpicture},
  \\
  S_\bb = \{w \in S_\ell \mid b_{w(i)} = b_i \text{ for all } 1 \le i \le \ell\}.
\end{gather*}

For $\ell \in \N_+$, let
\[
  \bB_\ell = \{(b_1,b_2,\dotsc,b_\ell) \mid 0 \le b_1 \le b_2 \le \dotsb b_\ell \le d-1\}.
\]
Note that
\begin{equation} \label{eq:dot-sum-counting}
  \sum_{\bb \in \{0,1,\dotsc,d-1\}^\ell}\
  \begin{tikzpicture}[anchorbase]
    \draw (-0.5,0.5) rectangle (0.5,1) node[midway] {$\ell$};
    \draw (-0.5,-0.5) rectangle (0.5,-1) node[midway] {$\ell$};
    \draw[->,dashed] (0,-0.5) -- (0,0.5);
    \bluedot{(0,0)} node[color=black, anchor=east] {\dotlabel{\bb}};
  \end{tikzpicture}
  \ =\
  \sum_{\bb \in \bB_\ell} \frac{\ell!}{|S_\bb|}\
  \begin{tikzpicture}[anchorbase]
    \draw (-0.5,0.5) rectangle (0.5,1) node[midway] {$\ell$};
    \draw (-0.5,-0.5) rectangle (0.5,-1) node[midway] {$\ell$};
    \draw[->,dashed] (0,-0.5) -- (0,0.5);
    \bluedot{(0,0)} node[color=black, anchor=east] {\dotlabel{\bb}};
  \end{tikzpicture}
\end{equation}

For $\bb = (b_1,\dotsc,b_\ell) \in \{0,1,\dotsc,d-1\}^\ell$, define
\begin{equation} \label{eq:alpha_bvee-def}
  \alpha_{\bb^\vee} =\
  \begin{tikzpicture}[anchorbase]
    \draw (-0.5,1) rectangle (-2,1.5) node[midway] {$m-\ell$};
    \draw (0.5,1) rectangle (2,1.5) node[midway] {$n-\ell$};
    \draw (-0.5,-1) rectangle (-2,-1.5) node[midway] {$n$};
    \draw (0.5,-1) rectangle (2,-1.5) node[midway] {$m$};
    \draw[<-,dashed] (-0.8,-1) arc (180:0:0.8);
    \bluedot{(-0.7,-0.6)} node[color=black,anchor=west] {\dotlabel{\bb^\vee}};
    \draw[<-,dashed] (-1.25,1) .. controls (-1.25,0.3) and (1.7,-0.3) .. (1.7,-1);
    \draw[->,dashed] (1.25,1) .. controls (1.25,0.3) and (-1.7,-0.3) .. (-1.7,-1);
  \end{tikzpicture}
  \ \colon  \sQ_-^{(n)} \otimes \sQ_+^{(m)} \to \sQ_+^{(m-\ell)} \otimes \sQ_-^{(n-\ell)}
\end{equation}
and
\begin{equation} \label{eq:beta_b-def}
  \beta_\bb =\
  \begin{tikzpicture}[anchorbase]
    \draw (-0.5,-1) rectangle (-2,-1.5) node[midway] {$m-\ell$};
    \draw (0.5,-1) rectangle (2,-1.5) node[midway] {$n-\ell$};
    \draw (-0.5,1) rectangle (-2,1.5) node[midway] {$n$};
    \draw (0.5,1) rectangle (2,1.5) node[midway] {$m$};
    \draw[->,dashed] (-0.8,1) arc (180:360:0.8);
    \bluedot{(+0.72,0.6)} node[color=black,anchor=east] {\dotlabel{\bb}};
    \draw[->,dashed] (-1.25,-1) .. controls (-1.25,-0.3) and (1.7,0.3) .. (1.7,1);
    \draw[<-,dashed] (1.25,-1) .. controls (1.25,-0.3) and (-1.7,0.3) .. (-1.7,1);
  \end{tikzpicture}
  \ \colon \sQ_+^{(m-\ell)} \otimes \sQ_-^{(n-\ell)} \to  \sQ_-^{(n)} \otimes \sQ_+^{(m)}.
\end{equation}

\begin{lem} \label{lem:alpha-beta-composition}
  Suppose $0 \le k,\ell \le \min \{m,n\}$.  Then, in $\cH^\lambda_\gr$, we have
  \[
    \alpha_{\bc^\vee} \circ \beta_\bb \circeq \delta_{\bc, \bb} \frac{|S_\bb| (n-k)! (m-k)!}{n!m!}    \id_{\sQ_+^{(m-k)} \sQ_-^{(n-k)}}
    \quad \text{for all } \bb \in \bB_\ell,\ \bc \in \bB_k.
  \]
\end{lem}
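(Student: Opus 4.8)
The plan is to compute the composite directly in $\cH^\lambda_\gr$, where Lemma~\ref{lem:circle-duality} together with the graded slide relations \eqref{eq:dotslide-graded}--\eqref{eq:dual-dotslide-graded} make the dual dots tractable. First I would stack $\beta_\bb$ below $\alpha_{\bc^\vee}$ and merge the two symmetrizer boxes on $n$ strands that meet in the middle (using $e_{(n)}^2 = e_{(n)}$), and likewise the two on $m$ strands. The result is a diagram with a single $n$-box and a single $m$-box, joined on one side by the $\ell$-strand cup of $\beta_\bb$ (carrying the dots $\bb$) and on the other by the $k$-strand cap of $\alpha_{\bc^\vee}$ (carrying the dual dots $\bc^\vee$), with the remaining strands running down to the source symmetrizers $e_{(m-\ell)},e_{(n-\ell)}$ and up to the target symmetrizers $e_{(m-k)},e_{(n-k)}$.

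Next I would put this diagram into a standard form using the symmetrizer calculus: symmetrizers absorb crossings \eqref{eq:symmetrizer-absorb-crossing} and pass through crossings (the displayed relation following it), which untangles the through-strands and, via identities such as $e_{(n)} = e_{(n)}(e_{(\ell)}\otimes e_{(n-\ell)})$, lets one split off the way the two cables attach to the big symmetrizers. The cup of $\beta_\bb$ and the cap of $\alpha_{\bc^\vee}$, each running from the $n$-box to the $m$-box, can then be brought together; passing through the symmetrizers $e_{(n)}$ and $e_{(m)}$ effectively averages over all matchings of the $\ell$ cup-strands against the $k$ cap-strands, and each matching closes the matched pair into a counterclockwise circle. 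By Lemma~\ref{lem:circle-duality} (applied after a $180^\circ$ rotation), a circle built from a cup-strand carrying $b_a$ dots and a cap-strand carrying $c_{a'}^\vee$ dual dots contributes the scalar $\delta_{b_a,c_{a'}}$.

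What is left is a sum over matchings of products of Kronecker deltas, weighted by the normalizations $\tfrac{1}{n!\,m!}$ coming from $e_{(n)}$ and $e_{(m)}$ and by $(n-k)!\,(m-k)!$ absorbed from the split-off symmetrizers. For a nonzero term every closed circle must be dot-balanced; since $\bb$ and $\bc$ are weakly increasing, this forces $\ell = k$ and, after reindexing, $\bb = \bc$, so the composite vanishes in $\cH^\lambda_\gr$ unless $\bb = \bc$. When $\bb = \bc$, the number of matchings of the $\ell$ cable-strands respecting the dot-multiplicities is exactly $|S_\bb|$, so the sum collapses to $\tfrac{|S_\bb|(n-k)!(m-k)!}{n!\,m!}$ times the diagram obtained by deleting all the circles, which is precisely $\id_{\sQ_+^{(m-k)}\sQ_-^{(n-k)}}$; this is the claimed identity.

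I expect the main obstacle to be the combinatorial bookkeeping in the last two steps: carefully tracking the symmetrizer normalizations and double-coset multiplicities, and checking that in the associated graded category only the fully-closed configurations survive to leading order, so that no residual dotted cups or caps are left over. A convenient way to sidestep part of this when $\bb=\bc$ (hence $\ell=k$) is to invoke Lemma~\ref{lem:filtered-dimensions}: any endomorphism of $\sQ_+^{(m-k)}\sQ_-^{(n-k)}$ of filtration degree $\le 0$ is automatically a scalar multiple of the identity, so it suffices to pin down that one scalar --- for instance by composing further with suitable cups and caps to reduce to a closed diagram and evaluating via \eqref{rel:cc-bubble} and Lemma~\ref{lem:circle-det-expand}.
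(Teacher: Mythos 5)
Your overall strategy is the one the paper itself uses (its proof is deferred to the argument of \cite[Th.~9.2]{RS17}): expand the middle symmetrizers, close the cup strands of $\beta_\bb$ against the cap strands of $\alpha_{\bc^\vee}$ into circles, evaluate each circle by Lemma~\ref{lem:circle-duality}, and count, with the normalizations $\tfrac{1}{n!m!}$, the absorbed $(n-k)!(m-k)!$, and the $|S_\bb|$ admissible matchings. However, there is a genuine gap at exactly the step you flag as ``the main obstacle,'' and the justification you offer for it does not work. When you write $e_{(n)}=\tfrac{1}{n!}\sum_{w\in S_n}w$ and $e_{(m)}=\tfrac{1}{m!}\sum_{v\in S_m}v$, most terms do \emph{not} close cup strands against cap strands: generically a cup strand is routed by $w$ or $v$ to a through strand or to the boundary, leaving residual dotted cups/caps, and this is the situation for \emph{every} term when $k\neq\ell$. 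Your stated reason for discarding these configurations --- that closed circles must be dot-balanced, which ``forces $\ell=k$'' --- only applies once everything is already closed into circles; it says nothing about the unmatched configurations, and those are not obviously of lower filtration degree either (a stray dotted cup keeps its dots). So as written the argument does not close.

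The missing idea is relation \eqref{rel:left-curl-zero}: in the paper's (hidden) argument, one observes that after substituting $w$ and $v$ for the boxes, the resulting diagram contains a left curl --- and is therefore \emph{identically} zero, not merely lower order --- unless $w=(w_1,w_2)\in S_{n-k}\times S_k$ and $v=(w_2^{-1},v_2)\in S_k\times S_{m-k}$. This single observation disposes of the $k\neq\ell$ case and reduces the $k=\ell$ case to the circle evaluations of Lemma~\ref{lem:circle-duality}, after which your counting of $w_1$, $v_2$ and the $|S_\bb|$ choices of $w_2$ gives the stated coefficient. Your fallback via Lemma~\ref{lem:filtered-dimensions} is a reasonable way to see that, for $\bb=\bc$ (and $d>1$), the composite is a scalar multiple of the identity in degree $0$, but it does not address the vanishing when $\bb\neq\bc$ (in particular when $k\neq\ell$), and pinning down the scalar by closing off the diagram again runs into the same need to kill the non-matching symmetrizer terms --- i.e.\ the left-curl argument is not sidestepped, it is the crux.
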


\begin{proof}
  Using Lemma~\ref{lem:circle-duality}, the proof is almost identical to the analogous argument in the proof of \cite[Th.~9.2]{RS17} and so will be omitted.
  \details{
    We work in the graded category $\cH^\lambda_\gr$.  We have
    \[
      \alpha_{\bc^\vee} \circ \beta_\bb \circeq
      \begin{tikzpicture}[anchorbase]
        \draw (-0.5,-1) rectangle (-2,-1.5) node[midway] {$m-\ell$};
        \draw (0.5,-1) rectangle (2,-1.5) node[midway] {$n-\ell$};
        \draw (-0.5,1) rectangle (-2,1.5) node[midway] {$n$};
        \draw (0.5,1) rectangle (2,1.5) node[midway] {$m$};
        \draw[->,dashed] (-0.8,1) arc (180:360:0.8);
        \bluedot{(+0.72,0.6)} node[color=black,anchor=east] {\dotlabel{\bb}};
        \draw[->,dashed] (-1.25,-1) .. controls (-1.25,-0.3) and (1.7,0.3) .. (1.7,1);
        \draw[<-,dashed] (1.25,-1) .. controls (1.25,-0.3) and (-1.7,0.3) .. (-1.7,1);
        \draw (-0.5,3.5) rectangle (-2,4) node[midway] {$m-k$};
        \draw (0.5,3.5) rectangle (2,4) node[midway] {$n-k$};
        \draw[<-,dashed] (-0.8,1.5) arc (180:0:0.8);
        \bluedot{(-0.72,1.9)} node[color=black,anchor=west] {\dotlabel{\bc^\vee}};
        \draw[<-,dashed] (-1.25,3.5) .. controls (-1.25,2.8) and (1.7,2.2) .. (1.7,1.5);
        \draw[->,dashed] (1.25,3.5) .. controls (1.25,2.8) and (-1.7,2.2) .. (-1.7,1.5);
      \end{tikzpicture}\ .
    \]
    If $k \ne \ell$, then, after expanding the symmetrizers, all diagrams involved will contain left curls and hence be zero by \eqref{rel:left-curl-zero}.  Now assume $k = \ell$ and consider the expansions of the symmetrizers
    \[
      e_{(n)} = \frac{1}{n!} \sum_{w \in S_n} w
      \quad \text{and} \quad
      e_{(m)} = \frac{1}{m!} \sum_{v \in S_m} v.
    \]
    If we replace the boxes labeled $n$ and $m$ by $w \in S_n$ and $v \in S_m$, respectively, then the resulting diagram will have a left curl unless
    \[
      w = (w_1,w_2) \in S_{n-k} \times S_k \subseteq S_n
      \quad \text{and} \quad
      v = (w_2^{-1}, v_2) \in S_k \times S_{m-k} \subseteq S_m,
    \]
    for some $w_1 \in S_{n-k}$, $w_2 \in S_k$, and $v_2 \in S_{m-k}$.  Furthermore, in light of Lemma~\ref{lem:circle-duality}, the diagram is zero if $\bb \ne \bc$.  If $\bb = \bc$, there are precisely $|S_\bb|$ choices for $w_2$ that match the $\bb$ and $\bb^\vee$ correctly, evaluating to one.  For all other choices, the diagram is zero.
  }
\end{proof}

\begin{prop} \label{prop:associated-graded-HB}
  The category obtained from $\tcH^\lambda_\gr$ by imposing the extra local relation that $d$ dots is equal to zero is isomorphic to the Heisenberg category $\cH_B'$ of \cite[\S6]{RS17} with $B = \kk[x]/(x^d)$ and trace map $\tr_B \colon B \to \kk$ given by $\tr(x^j) = \delta_{j,d-1}$.
\end{prop}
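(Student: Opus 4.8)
Write $\overline{\tcH^\lambda_\gr}$ for the category in the statement. The plan is to produce a $\kk$-linear monoidal functor $\Phi\colon \overline{\tcH^\lambda_\gr}\to\cH_B'$ and a functor $\Psi\colon\cH_B'\to\overline{\tcH^\lambda_\gr}$ in the opposite direction, mutually inverse on generating morphisms, by matching up generators and checking relations --- after first working out what the presentation of $\cH_B'$ from \cite{RS17} looks like for this particular $(B,\tr)$. Since $\tr(x^j)=\delta_{j,d-1}$, the basis $\{1,x,\dots,x^{d-1}\}$ of $B$ has dual basis $\{x^{d-1},x^{d-2},\dots,1\}$, so the $\tr$-dual of $x^j$ is $x^{d-1-j}$ and the canonical element of $B\otimes B$ controlling the dot-slide and inversion relations of $\cH_B'$ is $\sum_{j=0}^{d-1}x^j\otimes x^{d-1-j}$. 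Thus $\cH_B'$ is generated by up/down crossings, right cups and caps, left cups and caps, and $B$-tokens, subject (for this $B$) to: the braid relation and crossing-squared $=$ identity (cf.\ \eqref{rel:braid}, \eqref{rel:s-squared}); free passage of tokens through crossings, swapping strands (with no correction term, as $B$ is commutative); vanishing of the left curl (cf.\ \eqref{rel:left-curl-zero}); the evaluation of a counterclockwise bubble carrying $b\in B$ as $\tr(b)\,\id_{\one}$; and the relation expressing the down-up double crossing as the up-down double crossing plus $\sum_{j=0}^{d-1}$ of the cap--cup carrying $x^j$ on top and $x^{d-1-j}$ on the bottom.

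I would then record what happens under $\gr$ and the quotient. By Lemma~\ref{lem:filtered-dimensions} together with Propositions~\ref{prop:bubble-iso} and \ref{prop:DH-Q-iso}, in $\tcH^\lambda_\gr$ one has $\End(\one)=\kk[\bar y_1,\bar y_2,\dots]$ with $\bar y_k$ the counterclockwise bubble with $d+k$ dots (degree $k+1$), and $\End(\sQ_+^n)\cong\kk[x_1,\dots,x_n]\rtimes\kk S_n$. The decisive observation is that the $\lambda$-dependent case $j=d$ of \eqref{rel:cc-bubble} evaporates in the associated graded: the counterclockwise bubble with $d$ dots is a composite of morphisms of degrees $0$, $d$ and $1-d$, so it represents a class in $\gr^1\End_{\tcH^\lambda}(\one)=0$ and hence equals $0$ in $\tcH^\lambda_\gr$, whatever $\lambda$ is. A short expansion of the determinant in \eqref{eq:c-def} then yields $c_0=1$, $c_1=0$ and $c_s=-\bar y_{s-1}$ for $s\ge2$ in $\tcH^\lambda_\gr$. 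Imposing $d$ dots $=0$ kills every $x_i^d$ and every $\bar y_k$, so in $\overline{\tcH^\lambda_\gr}$ one has $\End(\one)=\kk$, $\End(\sQ_+^n)\cong B^{\otimes n}\rtimes\kk S_n$, and $c_0=1$ with $c_s=0$ for all $s\ge1$. Substituting this into \eqref{eq:dot-vee-def} shows that the dual dot $j^\vee$ is just $d-1-j$ ordinary dots, i.e.\ precisely the dual-basis token $x^{d-1-j}$; and \eqref{rel:cc-bubble} becomes ``a counterclockwise bubble with $j$ dots equals $\delta_{j,d-1}=\tr(x^j)$''. (Lemma~\ref{lem:bubble-conversion} and \eqref{eq:dots-vs-right-curls} similarly normalize clockwise bubbles and curls.)

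With these identifications, define $\Phi$ on generators by $\sQ_\pm\mapsto{\uparrow},{\downarrow}$, one dot $\mapsto$ the token $x$ (so $j$ dots $\mapsto x^j$ and $j^\vee\mapsto x^{d-1-j}$), right cups/caps and crossings to their $\cH_B'$ counterparts, and the degree-$(d-1)$, resp.\ degree-$(1-d)$, left cup, resp.\ cap, to the left cup, resp.\ cap, of $\cH_B'$ (the degrees agree because the Frobenius form has degree $1-d$). Checking $\Phi$ is well defined amounts to verifying the relations of $\overline{\tcH^\lambda_\gr}$: \eqref{rel:braid}, \eqref{rel:s-squared}, \eqref{rel:left-curl-zero} go across verbatim; \eqref{eq:dotslide-graded} and its analogues for the remaining crossing orientations become free token passage; \eqref{rel:cc-bubble} becomes bubble-evaluation; and \eqref{rel:down-up-double-crossing}, after the substitution $j^\vee=x^{d-1-j}$ of the preceding paragraph, is exactly the inversion relation of $\cH_B'$, while \eqref{rel:up-down-double-crossing} is the remaining adjunction relation. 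Defining $\Psi$ by the inverse assignments on generators and checking the mirror list of $\cH_B'$-relations --- again, the only substantial one being the inversion relation, read as \eqref{rel:down-up-double-crossing} --- produces a functor with $\Phi\Psi=\id$, $\Psi\Phi=\id$ on generators, hence the asserted isomorphism. (One can also bypass the relation-check for $\Psi$: $\Phi$ carries the basis $B(\epsilon,\epsilon')$ of Proposition~\ref{prop:hom-space-basis} --- whose images are the bubble-free diagrams with at most $d-1$ dots on each strand --- bijectively to the standard basis of $\Hom_{\cH_B'}(\sQ_\epsilon,\sQ_{\epsilon'})$, so $\Phi$ is fully faithful and obviously essentially surjective.) The only truly delicate step is the dual-dot bookkeeping, and it rests entirely on the evaporation of the $\sum_i i\lambda_i$ term, i.e.\ on the degree count showing that the $d$-dotted counterclockwise bubble already vanishes in $\tcH^\lambda_\gr$; the rest is a routine relation-by-relation comparison that succeeds because the two presentations have been set up to look alike.
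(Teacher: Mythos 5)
Your argument is correct and takes essentially the same route as the paper's proof: note that the $\lambda$-dependent bubble evaluation and all $c_s$ with $s\ge 1$ vanish (the paper gets $c_s=0$ directly in the quotient from \eqref{eq:c-def}), so by \eqref{eq:dot-vee-def} the dual dot $j^\vee$ becomes $d-1-j$ ordinary dots, and the local relations then coincide with the defining relations of $\cH_B'$; your explicit two-functor and basis-matching packaging is just a more detailed write-up of this comparison of presentations. One inessential slip: the aside $c_s=-\bar y_{s-1}$ in $\tcH^\lambda_\gr$ fails for $s\ge 4$ (e.g.\ $c_4$ also contains a $\bar y_1^{\,2}$ term), but your proof only uses that $c_s=0$ for $s\ge 1$ once the relation ``$d$ dots $=0$'' is imposed, which is correct.
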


\begin{proof}
  Let $\cC$ be the category obtained from $\tcH^\lambda_\gr$ by imposing the extra relation that $d$ dots is equal to zero.  It follows from \eqref{eq:c-def} that $c_s = 0$ in $\cC$ for all $s \ge 1$.  Thus, \eqref{eq:dot-vee-def} gives
  \[
    \begin{tikzpicture}[anchorbase]
      \draw[->] (0,0) -- (0,1);
      \bluedot{(0,0.5)} node [anchor=east,color=black] {\dotlabel{j^\vee}};
    \end{tikzpicture}
    \ = \
    \begin{tikzpicture}[anchorbase]
      \draw[->] (0,0) -- (0,1);
      \bluedot{(0,0.5)} node [anchor=west,color=black] {\dotlabel{d-1-j}};
    \end{tikzpicture}
    \ .
  \]
  Therefore, the local relations in $\cC$ become precisely the defining local relations of $\cH_B'$ (see \cite[(6.9)--(6.16)]{RS17}), where the dot in $\cC$ corresponds to a dot labelled $x$ in $\cH_B'$.
\end{proof}

%
\section{The Grothendieck ring} \label{sec:Groth-group}
%

Throughout this section we assume that $\kk$ is a field of characteristic zero.

\subsection{The Heisenberg algebra} \label{subsec:Heisenberg-algebra}

Recall that $d = \sum_{i \in I} \lambda_i$ is a positive integer.  Let $\Sym_\Q^+$ and $\Sym_\Q^-$ be two copies of the Hopf algebra of symmetric functions over $\Q$.  Consider the Hopf pairing
\[
  \langle -, - \rangle_d \colon \Sym_\Q^- \times \Sym_\Q^+ \to \Q,\quad \langle p_n^-, p_m^+ \rangle_d = nd \delta_{n,m},
\]
where $p_n^\pm$ denotes the $n$-th power sum in $\Sym^\pm$.  (The Hopf pairing is uniquely determined by its values on these elements.)  One can show that the pairing of two complete symmetric functions is an integer.  (This follows, for example, by comparing the coefficients appearing in \cite[Th.~5.3]{Sua15} to \cite[(2.2)]{Sua15}.)  Thus we can restrict to obtain a bilinear form $\Sym_\Z^- \times \Sym_\Z^+ \to \Z$, where $\Sym_\Z^\pm$ are copies of the Hopf algebra of symmetric functions over $\Z$.

The \emph{level $d$ Heisenberg algebra} is the Heisenberg double $\fh_d := \Sym_\Z^+ \# \Sym_\Z^-$.  One can obtain a presentation by choosing any sets of generators of $\Sym_\Z^\pm$.  Choosing the complete symmetric functions $h_n^\pm$, $n \in \N$, we obtain that $\fh_d$ is the unital associative $\Z$-algebra generated by $h_n^\pm$, $n \in \N$, subject to the relations $h_0^+=h_0^-=1$ and
\begin{equation} \label{eq:heisenberg-presentation}
  h_n^+ h_m^+ = h_m^+ h_n^+,\quad
  h_n^- h_m^- = h_m^- h_n^-,\quad
  h_n^- h_m^+ = \sum_{r=0}^{\min\{ m,n\}} \binom{d+r-1}{r}h_{m-r}^+ h_{n-r}^- ,\quad
  n, m \in \N.
\end{equation}
See \cite[Th.~5.3]{Sua15} and \cite[Prop.~A.1]{LRS18}.  (Readers not familiar with the Heisenberg double construction may take the presentation \eqref{eq:heisenberg-presentation} as the definition of $\fh_d$.)

\subsection{Decomposition of the identity}

Fix $m,n \in \N$ and define $\beta_\bb$ as in \eqref{eq:beta_b-def}.

\begin{lem} \label{lem:beta-duals-exist}
  There exist
  \[
    \theta_\bb \colon \sQ_-^{(n)} \otimes \sQ_+^{(m)} \to \sQ_+^{(n-k)} \otimes \sQ_-^{(n-k)},\quad \bb \in \bB_k,\ 0 \le k \le \min \{m,n\},
  \]
  such that
  \begin{equation} \label{eq:theta-beta-duality}
    \theta_\bc \circ \beta_\bb = \delta_{\bc,\bb} \id_{\sQ_+^{(m-k)} \sQ_-^{(n-k)}}
    \quad \text{for all } \bb \in \bB_\ell,\ \bc \in \bB_k,\ 0 \le k,\ell \le \min \{m,n\}.
  \end{equation}
\end{lem}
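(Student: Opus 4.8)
The plan is to produce the $\theta_\bb$ as the components of a single morphism $\theta$, obtained by inverting a ``Gram matrix'' assembled from the compositions computed in Lemma~\ref{lem:alpha-beta-composition}. Set
\[
  X := \bigoplus_{k=0}^{\min\{m,n\}}\ \bigoplus_{\bb \in \bB_k} \sQ_+^{(m-k)} \otimes \sQ_-^{(n-k)},
\]
let $\beta \colon X \to \sQ_-^{(n)} \otimes \sQ_+^{(m)}$ be the morphism whose $\bb$-component is $\beta_\bb$, and let $\alpha \colon \sQ_-^{(n)} \otimes \sQ_+^{(m)} \to X$ be the morphism whose $\bb$-component, for $\bb \in \bB_k$, is the rescaled map $\tfrac{n!\,m!}{|S_\bb|\,(n-k)!\,(m-k)!}\,\alpha_{\bb^\vee}$, with $\alpha_{\bb^\vee}$ as in \eqref{eq:alpha_bvee-def}. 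It then suffices to show that $\Phi := \alpha \circ \beta \in \End_{\cH^\lambda}(X)$ is invertible: granting this, $\theta := \Phi^{-1} \circ \alpha \colon \sQ_-^{(n)} \otimes \sQ_+^{(m)} \to X$ has components $\theta_\bb$ indexed by the $(k,\bb)$ with $\bb \in \bB_k$, and $\theta \circ \beta = \Phi^{-1} \circ \alpha \circ \beta = \id_X$, which, read off componentwise, is exactly \eqref{eq:theta-beta-duality}.

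To see that $\Phi$ is invertible, I would pass to the associated graded category. By Lemma~\ref{lem:alpha-beta-composition} and the choice of normalization, the $(\bc,\bb)$-entry of $\Phi$ satisfies $\tfrac{n!\,m!}{|S_\bb|\,(n-k)!\,(m-k)!}\,\alpha_{\bc^\vee}\circ\beta_\bb \circeq \delta_{\bc,\bb}\,\id$; that is, $\Phi \in \End^{\le 0}_{\cH^\lambda}(X)$ and $N := \Phi - \id_X$ lies in $\End^{\le -1}_{\cH^\lambda}(X)$.

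The key point is that the filtration on $\End_{\cH^\lambda}(X)$ is bounded below. Indeed, $X$ is a finite direct sum, and by Proposition~\ref{prop:hom-space-basis} each space $\Hom_{\cH^\lambda}(\sQ_+^{(m-\ell)}\sQ_-^{(n-\ell)},\sQ_+^{(m-k)}\sQ_-^{(n-k)})$ is spanned by planar diagrams of degree bounded below: dots and bubbles have nonnegative degree, and the number of cups and caps occurring in such a diagram is bounded in terms of $m$ and $n$. Hence $\End^{\le -j}_{\cH^\lambda}(X) = 0$ for $j$ large enough. Since the filtration respects composition, $N^j \in \End^{\le -j}_{\cH^\lambda}(X)$, so $N$ is nilpotent and $\Phi = \id_X + N$ is invertible with inverse $\sum_{i \ge 0} (-N)^i$ (a finite sum). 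This completes the argument.

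The one place requiring care is the middle step: passing from the $\cH^\lambda_\gr$-identity of Lemma~\ref{lem:alpha-beta-composition} to the filtered statement that each entry $\alpha_{\bc^\vee}\circ\beta_\bb$ of $\Phi$ lies in $\Hom^{\le 0}_{\cH^\lambda}$ with leading term $\delta_{\bc,\bb}\,(\text{scalar})\,\id$. This is precisely where the bad behaviour of dual dots under crossings (Remark~\ref{rem:dualdots}) could, a priori, introduce corrections; one must check that every such correction has strictly smaller filtered degree, so that it is absorbed into $N$. (In fact, by the left-curl argument in the proof of Lemma~\ref{lem:alpha-beta-composition}, the off-diagonal entries with $\bc \ne \bb$ vanish already in $\cH^\lambda$, so $\Phi$ is block diagonal; but the argument above does not need this.)
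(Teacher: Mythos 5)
Your construction is the paper's: the rescaled $\alpha_{\bb^\vee}$ are the approximate duals, Lemma~\ref{lem:alpha-beta-composition} says the Gram matrix is ``identity plus lower order,'' and one inverts. The genuine gap is exactly the middle step you flag but do not resolve: the inference from the $\circeq$-identity to ``$\Phi \in \End^{\le 0}_{\cH^\lambda}(X)$ and $N = \Phi - \id_X \in \End^{\le -1}_{\cH^\lambda}(X)$'' is false in general. Writing $\deg\bb = b_1 + \dotsb + b_\ell$, one has $\beta_\bb \in \Hom^{\le \deg\bb}_{\cH^\lambda}$ (the cup strands and symmetrizers have degree $0$, the dots contribute $\deg\bb$) and $\alpha_{\bc^\vee} \in \Hom^{\le -\deg\bc}_{\cH^\lambda}$ (each cap strand contributes $1-d$ and each dual dot $c_i^\vee$ at most $d-1-c_i$), so the $(\bc,\bb)$-entry of $\Phi$ lies in $\Hom^{\le \deg\bb - \deg\bc}_{\cH^\lambda}$. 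The relation $\circeq$ is equality in the associated graded category, i.e.\ modulo the filtration piece \emph{one below the top degree of that entry}; so Lemma~\ref{lem:alpha-beta-composition} only yields $N_{\bc\bb} \in \Hom^{\le \deg\bb - \deg\bc - 1}_{\cH^\lambda}$, which is a \emph{positive} bound whenever $\deg\bb > \deg\bc + 1$. The dual-dot and bubble-slide corrections of Remark~\ref{rem:dualdots} live precisely in these positive-degree pieces: they are lower order relative to $\deg\bb - \deg\bc$, not of absolute degree $\le -1$, so they are not ``absorbed into $N$'' in your sense, and the argument ``$N$ has negative degree, the filtration is bounded below, hence $N$ is nilpotent'' does not follow as written. (For the same reason your closing parenthetical overstates the left-curl argument: it kills the entries with $k \ne \ell$ on the nose, but for $k = \ell$, $\bb \ne \bc$ the vanishing holds only modulo lower-order terms.)

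The argument is repairable, and the repair is essentially the paper's proof. With the correct entrywise bound one gets $(N^j)_{\bc\bb} \in \Hom^{\le \deg\bb - \deg\bc - j}_{\cH^\lambda}$; since the $\deg\bb$ are bounded and, by Proposition~\ref{prop:hom-space-basis}, each relevant Hom space vanishes in sufficiently negative degree, $N$ is nilpotent and $\Phi$ is invertible. Equivalently (and this is what the paper does), order the $\bb$ by $\deg\bb$, use Lemma~\ref{lem:filtered-dimensions} to upgrade the estimate to the exact equality $\theta'_\bc \circ \beta_\bb = \delta_{\bc,\bb}\,\id$ whenever $\deg\bb \le \deg\bc$, and then invert the resulting unitriangular matrix; your ``$\id_X + N$'' formulation is just this inversion, but it needs the weighted (shifted-by-$\deg\bb$) bookkeeping rather than the absolute bound $N \in \End^{\le -1}_{\cH^\lambda}(X)$.
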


\begin{proof}
  For all $\bb \in \bB_\ell$, define
  \[
    \theta'_\bb = \frac{n!m!}{|S_\bb| (n-k)! (m-k)!} \alpha_{\bb^\vee}.
  \]
  By Lemma~\ref{lem:alpha-beta-composition}, we have
  \[
    \theta'_\bc \circ \beta_\bb - \delta_{\bc,\bb} \id_{\sQ_+^{(m-k)} \sQ_-^{(n-k)}}
    \in \End_{\cH^\lambda}^{\le \deg \bb - \deg \bc - 1} \left( \sQ_+^{(m-k)} \sQ_-^{(n-k)} \right).
  \]
  It follows from Lemma~\ref{lem:filtered-dimensions} that $\theta'_\bc \circ \beta_\bb = \delta_{\bc,\bb} \id_{\sQ_+^{(m-k)} \sQ_-^{(n-k)}}$ if $\deg \bb \le \deg \bc$.  The desired $\theta_\bc$ then exist by the usual the usual process of inverting a unitriangular matrix.
\end{proof}

\begin{prop} \label{prop:key-id-decomp}
  In $\cH^\lambda$, we have
  \begin{equation} \label{eq:key-id-decomp}
    \id_{\sQ_-^{(n)} \sQ_+^{(m)}}
    = \sum_{k=0}^{\min \{m,n\}} \sum_{\bb \in \bB_k} \beta_\bb \circ \theta_\bb.
  \end{equation}
  Furthermore,
  \[
    \beta_\bb \circ \theta_\bb,\quad \bb \in \bB_k,\ 1 \le k \le \min \{m,n\},
  \]
  are orthogonal idempotents in $\End_{\cH^\lambda}(\sQ_-^{(n)} \sQ_+^{(m)})$.
\end{prop}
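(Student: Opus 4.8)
The plan is to derive both assertions from Lemma~\ref{lem:beta-duals-exist}, which supplies morphisms $\theta_\bb$ satisfying $\theta_\bc\circ\beta_\bb=\delta_{\bc,\bb}\id$. The orthogonal-idempotent claim is then purely formal: for $\bb\in\bB_k$ and $\bc\in\bB_\ell$ one has
\[
  (\beta_\bb\circ\theta_\bb)\circ(\beta_\bc\circ\theta_\bc)=\beta_\bb\circ(\theta_\bb\circ\beta_\bc)\circ\theta_\bc=\delta_{\bb,\bc}\,\beta_\bb\circ\theta_\bb
\]
by \eqref{eq:theta-beta-duality}, so each $\beta_\bb\circ\theta_\bb$ is idempotent and distinct ones are orthogonal (this works for $k=0$ too). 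Putting $e=\sum_k\sum_{\bb\in\bB_k}\beta_\bb\circ\theta_\bb$, we get that $e$ is idempotent and, again by \eqref{eq:theta-beta-duality}, that $e\circ\beta_\bc=\beta_\bc$ and $\theta_\bc\circ e=\theta_\bc$ for all $\bc$. Hence
\[
  (\beta_\bb)_\bb\colon\bigoplus_k\bigoplus_{\bb\in\bB_k}\sQ_+^{(m-k)}\sQ_-^{(n-k)}\longrightarrow\sQ_-^{(n)}\sQ_+^{(m)}
\]
is a split monomorphism with retraction $(\theta_\bb)_\bb$ and image idempotent $e$, and \eqref{eq:key-id-decomp} is equivalent to the assertion that this split mono is an isomorphism, i.e.\ that $\id-e=0$.

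To prove $\id=e$ I would construct the decomposition $\sQ_-^{(n)}\sQ_+^{(m)}\cong\bigoplus_{k=0}^{\min\{m,n\}}\bigl(\sQ_+^{(m-k)}\sQ_-^{(n-k)}\bigr)^{\oplus|\bB_k|}$ explicitly and identify it with the $\beta_\bb$. Iterating Corollary~\ref{cor:Q+Q-commutation-relation} (a consequence of \eqref{rel:down-up-double-crossing}), move each of the $m$ upward strands of $\sQ_-^n\sQ_+^m$ past each of the $n$ downward strands; the resulting summands $\sQ_+^{m-k}\sQ_-^{n-k}$ are indexed by a choice of $k$ up/down strands to cancel, a matching of them, and a sequence in $\{0,\dots,d-1\}^k$ recording the labels $j$, $j^\vee$ on the $k$ created cup/cap pairs. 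Now sandwich with the symmetrizers $e_{(n)}$ and $e_{(m)}$: using that symmetrizers absorb and slide through crossings (cf.\ \eqref{eq:symmetrizer-absorb-crossing}), all summands sharing a given $k$ and a given \emph{multiset} of labels become identified, with residual idempotent $e_{(m-k)}$ on the remaining up-strands and $e_{(n-k)}$ on the remaining down-strands. Since a size-$k$ multiset from $\{0,\dots,d-1\}$ is precisely an element of $\bB_k$, this produces the displayed decomposition with the correct multiplicities $|\bB_k|=\binom{d+k-1}{k}$; tracking the inclusions and projections through the isotopies — using Lemma~\ref{lem:circle-duality} to see why the \emph{dual} dots $j^\vee$ appear and why the scalars entering $\theta_\bb$ in Lemma~\ref{lem:beta-duals-exist} are exactly those forced by the symmetrizer normalizations — identifies them with the $\beta_\bb$ and a valid choice of $\theta_\bb$, so that $e=\id$. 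Alternatively, since $(\beta_\bb)_\bb$ is already known to be injective on each Hom-space, one may compare, level by level in the filtration of Section~\ref{sec:cat-filtration}, the $\kk$-dimension of $\Hom^{\le r}(\sy,\sQ_-^{(n)}\sQ_+^{(m)})$ with the corresponding (degree-shifted) sum of dimensions of $\Hom^{\le r}(\sy,\sQ_+^{(m-k)}\sQ_-^{(n-k)})$; closing up cups and caps reduces this to $\sy=\one$, where both sides are read off from the basis $B(\varnothing,-)$ of Proposition~\ref{prop:hom-space-basis}, and the required identity is once more the combinatorics of the sets $\bB_k$.

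The main obstacle is exactly the equality $\id=e$: the idempotent manipulations of the first paragraph are routine, whereas the surjectivity of $(\beta_\bb)_\bb$ — equivalently, the fact that $\sQ_-^{(n)}\sQ_+^{(m)}$ carries no summand beyond $\bigoplus_k(\sQ_+^{(m-k)}\sQ_-^{(n-k)})^{\oplus|\bB_k|}$ — is precisely the categorical shadow of the Heisenberg relation \eqref{eq:heisenberg-presentation}, and all the genuine content (controlling how the symmetrizers collapse the over-counted summands of the unsymmetrized decomposition, or, in the other approach, making the filtered dimension count balance) lies there.
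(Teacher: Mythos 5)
Your proposal is correct and takes essentially the same route as the paper: the idempotency/orthogonality claim is the same formal consequence of \eqref{eq:theta-beta-duality}, and the completeness $\id = \sum_\bb \beta_\bb \circ \theta_\bb$ is obtained, exactly as in the paper, by pulling the up-strands through the down-strands via \eqref{rel:down-up-double-crossing} (your Corollary~\ref{cor:Q+Q-commutation-relation}) and collapsing the label sequences into multisets using symmetrizer absorption and dot-sliding. The only real difference is that the paper does not attempt your explicit identification of the resulting projection halves with ``a valid choice of $\theta_\bb$'' (which would be painful, since dual dots do not slide through crossings, cf.\ Remark~\ref{rem:dualdots}); it leaves them as unspecified morphisms $\theta''_\bb$, writes $\id = \sum_\bb \beta_\bb \circ \theta''_\bb$, and composes on the left with $\theta_\bc$ to force $\theta''_\bb = \theta_\bb$ --- a one-line shortcut that your own observation $\theta_\bc \circ e = \theta_\bc$ already supplies.
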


\begin{proof}
  Starting with
  \[
    \id_{\sQ_-^{(n)} \sQ_+^{(m)}}
    =\
    \begin{tikzpicture}[anchorbase]
      \draw (-0.2,0.5) rectangle (-1,1) node[midway] {$n$};
      \draw (0.2,0.5) rectangle (1,1) node[midway] {$m$};
      \draw (-0.2,-0.5) rectangle (-1,-1) node[midway] {$n$};
      \draw (0.2,-0.5) rectangle (1,-1) node[midway] {$m$};
      \draw[->,dashed] (-0.6,0.5) -- (-0.6,-0.5);
      \draw[<-,dashed] (0.6,0.5) -- (0.6,-0.5);
    \end{tikzpicture}
    \ ,
  \]
  we repeatedly use  \eqref{rel:down-up-double-crossing} to pull the upward strands left through the downward strands.  This yields a linear combination of diagrams of the form depicted in Figure~\ref{subfig1:key-id-decomp}, where
  \begin{itemize}
    \item the black boxes represent some linear combinations of diagrams involving crossings and dots, with dots only appearing on the strands involved in the cups, and
    \item where the gray boxes represent some linear combinations of diagrams involving crossings and dual dots.
  \end{itemize}
  Then, using \eqref{rel:dotslide} and the fact that crossings can be absorbed into symmetrizers, we can assume that the black boxes do not involve any crossings and the total number of dots on the cups weakly increases as we move down.  Thus, we see that $\id_{\sQ_-^{(n)} \sQ_+^{(m)}}$ can be written as a linear combination of diagrams of the form depicted in Figure~\ref{subfig2:key-id-decomp}, where $\bb \in \bB_k$ for some $0 \le k \le \min \{m,n\}$, and the gray boxes represent some linear combinations of diagrams involving crossings and dual dots.  Then, using the fact that smaller symmetrizers can be absorbed into larger ones, and that symmetrizers slide through crossings, we see that $\id_{\sQ_-^{(n)} \sQ_+^{(m)}}$ can be written as a linear combination of diagrams of the form depicted in Figure~\ref{subfig3:key-id-decomp}, where $0 \le k \le \min \{m,n\}$, $\bb \in \bB_k$, and the gray boxes represent some linear combinations of diagrams involving crossings and dual dots.

  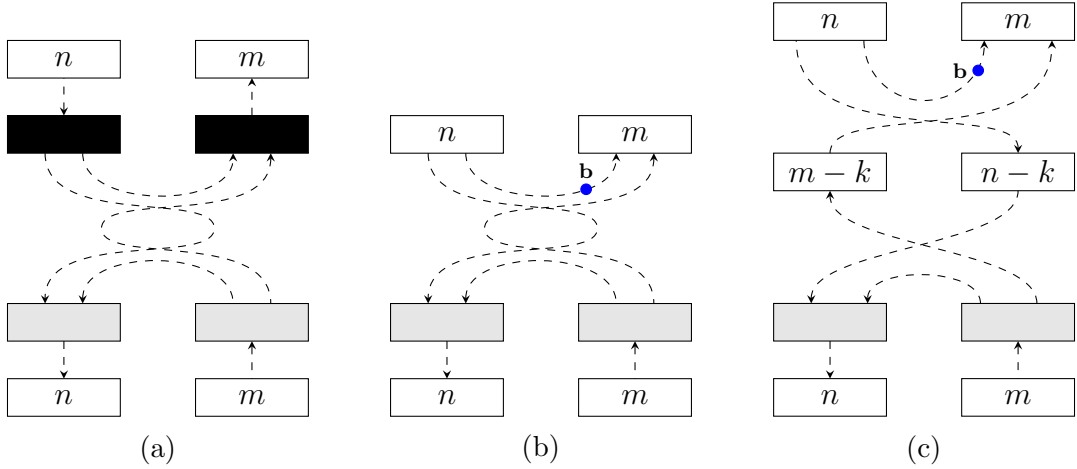
\begin{figure}[htb]
    \centering
    \begin{subfigure}[b]{0.3\linewidth}
      \centering
      \begin{tikzpicture}[anchorbase]
        \draw (-0.5,-2) rectangle (-2,-2.5) node[midway] {$n$};
        \draw (0.5,-2) rectangle (2,-2.5) node[midway] {$m$};
        \draw (-0.5,2) rectangle (-2,2.5) node[midway] {$n$};
        \draw (0.5,2) rectangle (2,2.5) node[midway] {$m$};
        \filldraw[fill=black] (-2,1) rectangle (-0.5,1.5);
        \filldraw[fill=black] (2,1) rectangle (0.5,1.5);
        \filldraw[fill=black!10] (-2,-1) rectangle (-0.5,-1.5);
        \filldraw[fill=black!10] (2,-1) rectangle (0.5,-1.5);
        \draw[dashed,->] (-1.25,-1.5) -- (-1.25,-2);
        \draw[dashed,<-] (1.25,-1.5) -- (1.25,-2);
        \draw[dashed,<-] (-1.25,1.5) -- (-1.25,2);
        \draw[dashed,->] (1.25,1.5) -- (1.25,2);
        \draw[dashed,->] (-1,1) .. controls (-1,0.25) and (1,0.25) .. (1,1);
        \draw[dashed,<-] (-1,-1) .. controls (-1,-0.25) and (1,-0.25) .. (1,-1);
        \draw[dashed,->] (-1.5,1) .. controls (-1.5,0) and (0.75,0.5) .. (0.75,0) .. controls (0.75,-0.5) and (-1.5,0) .. (-1.5,-1);
        \draw[dashed,<-] (1.5,1) .. controls (1.5,0) and (-0.75,0.5) .. (-0.75,0) .. controls (-0.75,-0.5) and (1.5,0) .. (1.5,-1);
      \end{tikzpicture}
      \caption{\label{subfig1:key-id-decomp}}
    \end{subfigure}
    \begin{subfigure}[b]{0.3\linewidth}
      \centering
      \begin{tikzpicture}[anchorbase]
        \draw (-0.5,-2) rectangle (-2,-2.5) node[midway] {$n$};
        \draw (0.5,-2) rectangle (2,-2.5) node[midway] {$m$};
        \draw (-2,1) rectangle (-0.5,1.5) node[midway] {$n$};
        \draw (2,1) rectangle (0.5,1.5) node[midway] {$m$};
        \filldraw[fill=black!10] (-2,-1) rectangle (-0.5,-1.5);
        \filldraw[fill=black!10] (2,-1) rectangle (0.5,-1.5);
        \draw[dashed,->] (-1.25,-1.5) -- (-1.25,-2);
        \draw[dashed,<-] (1.25,-1.5) -- (1.25,-2);
        \draw[dashed,->] (-1,1) .. controls (-1,0.25) and (1,0.25) .. (1,1);
        \draw[dashed,<-] (-1,-1) .. controls (-1,-0.25) and (1,-0.25) .. (1,-1);
        \draw[dashed,->] (-1.5,1) .. controls (-1.5,0) and (0.75,0.5) .. (0.75,0) .. controls (0.75,-0.5) and (-1.5,0) .. (-1.5,-1);
        \draw[dashed,<-] (1.5,1) .. controls (1.5,0) and (-0.75,0.5) .. (-0.75,0) .. controls (-0.75,-0.5) and (1.5,0) .. (1.5,-1);
        \bluedot{(0.6,0.52)} node[anchor=south,color=black] {\dotlabel{\bb}};
      \end{tikzpicture}
      \caption{\label{subfig2:key-id-decomp}}
    \end{subfigure}
    \begin{subfigure}[b]{0.3\linewidth}
      \centering
      \begin{tikzpicture}[anchorbase]
        \draw (-0.5,1.5) rectangle (-2,2) node[midway] {$m-k$};
        \draw (0.5,1.5) rectangle (2,2) node[midway] {$n-k$};
        \draw (-0.5,-1) rectangle (-2,-1.5) node[midway] {$n$};
        \draw (0.5,-1) rectangle (2,-1.5) node[midway] {$m$};
        \filldraw[fill=black!10] (-0.5,0) rectangle (-2,-0.5);
        \filldraw[fill=black!10] (0.5,0) rectangle (2,-0.5);
        \draw[dashed,->] (-1.25,-0.5) -- (-1.25,-1);
        \draw[dashed,<-] (1.25,-0.5) -- (1.25,-1);
        \draw[dashed,->] (0.75,0) .. controls (0.75,0.5) and (-0.75,0.5) .. (-0.75,0);
        \draw[dashed,->] (1.5,0) .. controls (1.5,0.75) and (-1.25,0.75) .. (-1.25,1.5);
        \draw[dashed,<-] (-1.5,0) .. controls (-1.5,0.75) and (1.25,0.75) .. (1.25,1.5);
        \draw (-0.5,3.5) rectangle (-2,4) node[midway] {$n$};
        \draw (0.5,3.5) rectangle (2,4) node[midway] {$m$};
        \draw[->,dashed] (-0.8,3.5) arc (180:360:0.8);
        \bluedot{(+0.72,3.1)} node[color=black,anchor=east] {\dotlabel{\bb}};
        \draw[->,dashed] (-1.25,2) .. controls (-1.25,2.7) and (1.7,2) .. (1.7,3.5);
        \draw[<-,dashed] (1.25,2) .. controls (1.25,2.7) and (-1.7,2) .. (-1.7,3.5);
      \end{tikzpicture}
      \caption{\label{subfig3:key-id-decomp}}
    \end{subfigure}
    \caption{Diagrams involved in the proof of Proposition~\ref{prop:key-id-decomp}. \label{fig:key-id-decomp}}
  \end{figure}

  The above shows that we can write
  \[
    \id_{\sQ_-^{(n)} \sQ_+^{(m)}}
    = \sum_{k=0}^{\min \{m,n\}} \sum_{\bb \in \bB_k} \beta_\bb \circ \theta''_\bb,
  \]
  for some $\theta''_\bb \in \Hom_{\cH^\lambda}(\sQ_-^{(n)} \sQ_+^{(m)}, \sQ_+^{(n-k)} \sQ_-^{(n-k)})$, $\bb \in \bB_k$, $0 \le k \le \min \{m,n\}$.  Composing on the left with $\theta_\bc$ and using Lemma~\ref{lem:beta-duals-exist} then yields that $\theta_\bb = \theta''_\bb$ for all $\bb$.  The final statement about orthogonal idempotents follows immediately from \eqref{eq:theta-beta-duality}.
\end{proof}

\subsection{Categorification of the higher level Heisenberg algebra}

\begin{prop} \label{prop:key-isom}
  Suppose $n,m \in \N$.  In $\cH^\lambda$, we have
  \begin{gather}
    \sQ_+^{(n)} \sQ_+^{(m)} \cong \sQ_+^{(m)} \sQ_+^{(n)}, \qquad
    \sQ_-^{(n)} \sQ_-^{(m)} \cong \sQ_-^{(m)} \sQ_-^{(n)}, \label{eq:Qpm-commute} \\
    \sQ_-^{(n)} \sQ_+^{(m)} \cong \sum_{k=0}^{\min \{m,n\}} \left(\sQ_+^{(m-k)} \sQ_-^{(n-k)}\right)^{\oplus {d+k-1 \choose k}}, \label{eq:key-isom}
  \end{gather}
  where, by convention, $\sQ_\pm^{(0)} = \one$.
\end{prop}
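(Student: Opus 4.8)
The plan is to handle the two commutation isomorphisms \eqref{eq:Qpm-commute} by an explicit ``block swap'' braiding, and to derive the main decomposition \eqref{eq:key-isom} directly from the orthogonal idempotent decomposition of $\id_{\sQ_-^{(n)}\sQ_+^{(m)}}$ furnished by Proposition~\ref{prop:key-id-decomp}.

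For \eqref{eq:Qpm-commute}: write $\sQ_+^{(n)}\sQ_+^{(m)} = (\sQ_+^{n+m}, e_{(n)}\otimes e_{(m)})$ and $\sQ_+^{(m)}\sQ_+^{(n)} = (\sQ_+^{n+m}, e_{(m)}\otimes e_{(n)})$ in the Karoubi envelope. Let $\sigma \in \End_{\tcH^\lambda}(\sQ_+^{n+m})$ be a crossing diagram realizing the permutation that carries the first block of $n$ strands past the last block of $m$ strands, and let $\sigma'$ be the reverse block swap. Since crossings of equally oriented strands satisfy only relations \eqref{rel:braid} and \eqref{rel:s-squared}, dotless permutation diagrams of up-strands obey the symmetric-group relations exactly, so $\sigma'\circ\sigma = \id_{\sQ_+^{n+m}}$. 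Using that symmetrizers absorb and slide through crossings (see \eqref{eq:symmetrizer-absorb-crossing} and the displayed identity following it) we get $\sigma\circ(e_{(n)}\otimes e_{(m)}) = (e_{(m)}\otimes e_{(n)})\circ\sigma$, and likewise for $\sigma'$. I would then set $F = (e_{(m)}\otimes e_{(n)})\,\sigma\,(e_{(n)}\otimes e_{(m)})$ and $G = (e_{(n)}\otimes e_{(m)})\,\sigma'\,(e_{(m)}\otimes e_{(n)})$; sliding the symmetrizers through and using idempotency collapses $G\circ F$ and $F\circ G$ to the identity endomorphisms of the two objects, proving the first isomorphism. The second is identical, using that $H_m$ also acts on $\End_{\cH^\lambda}(\sQ_-^m)$ by adjunction, so the same relations govern downward permutation diagrams.

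For \eqref{eq:key-isom}: Proposition~\ref{prop:key-id-decomp} gives $\id_{\sQ_-^{(n)}\sQ_+^{(m)}} = \sum_{k=0}^{\min\{m,n\}}\sum_{\bb\in\bB_k}\beta_\bb\circ\theta_\bb$ with the $\beta_\bb\circ\theta_\bb$ orthogonal idempotents (the $k=0$ term, with $\bB_0$ the singleton containing the empty tuple, is idempotent as well, since $\theta_\varnothing\circ\beta_\varnothing = \id$ by \eqref{eq:theta-beta-duality}). In an idempotent-complete additive category this produces $\sQ_-^{(n)}\sQ_+^{(m)} \cong \bigoplus_{k}\bigoplus_{\bb\in\bB_k}\bigl(\sQ_-^{(n)}\sQ_+^{(m)},\,\beta_\bb\circ\theta_\bb\bigr)$. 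The identity $\theta_\bb\circ\beta_\bb = \id_{\sQ_+^{(m-k)}\sQ_-^{(n-k)}}$ from Lemma~\ref{lem:beta-duals-exist} says $(\beta_\bb,\theta_\bb)$ is a split pair, whence $\bigl(\sQ_-^{(n)}\sQ_+^{(m)},\,\beta_\bb\circ\theta_\bb\bigr)\cong\sQ_+^{(m-k)}\sQ_-^{(n-k)}$ via $\theta_\bb$ and $\beta_\bb$ — and, importantly, this isomorphism class depends on $\bb$ only through $k$. Finally $|\bB_k|$ counts weakly increasing tuples $0\le b_1\le\dots\le b_k\le d-1$, i.e.\ size-$k$ multisets from a $d$-element set, which is $\binom{d+k-1}{k}$; grouping summands by $k$ gives \eqref{eq:key-isom}.

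The argument is short because the real content sits in Proposition~\ref{prop:key-id-decomp} and Lemma~\ref{lem:beta-duals-exist}. The two points that warrant care are: checking that $F$ and $G$ above are genuinely mutually inverse once conjugated by the symmetrizers, which relies wholly on the ``symmetrizers slide through crossings'' relation; and the combinatorial bookkeeping matching the multiplicity of $\sQ_+^{(m-k)}\sQ_-^{(n-k)}$ with $|\bB_k| = \binom{d+k-1}{k}$, together with noting that all idempotents $\beta_\bb\circ\theta_\bb$ with $\bb\in\bB_k$ cut out isomorphic objects regardless of $\bb$.
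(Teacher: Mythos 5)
Your proof is correct and follows essentially the same route as the paper: the commutation isomorphisms via block-swap crossings conjugated by symmetrizers (using that symmetrizers absorb and slide through crossings), and \eqref{eq:key-isom} read off from the orthogonal idempotent decomposition of Proposition~\ref{prop:key-id-decomp} together with \eqref{eq:theta-beta-duality} and the count $|\bB_k|=\binom{d+k-1}{k}$. Your explicit remarks on the $k=0$ term and on the split-pair identification of each summand are just slightly more detailed versions of what the paper leaves implicit.
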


\begin{proof}
  Using the fact that symmetrizers slide through crossings, it is straightforward to verify that the morphisms
  \[
    \begin{tikzpicture}[anchorbase]
      \draw (-0.25,1) rectangle (-1.25,.5) node[midway] {$n$};
      \draw (0.25,1) rectangle (1.25,.5) node[midway] {$m$};
      \draw (-0.25,-1) rectangle (-1.25,-.5) node[midway] {$m$};
      \draw (0.25,-1) rectangle (1.25,-.5) node[midway] {$n$};
      \draw[->,dashed] (-0.75,-0.5) .. controls (-0.75,0) and (0.75,0) .. (0.75,0.5);
      \draw[->,dashed] (0.75,-0.5) .. controls (0.75,0) and (-0.75,0) .. (-0.75,0.5);
    \end{tikzpicture}
    \qquad \text{and} \qquad
    \begin{tikzpicture}[anchorbase]
      \draw (-0.25,1) rectangle (-1.25,.5) node[midway] {$m$};
      \draw (0.25,1) rectangle (1.25,.5) node[midway] {$n$};
      \draw (-0.25,-1) rectangle (-1.25,-.5) node[midway] {$n$};
      \draw (0.25,-1) rectangle (1.25,-.5) node[midway] {$m$};
      \draw[->,dashed] (-0.75,-0.5) .. controls (-0.75,0) and (0.75,0) .. (0.75,0.5);
      \draw[->,dashed] (0.75,-0.5) .. controls (0.75,0) and (-0.75,0) .. (-0.75,0.5);
    \end{tikzpicture}
  \]
  are mutually inverse, giving the first isomorphism in \eqref{eq:Qpm-commute}.  The second is similar, simply reversing orientations of strands.

  The isomorphism \eqref{eq:key-isom} follows immediately from Proposition~\ref{prop:key-id-decomp}, after noting that $|\bB_k|$ is the dimension of the $k$-th symmetric power of a vector space of dimension $d$, and is therefore equal to ${d+k-1 \choose k}$.
\end{proof}

Let $K_0(\cH^\lambda)$ be the split Grothendieck group of $\cH^\lambda$.  The monoidal structure on $\cH^\lambda$ endows $K_0(\cH^\lambda)$ with the structure of a ring.

\begin{theo} \label{theo:main}
  We have an injective ring homomorphism
  \begin{equation} \label{eq:h-to-KH}
    \fh_d \hookrightarrow K_0(\cH^\lambda),\quad s_\mu^\pm \mapsto \sQ_\pm^\mu,\quad \mu \text{ a partition},
  \end{equation}
  where $s_\mu^\pm$ denotes the Schur function in $\Sym^\pm$ corresponding to the partition $\mu$.  In particular, this ring homomorphism maps $h_n^\pm$ to $\sQ_\pm^{(n)}$ for $n \in \N_+$.
\end{theo}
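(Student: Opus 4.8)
\emph{The plan} is to build the homomorphism from the presentation \eqref{eq:heisenberg-presentation} of $\fh_d$ and then identify the images of the Schur functions; injectivity will be deferred. First I would define $\phi\colon\fh_d\to K_0(\cH^\lambda)$ on the algebra generators by $\phi(h_n^\pm)=[\sQ_\pm^{(n)}]$ and check the defining relations of \eqref{eq:heisenberg-presentation}. The relations $h_0^\pm=1$ hold since $\sQ_\pm^{(0)}=\one$ by convention. The two commutation relations $h_n^+h_m^+=h_m^+h_n^+$ and $h_n^-h_m^-=h_m^-h_n^-$ are immediate from the isomorphisms \eqref{eq:Qpm-commute}. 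The remaining mixed relation
\[
  h_n^-h_m^+=\sum_{r=0}^{\min\{m,n\}}\binom{d+r-1}{r}h_{m-r}^+h_{n-r}^-
\]
matches the isomorphism \eqref{eq:key-isom} term by term, the multiplicities $\binom{d+r-1}{r}$ appearing there being exactly the binomial coefficients in the relation. Since \eqref{eq:heisenberg-presentation} is a presentation of $\fh_d$, this makes $\phi$ a well-defined ring homomorphism with $\phi(h_n^\pm)=[\sQ_\pm^{(n)}]$.

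\emph{Identifying $\phi(s_\mu^\pm)$.} By the Jacobi--Trudi identity, $s_\mu^\pm=\det\bigl(h^\pm_{\mu_i-i+j}\bigr)_{i,j}$ in $\Sym_\Z^\pm$, so $\phi(s_\mu^\pm)=\det\bigl([\sQ_\pm^{(\mu_i-i+j)}]\bigr)$; by symmetry it is enough to treat the $+$ case. Writing $h_\mu^+:=\prod_i h_{\mu_i}^+$, the object $\sQ_+^{(\mu_1)}\dotsb\sQ_+^{(\mu_\ell)}$ is, by definition, $(\sQ_+^n,e)$ with $n=|\mu|$ and $e=e_{(\mu_1)}\otimes\dotsb\otimes e_{(\mu_\ell)}\in\kk S_n\subseteq\End_{\cH^\lambda}(\sQ_+^n)$. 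Since $\kk$ has characteristic zero, inside $\kk S_n$ the left ideal $\kk S_n\,e$ is $\Ind_{S_{\mu_1}\times\dotsb\times S_{\mu_\ell}}^{S_n}(\mathbf 1)\cong\bigoplus_\nu(S^\nu)^{\oplus K_{\nu\mu}}$ by Young's rule, where $K_{\nu\mu}$ is a Kostka number and $\kk S_n\,e_\nu\cong S^\nu$; hence $e$ is equivalent, as an idempotent of $\kk S_n$, to $\bigoplus_\nu e_\nu^{\oplus K_{\nu\mu}}$. Passing to the idempotent completion gives $\sQ_+^{(\mu_1)}\dotsb\sQ_+^{(\mu_\ell)}\cong\bigoplus_\nu(\sQ_+^\nu)^{\oplus K_{\nu\mu}}$, so $\phi(h_\mu^+)=\prod_i[\sQ_+^{(\mu_i)}]=\sum_\nu K_{\nu\mu}\,[\sQ_+^\nu]$. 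Comparing with $h_\mu^+=\sum_\nu K_{\nu\mu}\,s_\nu^+$ and using that the Kostka matrix $(K_{\nu\mu})$ is unitriangular for the dominance order, hence invertible over $\Z$, we conclude $\phi(s_\nu^+)=[\sQ_+^\nu]$ for every partition $\nu$, and similarly on the $\sQ_-$ side.

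\emph{Injectivity}, the genuinely hard point, cannot be read off the diagrammatics, and I would defer it to Section~\ref{sec:action}. There the action of $\cH^\lambda$ on the module categories of the degenerate cyclotomic Hecke algebras $H_n^\lambda$ descends to an action of $K_0(\cH^\lambda)$ on $\bigoplus_{n\in\N}K_0(H_n^\lambda\md)$, under which $[\sQ_+^{(n)}]$ and $[\sQ_-^{(n)}]$ act by the appropriate components of induction and restriction. Precomposing with $\phi$ recovers the Fock-space representation of $\fh_d$, which is faithful, so $\phi$ is injective. Thus the main obstacle is not anything in the present section---where the construction is essentially bookkeeping on top of Proposition~\ref{prop:key-isom} together with classical symmetric-function combinatorics---but rather the later identification of $\bigoplus_n K_0(H_n^\lambda\md)$ with Fock space and the verification that the categorical action is compatible with induction and restriction, which rests on Theorem~\ref{theo:action-functor}.
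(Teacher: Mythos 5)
Your proposal is correct, and for the two main points it follows the same skeleton as the paper: well-definedness of $h_n^\pm\mapsto[\sQ_\pm^{(n)}]$ is obtained exactly as in the paper by matching the presentation \eqref{eq:heisenberg-presentation} against Proposition~\ref{prop:key-isom}, and injectivity is deferred, just as the paper does in Proposition~\ref{prop:h-KH-injective}, to the action of Theorem~\ref{theo:action-functor} together with faithfulness of the Fock space representation (one small correction there: the relevant split Grothendieck groups are those of $H_n^\lambda\pmd$, not of all of $H_n^\lambda\md$, and the Fock space is the submodule generated by $[\kk_0]$; the total space is in general a direct sum of Fock spaces). Where you genuinely diverge is in identifying $\phi(s_\mu^\pm)$. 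The paper argues as in \cite[Th.~4.5]{LRS18}, using the Giambelli (Jacobi--Trudi) rule to express $[\sQ_\pm^\mu]$ directly as the appropriate determinant in the classes $[\sQ_\pm^{(n)}]$, whereas you expand in the opposite direction: you decompose $\sQ_+^{(\mu_1)}\dotsb\sQ_+^{(\mu_\ell)}=(\sQ_+^{|\mu|},e_{(\mu_1)}\otimes\dotsb\otimes e_{(\mu_\ell)})$ by Young's rule in $\kk S_{|\mu|}$ and then invert the unitriangular Kostka matrix. This is legitimate: orthogonal idempotent decompositions and equivalences of idempotents in $\kk S_n$ transfer along the algebra homomorphism \eqref{eq:Hn-elements-as-diagrams} into $\End_{\tcH^\lambda}(\sQ_+^n)$ and hence split in the Karoubi envelope, which is exactly the categorical input the paper's route also relies on. The trade-off is that your argument is self-contained (no appeal to the argument of \cite{LRS18}) at the price of invoking Young's rule plus a matrix inversion over all partitions of a given size, while the paper's determinantal identity produces $[\sQ_\pm^\mu]$ in one step from the generators; both reduce to standard characteristic-zero symmetric-group combinatorics layered over Proposition~\ref{prop:key-isom}.
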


\begin{proof}
  Recall that $s_{(n)}^\pm = h_n^\pm$.  Comparing \eqref{eq:heisenberg-presentation} and Proposition~\ref{prop:key-isom}, we see that
  \[
    \fh_d \to K_0(\cH^\lambda),\quad h_n^\pm \mapsto \sQ_\pm^{(n)},\quad n \in \N_+.
  \]
  is a well-defined ring homomorphism.  We will show in Proposition~\ref{prop:h-KH-injective} that it is injective.  An argument analogous to the one in the proof of \cite[Th.~4.5]{LRS18}, based on the fact that the expression for $\sQ_\pm^{\mu}$ as a linear combination of products of $\sQ_\pm^{(n)}$ is given by the Giambelli rules in the ring of symmetric functions for expressing the Schur functions in terms of the complete homogeneous symmetric functions, then shows that \eqref{eq:h-to-KH} holds for all partitions $\mu$.
\end{proof}

\begin{conj} \label{conj:categorification}
  The map \eqref{eq:h-to-KH} is an isomorphism.
\end{conj}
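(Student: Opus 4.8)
The map \eqref{eq:h-to-KH} is already known to be injective by Theorem~\ref{theo:main}, so the plan is to establish surjectivity, i.e.\ that the class of every indecomposable object of $\cH^\lambda$ lies in the image of $\fh_d$. Since $K_0(\cH^\lambda)$ is free on the isomorphism classes of indecomposables, and since, by Corollary~\ref{cor:tcH-basic-objects}, every object of $\cH^\lambda$ is a summand of some $\sQ_+^n \sQ_-^m$, it suffices to understand the indecomposable summands of the objects $\sQ_+^n \sQ_-^m$. Concretely, I would try to show that the indecomposable objects of $\cH^\lambda$ are naturally parametrized by pairs of partitions $(\mu,\nu)$, and that the transition matrix between the family $\{[\sQ_+^\mu \sQ_-^\nu]\}$ and the corresponding indecomposable classes is unitriangular with respect to a suitable partial order (exactly as Schur functions are unitriangular against complete homogeneous symmetric functions). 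Because $\{s_\mu^+ s_\nu^-\}$ is a $\Z$-basis of $\fh_d$ and maps to $\{[\sQ_+^\mu \sQ_-^\nu]\}$, this would force $K_0(\cH^\lambda)$ to coincide with the image of $\fh_d$.

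To get control over these indecomposables I would exploit the action of $\cH^\lambda$ on modules over the tower $(H_n^\lambda)_{n\in\N}$ supplied by Theorem~\ref{theo:action-functor}. Setting $\mathcal{F} := \bigoplus_{n\in\N} K_0(H_n^\lambda\pmd)$, this yields a ring homomorphism
\[
  \rho \colon K_0(\cH^\lambda) \longrightarrow \mathrm{End}_\Z(\mathcal{F}),
\]
under which $\sQ_+^{(k)}$ and $\sQ_-^{(k)}$ act by the divided-power induction and restriction functors for this tower. By the categorification theorems of Ariki and Brundan--Kleshchev together with the boson--fermion correspondence, $\mathcal{F}$ is (after extension of scalars) a level-$d$ Fock space on which $\fh_d$ acts faithfully, and the operators coming from $\fh_d \subseteq K_0(\cH^\lambda)$ generate precisely this Heisenberg action. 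Thus $\fh_d$ embeds in $K_0(\cH^\lambda)$, which maps via $\rho$ onto a subalgebra of $\mathrm{End}_\Z(\mathcal{F})$, and the composite $\fh_d \to \mathrm{End}_\Z(\mathcal{F})$ is injective with image $\rho(\fh_d)$; consequently the conjecture reduces to proving that $\rho$ is injective, i.e.\ that no nonzero class in $K_0(\cH^\lambda)$ acts as the zero operator on every cyclotomic module category.

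Proving this injectivity is the main obstacle, and it is essentially a classification statement: one must rule out ``phantom'' indecomposable objects of $\cH^\lambda$ not accounted for by $\fh_d$. I see two complementary routes. The first is to compute $\End_{\cH^\lambda}(\sQ_+^n \sQ_-^m)$ explicitly enough, using the basis theorem (Proposition~\ref{prop:hom-space-basis}), the fullness of the action functors in characteristic zero (Theorem~\ref{theo:Fn-full}), and the filtration of Section~\ref{sec:cat-filtration}, to identify its idempotents after quotienting out the bubble subalgebra $\Pi$ and specializing the central bubbles; the principal technical difficulty here is that the dual dots do not slide through crossings in any simple way (see Remark~\ref{rem:dualdots}) and that these endomorphism rings are infinite-dimensional, so one genuinely needs the associated graded category and the decomposition of Proposition~\ref{prop:key-isom} to keep the bookkeeping tractable. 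The second route is to endow the morphism spaces of $\cH^\lambda$ with a triangular (or cellular-type) structure adapted to Proposition~\ref{prop:key-isom}, producing an \emph{upper} bound on the rank of $K_0(\cH^\lambda)$ that matches the \emph{lower} bound coming from $\fh_d$ via Theorem~\ref{theo:main}, so that the two bounds pinch. In level one either approach would in particular recover Khovanov's conjecture \cite[Conj.~1]{Kho14}, so one should expect the level-$d$ statement to be at least as hard.
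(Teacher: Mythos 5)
You are attempting to prove Conjecture~\ref{conj:categorification}, which the paper deliberately leaves open: there is no proof in the paper to compare against, only the remarks that in level one it reduces to Khovanov's conjecture \cite[Conj.~1]{Kho14}, and that the analogous statements in \cite{RS17} and \cite{CL12} are accessible only because a nontrivial grading (coming from a Frobenius algebra) is available there, whereas $\cH^\lambda$ carries only a filtration. Your text is, by its own admission, a strategy rather than a proof --- every decisive step (classification of indecomposables, unitriangularity, injectivity of $\rho$) is deferred --- so it cannot be accepted as establishing the statement.

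Beyond being incomplete, two steps as written are genuinely problematic. First, the assertion that $K_0(\cH^\lambda)$ is free on isomorphism classes of indecomposables presupposes a Krull--Schmidt property for $\cH^\lambda$, which is not automatic here: the endomorphism algebras are infinite-dimensional (e.g.\ $\End_{\cH^\lambda}(\one)\cong\Pi$ by Proposition~\ref{prop:bubble-iso}, and $\End_{\cH^\lambda}(\sQ_+^m)\cong H_m\otimes\Pi$ by Proposition~\ref{prop:DH-Q-iso}), so local endomorphism rings and uniqueness of direct sum decompositions require an argument you do not supply. Second, the claimed reduction ``the conjecture reduces to proving that $\rho$ is injective'' is not valid: injectivity of $\rho\colon K_0(\cH^\lambda)\to\End_\Z(\mathcal{F})$ would show $K_0(\cH^\lambda)$ embeds in $\End_\Z(\mathcal{F})$, but surjectivity of $\fh_d\to K_0(\cH^\lambda)$ would additionally require that $\rho\bigl(K_0(\cH^\lambda)\bigr)=\rho(\fh_d)$, i.e.\ that the class of every idempotent summand of every $\sQ_\epsilon$ acts on $\bigoplus_n K_0(H_n^\lambda\pmd)$ by an operator lying in the integral Heisenberg algebra; such summands refine the classes $[\sQ_\epsilon]$ and there is no a priori reason their images lie in $\rho(\fh_d)$, so this is essentially the original problem in disguise. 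The remaining ``routes'' you sketch (explicit idempotent analysis of $\End_{\cH^\lambda}(\sQ_+^n\sQ_-^m)$, or a triangular/cellular bound pinching against Theorem~\ref{theo:main}) run exactly into the difficulty the paper records: the dual dots and the absence of a grading mean the known tools (Proposition~\ref{prop:hom-space-basis}, the filtration of Section~\ref{sec:cat-filtration}, Theorem~\ref{theo:Fn-full}) control morphism spaces and their images in bimodule categories, but not the full idempotent structure needed to bound $K_0$ from above.
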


If $d=1$, Conjecture~\ref{conj:categorification} reduces to \cite[Conj.~1]{Kho14}.  In \cite[Th.~10.5]{RS17}, it is possible to prove the analogue of Conjecture~\ref{conj:categorification} because of the presence of a nontrivial grading coming from a Frobenius algebra.  This is also the case in \cite[Th.~1]{CL12}.  The difficulty in the setting of the current paper is that we only have a filtration, and not a grading.

%
\section{Degenerate cyclotomic Hecke algebras} \label{sec:DCHA}
%

Our next goal is to define an action of the category $\cH^\lambda$ on categories of modules for degenerate cyclotomic Hecke algebras.  In this section, we recall these algebras and prove some results concerning them that we will need to show our action is well defined.  The action will then be defined in Section~\ref{sec:action}.

Throughout this section, $\kk$ is an arbitrary commutative ring.  We will cite \cite{Kle05} for various basic results.  Even though that reference works over a field of characteristic zero, one can check that, for all of the results we cite, the proofs go through in the more general setting of an arbitrary commutative ring.

\subsection{Definitions}

\begin{defin}[Degenerate affine Hecke algebra $H_n$] \label{def:degAHA}
  Let $n \in \N$. The \emph{degenerate affine Hecke algebra $H_n$} is the $\kk$-algebra generated by the elements $s_1,\dotsc, s_{n-1}$ and $x_1,\dotsc, x_n$, subject to the relations
    \begin{align}
      \label{eq:deg1} s_i^2 &= 1, \\
      \label{eq:deg2} s_is_j & =s_js_i, \quad |i-j|>1, \\
      \label{eq:deg3} s_is_{i+1}s_i &= s_{i+1}s_is_{i+1}, \\
      \label{eq:deg4} s_jx_i &= x_is_j,\quad i\ne j,j+1, \\
      \label{eq:deg5} s_ix_i &= x_{i+1}s_i-1, \\
      \label{eq:deg6} x_ix_j &= x_jx_i.
    \end{align}
  By convention, $H_0=\kk$ and $H_1=\kk[x_1]$.
\end{defin}

Any result for $H_n$ in the rest of the paper is understood to hold for all $n \in \N$ unless explicitly stated otherwise.

Note that in $H_n$ we also have, for $1\leq i\leq n$ and $t\geq 1$:
\begin{align}
  \label{eq:deg7} s_ix_{i+1} &= x_is_i+1, \\
  \label{eq:deg8} s_ix_i^t &= x_{i+1}^ts_i - \sum_{a+b=t-1} x_i^ax_{i+1}^b, \\
  \label{eq:deg9} s_ix_{i+1}^t &= x_{i}^ts_i + \sum_{a+b=t-1}x_i^ax_{i+1}^b.
\end{align}

Recall the weight $\lambda$ as defined at the beginning of Section~\ref{sec:diagrammatics}.

\begin{defin}[Cyclotomic quotient $H_n^\lambda$] \label{def:cyclotomic-quotient}
  Let $I_n^\lambda$ be the {\em cyclotomic ideal} of $H_n$, which is the ideal generated by $\prod_{i \in I} (x_1-i)^{\lambda_i}$. The \emph{cyclotomic quotient} of $H_n$ corresponding to $\lambda$ is defined to be
  \[
    H_n^{\lambda}:=H_n/I_n^\lambda.
  \]
By convention, $I_0^\lambda=\{0\}$ and so $H_0^\lambda=H_0$.
\end{defin}

One can define a cyclotomic quotient $H_n^f$ for any monic polynomial $f \in \kk[x_1,\dotsc,x_n]$ by replacing $I_n^\lambda$ in Definition~\ref{def:cyclotomic-quotient} with the ideal generated by $f$.  However, as explained in \cite[\S7.1]{Kle05}, it is sufficient to consider the cases where $f = \prod_{i \in I} (x_1-i)^{\lambda_i}$ for some $\lambda \in P_+$.  For this reason, we focus on such quotients in the current paper.

Note that $H_n \subseteq H_{n+1}$, for any $n \in \N$. Composing with the projection onto $H_{n+1}^\lambda$ gives a homomorphism of algebras $H_n\to H_{n+1}^\lambda$, whose kernel is exactly $I_n^\lambda$.  Therefore, we get an embedding of $H_n^\lambda$ into $H_{n+1}^\lambda$.  Via this embedding, we will view $H_n^\lambda$ as a subalgebra of $H_{n+1}^\lambda$ from now on.

\begin{lem}[{\cite[Lem.~7.6.1(i)]{Kle05}}]
 We have that $H_{n+1}^{\lambda}$ is a free left $H_n^{\lambda}$-module with basis
 \begin{equation} \label{eq:left-module-basis}
  \{s_n \dotsm s_jx_j^a \mid 0\leq a< d,\ 1\leq j\leq n+1\},
 \end{equation}
 and a free right $H_n^\lambda$-module with basis
 \begin{equation} \label{eq:right-module-basis}
  \{x_j^a s_j \dotsm s_n \mid 0\leq a< d,\ 1\leq j\leq n+1\}.
 \end{equation}
\end{lem}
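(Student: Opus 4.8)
The statement is \cite[Lem.~7.6.1(i)]{Kle05}; below is the plan of proof one would follow. First I would recall the PBW-type basis of the degenerate affine Hecke algebra: the relations \eqref{eq:deg1}--\eqref{eq:deg6} let one move all polynomial generators to the left of any permutation, so $H_m$ is a free $\kk$-module with basis $\{x_1^{a_1}\dotsm x_m^{a_m}\, w : a_i \in \N,\ w \in S_m\}$. Let $d_j := s_n s_{n-1} \dotsm s_j$ for $1 \le j \le n$ and $d_{n+1} := 1$; these are the minimal-length representatives of the right cosets $S_n \backslash S_{n+1}$, and $d_j$ carries position $j$ to position $n+1$. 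Using \eqref{eq:deg4}, \eqref{eq:deg5} and \eqref{eq:deg7} repeatedly one obtains the straightening identity $x_{n+1}^a d_j = d_j x_j^a + (\text{terms supported on permutations of strictly smaller length})$, which makes the straightening procedure terminate. Combined with the PBW basis this shows that $H_{n+1}$ is a free left $H_n$-module with basis $\{s_n \dotsm s_j x_j^a : a \in \N,\ 1 \le j \le n+1\}$ (with the $j=n+1$ term being $x_{n+1}^a$).

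Next I would pass to the cyclotomic quotient. Since $I_{n+1}^\lambda$ is generated by the monic degree-$d$ polynomial $f(x_1) := \prod_{i \in I}(x_1 - i)^{\lambda_i}$, the main task is to show that modulo $I_{n+1}^\lambda$ every power $x_j^a$ with $a \ge d$ can be rewritten as a left $H_n^\lambda$-combination of the $s_n \dotsm s_j x_j^b$ with $b < d$; equivalently, that in $H_{n+1}^\lambda$ the element $x_{n+1}$ satisfies a monic degree-$d$ equation with coefficients in $H_n^\lambda$, and similarly for the other relevant positions. This is carried out by an explicit computation sliding the relation $f(x_1) = 0$ across the generators $s_1, \dotsc, s_n$, so as to transport a degree-$d$ relation from $x_1$ to $x_{n+1}$, together with an induction on $n$; see \cite[\S7.5--7.6]{Kle05}. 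Granting this, the set $\{s_n \dotsm s_j x_j^a : 0 \le a < d,\ 1 \le j \le n+1\}$ spans $H_{n+1}^\lambda$ as a left $H_n^\lambda$-module.

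For freeness it remains to check that the images of these $(n+1)d$ elements are $H_n^\lambda$-independent. I would do this by a dimension count: by induction on $m$ one has $\dim_\kk H_m^\lambda = d^m\, m!$ (the base cases $H_0^\lambda = \kk$ and $H_1^\lambda = \kk[x_1]/(f(x_1))$ being immediate), so a left $H_n^\lambda$-module generated by $(n+1)d$ elements has $\kk$-dimension at most $(n+1)d \cdot d^n n! = d^{n+1}(n+1)! = \dim_\kk H_{n+1}^\lambda$; hence the spanning set above must in fact be an $H_n^\lambda$-basis, and $H_{n+1}^\lambda$ is free of rank $(n+1)d$ over $H_n^\lambda$. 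The main obstacle is precisely the reduction step of the previous paragraph: unlike in the affine case, the monic relation satisfied by $x_{n+1}$ over $H_n^\lambda$ is not visible from the defining relations and requires the transport-of-relations computation; and the linear-independence half genuinely relies on the rank formula $\dim_\kk H_m^\lambda = d^m\, m!$.

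Finally, the right-module statement \eqref{eq:right-module-basis} follows by applying the $\kk$-algebra anti-automorphism $\sigma$ of $H_{n+1}$ determined by $\sigma(s_i) = s_i$ and $\sigma(x_j) = x_j$; one checks on generators that $\sigma$ respects \eqref{eq:deg1}--\eqref{eq:deg6} (for instance it interchanges \eqref{eq:deg5} and \eqref{eq:deg7}), and since $\sigma$ fixes $x_1$ it fixes $f(x_1)$ and hence descends to an anti-automorphism of $H_{n+1}^\lambda$ restricting to one of $H_n^\lambda$. As $\sigma(s_n \dotsm s_j x_j^a) = x_j^a\, s_j \dotsm s_n$, it carries the left basis \eqref{eq:left-module-basis} to the claimed right basis, completing the proof.
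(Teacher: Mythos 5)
The paper itself does not prove this lemma; it is quoted verbatim from \cite{Kle05}, so the benchmark is Kleshchev's argument rather than anything in the text. Your spanning half follows the standard route and is fine as a sketch: the PBW basis of the degenerate affine Hecke algebra, the minimal coset representatives $s_n\dotsm s_j$ of $S_n\backslash S_{n+1}$, straightening, and transporting the cyclotomic relation from $x_1$ to $x_{n+1}$ are exactly the ingredients used in \cite{Kle05}. One small inaccuracy: spanning is \emph{not} equivalent to ``$x_{n+1}$ satisfies a monic degree-$d$ equation with coefficients in $H_n^\lambda$''. In $H_{n+1}^\lambda$ the element $x_{n+1}^d$ is an $H_n^\lambda$-combination of $x_{n+1}^j$, $j<d$, only modulo $H_n^\lambda s_n H_n^\lambda$, so the rewriting necessarily involves the basis elements $s_n\dotsm s_j x_j^a$ with $j\le n$ as well; this does not hurt the spanning claim, but the reformulation is wrong as stated.

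The genuine gap is in the linear-independence half: your ``dimension count'' is circular. The induction you invoke would give $\dim_\kk H_{m}^\lambda = d^{m} m!$ at step $m=n+1$ only if you already had the freeness of $H_{n+1}^\lambda$ over $H_n^\lambda$ — which is precisely the lemma being proved. What your spanning argument actually yields is the upper bound $\dim_\kk H_{n+1}^\lambda \le (n+1)d\cdot \dim_\kk H_n^\lambda$; the matching lower bound is equivalent to the cyclotomic basis theorem (the set $\{x_1^{a_1}\dotsm x_m^{a_m}w \mid 0\le a_i<d,\ w\in S_m\}$ is a $\kk$-basis of $H_m^\lambda$), and that is the nontrivial content. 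It cannot be extracted from the lemma by induction; it needs an independent argument, e.g.\ constructing a faithful representation of $H_m^\lambda$ on a free $\kk$-module of rank $d^m m!$ (the degenerate analogue of the Ariki--Koike basis theorem), which is how it is established in \cite{Kle05}. Once that theorem is granted, your spanning-plus-rank comparison and the anti-automorphism $\sigma$ (which is correctly checked to fix the cyclotomic ideal and to exchange the two claimed bases) do complete the proof — with ``dimension'' replaced throughout by the rank of free $\kk$-modules, since in this section $\kk$ is only assumed to be a commutative ring, not a field.
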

By convention, $s_n\cdots s_j x_j^a$ and $x_j^a s_j \dotsm s_n$ are equal to $x_{n+1}^a$ when $j=n+1$.

\begin{lem}[{\cite[Lem.~7.6.1(ii,iii)]{Kle05}}] \label{lem:decomp(n+1)}
As $(H_{n}^{\lambda},H_{n}^{\lambda})$-bimodules, we have
  \begin{gather} \label{eq:decomp1}
    H_{n+1}^{\lambda}=
    H_n^{\lambda}s_n H_n^{\lambda}\oplus \bigoplus_{j=0}^{d-1}H_{n}^{\lambda}x_{n+1}^j,
    \\ \label{eq:decomp2}
    H_n^{\lambda}s_n H_n^{\lambda}\cong H_n^{\lambda}\otimes_{H_{n-1}^{\lambda}} H_n^{\lambda},\quad u s_n v \mapsto u \otimes v,
    \\ \label{eq:decomp3}
    H_n^{\lambda}x_{n+1}^a\cong H_n^{\lambda},\quad u x_{n+1}^a \mapsto u,\quad 0 \le a < d.
  \end{gather}
\end{lem}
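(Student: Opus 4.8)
The plan is to deduce all three assertions from the free‑module bases \eqref{eq:left-module-basis} and \eqref{eq:right-module-basis}, used at level $n+1$ and also one step down at level $n$, together with two elementary commutation facts inside $H_{n+1}^\lambda$. First, $s_n$ commutes with every generator of $H_{n-1}^\lambda$: indeed $s_n s_i = s_i s_n$ for $i \le n-2$ by \eqref{eq:deg2}, and $s_n x_i = x_i s_n$ for $i \le n-1$ by \eqref{eq:deg4}; hence $s_n w = w s_n$ for all $w \in H_{n-1}^\lambda$, and in particular $H_{n-1}^\lambda s_n = s_n H_{n-1}^\lambda$. Second, $x_{n+1}$ commutes with every generator of $H_n^\lambda$ (by \eqref{eq:deg4} and \eqref{eq:deg6}), so $H_n^\lambda x_{n+1}^a = x_{n+1}^a H_n^\lambda$ for every $a$, and multiplication by $x_{n+1}^a$ on either side is a homomorphism of $(H_n^\lambda, H_n^\lambda)$-bimodules.

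I would dispatch \eqref{eq:decomp3} first: the bimodule map $H_n^\lambda \to H_{n+1}^\lambda$, $u \mapsto u x_{n+1}^a$, has image $H_n^\lambda x_{n+1}^a$ and is injective because $x_{n+1}^a$ is the $j=n+1$ member of the free left $H_n^\lambda$-basis \eqref{eq:left-module-basis}; hence it is an isomorphism, and simultaneously the sum $\sum_{j=0}^{d-1} H_n^\lambda x_{n+1}^j$ is direct.

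The heart of the matter is to describe $H_n^\lambda s_n H_n^\lambda$ via the \emph{right} module structure. Write $H_n^\lambda = \bigoplus_i b_i H_{n-1}^\lambda$, a free right $H_{n-1}^\lambda$-module on $\{b_i\} = \{x_j^a s_j \cdots s_{n-1} : 0 \le a < d,\ 1 \le j \le n\}$, which is \eqref{eq:right-module-basis} one level down. Using $H_{n-1}^\lambda s_n = s_n H_{n-1}^\lambda$ we get $H_n^\lambda s_n = \bigoplus_i b_i s_n H_{n-1}^\lambda$, whence $H_n^\lambda s_n H_n^\lambda = \sum_i b_i s_n H_n^\lambda$. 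Now the crucial bookkeeping observation: $\{b_i s_n\} \cup \{x_{n+1}^a : 0 \le a < d\}$ is exactly the free right $H_n^\lambda$-basis $\{x_j^a s_j \cdots s_n : 0 \le a < d,\ 1 \le j \le n+1\}$ of $H_{n+1}^\lambda$ from \eqref{eq:right-module-basis} (the terms $j \le n$ are the $b_i s_n$, and $j = n+1$ gives the $x_{n+1}^a$). Therefore $H_{n+1}^\lambda = \big( \bigoplus_i b_i s_n H_n^\lambda \big) \oplus \big( \bigoplus_a x_{n+1}^a H_n^\lambda \big)$; the first summand equals $H_n^\lambda s_n H_n^\lambda$ and is right $H_n^\lambda$-free on $\{b_i s_n\}$, and the second equals $\bigoplus_a H_n^\lambda x_{n+1}^a$ by centrality of $x_{n+1}$. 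Both summands are sub-bimodules, so this proves \eqref{eq:decomp1}. Finally, for \eqref{eq:decomp2}, the map $\varphi \colon H_n^\lambda \otimes_{H_{n-1}^\lambda} H_n^\lambda \to H_{n+1}^\lambda$, $u \otimes v \mapsto u s_n v$, is well defined precisely because $s_n$ commutes with $H_{n-1}^\lambda$, it is a bimodule map with image $H_n^\lambda s_n H_n^\lambda$, and (since $H_n^\lambda = \bigoplus_i b_i H_{n-1}^\lambda$ gives $H_n^\lambda \otimes_{H_{n-1}^\lambda} H_n^\lambda = \bigoplus_i (b_i \otimes 1) H_n^\lambda$ as a free right $H_n^\lambda$-module) $\varphi$ carries the basis $\{b_i \otimes 1\}$ bijectively onto the basis $\{b_i s_n\}$; hence $\varphi$ is a bimodule isomorphism.

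I do not anticipate a genuine obstacle — it is all a consequence of the previous lemma — but the step worth flagging is the decision to analyze the two‑sided piece $H_n^\lambda s_n H_n^\lambda$ through the right $H_{n-1}^\lambda$-module decomposition of $H_n^\lambda$ rather than the left one. A naive left-handed computation of $s_n H_n^\lambda$ reintroduces $x_{n+1}$ via \eqref{eq:deg5} ($s_n x_n = x_{n+1} s_n - 1$), which muddies the direct-sum statement; the right-handed viewpoint makes the identification of $H_n^\lambda s_n H_n^\lambda$ and of its complement with $\bigoplus_j H_n^\lambda x_{n+1}^j$ transparent. The only care needed is the routine matching of bases described above.
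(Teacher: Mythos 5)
Your proof is correct. The paper does not prove this lemma at all---it is quoted from Kleshchev \cite[Lem.~7.6.1(ii,iii)]{Kle05}---and your derivation of all three statements from the free left/right bases \eqref{eq:left-module-basis}--\eqref{eq:right-module-basis} (applied at levels $n+1$ and $n$), together with the commutation of $s_n$ with $H_{n-1}^\lambda$ and of $x_{n+1}$ with $H_n^\lambda$, is essentially the standard argument behind the cited result; in particular the key bookkeeping step, matching $\{b_i s_n\}\cup\{x_{n+1}^a\}$ with the right basis of $H_{n+1}^\lambda$, is exactly what makes both \eqref{eq:decomp1} and the injectivity in \eqref{eq:decomp2} work. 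The only caveat is the degenerate case $n=0$, where $s_n$ does not exist and $H_{n-1}^\lambda$ is undefined, so your argument (like the statement itself) should be read as applying for $n\ge 1$.
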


\subsection{The trace map}

\begin{lem}[{\cite[Lem.~7.7.2]{Kle05}}] \label{lem:Frob}
  The algebra $H_{n+1}^{\lambda}$ is a Frobenius extension of $H_n^{\lambda}$, with the nondegenerate trace
  \[
    \tr_{n+1} \colon H_{n+1}^\lambda\to H_n^\lambda
  \]
  being defined by composing the projection onto $H_n^\lambda x_{n+1}^{d-1}$ in \eqref{eq:decomp1} with the isomorphism $H_n^\lambda x_{n+1}^{d-1} \cong H_n^\lambda$ in \eqref{eq:decomp3}.
\end{lem}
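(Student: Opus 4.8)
The plan is to reduce the Frobenius property to the bimodule decomposition of Lemma~\ref{lem:decomp(n+1)}. First, $\tr_{n+1}$ is well defined and is a homomorphism of $(H_n^\lambda,H_n^\lambda)$-bimodules: the decomposition \eqref{eq:decomp1} is one of bimodules, so the projection onto its summand $H_n^\lambda x_{n+1}^{d-1}$ is a bimodule map, and \eqref{eq:decomp3} supplies a bimodule isomorphism $H_n^\lambda x_{n+1}^{d-1}\to H_n^\lambda$ (note that $x_{n+1}$ commutes with $x_1,\dots,x_n$ by \eqref{eq:deg6} and with $s_1,\dots,s_{n-1}$ by \eqref{eq:deg4}, so $H_n^\lambda x_{n+1}^{d-1}$ really is a sub-bimodule). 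The content of the lemma is therefore that this bimodule map is \emph{nondegenerate}, i.e.\ that $b\mapsto\tr_{n+1}(b\,\cdot\,)$ is an isomorphism $H_{n+1}^\lambda\to\Hom_{H_n^\lambda}({}_{H_n^\lambda}H_{n+1}^\lambda,{}_{H_n^\lambda}H_n^\lambda)$ of bimodules, equivalently that $H_{n+1}^\lambda$ admits a pair of dual bases over $H_n^\lambda$ with respect to $\tr_{n+1}$; since being a bimodule map is necessary but not sufficient for this, a genuine argument is needed.

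I would prove nondegeneracy by induction on $n$, using \eqref{eq:decomp1}--\eqref{eq:decomp3} as the inductive device. The base case is $H_1^\lambda=\kk[x_1]/\bigl(\prod_{i\in I}(x_1-i)^{\lambda_i}\bigr)$, which is $\kk$-free on $1,x_1,\dots,x_1^{d-1}$ with $\tr_1$ extracting the coefficient of $x_1^{d-1}$; the Gram matrix $\bigl(\tr_1(x_1^{a+b})\bigr)_{a,b=0}^{d-1}$ equals $1$ on the antidiagonal and $0$ above it, hence is invertible over $\kk$, so $\tr_1$ is a nondegenerate trace. For the inductive step I would apply $\Hom_{H_n^\lambda}(-,{}_{H_n^\lambda}H_n^\lambda)$ to the bimodule decomposition: the free summand $\bigoplus_{j=0}^{d-1}H_n^\lambda x_{n+1}^j$ is self-dual as a bimodule via the pairing $\langle x_{n+1}^i,x_{n+1}^j\rangle=\delta_{i+j,d-1}$ (well defined because $x_{n+1}$ is central in $H_n^\lambda$, with the invertible antidiagonal Gram matrix), while for the summand $H_n^\lambda s_n H_n^\lambda\cong H_n^\lambda\otimes_{H_{n-1}^\lambda}H_n^\lambda$ of \eqref{eq:decomp2}, tensor--Hom adjunction, the freeness of $H_n^\lambda$ over $H_{n-1}^\lambda$ from \eqref{eq:left-module-basis}, and the inductive hypothesis (the Frobenius property of $H_{n-1}^\lambda\subseteq H_n^\lambda$) give bimodule isomorphisms
\[
  \Hom_{H_n^\lambda}\!\bigl({}_{H_n^\lambda}(H_n^\lambda\otimes_{H_{n-1}^\lambda}H_n^\lambda),\,H_n^\lambda\bigr)\;\cong\;\Hom_{H_{n-1}^\lambda}\!\bigl({}_{H_{n-1}^\lambda}H_n^\lambda,\,{}_{H_{n-1}^\lambda}H_n^\lambda\bigr)\;\cong\;H_n^\lambda\otimes_{H_{n-1}^\lambda}H_n^\lambda.
\]
Adding the two summands yields $H_{n+1}^\lambda\cong\Hom_{H_n^\lambda}({}_{H_n^\lambda}H_{n+1}^\lambda,H_n^\lambda)$ as bimodules, so the extension is Frobenius; tracing $1\in H_{n+1}^\lambda$ through this isomorphism, and using that the two summands are orthogonal under the resulting pairing, that $1$ lies in the $j=0$ piece of the free summand, and the antidiagonal pairing chosen above, identifies the associated Frobenius homomorphism with the projection onto $H_n^\lambda x_{n+1}^{d-1}$, that is, with $\tr_{n+1}$; this closes the induction.

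A more hands-on alternative is to exhibit the dual bases directly from the $H_n^\lambda$-bases of \eqref{eq:left-module-basis} and \eqref{eq:right-module-basis}: pair $s_n\cdots s_j\,x_j^{a}$ with $x_{j}^{\,d-1-a}s_j\cdots s_n$ and compute $\tr_{n+1}\bigl(s_n\cdots s_j\,x_j^{a}\cdot x_{j'}^{\,d-1-a'}s_{j'}\cdots s_n\bigr)\in H_n^\lambda$. Here it is convenient to pass to the associated graded algebra $\gr H_{n+1}^\lambda$ for the filtration with $\deg x_i=1$ and $\deg s_i=0$; in $\gr H_{n+1}^\lambda$ one has $x_1^d=0$ (the leading term of the cyclotomic relation), hence $x_j^d=0$ for every $j$ (since $x_{j+1}=s_jx_js_j$ there), so all the correction terms produced by \eqref{eq:deg7}--\eqref{eq:deg9} when commuting dots past crossings vanish. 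Using $s_i^2=1$ one then finds that in $\gr H_{n+1}^\lambda$ the above Gram matrix is the identity (it is block-diagonal in $j$, the $j\ne j'$ entries landing in $\gr(H_n^\lambda s_n H_n^\lambda)$ and the $j=j'$ block reducing to $\gr\tr_{n+1}(x_{n+1}^{a+d-1-a'})=\delta_{a,a'}$); by unitriangularity this lifts back to an invertible Gram matrix over $H_n^\lambda$, establishing nondegeneracy. In either route the one real obstacle is precisely this nondegeneracy verification: pinning down the behaviour of $\tr_{n+1}$ on products of Hecke-algebra elements well enough to recognize the explicitly described bimodule map as a Frobenius homomorphism. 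Everything else is formal, and most of the necessary input is already contained in Lemma~\ref{lem:decomp(n+1)}.
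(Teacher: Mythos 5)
The paper itself gives no argument here: Lemma~\ref{lem:Frob} is quoted from \cite[Lem.~7.7.2]{Kle05}, and the closest the paper comes to a proof is its later construction of explicit dual bases (Lemma~\ref{lem:zerotrace}, Corollary~\ref{cor:subseqtraces}, Proposition~\ref{prop:dual}), where the left basis \eqref{eq:left-module-basis} is corrected by the elements $y_{i,k}$ of \eqref{eq:y-def} so that the Gram matrix of the pairing $\langle u,v\rangle_{n+1}=\tr_{n+1}(uv)$ against the right basis \eqref{eq:right-module-basis} is exactly the identity. You correctly isolate the real content (nondegeneracy of the specific bimodule map $\tr_{n+1}$), and your second, hands-on route is the right kind of argument --- it is a Gram-matrix version of what the paper does, with triangularity replacing exact duality. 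But as written both routes have genuine gaps.

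In the first route, dualizing \eqref{eq:decomp1} summand by summand can only produce an isomorphism of $(H_n^\lambda,H_n^\lambda)$-bimodules $H_{n+1}^\lambda\cong\Hom_{H_n^\lambda}\bigl({}_{H_n^\lambda}H_{n+1}^\lambda,H_n^\lambda\bigr)$, because \eqref{eq:decomp1} is not a decomposition of $(H_n^\lambda,H_{n+1}^\lambda)$-bimodules. The Frobenius property requires the specific right-$H_{n+1}^\lambda$-linear map $v\mapsto\tr_{n+1}(\,\cdot\,v)$ to be bijective; an abstract two-sided-$H_n^\lambda$ isomorphism neither implies this nor identifies the image of $1$ under it with $\tr_{n+1}$ (your ``tracing $1$ through'' step presumes right $H_{n+1}^\lambda$-linearity of an isomorphism that was not constructed to have it). In the second route, writing ``$\gr\tr_{n+1}$'' tacitly assumes that $\tr_{n+1}$ is filtered of degree $-(d-1)$, i.e.\ that the projection in \eqref{eq:decomp1} does not raise $x$-degree; this is true but not automatic, since $H_n^\lambda s_nH_n^\lambda$ is \emph{not} the span of the PBW monomials $x_1^{a_1}\dotsm x_{n+1}^{a_{n+1}}w$ with $w\notin S_n$ (e.g.\ $s_nx_n=x_{n+1}s_n-1$), so the projection is not just reading off leading PBW components and its filtered property needs a proof. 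Moreover ``Gram matrix $=$ identity in $\gr$'' does not by itself give invertibility over $H_n^\lambda$: the filtration is by nonnegative degrees, so ``$1$ plus higher-order terms'' need not be a unit (e.g.\ $1+x_1$ in $H_1^\lambda$ is not invertible when $\lambda_{-1}\neq 0$). What you actually need, and what the filtered property of $\tr_{n+1}$ would deliver, is the sharper statement that the entries with $a<a'$ vanish \emph{exactly} (their degree bound is negative) and the $a=a'$ entries equal $\delta_{j,j'}$ \emph{exactly}, giving a genuinely triangular matrix with unit diagonal. Once that point is supplied --- or if you bypass $\gr$ and do the trace computations directly, as in Lemma~\ref{lem:zerotrace} and Corollary~\ref{cor:subseqtraces} with an induction on $n$ as in Proposition~\ref{prop:dual} --- the Gram-matrix plan closes and yields the lemma, with $\tr_{n+1}$ as the Frobenius homomorphism.
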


At the end of this section we will show that the trace is not symmetric.  Diagrammatically, the map $\tr_{n+1}$ corresponds to the operation of taking a diagram on $n$ upward strands and closing off the leftmost strand to the left:
\[
  \begin{tikzpicture}[anchorbase]
    \draw[dashed] (-0.75,-0.25) rectangle (0.75,0.25);
    \draw[->] (-0.6,0.25) -- (-0.6,0.75);
    \draw[->] (-0.4,0.25) -- (-0.4,0.75);
    \draw[->] (-0.2,0.25) -- (-0.2,0.75);
    \draw[->] (0,0.25) -- (0,0.75);
    \draw[->] (0.2,0.25) -- (0.2,0.75);
    \draw[->] (0.4,0.25) -- (0.4,0.75);
    \draw[->] (0.6,0.25) -- (0.6,0.75);
    \draw[<-] (-0.6,-0.25) -- (-0.6,-0.75);
    \draw[<-] (-0.4,-0.25) -- (-0.4,-0.75);
    \draw[<-] (-0.2,-0.25) -- (-0.2,-0.75);
    \draw[<-] (0,-0.25) -- (0,-0.75);
    \draw[<-] (0.2,-0.25) -- (0.2,-0.75);
    \draw[<-] (0.4,-0.25) -- (0.4,-0.75);
    \draw[<-] (0.6,-0.25) -- (0.6,-0.75);
  \end{tikzpicture}
  \quad \mapsto \quad
  \begin{tikzpicture}[anchorbase]
    \draw[dashed] (-0.75,-0.25) rectangle (0.75,0.25);
    \draw[->] (-0.6,0.25) -- (-0.6,0.35) arc (0:180:0.2) -- (-1,-0.35) arc (180:360:0.2) -- (-0.6,-0.25);
    \draw[->] (-0.4,0.25) -- (-0.4,0.75);
    \draw[->] (-0.2,0.25) -- (-0.2,0.75);
    \draw[->] (0,0.25) -- (0,0.75);
    \draw[->] (0.2,0.25) -- (0.2,0.75);
    \draw[->] (0.4,0.25) -- (0.4,0.75);
    \draw[->] (0.6,0.25) -- (0.6,0.75);
    \draw[<-] (-0.4,-0.25) -- (-0.4,-0.75);
    \draw[<-] (-0.2,-0.25) -- (-0.2,-0.75);
    \draw[<-] (0,-0.25) -- (0,-0.75);
    \draw[<-] (0.2,-0.25) -- (0.2,-0.75);
    \draw[<-] (0.4,-0.25) -- (0.4,-0.75);
    \draw[<-] (0.6,-0.25) -- (0.6,-0.75);
  \end{tikzpicture}
\]

\begin{lem} \label{lem:Nakayama-fix-x}
  We have
  \[
    \tr_{n+1}(x_{n+1}z) = \tr_{n+1}(z x_{n+1})
    \quad \text{for all } z \in H_{n+1}^\lambda.
  \]
\end{lem}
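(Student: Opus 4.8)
The plan is to reduce the identity to a short direct computation by means of the bimodule decomposition \eqref{eq:decomp1}. Since both sides of the asserted equality are $\kk$-linear in $z$, it suffices to verify it as $z$ ranges over a $\kk$-spanning set of $H_{n+1}^\lambda$, and by \eqref{eq:decomp1} such a set is given by the elements of $H_n^\lambda x_{n+1}^j$ for $0 \le j \le d-1$ together with the elements $u s_n v$, $u,v \in H_n^\lambda$, which span the summand $H_n^\lambda s_n H_n^\lambda$. The key structural fact I would use throughout is that $x_{n+1}$ is central in the subalgebra $H_n^\lambda$: it commutes with each of the generators $s_1,\dots,s_{n-1}$ and $x_1,\dots,x_n$ of $H_n^\lambda$ by \eqref{eq:deg4} and \eqref{eq:deg6}.

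For $z = u x_{n+1}^j$ with $u \in H_n^\lambda$, centrality of $x_{n+1}$ in $H_n^\lambda$, together with the obvious fact that $x_{n+1}$ commutes with $x_{n+1}^j$, gives $x_{n+1} z = z x_{n+1}$ already in $H_{n+1}^\lambda$, so the claim follows after applying $\tr_{n+1}$. For $z = u s_n v$, I would transport $x_{n+1}$ through the crossing $s_n$ using the two relations $x_{n+1} s_n = s_n x_n + 1$ (a rewriting of \eqref{eq:deg5}) and $s_n x_{n+1} = x_n s_n + 1$ (relation \eqref{eq:deg7}); combining these with $v x_{n+1} = x_{n+1} v$ yields
\[
  x_{n+1} z = u s_n (x_n v) + uv, \qquad z x_{n+1} = (u x_n) s_n v + uv.
\]
Since $x_n v$ and $u x_n$ lie in $H_n^\lambda$, the terms $u s_n (x_n v)$ and $(u x_n) s_n v$ both lie in the summand $H_n^\lambda s_n H_n^\lambda$ of \eqref{eq:decomp1}, on which $\tr_{n+1}$ vanishes by its description in Lemma~\ref{lem:Frob}. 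Hence $\tr_{n+1}(x_{n+1} z) = \tr_{n+1}(uv) = \tr_{n+1}(z x_{n+1})$, which completes the argument. Note that this runs uniformly in $d$: whether $\tr_{n+1}(uv)$ is $uv$ (when $d=1$) or $0$ (when $d>1$) is irrelevant.

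There is no genuine obstacle here; the computation is a few lines. The only points requiring care are, first, keeping straight the defining data of $\tr_{n+1}$ from Lemma~\ref{lem:Frob} — namely that it is the projection of \eqref{eq:decomp1} onto the summand $H_n^\lambda x_{n+1}^{d-1}$ followed by the isomorphism \eqref{eq:decomp3}, so that it annihilates the whole summand $H_n^\lambda s_n H_n^\lambda$ (and the powers $x_{n+1}^j$ with $j\ne d-1$, though that is not needed) — and, second, using the non-commuting relations \eqref{eq:deg5} and \eqref{eq:deg7} with the correct indices when moving $x_{n+1}$ past $s_n$. One could equally well phrase the proof diagrammatically, moving a dot through a single upward crossing, but the algebraic version above seems the cleanest.
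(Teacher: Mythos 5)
Your proof is correct and follows essentially the same route as the paper: decompose $z$ via \eqref{eq:decomp1}, use that $x_{n+1}$ commutes with $H_n^\lambda$, move $x_{n+1}$ past $s_n$ with \eqref{eq:deg5} and \eqref{eq:deg7}, and invoke the vanishing of $\tr_{n+1}$ on the summand $H_n^\lambda s_n H_n^\lambda$. The paper merely packages the two cases into a single computation of $\tr_{n+1}(x_{n+1}z - z x_{n+1})$, noting that the commutator $x_{n+1}s_n - s_n x_{n+1} = s_n x_n - x_n s_n$ lies in $H_n^\lambda s_n H_n^\lambda$.
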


\begin{proof}
  Fix $z \in H_{n+1}^\lambda$.  By \eqref{eq:decomp1}, we have
  \[
    z = \sum_{j=0}^\ell h'_j s_n h''_j + \sum_{k=0}^{d-1} h_k x_{n+1}^k,
  \]
  for some $h'_j, h''_j, h_k \in H_n^\lambda$, $\ell \in \N$.  Then, since $x_{n+1}$ commutes with $H_n^\lambda$, we have
  \begin{multline*}
    \tr_{n+1} (x_{n+1} z) - \tr_{n+1} (z x_{n+1})
    = \tr_{n+1} (x_{n+1} z - z x_{n+1}) \\
    = \tr_{n+1} \left( \sum_{j=0}^\ell h'_j (x_{n+1} s_n - s_n x_{n+1}) h''_j \right)
    \stackrel{\substack{\eqref{eq:deg5} \\ \eqref{eq:deg7}}}{=} \tr_{n+1} \left( \sum_{j=0}^\ell h'_j (s_n x_n - x_n s_n) h''_j \right)
    = 0,
  \end{multline*}
  where the last equality follows from the fact that $h'_j (s_n x_n - x_n s_n) h''_j \subseteq H_n^\lambda s_n H_n^\lambda$ for all $j$.
\end{proof}

As noted in \cite[Cor.~7.7.4]{Kle05}, we can use Lemma~\ref{lem:Frob} recursively to see that $H_{n+1}^{\lambda}$ is a Frobenius algebra over $\kk$, with nondegenerate trace being defined by composing the projection onto $\kk x_1^{d-1} \dotsm x_{n+1}^{d-1}$ with the isomorphism of vector spaces $\kk x_1^{d-1} \dotsm x_{n+1}^{d-1}\cong \kk$. However, we will not need this fact here.

The trace induces an $H_n^{\lambda}$-valued bilinear form on $H_{n+1}^{\lambda}$, defined by
\begin{equation} \label{eq:bilinear-form}
  \langle u,v\rangle_{n+1}:=\tr_{n+1}(uv).
\end{equation}
Our next goal is to find a basis for $H_{n+1}^\lambda$ as an $H_n^\lambda$-module that is left dual to the basis \eqref{eq:left-module-basis} with respect to the form \eqref{eq:bilinear-form}.  For $n \in \N_+$ and $k \in \{0,\dotsc,d-1\}$, define
\begin{equation} \label{eq:y-def}
  y_{n,k} := \sum_{t=k}^{d-1} (-1)^{d-1-t} x_n^{t-k} \det \big( \tr_n ( x_n^{d+j-i} ) \big)_{i,j = 1,\dotsc,d-1-t},
\end{equation}
where we take the determinant above to be equal to one when $t=d-1$.  Under the action to be defined in Section~\ref{sec:action}, the elements $y_{n,k}$ will correspond to the dual dots in the category $\cH^\lambda$.

\begin{lem}
  The set
  \begin{equation}
    \{s_n \dotsm s_i y_{i,k} \mid i=1,\dotsc,n+1,\ k=0,\dotsc,d-1\}
  \end{equation}
  is a basis of $H_{n+1}^\lambda$ as a left $H_n^\lambda$-module.  Here we adopt the convention that, for $i=n+1$, the notation $s_n \dotsm s_i y_{i,k}$ means $y_{n+1,k}$ for $k=0,\dotsc,d-1$.
\end{lem}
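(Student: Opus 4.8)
The plan is to compare the proposed set with the known left $H_n^\lambda$-basis \eqref{eq:left-module-basis} and to show that the transition between them is given by an invertible block-triangular matrix over $H_n^\lambda$. First I would record the shape of $y_{i,k}$ as a polynomial in $x_i$: substituting $s=d-1-t$ in \eqref{eq:y-def} gives
\[
  y_{i,k} = \sum_{s=0}^{d-1-k} \gamma_{i,s}\, x_i^{\,d-1-k-s},
  \qquad
  \gamma_{i,s} := (-1)^s \det\big(\tr_i(x_i^{d+j-i'})\big)_{i',j=1,\dotsc,s},
\]
where $\gamma_{i,0}=1$ (empty determinant). Thus $y_{i,k}$ is monic of $x_i$-degree exactly $d-1-k$, and its coefficients $\gamma_{i,s}$ lie in $H_{i-1}^\lambda$ when $i\le n$, and in $H_n^\lambda$ in the boundary case $i=n+1$.

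Next I would use that each $s_\ell$ with $\ell\ge i$ commutes with $H_{i-1}^\lambda$ (immediate from \eqref{eq:deg2} and \eqref{eq:deg4}), so $s_n\dotsm s_i$ commutes with every $\gamma_{i,s}$. Hence
\[
  s_n \dotsm s_i\, y_{i,k}
  = s_n \dotsm s_i\, x_i^{\,d-1-k}
  + \sum_{s=1}^{d-1-k} \gamma_{i,s}\,\big(s_n \dotsm s_i\, x_i^{\,d-1-k-s}\big),
\]
which is an $H_n^\lambda$-linear combination of the basis elements of \eqref{eq:left-module-basis} having the \emph{same} index $i$, involving only exponents $0\le a\le d-1-k$, and in which the coefficient of $s_n\dotsm s_i\, x_i^{\,d-1-k}$ equals $1$. (When $i=n+1$ the prefix $s_n\dotsm s_{n+1}$ is empty and one uses the stated convention; the argument is unchanged.)

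Consequently the matrix $M$ expressing the proposed elements in terms of \eqref{eq:left-module-basis} is block diagonal, with one $d\times d$ block $M^{(i)}$ for each $i\in\{1,\dotsc,n+1\}$; writing rows by $k$ and columns by $a$ (both in $\{0,\dotsc,d-1\}$), the $(k,a)$-entry of $M^{(i)}$ is $\gamma_{i,\,d-1-k-a}$ when $a+k\le d-1$ and $0$ otherwise, so $M^{(i)}$ is supported on $a+k\le d-1$ with $1$'s along the antidiagonal $a+k=d-1$. Reversing the column index turns $M^{(i)}$ into an upper-triangular matrix over $H_n^\lambda$ with $1$'s on the diagonal, hence invertible over any ring by back-substitution; therefore $M$ is invertible and the proposed set is a left $H_n^\lambda$-basis of $H_{n+1}^\lambda$. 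The computation is routine; the only points deserving care are the commutation of the coefficients $\gamma_{i,s}$ past $s_n\dotsm s_i$ (which is what keeps the block entries inside $H_n^\lambda$) and the fact that, since $\kk$ is an arbitrary commutative ring, one must deduce invertibility of $M$ from its triangular shape rather than from any dimension count.
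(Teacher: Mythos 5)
Your proof is correct and takes essentially the same route as the paper: the paper's argument is exactly that the transition matrix between the basis \eqref{eq:left-module-basis} and the (reindexed) set $\{s_n \dotsm s_i y_{i,d-1-k}\}$ is triangular unipotent by \eqref{eq:y-def}. You have merely made explicit the block structure, the commutation of the coefficients $\gamma_{i,s}\in H_{i-1}^\lambda\subseteq H_n^\lambda$ past $s_n\dotsm s_i$, and the invertibility of a unitriangular matrix over an arbitrary ring, all of which the paper leaves implicit.
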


\begin{proof}
  This follows from the fact that the transition matrix between the basis \eqref{eq:left-module-basis} and the set
  \[
    \{s_n \dotsm s_i y_{i,d-1-k} \mid i=1,\dotsc,n+1,\ k=0,\dotsc,d-1\}
  \]
  is triangular unipotent by \eqref{eq:y-def}.
\end{proof}

\begin{lem} \label{lem:zerotrace}
  We have
  \[
    \tr_{n+1}(gs_n)=\tr_{n+1}(s_ng)=0
    \quad \text{for all} \quad
    g \in \bigoplus_{m=0}^{d-1} H_n^\lambda x_{n+1}^m.
  \]
\end{lem}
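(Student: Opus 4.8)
The idea is to push $s_n g$ and $g s_n$ into the part of the decomposition \eqref{eq:decomp1} on which $\tr_{n+1}$ visibly vanishes. Recall from Lemma~\ref{lem:Frob} that $\tr_{n+1}$ is (the composite of) the projection onto the summand $H_n^\lambda x_{n+1}^{d-1}$ relative to the direct sum decomposition $H_{n+1}^\lambda = H_n^\lambda s_n H_n^\lambda \oplus \bigoplus_{j=0}^{d-1} H_n^\lambda x_{n+1}^j$; consequently $\tr_{n+1}$ kills $H_n^\lambda s_n H_n^\lambda$ and also kills $H_n^\lambda x_{n+1}^b$ for every $b \ne d-1$. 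So it suffices to show that $s_n g$ and $g s_n$ lie in $H_n^\lambda s_n H_n^\lambda \oplus \bigoplus_{b=0}^{d-2} H_n^\lambda x_{n+1}^b$. By $\kk$-linearity of $\tr_{n+1}$ we may assume $g = h\, x_{n+1}^m$ with $h \in H_n^\lambda$ and $0 \le m \le d-1$, and we use throughout that $x_{n+1}$ commutes with every element of $H_n^\lambda$ (by \eqref{eq:deg4} and \eqref{eq:deg6}).

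\textbf{The case $s_n g$.} Since $x_{n+1}^m$ commutes with $h$, write $s_n g = s_n x_{n+1}^m h$ and apply \eqref{eq:deg9} with $i = n$ and $t = m$, giving
\[
  s_n x_{n+1}^m = x_n^m s_n + \sum_{a+b=m-1} x_n^a x_{n+1}^b .
\]
Multiplying on the right by $h$, the term $x_n^m s_n h$ lies in $H_n^\lambda s_n H_n^\lambda$ (both $x_n^m$ and $h$ are in $H_n^\lambda$), while each remaining term equals $(x_n^a h)\, x_{n+1}^b \in H_n^\lambda x_{n+1}^b$ with $b \le m-1 \le d-2$. Hence $s_n g$ lies in $H_n^\lambda s_n H_n^\lambda \oplus \bigoplus_{b=0}^{d-2} H_n^\lambda x_{n+1}^b$, so $\tr_{n+1}(s_n g) = 0$.

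\textbf{The case $g s_n$.} From \eqref{eq:deg8} with $i = n$ and $t = m$ we get $x_{n+1}^m s_n = s_n x_n^m + \sum_{a+b=m-1} x_n^a x_{n+1}^b$, whence
\[
  g s_n = h\, x_{n+1}^m s_n = h s_n x_n^m + \sum_{a+b=m-1} (h x_n^a)\, x_{n+1}^b ,
\]
which again lies in $H_n^\lambda s_n H_n^\lambda \oplus \bigoplus_{b=0}^{d-2} H_n^\lambda x_{n+1}^b$; therefore $\tr_{n+1}(g s_n) = 0$. There is no genuine obstacle here: the only point requiring attention is the elementary bookkeeping that $a+b = m-1$ together with $m \le d-1$ forces $b \le d-2$, so no term ever lands in the summand $H_n^\lambda x_{n+1}^{d-1}$ that the trace detects.
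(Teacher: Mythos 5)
Your proof is correct and follows essentially the same route as the paper: use \eqref{eq:deg9} (resp.\ \eqref{eq:deg8}) to rewrite $s_n x_{n+1}^m$ (resp.\ $x_{n+1}^m s_n$) and observe that everything lands in $H_n^\lambda s_n H_n^\lambda \oplus \bigoplus_{b=0}^{d-2} H_n^\lambda x_{n+1}^b$, on which $\tr_{n+1}$ vanishes by its definition via \eqref{eq:decomp1}. The only cosmetic difference is that the paper treats $\tr_{n+1}(s_n g)$ explicitly and declares the other case similar, whereas you write out both.
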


\begin{proof}
  Let $g=\sum_{m=0}^{d-1}a_mx_{n+1}^m$, for some $a_m\in H_n^{\lambda}$.  We prove $\tr_{n+1}(s_ng)=0$, the proof of $\tr_{n+1}(gs_n)=0$  being similar.  Note that $x_{n+1}$ commutes with the $a_m$, $m \in \{0,\dotsc,m-1\}$, because the latter all belong to $H_n^{\lambda}$ by assumption. So we have
  \begin{multline*}
    s_n g
    = \sum_{m=0}^{d-1}s_na_mx_{n+1}^m
    = \sum_{m=0}^{d-1}s_nx_{n+1}^ma_m
    \stackrel{\eqref{eq:deg9}}{=} \sum_{m=0}^{d-1} \left( x_n^m s_n a_m + \sum_{a+b=m-1} x_n^a x_{n+1}^b a_m \right) \\
    = \sum_{m=0}^{d-1} \left( x_n^m s_n a_m + \sum_{a+b=m-1} x_n^a a_m x_{n+1}^b \right)
    \in H_n^\lambda s_n H_n^\lambda\oplus\bigoplus_{j=0}^{d-2}H_n^\lambda x_{n+1}^j.
  \end{multline*}
  Therefore the projection onto $H_n^\lambda x_{n+1}^{d-1}$ is zero and the result follows.
\end{proof}

\begin{lem} \label{lem:inclusions}
  We have
  \begin{gather*}
    s_n H_{n-1}^\lambda s_{n-1} H_{n-1}^\lambda s_n\subseteq H_n^\lambda s_n H_n^\lambda,\quad \text{and}
    \\
    s_n H_{n-1}^\lambda x_n^as_n \subseteq H_n^\lambda s_n H_n^\lambda \oplus \bigoplus_{0\leq b\leq a}  H_n^\lambda x_{n+1}^b,\quad 0\leq a\leq d-1.
  \end{gather*}
\end{lem}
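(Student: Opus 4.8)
The plan is to deduce both inclusions from the single observation that $s_n$ commutes with every element of $H_{n-1}^\lambda$. Concretely, as a subalgebra of $H_n^\lambda$ the algebra $H_{n-1}^\lambda$ is generated by $s_1,\dots,s_{n-2}$ and $x_1,\dots,x_{n-1}$; by \eqref{eq:deg2} each $s_i$ with $i\le n-2$ commutes with $s_n$ (since $|n-i|>1$), and by \eqref{eq:deg4} each $x_i$ with $i\le n-1$ commutes with $s_n$ (since $i\notin\{n,n+1\}$). Hence $s_n h = h s_n$ for every $h\in H_{n-1}^\lambda$.

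For the first inclusion I would take $h,h'\in H_{n-1}^\lambda$ and compute, using this commutation and then the braid relation \eqref{eq:deg3},
\[
  s_n\, h\, s_{n-1}\, h'\, s_n = h\, s_n s_{n-1} s_n\, h' = h\, s_{n-1} s_n s_{n-1}\, h',
\]
which lies in $H_n^\lambda s_n H_n^\lambda$ because $h s_{n-1},\, s_{n-1} h'\in H_n^\lambda$.

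For the second inclusion I would write $s_n\, h\, x_n^a\, s_n = h\,(s_n x_n^a s_n)$ for $h\in H_{n-1}^\lambda\subseteq H_n^\lambda$; since $H_n^\lambda s_n H_n^\lambda$ and each $H_n^\lambda x_{n+1}^b$ are left $H_n^\lambda$-submodules of $H_{n+1}^\lambda$, it is enough to analyze $s_n x_n^a s_n$. Applying \eqref{eq:deg8} and then $s_n^2=1$ from \eqref{eq:deg1},
\[
  s_n x_n^a s_n = x_{n+1}^a - \sum_{p+q=a-1} x_n^p\, x_{n+1}^q\, s_n .
\]
For each $q$ with $0\le q\le a-1$, rearranging \eqref{eq:deg8} once more gives $x_{n+1}^q s_n = s_n x_n^q + \sum_{p'+q'=q-1} x_n^{p'} x_{n+1}^{q'}$, so that
\[
  x_n^p\, x_{n+1}^q\, s_n = x_n^p\, s_n\, x_n^q + \sum_{p'+q'=q-1} x_n^{p+p'}\, x_{n+1}^{q'} \in H_n^\lambda s_n H_n^\lambda + \bigoplus_{0\le b\le a-2} H_n^\lambda x_{n+1}^b .
\]
Together with $x_{n+1}^a\in H_n^\lambda x_{n+1}^a$ and the hypothesis $a\le d-1$ (so that all these summands sit inside the bimodule decomposition \eqref{eq:decomp1}, and the sum is therefore direct), this gives $s_n x_n^a s_n\in H_n^\lambda s_n H_n^\lambda\oplus\bigoplus_{0\le b\le a}H_n^\lambda x_{n+1}^b$, and multiplying on the left by $h$ finishes the argument.

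There is no serious obstacle here: the whole argument is a short direct computation, and it is the initial observation that $s_n$ commutes with $H_{n-1}^\lambda$ that keeps both parts short. The only step needing a bit of attention is tracking the exponents of $x_{n+1}$ through the iterated use of \eqref{eq:deg8} to confirm that they never exceed $a$.
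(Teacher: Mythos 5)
Your proposal is correct and follows essentially the same route as the paper: commute $H_{n-1}^\lambda$ past $s_n$, handle the first inclusion with the braid relation \eqref{eq:deg3}, and handle the second by applying \eqref{eq:deg8} twice to $s_n x_n^a s_n$, with the directness of the sum coming from \eqref{eq:decomp1}. The exponent bookkeeping you flag (all $x_{n+1}$-exponents landing in $\{0,\dotsc,a\}$, in fact at most $a-2$ apart from the leading $x_{n+1}^a$) matches the paper's computation.
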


\begin{proof}
  Since $s_n$ commutes with $H_{n-1}^\lambda$, the first inclusion follows from the braid relation \eqref{eq:deg3}. The second inclusion follows from the fact that $H_{n-1}^\lambda x_n^a s_n = x_n^a s_n H_{n-1}^\lambda$, and that
  \[
    s_nx_n^as_n
    = x_{n+1}^a-\sum_{u+v=a-1} x_n^ux_{n+1}^vs_n
    = x_{n+1}^a-\sum_{u+v=a-1} \left(x_n^us_nx_{n}^v+\sum_{s+t=v-1} x_n^{u+s}x_{n+1}^t\right).
  \]
  by using \eqref{eq:deg8} twice.
\end{proof}

\begin{cor} \label{cor:subseqtraces}
  For any $y\in H_n^{\lambda}$ we have
  \[
    \tr_{n+1}(s_nys_n)=\tr_n(y).
  \]
\end{cor}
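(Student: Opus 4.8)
The plan is to reduce the identity to a single scalar computation by decomposing $y$ according to the $\kk$-module (indeed $(H_{n-1}^\lambda,H_{n-1}^\lambda)$-bimodule) decomposition of $H_n^\lambda$ over $H_{n-1}^\lambda$ given by \eqref{eq:decomp1} (with $n$ in place of $n+1$),
\[
  H_n^\lambda = H_{n-1}^\lambda s_{n-1} H_{n-1}^\lambda \oplus \bigoplus_{j=0}^{d-1} H_{n-1}^\lambda x_n^j,
\]
recalling that $\tr_n$ is by definition the projection onto $H_{n-1}^\lambda x_n^{d-1}$ followed by the isomorphism \eqref{eq:decomp3}. Both maps $y \mapsto \tr_{n+1}(s_n y s_n)$ and $y \mapsto \tr_n(y)$ are left $H_{n-1}^\lambda$-linear on $H_n^\lambda$: for the first this holds because $s_n$ commutes with $H_{n-1}^\lambda$ (it commutes with $s_1,\dots,s_{n-2}$ and with $x_1,\dots,x_{n-1}$) and because $\tr_{n+1}$ is left $H_n^\lambda$-linear, being the projection in the bimodule decomposition \eqref{eq:decomp1} composed with the left-module isomorphism \eqref{eq:decomp3}. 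So it suffices to verify the claim on a generating set of $H_n^\lambda$ as a left $H_{n-1}^\lambda$-module; by the displayed decomposition we may take this set to consist of the elements $s_{n-1} c$ with $c \in H_{n-1}^\lambda$ together with the elements $x_n^j$, $0 \le j \le d-1$ (when $n=1$ the first family is empty).

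For $y = s_{n-1}c$ we have $\tr_n(y) = 0$, since $y$ lies in the summand $H_{n-1}^\lambda s_{n-1} H_{n-1}^\lambda$, which is killed by the projection defining $\tr_n$. On the other side, $s_n s_{n-1} c \, s_n = s_n s_{n-1} s_n c = s_{n-1} s_n s_{n-1} c$ using \eqref{eq:deg3} and $c s_n = s_n c$, so $s_n y s_n \in H_n^\lambda s_n H_n^\lambda$; this submodule is complementary to the summands $H_n^\lambda x_{n+1}^m$ in \eqref{eq:decomp1}, whence $\tr_{n+1}(s_n y s_n)=0$. Thus both sides vanish. For $y = x_n^j$ we have $\tr_n(x_n^j) = \delta_{j,d-1}$, so it remains to show $\tr_{n+1}(s_n x_n^j s_n) = \delta_{j,d-1}$. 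Applying \eqref{eq:deg8} twice, exactly as in the proof of Lemma~\ref{lem:inclusions}, gives
\[
  s_n x_n^j s_n = x_{n+1}^j - \sum_{u+v=j-1} \Bigl( x_n^u s_n x_n^v + \sum_{s+t=v-1} x_n^{u+s} x_{n+1}^t \Bigr),
\]
to which I would apply $\tr_{n+1}$ term by term against \eqref{eq:decomp1}: the term $x_{n+1}^j$ contributes $\delta_{j,d-1}$; each $x_n^u s_n x_n^v$ lies in $H_n^\lambda s_n H_n^\lambda$ and so contributes $0$; and in every remaining term the exponent $t$ satisfies $t \le v-1 \le j-2 < d-1$, so $x_n^{u+s} x_{n+1}^t$ projects to $0$ in $H_n^\lambda x_{n+1}^{d-1}$. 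Hence $\tr_{n+1}(s_n x_n^j s_n) = \delta_{j,d-1}$, completing the proof.

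The argument is mostly routine; the point demanding the most care is the exponent bound $t \le j-2 < d-1$ in the last display, which is precisely what ensures that the double sum contributes no power $x_{n+1}^{d-1}$, together with the elementary observation that $\tr_{n+1}$ is left $H_{n-1}^\lambda$-linear, which is what makes the reduction to the generators $s_{n-1}c$ and $x_n^j$ legitimate.
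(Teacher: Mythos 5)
Your proof is correct and follows essentially the same route as the paper's: both rest on the bimodule decomposition \eqref{eq:decomp1}, kill the $H_n^\lambda s_n H_n^\lambda$ and lower-power $x_{n+1}^j$ summands, and evaluate $s_n x_n^j s_n$ via \eqref{eq:deg8}--\eqref{eq:deg9} exactly as in Lemma~\ref{lem:inclusions}. The only difference is organizational: you reduce to the generators $s_{n-1}c$ and $x_n^j$ using left $H_{n-1}^\lambda$-linearity, whereas the paper splits $y$ into the part with $\tr_n(y)=0$ (disposed of by citing Lemma~\ref{lem:inclusions}) and the part $y'x_n^{d-1}$ (handled by the same explicit computation).
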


\begin{proof}
  If $\tr_n(y)=0$, then $y\in H_{n-1}^\lambda s_{n-1}H_{n-1}^\lambda\oplus\bigoplus_{j=0}^{d-2}H_{n-1}^\lambda x_n^j$.  Therefore, Lemma~\ref{lem:inclusions} implies

  \[
    s_n y s_n \in H_n^\lambda s_n H_n^\lambda \oplus \bigoplus_{j=0}^{d-2} H_n^\lambda x_{n+1}^b,
  \]
  so $\tr_{n+1}(s_nys_n)=0$.

  The only remaining case to prove is when $y=y'x_{n}^{d-1}$, for some $y '\in H_{n-1}^\lambda$.
  In this case we have
  \begin{align*}
    s_n y s_n
    = s_n y' x_{n}^{d-1} s_n
    &\stackrel{\eqref{eq:deg9}}{=} y'x_{n+1}^{d-1} - \sum_{a+b=d-2} y' s_n x_n^a x_{n+1}^b \\
    &\stackrel{\eqref{eq:deg9}}{=} y'x_{n+1}^{d-1} - \sum_{a+b=d-2} \left( y' x_n^b s_n x_{n}^a + \sum_{s+t=b-1} y'x_n^{a+s} x_{n+1}^t \right).
  \end{align*}
  Therefore,
  \[
    \tr_{n}(y)=y'=\tr_{n+1}(s_nys_n). \qedhere
  \]
\end{proof}

\begin{prop} \label{prop:dual}
  The basis
  \[
    \{s_n \dotsm s_i y_{i,a} \mid i=1,\dotsc, n+1,\ a=0,\dotsc, d-1\}
  \]
  is left dual to the basis \eqref{eq:right-module-basis} with respect to $\langle -,- \rangle_{n+1}$.  More precisely,
  \[
    \langle s_n \dotsm s_i y_{i,a}, x_j^bs_j\dotsm s_n\rangle_{n+1}
    = \delta_{i,j} \delta_{a,b}, \quad i,j \in \{1,\dotsc,n+1\},\quad a,b \in \{0,\dotsc,d-1\}.
  \]
\end{prop}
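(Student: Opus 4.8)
The plan is to prove $\langle s_n\dotsm s_i\,y_{i,a},\,x_j^b s_j\dotsm s_n\rangle_{n+1} = \tr_{n+1}\!\big(s_n\dotsm s_i\,y_{i,a}\,x_j^b s_j\dotsm s_n\big) = \delta_{i,j}\delta_{a,b}$ by induction on $n$, splitting into four cases according to whether each of $i,j$ equals $n+1$ or lies in $\{1,\dotsc,n\}$. Two elementary facts will be used repeatedly. First, for $0\le v\le d-1$ the definition of $\tr_{n+1}$ as the projection onto $H_n^\lambda x_{n+1}^{d-1}$ in \eqref{eq:decomp1} gives $\tr_{n+1}(h x_{n+1}^v) = \delta_{v,d-1}\,h$ for every $h\in H_n^\lambda$. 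Second, $x_{n+1}$ commutes with all generators of $H_n^\lambda$ by \eqref{eq:deg4} and \eqref{eq:deg6}, and $\tr_{n+1}$ is an $(H_n^\lambda,H_n^\lambda)$-bimodule map (both the projection in \eqref{eq:decomp1} and the isomorphism in \eqref{eq:decomp3} are bimodule maps); consequently each $\tr_{n+1}(x_{n+1}^m)$ is central in $H_n^\lambda$, the entries of the determinant in \eqref{eq:y-def} pairwise commute, and that determinant $D_{n+1,t}:=\det\!\big(\tr_{n+1}(x_{n+1}^{d+j-i})\big)_{i,j=1,\dotsc,d-1-t}$ is a well-defined central element, so that $y_{n+1,a} = \sum_{t=a}^{d-1}(-1)^{d-1-t}x_{n+1}^{t-a}D_{n+1,t}$.

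For the three cases involving $n+1$ at most once: if $i,j\le n$ then $s_n\dotsm s_i\,y_{i,a}\,x_j^b s_j\dotsm s_n = s_n\,w\,s_n$ with $w = s_{n-1}\dotsm s_i\,y_{i,a}\,x_j^b\,s_j\dotsm s_{n-1}\in H_n^\lambda$, so Corollary~\ref{cor:subseqtraces} identifies the trace with $\tr_n(w) = \langle s_{n-1}\dotsm s_i\,y_{i,a},\,x_j^b s_j\dotsm s_{n-1}\rangle_n$, which is $\delta_{i,j}\delta_{a,b}$ by the inductive hypothesis (this branch is empty when $n=0$). If $i\le n$ and $j=n+1$, the element is $s_n\,g\,x_{n+1}^b$ with $g = s_{n-1}\dotsm s_i\,y_{i,a}\in H_n^\lambda$; since $x_{n+1}$ commutes with $g$, relation \eqref{eq:deg9} gives $s_n g\,x_{n+1}^b = x_n^b s_n g + \sum_{u+v=b-1}x_n^u g\,x_{n+1}^v$, and applying $\tr_{n+1}$ yields $x_n^b\,\tr_{n+1}(s_n g) + \sum_{u+v=b-1}\tr_{n+1}\!\big((x_n^u g)x_{n+1}^v\big) = 0$ by Lemma~\ref{lem:zerotrace} and the first fact above (all the $v$ occurring are $\le d-2$). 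If $i=n+1$ and $j\le n$, substitute $y_{n+1,a}=\sum_t(-1)^{d-1-t}x_{n+1}^{t-a}D_{n+1,t}$, commute each $x_{n+1}^{t-a}$ rightward past $D_{n+1,t}$, $x_j^b$ and $s_j\dotsm s_{n-1}$, and rewrite $x_{n+1}^{t-a}s_n$ using \eqref{eq:deg8}; this produces a combination of terms $h\,s_n$ with $h\in H_n^\lambda$ and terms $h'x_{n+1}^v$ with $h'\in H_n^\lambda$ and $v\le d-2$, all of which have zero trace.

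The substance is the case $i=j=n+1$, where the element is just $y_{n+1,a}x_{n+1}^b$. Using centrality of $D_{n+1,t}$ (so $x_{n+1}^{t-a}D_{n+1,t}x_{n+1}^b = D_{n+1,t}x_{n+1}^{t-a+b}$) and left $H_n^\lambda$-linearity of $\tr_{n+1}$, one gets $\tr_{n+1}(y_{n+1,a}x_{n+1}^b) = \sum_{t=a}^{d-1}(-1)^{d-1-t}D_{n+1,t}\,\tr_{n+1}(x_{n+1}^{t-a+b})$. Put $a_k:=\tr_{n+1}(x_{n+1}^{d+k})$, so that $a_{-1}=1$ and $a_k=0$ for $k<-1$ by the first fact, and $D_{n+1,t}=\det(a_{j-i})_{i,j=1,\dotsc,d-1-t}$; reindexing by $s=d-1-t$ turns the sum into $\sum_{s=0}^{d-1-a}(-1)^s\det(a_{j-i})_{i,j=1,\dotsc,s}\,a_{(b-a)-1-s}$. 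If $b<a$ every term vanishes; if $b=a$ only the $s=0$ term survives, equal to $a_{-1}=1$; if $b>a$, the terms with $s>b-a$ vanish (then $(b-a)-1-s<-1$), leaving $\sum_{s=0}^{b-a}(-1)^s a_{(b-a)-1-s}\det(a_{j-i})_{i,j=1,\dotsc,s}$, which is $0$ by Lemma~\ref{lem:det-expansion} with $m=b-a\ge 1$ (the $a_k$ being central is exactly what makes its commutative-ring hypothesis applicable). This is the algebraic shadow of Lemma~\ref{lem:circle-duality}. I expect the only genuine obstacle to be matching the index ranges between \eqref{eq:y-def} and Lemma~\ref{lem:det-expansion} and keeping the three exponent regimes straight; everything else is mechanical rewriting in $H_{n+1}^\lambda$.
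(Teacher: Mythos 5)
Your proposal is correct and follows essentially the same route as the paper: induction on $n$, Corollary~\ref{cor:subseqtraces} for the case $i,j\le n$, Lemma~\ref{lem:zerotrace} (which your manual \eqref{eq:deg8}--\eqref{eq:deg9} manipulations in the mixed cases essentially re-derive) for the cases where exactly one index equals $n+1$, and Lemma~\ref{lem:det-expansion} with $a_k=\tr_{n+1}(x_{n+1}^{d+k})$ viewed as central elements for the case $i=j=n+1$. The only difference is organizational: the paper proves the pairing $\langle y_{n+1,a},x_{n+1}^b\rangle_{n+1}=\delta_{a,b}$ first (serving as the $n=0$ base case) rather than as one branch inside the induction, which changes nothing of substance.
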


\begin{proof}
  We first show that
  \begin{equation} \label{eq:xy-inner-prod}
    \langle y_{n+1,a},x_{n+1}^b \rangle_{n+1} = \delta_{a,b}
    \quad \text{for all } n \in \N,\ a,b \in \{0,\dotsc,d-1\}.
  \end{equation}
  We have
  \[
    \langle y_{n+1,a}, x_{n+1}^b \rangle_{n+1}
    = \sum_{t=a}^{d-1} (-1)^{d-1-t} \tr_{n+1} (x_{n+1}^{t+b-a}) \det \big( \tr_{n+1} ( x_{n+1}^{d+j-i} ) \big)_{i,j = 1,\dotsc,d-1-t}.
  \]
  If $b\leq a$, then $\tr_{n+1}(x_{n+1}^{t+b-a}) = 0$ unless $a=b$ and $t = d-1$, and so \eqref{eq:xy-inner-prod} follows immediately.  Now suppose $b>a$.  Setting $m=b-a$ and $s=d-1-t$ in the above sum, it suffices to prove that
  \[
    \sum_{s=0}^{d-1-a} (-1)^s \tr_{n+1} (x_{n+1}^{d-1+m-s}) \det \big( \tr_{n+1}(x_{n+1}^{d+j-i} ) \big)_{i,j=1,\dotsc,s} = 0.
  \]
  By~\eqref{eq:decomp1} and the definition of the trace in Lemma~\ref{lem:Frob}, the terms with $s > m$ are equal to zero. Since $b \le d-1$, we have $m \le d-1-a$.  Therefore, it is enough to prove
  \begin{equation} \label{eq:xy-dual-sum}
    \sum_{s=0}^m (-1)^s \tr_{n+1} (x_{n+1}^{d-1+m-s}) \det \big( \tr_{n+1}(x_{n+1}^{d+j-i} ) \big)_{i,j=1,\dotsc,s} = 0.
  \end{equation}
  But \eqref{eq:xy-dual-sum} follows from Lemma~\ref{lem:det-expansion} with $a_k = \tr_{n+1} (x_{n+1}^{d+k})$, considered as elements of the center of $H_n^\lambda$.  This completes the proof of \eqref{eq:xy-inner-prod}.

 The proof of the proposition will now proceed by induction with respect to $n \in \N$.  The $n=0$ case follows immediately from \eqref{eq:xy-inner-prod}.  Now suppose that $\{s_{n-1} \dotsm s_i y_{i,a} \mid i=1,\dotsc, n,\ a=0,\dotsc, d-1\}$ is left dual to $\{x_i^as_i\dotsm s_{n-1} \mid i=1,\dotsc, n,\ a=0,\dotsc, d-1\}$ with respect to $\langle -,- \rangle_n$, and consider the bases $\{s_{n} \dotsm s_i y_{i,a} \mid i=1,\dotsc, n+1,\ a=0,\dotsc, d-1\}$ and $\{x_i^a s_i\dotsm s_{n} \mid i=1,\dotsc, n+1,\ a=0,\dotsc, d-1\}$.

  By Corollary~\ref{cor:subseqtraces} we have
  \[
    \langle s_{n} \dotsm s_i y_{i,a}, x_j^bs_j\dotsm s_{n}\rangle_{n+1}
    = \langle s_{n-1} \dotsm s_i y_{i,a}, x_j^b s_j \dotsm s_{n-1} \rangle_n,
  \]
  for $1\leq i,j\leq n$. By induction the latter is equal to $\delta_{i,j}\delta_{a,b}$.  It remains to show the equalities
  \[
    \langle y_{n+1,a},x_j^b s_j \dotsm s_{n}\rangle_{n+1}=0
    \quad\text{and}\quad
    \langle s_{n}\dotsm s_j y_{j,a},x_{n+1}^b\rangle_{n+1}=0,
    \quad 1\leq j\leq n,\ 0 \le a,b \le d-1.
  \]
  The first one follows from
  \[
    \langle y_{n+1,a}, x_j^bs_j \dotsm s_{n}\rangle_{n+1}
    = \tr_{n+1}(y_{n+1,a}x_j^bs_j \dotsm s_{n})
    = 0,
  \]
  where the last equality holds by Lemma~\ref{lem:zerotrace}, with $g = y_{n+1,a} x_j^b s_j \dotsm s_{n-1}$. Similarly, we have
  \[
    \langle s_{n} \dotsm s_j y_{j,a},x_{n+1}^b\rangle_{n+1}
    = \tr_{n+1}(s_{n} \dotsm s_j y_{j,a} x_{n+1}^b)
    = 0,
  \]
  where the last equality holds by Lemma~\ref{lem:zerotrace}, with $g = s_{n-1} \dotsm s_j y_{j,a} x_{n+1}^b$.
\end{proof}

\begin{lem} \label{lem:tr_x^d}
  For $n \ge 1$, we have
  \[
    \tr_n (x_n^d) = \sum_{i \in I} i \lambda_i.
  \]
\end{lem}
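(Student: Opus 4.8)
The plan is to argue by induction on $n$, the base case being a direct computation in $H_1^\lambda$ and the inductive step being a reduction via the commutation relation \eqref{eq:deg8}.

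\textbf{Base case.} For $n=1$ we have $H_1^\lambda=\kk[x_1]/(f)$ with $f=\prod_{i\in I}(x_1-i)^{\lambda_i}$, a monic polynomial of degree $d$ whose coefficient of $x_1^{d-1}$ is $-\sum_{i\in I}i\lambda_i$. The decomposition \eqref{eq:decomp1} degenerates, for the extension $H_0^\lambda\subseteq H_1^\lambda$, to $H_1^\lambda=\bigoplus_{j=0}^{d-1}\kk x_1^j$, so by Lemma~\ref{lem:Frob} the trace $\tr_1$ is simply extraction of the coefficient of $x_1^{d-1}$. Reducing $x_1^d$ modulo $f$ gives $x_1^d=\big(\sum_{i\in I}i\lambda_i\big)x_1^{d-1}+(\text{terms of degree}\le d-2)$, hence $\tr_1(x_1^d)=\sum_{i\in I}i\lambda_i$.

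\textbf{Inductive step.} Fix $n\ge 2$ and assume $\tr_{n-1}(x_{n-1}^d)=\sum_{i\in I}i\lambda_i$. Using \eqref{eq:deg8} with $i=n-1$ and $t=d$, and multiplying on the right by $s_{n-1}$ (recall $s_{n-1}^2=1$), we obtain
\[
  x_n^d=s_{n-1}x_{n-1}^d s_{n-1}+\sum_{a+b=d-1}x_{n-1}^a x_n^b s_{n-1}.
\]
Apply $\tr_n$. Since $H_n^\lambda$ is a Frobenius extension of $H_{n-1}^\lambda$ (Lemma~\ref{lem:Frob}), the map $\tr_n$ is left $H_{n-1}^\lambda$-linear, so each term in the sum equals $x_{n-1}^a\,\tr_n(x_n^b s_{n-1})$; since $0\le b\le d-1$ we have $x_n^b\in\bigoplus_{m=0}^{d-1}H_{n-1}^\lambda x_n^m$, so $\tr_n(x_n^b s_{n-1})=0$ by Lemma~\ref{lem:zerotrace} and the whole sum vanishes. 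For the remaining term, $x_{n-1}^d\in H_{n-1}^\lambda$, so Corollary~\ref{cor:subseqtraces} gives $\tr_n(s_{n-1}x_{n-1}^d s_{n-1})=\tr_{n-1}(x_{n-1}^d)=\sum_{i\in I}i\lambda_i$ by the inductive hypothesis. Therefore $\tr_n(x_n^d)=\sum_{i\in I}i\lambda_i$.

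\textbf{Remarks on difficulty.} The argument is short once one spots the right move: the obstacle to a purely ``direct'' computation is that, unlike $x_1$, the element $x_n$ for $n\ge 2$ satisfies no cyclotomic relation, so $x_n^d$ is not visibly a lower-degree expression with leading term $\sum_i i\lambda_i$; the recursive relation \eqref{eq:deg8} together with the vanishing Lemma~\ref{lem:zerotrace} is exactly what compensates for this. The only real care needed is the small-index bookkeeping: the inductive step calls Corollary~\ref{cor:subseqtraces} with parameter $n-1\ge 1$, so $n=1$ genuinely needs the separate treatment above (and, if one is uneasy about invoking Corollary~\ref{cor:subseqtraces} at parameter $1$, the case $n=2$ may be checked by hand by expanding $x_1^d$ via the cyclotomic relation and using $\tr_2(x_2^k)=\delta_{k,d-1}$ for $0\le k\le d-1$ together with \eqref{eq:deg8}).
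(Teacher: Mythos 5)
Your proof is correct and essentially the same as the paper's: both argue by induction on $n$, with the base case read off from the cyclotomic relation (the trace being the coefficient of $x_1^{d-1}$), and with the inductive step obtained by conjugating the $d$-th power of the polynomial generator by a simple transposition via \eqref{eq:deg8}, killing the cross terms by the vanishing of the trace on powers $x^b$ with $b\le d-1$, and invoking Corollary~\ref{cor:subseqtraces}. The only cosmetic difference is that you run the step downward, proving $\tr_n(x_n^d)=\tr_{n-1}(x_{n-1}^d)$ for $n\ge 2$ and citing Lemma~\ref{lem:zerotrace} for the cross terms, whereas the paper runs it upward, proving $\tr_n(x_n^d)=\tr_{n+1}(x_{n+1}^d)$ and disposing of the cross terms by a second application of \eqref{eq:deg8} together with the definition of the trace.
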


\begin{proof}
  We prove the result by induction on $n$.  For $n=1$, note that the equation $\prod_i (x_1-i)^\lambda_i = 0$ allows us to write $x_1^d$ as a polynomial in $x_1$ of degree at most $d-1$.  Then $\tr_1(x_1^d)$ is the coefficient of $x_1^{d-1}$ in this polynomial.  This coefficient is clearly $\sum_{i \in I} i \lambda_i$.

  Now, for $n \ge 1$, by Corollary~\ref{cor:subseqtraces} and \eqref{eq:deg8}, we have
  \begin{align*}
    \tr_n(x_n^d)
    &= \tr_{n+1}(s_n x_n^d s_n) \\
    &= \tr_{n+1} \left( x_{n+1}^d - \sum_{a=0}^{d-1} x_n^{d-1-a} x_{n+1}^a s_n \right) \\
    &= \tr_{n+1} \left( x_{n+1}^d - \sum_{a=0}^{d-1} x_n^{d-1-a} \left( s_n x_n^a + \sum_{b=0}^{a-1} x_n^{a-1-b} x_{n+1}^b \right) \right) \\
    &= \tr_{n+1} \left( x_{n+1}^d \right) - \sum_{a=0}^{d-1} \tr_{n+1} \left( x_n^{d-1-a} s_n x_n^a \right) - \sum_{a=0}^{d-1} \sum_{b=0}^{a-1} \tr_{n+1} \left( x_n^{d-b-2} x_{n+1}^b \right) \\
    &= \tr_{n+1} \left( x_{n+1}^d \right).
  \end{align*}
  This completes the proof of the inductive step.
\end{proof}

Note that the trace is not symmetric in general.  For example, by \eqref{eq:deg8}, \eqref{eq:deg9}, and the fact that the trace is an $(H_n^\lambda,H_n^\lambda)$-bimodule map, we get
\[
  \tr_{n+1} \left( s_nx_{n+1}^{d+1} s_{n-1} \right)
  = \left( x_n + \sum_{i \in I} i \lambda_i \right) s_{n-1}
\]
and
\[
 \tr_{n+1} \left( x_{n+1}^{d+1} s_{n-1}s_n \right)
  = \tr_{n+1} \left( s_{n-1}x_{n+1}^{d+1} s_n \right)
  = s_{n-1} \left( x_n + \sum_{i \in I} i \lambda_i \right).
\]
\details{
  Indeed, we have
  \begin{align*}
    \tr_{n+1} \left( s_nx_{n+1}^{d+1} s_{n-1} \right)
    &= \tr_{n+1} \left( s_nx_{n+1}^{d+1} \right) s_{n-1} \\
    &\stackrel{\eqref{eq:deg9}}{=} \tr_{n+1} \left( x_n^{d+1} s_n + \sum_{a+b=d} x_n^a x_{n+1}^b \right) s_{n-1}\\
    & =  \tr_{n+1} \left( x_{n+1}^d + x_n x_{n+1}^{d-1} \right) s_{n-1}\\
    & = \left( \sum_{i \in I} i \lambda_i + x_n \right) s_{n-1},
  \end{align*}
  and
  \begin{align*}
    \tr_{n+1} \left( s_{n-1}x_{n+1}^{d+1} s_n \right)
    &= s_{n-1} \tr_{n+1} \left( x_{n+1}^{d+1} s_n \right) \\
    &\stackrel{\eqref{eq:deg8}}{=} s_{n-1} \tr_{n+1} \left( s_n x_n^{d+1} + \sum_{a+b=d} x_n^a x_{n+1}^b \right) \\
    &= s_{n-1} \tr_{n+1} \left( x_{n+1}^d + x_n x_{n+1}^{d-1} \right) \\
    &= s_{n-1} \left( \sum_{i \in I} i \lambda_i + x_n \right).
  \end{align*}
}
But
\[
  \left( x_n + \sum_{i \in I} i \lambda_i \right) s_{n-1}
  \ne s_{n-1} \left( x_n + \sum_{i \in I} i \lambda_i \right)
\]
because $x_n s_{n-1} = s_{n-1}x_{n-1} +1\ne s_{n-1} x_n$. If $s_{n-1}x_{n-1} +1 = s_{n-1} x_n$ were true,
multiplying all terms by $s_{n-1}$ would give $x_{n-1} + s_{n-1} = x_n$, which contradicts~\eqref{eq:decomp1}.  Note that the full trace map
\[
  \tr_1 \circ \tr_2 \circ \dotsb \circ \tr_n \colon H_n^\lambda \to \kk
\]
\emph{is} symmetric.  See \cite[Th.~A.2]{BK08}.

\subsection{Biadjointness of induction and restriction}

We use the notation in~\cite{Kho14,LS13} for bimodules, so ${}_n(n+1)$ denotes $H_{n+1}^{\lambda}$, viewed as an $(H_{n}^{\lambda},H_{n+1}^{\lambda})$-bimodule, and $(n+1)_n$ denotes $H_{n+1}^{\lambda}$ viewed as an $(H_{n+1}^{\lambda},H_{n}^{\lambda})$-bimodule.  We use juxtaposition to denote the tensor product, so that, for example, $(n+1)_n(n+1)$ is the $(H_{n+1}^{\lambda},H_{n+1}^{\lambda})$-bimodule $H_{n+1}^{\lambda} \otimes_{H_{n}^{\lambda}} H_{n+1}^{\lambda}$.

\begin{prop} \label{prop:adjunction-maps}
  The maps
  \begin{gather*}
    \varepsilon_\rR \colon (n+1)_n(n+1) \to (n+1),\quad \varepsilon_\rR(a \otimes b) = ab,\quad a \in (n+1)_n,\ b \in {_n}{(n+1)}{}, \\
    \eta_\rR \colon (n) \hookrightarrow {_n}(n+1)_n,\quad \eta_{\rR}(a) = a,\quad a \in (n), \\
    \varepsilon_\rL = \tr_{n+1} \colon {_n}(n+1)_n \to (n),\\
    \eta_\rL \colon (n+1) \to (n+1)_n (n+1),\\
    \eta_\rL(a) = a \sum_{\substack{i=1,\dotsc,n+1 \\ k=0,\dotsc,d-1}} x_i^k s_i \dotsm s_{n-1} s_n \otimes s_n s_{n-1} \dotsm s_i y_{i,k},\quad a \in (n+1),
  \end{gather*}
  are bimodule homomorphisms and satisfy the relations
  \begin{gather}
    (\varepsilon_\rR \otimes \id) \circ (\id \otimes \eta_\rR) = \id, \quad
    (\id \otimes \varepsilon_\rR) \circ (\eta_\rR \otimes \id) = \id, \label{eq:right-unit-counit} \\
    (\varepsilon_\rL \otimes \id) \circ (\id \otimes \eta_\rL) = \id, \quad
    (\id \otimes \varepsilon_\rL) \circ (\eta_\rL \otimes \id) = \id. \label{eq:left-unit-counit}
  \end{gather}
  In particular, $(n+1)_n$ is both left and right adjoint to ${_n}(n+1)$ in the 2-category of bimodules over rings.
\end{prop}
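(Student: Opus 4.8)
The plan is to treat the two adjunctions separately: the pair $(\eta_\rR,\varepsilon_\rR)$ is elementary, whereas the pair $(\eta_\rL,\varepsilon_\rL)$ encodes the Frobenius structure packaged in Proposition~\ref{prop:dual}. First I would check that the four maps are bimodule homomorphisms. For $\varepsilon_\rR$ (multiplication) and $\eta_\rR$ (inclusion) this is immediate, and for $\varepsilon_\rL=\tr_{n+1}$ it is part of Lemma~\ref{lem:Frob}. For $\eta_\rL$ the key observation is that, setting
\[
  \Omega := \sum_{\substack{1 \le i \le n+1 \\ 0 \le k \le d-1}} x_i^k s_i \dotsm s_n \otimes s_n \dotsm s_i y_{i,k} \ \in\ (n+1)_n(n+1),
\]
we have $\eta_\rL(a) = a\Omega$, so that left $H_{n+1}^\lambda$-linearity is automatic and right $H_{n+1}^\lambda$-linearity is precisely the assertion that $\Omega$ is \emph{balanced}: $z\Omega = \Omega z$ for all $z \in H_{n+1}^\lambda$. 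This is the standard Casimir computation for a Frobenius extension: by Proposition~\ref{prop:dual}, $\{x_i^k s_i \dotsm s_n\}$ is a right $H_n^\lambda$-basis of $H_{n+1}^\lambda$ with left dual basis $\{s_n \dotsm s_i y_{i,k}\}$, and expanding $z\,(x_i^k s_i\dotsm s_n)$ in this basis and transporting the $H_n^\lambda$-coefficients across the tensor product yields $z\Omega = \Omega z$.

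Next I would verify the four triangle identities by unwinding the tensor products. The relations in \eqref{eq:right-unit-counit} follow immediately from the associativity and unitality of multiplication in $H_{n+1}^\lambda$ (they reduce to $a = a\cdot 1$ and $a = 1\cdot a$), which shows that $(n+1)_n$ is right adjoint to ${}_n(n+1)$. The relations in \eqref{eq:left-unit-counit} unwind to the two reconstruction identities
\[
  b = \sum_{i,k}\tr_{n+1}\bigl(b\,x_i^k s_i\dotsm s_n\bigr)\,s_n\dotsm s_i y_{i,k},
  \qquad
  b = \sum_{i,k} x_i^k s_i\dotsm s_n\;\tr_{n+1}\bigl(s_n\dotsm s_i y_{i,k}\,b\bigr),
\]
valid for every $b \in H_{n+1}^\lambda$. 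The first holds because $\{s_n\dotsm s_i y_{i,k}\}$ is a left $H_n^\lambda$-basis dual (with respect to $\tr_{n+1}$) to the right $H_n^\lambda$-basis $\{x_i^k s_i\dotsm s_n\}$, and the second because $\{x_i^k s_i\dotsm s_n\}$ is a right $H_n^\lambda$-basis dual to the left $H_n^\lambda$-basis $\{s_n\dotsm s_i y_{i,k}\}$; both follow at once from Proposition~\ref{prop:dual} together with the two basis lemmas that precede it. Biadjointness is then the conjunction of \eqref{eq:right-unit-counit}, \eqref{eq:left-unit-counit}, and the bimodule-homomorphism statement just established.

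The only genuine content lies in Proposition~\ref{prop:dual} (and the triangularity inputs feeding into it); everything else is formal bookkeeping with tensor products and the unit of $H_{n+1}^\lambda$. The one step I would write out carefully is the verification that $\Omega$ is balanced, since that is where a left/right or ordering slip is most likely to creep in; the remaining steps follow the well-known template for the biadjointness of induction and restriction along a Frobenius extension.
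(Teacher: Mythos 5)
Your proposal is correct and follows essentially the same route as the paper: the paper's proof simply observes that \eqref{eq:right-unit-counit} is the standard unit--counit computation and refers to the standard Frobenius-extension argument (citing Kadison, \S1.3) for $\varepsilon_\rL$, $\eta_\rL$ and \eqref{eq:left-unit-counit}, which is exactly the argument you write out explicitly. Your verification that the Casimir element is balanced and that the left triangle identities reduce to the two dual-basis reconstruction identities coming from Proposition~\ref{prop:dual} is precisely the content being delegated to that reference, so there is no substantive difference in approach.
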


\begin{proof}
  The map $\varepsilon_\rR$ is multiplication and $\eta_\rR$ is the inclusion $H_n^\lambda \hookrightarrow H_{n+1}^\lambda$.  Thus, both are clearly bimodule homomorphisms.  The relations \eqref{eq:right-unit-counit} are standard.  They are the usual unit-counit equations making induction left adjoint to restriction.
  \details{
    For $a \in (n+1)_n$, $b \in \prescript{}{n}(n+1)$, we have
    \begin{gather*}
      (\varepsilon_\rR \otimes \id) \circ (\id \otimes \eta_\rR) (a) = (\varepsilon_\rR \otimes \id) \circ (\id \otimes \eta_\rR) (a \otimes 1_n) =  (\varepsilon_\rR \otimes \id)(a \otimes 1_{n+1}) = a, \\
      (\id \otimes \varepsilon_\rR) \circ (\eta_\rR \otimes \id) (b) = (\id \otimes \varepsilon_\rR) \circ (\eta_\rR \otimes \id) (1_n \otimes b) = (\id \otimes \varepsilon_\rR)(1_{n+1} \otimes b) = b.
    \end{gather*}
  }
  Similarly, that the maps $\varepsilon_\rL$ and $\eta_\rL$ are bimodule homomorphisms and satisfy equations \eqref{eq:left-unit-counit} follows from the standard proof that induction is right adjoint to restriction for Frobenius extensions.  See, for example, \cite[\S1.3]{Kad99}.
\end{proof}

%
\section{Action on categories of modules for degenerate cyclotomic Hecke algebras} \label{sec:action}
%

In this section, $\kk$ is an arbitrary commutative ring until Section~\ref{subsec:Fock-space}, where we assume it is a field of characteristic zero.

\subsection{Definition of the action functors}

\begin{defin}[The bimodule categories $\cB^\lambda_n$]
  For $n \in \N$, we let $\cB^\lambda_n$ be the category defined as follows.  The objects of $\cB^\lambda_n$ are direct sums of direct summands of bimodules of the form
  \begin{equation} \label{eq:general-bimodule}
    (n_k)_{m_k}(n_{k-1})_{m_{k-1}} \cdots{}_{m_2} (n_1)_{m_1} (n)
  \end{equation}
  with $k \in \N$ and $\{m_i,m_{i-1}\} = \{n_{i-1},n_{i-1}-1\}$ for all $i=1,\dotsc,k+1$, where we adopt the convention thats $n_0=m_0=n$ and $m_{k+1}=n_k$.  (When the $n_k$ do not agree, direct sums of bimodules are formal.)  Morphisms in $\cB^\lambda_n$ are bimodule homomorphisms (where the only morphism between bimodules of the form \eqref{eq:general-bimodule} for distinct $n_k$ is the zero morphism).
\end{defin}

For each $n \in \N$, we will define a functor $\bF_n \colon \tcH^\lambda \to \cB^\lambda_n$.  We define the functor $\bF_n$ on objects as follows.  The object $\sQ_+$ is sent to the bimodule $(k+1)_k$, and the object $\sQ_-$ is sent to the bimodule $\prescript{}{k}(k+1)$, where $k$ is uniquely determined by the fact that our functor should respect the tensor product and be a functor to the category $\cB^\lambda_n$ (i.e.\ the right action should be by $H_n^\lambda$).  For example,
\[
  \bF_n(\sQ_{++-+--+}) = (n+1)_n(n)_{n-1}(n)_n(n)_{n-1}(n)_n(n+1)_{n+1}(n+1)_n
\]
We define the $\bF_n$ to be zero on any object for which the indices $k$ become negative.  For example, $\bF_1(\sQ_{--})=0$.

We now define the functor on morphisms.  Consider a morphism in $\tcH^\lambda$ consisting of a single planar diagram.  We label the rightmost region of the diagram by $n$.  Then, we label all other regions of the diagram by integers such that, as we move from right to left across the diagram, labels increase by one when we cross upward pointing strands and decrease by one when we cross downward pointing strands.  It is clear that there is a unique way to label the regions of the diagram in this way.  For instance, the following diagram is labelled as indicated.
\[
  \begin{tikzpicture}[>=stealth]
    \draw[->] (0,0) .. controls (0,1) and (1,1) .. (1,2);
    \draw[->] (1,0) .. controls (1,1) and (3,1) .. (3,0);
    \draw[->] (2,0) .. controls (2,1.5) and (-1,1.5) .. (-1,0);
    \draw[->] (2,2) arc(180:360:1);
    \bluedot{(0.93,1.5)} node[anchor=west,color=black] {\dotlabel{2}};
    \bluedot{(-.3,0.97)};
    \draw (7,1) .. controls (7,1.5) and (6.3,1.5) .. (6.1,1);
    \draw (7,1) .. controls (7,.5) and (6.3,.5) .. (6.1,1);
    \draw (6,0) .. controls (6,.5) .. (6.1,1);
    \draw (6.1,1) .. controls (6,1.5) .. (6,2) [->];
    \bluedot{(6,0.3)} node[anchor=west,color=black] {\dotlabel{3}};
    \draw[->] (4.3,1) arc(180:-180:.6);
    \bluedot{(3.88,1.5)};
    \draw (7.5,1) node {\regionlabel{n}};
    \draw (6.6,1) node {\regionlabel{n-1}};
    \draw (3.7,.6) node {\regionlabel{n+1}};
    \draw (4.9,1) node {\regionlabel{n}};
    \draw (3,1.5) node {\regionlabel{n+2}};
    \draw (-.5,1.5) node {\regionlabel{n+2}};
    \draw (-.3,.6) node {\regionlabel{n+3}};
    \draw (.8,.7) node {\regionlabel{n+2}};
    \draw (1.5,.3) node {\regionlabel{n+1}};
    \draw (2.5,.3) node {\regionlabel{n}};
  \end{tikzpicture}
\]
Each diagram is a composition of dots, crossings, cups and caps.  Thus, we define the functors $\bF_n$, $n \in \N$, on such atoms.  Since, on these pieces, the functor $\bF_n$ will be independent of $n$, we will drop the index and describe the functor $\bF = \bigoplus_{n \in \N} \bF_n$.

We define $\bF$ on cups and caps by

\noindent\begin{minipage}{0.5\linewidth}
  \begin{equation} \label{eq:right-cap-cup-maps}
    \begin{tikzpicture}[anchorbase]
      \draw[<-] (0,-.5) arc (0:180:.5);
      \draw (0.5,-0.2) node {\regionlabel{n+1}};
    \end{tikzpicture}
    \quad \mapsto \varepsilon_\rR,
  \end{equation}
\end{minipage}%
\begin{minipage}{0.5\linewidth}
  \begin{equation}
    \begin{tikzpicture}[anchorbase]
      \draw[->] (0,0) arc (180:360:.5);
      \draw (1.3,-0.2) node {\regionlabel{n}};
    \end{tikzpicture}
    \quad \mapsto \eta_\rR,
  \end{equation}
\end{minipage}\par\vspace{\belowdisplayskip}

\noindent\begin{minipage}{0.5\linewidth}
  \begin{equation}
    \begin{tikzpicture}[anchorbase]
      \draw[->] (0,-.5) arc (0:180:.5);
      \draw (0.3,-0.3) node {\regionlabel{n}};
    \end{tikzpicture}
    \quad \mapsto \varepsilon_\rL,
  \end{equation}
\end{minipage}%
\begin{minipage}{0.5\linewidth}
  \begin{equation} \label{eq:left-cap-cup-maps}
    \begin{tikzpicture}[anchorbase]
      \draw[<-] (0,0) arc (180:360:.5);
      \draw (1.4,-0.3) node {\regionlabel{n+1}};
    \end{tikzpicture}
    \quad \mapsto \eta_\rL,
  \end{equation}
\end{minipage}\par\vspace{\belowdisplayskip}

\noindent where $\varepsilon_\rR$, $\eta_\rR$, $\varepsilon_\rL$, and $\eta_\rL$ are the bimodule homomorphisms of Proposition~\ref{prop:adjunction-maps}.

We define $\bF$ on dots by

\noindent\begin{minipage}{0.5\linewidth}
  \begin{equation} \label{eq:dot-maps-up}
    \begin{tikzpicture}[anchorbase]
      \draw[->] (0,0) -- (0,1);
      \bluedot{(0,.5)};
      \draw (0.3,0.2) node {\regionlabel{n}};
    \end{tikzpicture}
    \mapsto \pr{x_{n+1}},
  \end{equation}
\end{minipage}%
\begin{minipage}{0.5\linewidth}
  \begin{equation} \label{eq:dot-maps-down}
    \begin{tikzpicture}[anchorbase]
      \draw[<-] (0,0) -- (0,1);
      \bluedot{(0,.5)};
      \draw (-0.3,0.2) node {\regionlabel{n}};
    \end{tikzpicture}
    \mapsto \pl{x_{n+1}},
  \end{equation}
\end{minipage}\par\vspace{\belowdisplayskip}

\noindent where, for $a \in H_{n+1}^\lambda$, $\pr{a}$ and $\pl{a}$ denote the maps of right and left multiplication by $a$:
\begin{gather*}
  \pr{a} \colon H_{n+1}^\lambda \to H_{n+1}^\lambda,\quad b \mapsto ba,\\
  \pl{a} \colon H_{n+1}^\lambda \to H_{n+1}^\lambda,\quad b \mapsto ab.
\end{gather*}

We define $\bF$ on crossings as follows:

\noindent\begin{minipage}{0.5\linewidth}
  \begin{equation} \label{eq:up-crossing-map}
    \begin{tikzpicture}[anchorbase]
      \draw[->] (0,0) -- (1,1);
      \draw[->] (1,0) -- (0,1);
      \draw (1,0.5) node {\regionlabel{n}};
    \end{tikzpicture}
    \ \mapsto \pr{s_{n+1}},
  \end{equation}
\end{minipage}%
\begin{minipage}{0.5\linewidth}
  \begin{equation}
    \begin{tikzpicture}[anchorbase]
      \draw[<-] (0,0) -- (1,1);
      \draw[<-] (1,0) -- (0,1);
      \draw (1,0.5) node {\regionlabel{n}};
    \end{tikzpicture}
    \quad \mapsto \pl{s_{n-1}},
  \end{equation}
\end{minipage}\par\vspace{\belowdisplayskip}

\begin{equation} \label{eq:right-crossing-map}
  \begin{tikzpicture}[anchorbase]
    \draw[->] (0,0) -- (1,1);
    \draw[<-] (1,0) -- (0,1);
    \draw (1,0.5) node {\regionlabel{n}};
  \end{tikzpicture}
  \ \mapsto
  \Big( (n)_{n-1}(n) \to \prescript{}{n}(n+1)_n,\ a \otimes a' \mapsto a s_n a' \Big),
\end{equation}

\begin{equation} \label{eq:left-crossing-map}
  \begin{tikzpicture}[anchorbase]
    \draw[<-] (0,0) -- (1,1);
    \draw[->] (1,0) -- (0,1);
    \draw (1,0.5) node {\regionlabel{n}};
  \end{tikzpicture}
  \ \mapsto \Big( \prescript{}{n}(n+1)_n \to (n)_{n-1}(n),\quad s_n \mapsto 1_n \otimes 1_n,\ x_{n+1}^j \mapsto 0,\ j=0,\dotsc,d-1 \Big).
\end{equation}

In the remainder of this section, we prove that the functors $\bF_n$ are well-defined.

\subsection{Cyclicity}

We first prove some results that imply that the functors $\bF_n$ respect isotopy invariance.

\begin{lem}
  Under the functor $\bF$, we have

  \noindent\begin{minipage}{0.5\linewidth}
    \begin{equation} \label{eq:dot-vee-maps-up}
      \begin{tikzpicture}[anchorbase]
        \draw[->] (0,0) -- (0,1);
        \bluedot{(0,.5)} node[anchor=east,color=black] {\dotlabel{j^\vee}};
        \draw (0.3,0.2) node {\regionlabel{n}};
      \end{tikzpicture}
      \mapsto \pr{y_{n+1,j}},
    \end{equation}
  \end{minipage}%
  \begin{minipage}{0.5\linewidth}
    \begin{equation} \label{eq:dot-vee-maps-down}
      \begin{tikzpicture}[anchorbase]
        \draw[<-] (0,0) -- (0,1);
        \bluedot{(0,.5)} node[anchor=west,color=black] {\dotlabel{j^\vee}};
        \draw (-0.3,0.2) node {\regionlabel{n}};
      \end{tikzpicture}
      \mapsto \pl{y_{n+1,j}},
    \end{equation}
  \end{minipage}\par\vspace{\belowdisplayskip}

  \noindent for $j \in \{0,\dotsc,d-1\}$.
\end{lem}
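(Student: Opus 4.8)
The plan is to compute $\bF$ on a dual dot directly from the defining identity \eqref{eq:dot-vee-def}. Since $\bF$ has already been fixed on dots, crossings, cups and caps, and since \eqref{eq:dot-vee-def} is an identity of honest diagrams (so no well-definedness of $\bF$ is yet being invoked), I may apply $\bF$ term by term to its right-hand side. The task then reduces to two purely computational steps: first, determining $\bF$ on the dotted counterclockwise bubbles out of which the coefficients $c_s$ of \eqref{eq:c-def} are built; second, matching the result with the defining formula \eqref{eq:y-def} for $y_{n+1,j}$.

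First I would compute $\bF$ on a counterclockwise bubble carrying $m$ dots and sitting in a region labelled $n$. Such a bubble is the composite of the cup sent by $\bF$ to $\eta_\rR\colon (n)\hookrightarrow {}_n(n+1)_n$, then the $m$-fold dot sent to $\pr{x_{n+1}^m}$, then the cap sent to $\varepsilon_\rL=\tr_{n+1}\colon {}_n(n+1)_n\to (n)$ (see \eqref{eq:right-cap-cup-maps}, \eqref{eq:left-cap-cup-maps}, \eqref{eq:dot-maps-up}); hence it maps to the endomorphism of $(n)=H_n^\lambda$ sending $a\mapsto \tr_{n+1}(a\,x_{n+1}^m)$. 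Since $\tr_{n+1}$ is a map of $(H_n^\lambda,H_n^\lambda)$-bimodules and $x_{n+1}$ commutes with $H_n^\lambda$, this equals $a\mapsto a\,\tr_{n+1}(x_{n+1}^m)$ with $\tr_{n+1}(x_{n+1}^m)\in Z(H_n^\lambda)$, i.e.\ multiplication by that central element. As $\bF$ is monoidal and these bubbles occur in the rightmost region, it then follows from \eqref{eq:c-def} that $\bF(c_0)=1$ and, for $s\ge 1$, that $\bF(c_s)$ is multiplication by $(-1)^s\det\big(\tr_{n+1}(x_{n+1}^{d+q-p})\big)_{p,q=1,\dots,s}\in Z(H_n^\lambda)$.

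With this in hand, applying $\bF$ to \eqref{eq:dot-vee-def} for a strand oriented upward with region $n$ to its right, and using \eqref{eq:dot-maps-up} for the dots, the left-hand side of \eqref{eq:dot-vee-maps-up} becomes $\sum_{i=j}^{d-1}\pr{x_{n+1}^{\,i-j}}\circ\big(\text{mult.\ by }\bF(c_{d-1-i})\big)$. Because $x_{n+1}$ commutes with $Z(H_n^\lambda)\subseteq H_{n+1}^\lambda$, this composite of right-multiplications is right-multiplication by
\[
  \sum_{i=j}^{d-1}(-1)^{d-1-i}\,x_{n+1}^{\,i-j}\,\det\big(\tr_{n+1}(x_{n+1}^{d+q-p})\big)_{p,q=1,\dots,d-1-i}\;=\;y_{n+1,j},
\]
the equality being \eqref{eq:y-def} with $n$ replaced by $n+1$ and summation index $i=t$; this gives \eqref{eq:dot-vee-maps-up}. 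For \eqref{eq:dot-vee-maps-down} I would run the identical argument on a downward strand, using \eqref{eq:dot-maps-down} and the left cup/cap of \eqref{eq:left-cap-cup-maps} in place of their right counterparts, so that every right-multiplication above is replaced by the corresponding left-multiplication.

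I expect no conceptual obstacle here: the statement is essentially the observation that the elements $y_{n,k}$ of \eqref{eq:y-def} were reverse-engineered from \eqref{eq:dot-vee-def} and \eqref{rel:cc-bubble} precisely via the Frobenius trace. The only delicate parts are bookkeeping: identifying which oriented cup and cap realize $\eta_\rR$ and $\varepsilon_\rL$ (so that the bubble composes to $\tr_{n+1}$ of the correct power of $x_{n+1}$, not a shifted one), confirming that bubbles in the rightmost region really land in $Z(H_n^\lambda)$ so that they commute past $x_{n+1}$, and checking that the index ranges appearing in \eqref{eq:dot-vee-def}, \eqref{eq:c-def} and \eqref{eq:y-def} line up term by term.
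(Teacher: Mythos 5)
Your proposal is correct and follows essentially the same route as the paper: apply $\bF$ term by term to the defining relation \eqref{eq:dot-vee-def}, evaluate $\bF$ on the dotted bubbles making up $c_{d-1-i}$ via $\eta_\rR$, the dot maps, and $\varepsilon_\rL=\tr_{n+1}$, and match the resulting sum with \eqref{eq:y-def} to get $\pr{y_{n+1,j}}$ (and $\pl{y_{n+1,j}}$ for the downward strand). The paper's proof is just a terser version of this computation, leaving the bubble evaluation and the commutation of $x_{n+1}$ with $H_n^\lambda$ implicit.
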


\begin{proof}
  To prove \eqref{eq:dot-vee-maps-up}, we compute
  \begin{multline*}
    \bF \left(
      \begin{tikzpicture}[anchorbase]
        \draw[->] (0,0) -- (0,1);
        \bluedot{(0,.5)} node[anchor=east,color=black] {\dotlabel{j^\vee}};
        \draw (0.3,0.2) node {\regionlabel{n}};
      \end{tikzpicture}
    \right)
    \stackrel{\eqref{eq:dot-vee-def}}{=} \sum_{i=j}^{d-1} \pr{x_{n+1}^{i-j}} \bF(c_{d-1-i}) \\
    \stackrel{\eqref{eq:c-def}}{=} \sum_{i=j}^{d-1} (-1)^{d-1-i} \left( \pr{x_{n+1}^{i-j} } \right) \det \left( \tr_{n+1} (x_{n+1}^{d+b-a}) \right)_{a,b=1,\dotsc,d-1-i}
    \stackrel{\eqref{eq:y-def}}{=} \pr{y_{n+1,j}}.
  \end{multline*}
  The proof of \eqref{eq:dot-vee-maps-down} is analogous.
\end{proof}

\begin{lem} \label{lem:dot-cyclicity}
  The images of the diagrams
  \[
    \begin{tikzpicture}[anchorbase]
      \draw[->] (-1,1) .. controls (-1,-1) and (0,-1) .. (0,0) .. controls (0,1) and (1,1) .. (1,-1);
      \bluedot{(0,0)};
      \draw (0.6,-0.5) node {\regionlabel{n}};
    \end{tikzpicture}
    \qquad, \qquad
    \begin{tikzpicture}[anchorbase]
      \draw[->] (1,1) .. controls (1,-1) and (0,-1) .. (0,0) .. controls (0,1) and (-1,1) .. (-1,-1);
      \bluedot{(0,0)};
      \draw (0.6,0.5) node {\regionlabel{n}};
    \end{tikzpicture}
    \qquad, \qquad \text{and} \qquad
    \begin{tikzpicture}[anchorbase]
      \draw[<-] (0,0) -- (0,1.6);
      \bluedot{(0,.8)};
      \draw (-0.4,0.4) node {\regionlabel{n}};
    \end{tikzpicture}
  \]
  under the functor $\bF$ are equal.
\end{lem}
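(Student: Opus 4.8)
The plan is to compute the image under $\bF$ of each of the three diagrams by cutting it into the elementary pieces on which $\bF$ has been defined---dots, crossings, cups and caps---and then simplifying the resulting composite of bimodule maps. The rightmost diagram is a single downward strand carrying one dot, so by \eqref{eq:dot-maps-down} its image is $\pl{x_{n+1}}$, left multiplication by $x_{n+1}$. It therefore suffices to show that the images of the other two diagrams also equal $\pl{x_{n+1}}$.

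First I would treat the curl that unravels through the right adjunction. Read from bottom to top it is the composite of a right cup, a dot on the resulting upward strand, and a right cap (each tensored with an identity strand), so by \eqref{eq:right-cap-cup-maps} and \eqref{eq:dot-maps-up} its image is the composite of the inclusion $H_n^\lambda\hookrightarrow H_{n+1}^\lambda$, right multiplication by $x_{n+1}$, and the multiplication map $H_{n+1}^\lambda\otimes_{H_n^\lambda}H_{n+1}^\lambda\to H_{n+1}^\lambda$. One of the snake relations \eqref{eq:right-unit-counit} collapses this composite, and since $x_{n+1}$ commutes with $H_n^\lambda$ the surviving operator is readily identified with $\pl{x_{n+1}}$.

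The hard part will be the other curl, which unravels through the left adjunction, so that its image is built from $\varepsilon_\rL=\tr_{n+1}$ and the coevaluation $\eta_\rL$. Writing this diagram as $\eta_\rL$, then the dot on the intermediate upward strand, then $\varepsilon_\rL$ (tensored with an identity), and substituting the explicit formula for $\eta_\rL$ from Proposition~\ref{prop:adjunction-maps}---which is expressed through the basis $\{x_i^k s_i\dotsm s_n\}$ and its left dual basis $\{s_n\dotsm s_i\,y_{i,k}\}$---the image becomes a sum of terms of the form $(\text{an element of }H_{n+1}^\lambda)\cdot\tr_{n+1}\!\big((\text{an element of }H_{n+1}^\lambda)\,x_{n+1}\,s_n\dotsm s_i\,y_{i,k}\big)$. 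I would then use Lemma~\ref{lem:Nakayama-fix-x} to commute $x_{n+1}$ past the trace and the duality of Proposition~\ref{prop:dual} to evaluate the pairing; this makes all but one summand vanish and leaves precisely $\pl{x_{n+1}}$.

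The main obstacle is exactly this last computation: it requires bookkeeping of the indices $i,k$ appearing in $\eta_\rL$ and the observation that, once $x_{n+1}$ has been commuted through the trace argument (Lemma~\ref{lem:Nakayama-fix-x}) and the pairing of Proposition~\ref{prop:dual} has collapsed the sum, the surviving term is left multiplication by $x_{n+1}$, matching the image of the plain downward dot. Everything else is a routine translation of pictures into bimodule maps via \eqref{eq:right-cap-cup-maps}--\eqref{eq:left-crossing-map} together with the unit--counit relations \eqref{eq:right-unit-counit} and \eqref{eq:left-unit-counit}.
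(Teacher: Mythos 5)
Your plan is essentially the paper's own proof: the right-adjunction snake is computed directly from \eqref{eq:right-cap-cup-maps} and \eqref{eq:dot-maps-up} (the composite $a\mapsto 1\otimes a\mapsto x_{n+1}\otimes a\mapsto x_{n+1}a$ gives $\pl{x_{n+1}}$ without needing the snake relation), and the left-adjunction snake is handled exactly as in the paper by substituting the explicit $\eta_\rL$, moving $x_{n+1}$ through the trace with Lemma~\ref{lem:Nakayama-fix-x}, and collapsing the sum with Proposition~\ref{prop:dual}. One detail to correct when executing the second computation: the cap traces the basis side, yielding $\sum_{i,k}\tr_{n+1}\!\big(x_i^k s_i\dotsm s_n\,x_{n+1}\big)\,s_n\dotsm s_i\,y_{i,k}\,a$ with the dual elements surviving \emph{outside} the trace, and this equals $x_{n+1}a$ by the dual-basis expansion $\sum_b \tr_{n+1}(x_{n+1}b)\,b^\vee=x_{n+1}$, not because all but one summand vanishes (for instance both $b=x_{n+1}^{d-2}$ and $b=x_{n+1}^{d-1}$ can contribute).
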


\begin{proof}
  The image under $\bF$ of the leftmost diagram is the map
  \begin{gather*}
    {}_n(n+1)
    \cong (n)_n(n+1)
    \xrightarrow{\eta_\rR \otimes \id} {}_n(n+1)_n(n+1)
    \xrightarrow{\pr{x_{n+1}} \otimes \id}  {}_n(n+1)_n(n+1)
    \xrightarrow{\varepsilon_\rR} {}_n(n+1),
    \\
    a
    \mapsto 1_n \otimes a
    \mapsto 1_{n+1} \otimes a
    \mapsto x_{n+1} \otimes a
    \mapsto x_{n+1}
    = \pl{x_{n+1}} (a).
  \end{gather*}
  The image under $\bF$ of the second diagram is the map
  \[
    {}_n(n+1)
    \xrightarrow{\eta_\rL} {}_n(n+1)_n(n+1)
    \xrightarrow{\pr{x_{n+1}} \otimes \id} {}_n(n+1)_n(n+1)
    \xrightarrow{\varepsilon_\rL \otimes \id} (n)_n(n+1)
    \cong {}_n(n+1),
  \]
  \begin{align*}
    a &\mapsto \sum_{\substack{i=1,\dotsc,n+1 \\ k=0,\dotsc,d-1}} x_i^k s_i \dotsm s_{n-1} s_n \otimes s_n s_{n-1} \dotsm s_i y_{i,k} a \\
    &\mapsto \sum_{\substack{i=1,\dotsc,n+1 \\ k=0,\dotsc,d-1}} x_i^k s_i \dotsm s_{n-1} s_n x_{n+1} \otimes s_n s_{n-1} \dotsm s_i y_{i,k} a \\
    &\mapsto \sum_{\substack{i=1,\dotsc,n+1 \\ k=0,\dotsc,d-1}} \tr_{n+1} (x_i^k s_i \dotsm s_{n-1} s_n x_{n+1}) s_n s_{n-1} \dotsm s_i y_{i,k} a \\
    &= \sum_{\substack{i=1,\dotsc,n+1 \\ k=0,\dotsc,d-1}} \tr_{n+1} (x_{n+1} x_i^k s_i \dotsm s_{n-1} s_n) s_n s_{n-1} \dotsm s_i y_{i,k} a
    = x_{n+1} a
    = \pl{x_{n+1}} (a),
  \end{align*}
  where we have used Lemma~\ref{lem:Nakayama-fix-x} in the first equality, and Proposition~\ref{prop:dual} in the second equality.
  \details{
    Let $\cB$ be a $\kk$-basis of $H_{n+1}^\lambda$, with left dual basis $\cB^\vee$.  We claim that $\sum_{b \in \cB} \tr_{n+1}(ab)b^\vee = a$ for all $a \in H_{n+1}^\lambda$.  Indeed, write $a = \sum_{b \in \cB} a_b b^\vee$ with $a_b \in \kk$ for all $b \in \cB$.  Then we have
    \[
      \sum_{b \in \cB} \tr_{n+1}(ab)b^\vee
      = \sum_{b,c \in \cB} a_c \tr_{n+1} (c^\vee b) b^\vee
      = \sum_{c \in \cB} a_c c^\vee = a. \qedhere
    \]
  }
\end{proof}

\begin{lem} \label{lem:downcross-invariance}
  The images of the diagrams
  \[
    \begin{tikzpicture}[scale=0.5,>=stealth,baseline={([yshift=-.5ex]current bounding box.center)}]
      \draw[->] (-1.5,1.5) .. controls (-1.5,0.5) and (-1,-1) .. (0,0) .. controls (1,1) and (1.5,-0.5) .. (1.5,-1.5);
      \draw[->] (-2,1.5) .. controls (-2,-2) and (1.5,-1.5) .. (0,0) .. controls (-1.5,1.5) and (2,2) .. (2,-1.5);
      \draw (2.3,0.5) node {\regionlabel{n}};
    \end{tikzpicture}
    \quad, \quad
    \begin{tikzpicture}[scale=0.5,>=stealth,baseline={([yshift=-.5ex]current bounding box.center)}]
      \draw[->] (1.5,1.5) .. controls (1.5,0.5) and (1,-1) .. (0,0) .. controls (-1,1) and (-1.5,-0.5) .. (-1.5,-1.5);
      \draw[->] (2,1.5) .. controls (2,-2) and (-1.5,-1.5) .. (0,0) .. controls (1.5,1.5) and (-2,2) .. (-2,-1.5);
      \draw (2.3,-0.6) node {\regionlabel{n}};
    \end{tikzpicture}
    \quad, \text{ and } \quad
    \begin{tikzpicture}[anchorbase]
      \draw[<-] (0,0) -- (1,1);
      \draw[<-] (1,0) -- (0,1);
      \draw (1,0.5) node {\regionlabel{n}};
    \end{tikzpicture}
  \]
  under the functor $\bF$ are equal.
\end{lem}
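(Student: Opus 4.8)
The plan is to argue exactly as in the proof of Lemma~\ref{lem:dot-cyclicity}: each of the two ``rotated'' diagrams is a vertical composite of atomic pieces — cups, caps, and a single crossing — whose images under $\bF$ are prescribed by \eqref{eq:right-cap-cup-maps}--\eqref{eq:left-crossing-map} and \eqref{eq:up-crossing-map}, so I would read off these composites from the pictures, compose the corresponding bimodule homomorphisms, and check that in both cases the result equals the map $\pl{s_{n-1}}$ assigned to the bare downward crossing (the line following \eqref{eq:up-crossing-map}). Since only the gluing data of the atoms matters, we may freely isotope the constituent pieces of each picture before applying $\bF$. On inspection, the first picture is obtained from the upward crossing $\sigma^+$, whose image is $\pr{s_{n+1}}$, by rotating its two strands around using cup/cap pairs, and the second picture is the mirror composite using the cup/cap pairs of the opposite handedness; thus the content of the lemma is that the left-handed and right-handed rotations of $\pr{s_{n+1}}$ through the biadjunction of Proposition~\ref{prop:adjunction-maps} coincide, and both equal $\pl{s_{n-1}}$.

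I would first handle the side whose rotation uses $\eta_\rR$ and $\varepsilon_\rR$. Tracing an element through the composite and applying the zigzag identities \eqref{eq:right-unit-counit} — valid for $\bF$ by Proposition~\ref{prop:adjunction-maps} — the maps $\eta_\rR$ (an inclusion) and $\varepsilon_\rR$ (multiplication) collapse and leave behind multiplication by $s_n$ on the appropriate tensor slot coming from $\pr{s_{n+1}}$ after reindexing regions; relation \eqref{eq:deg4}, that $s_n$ commutes with $H_{n-1}^\lambda$, then rewrites this as $\pl{s_{n-1}}$ on the correct slot, so this side agrees with the bare downward crossing. For the other side, which uses $\eta_\rL$ (the coevaluation built from the trace) and $\varepsilon_\rL = \tr_{n+1}$, I would expand $\eta_\rL$ as the sum over the mutually left-dual bases $\{x_i^k s_i \dotsm s_n\}$ and $\{s_n \dotsm s_i y_{i,k}\}$ furnished by Proposition~\ref{prop:dual}, push $\sigma^+$ through the sum using the braid and mixed relations \eqref{eq:deg3}--\eqref{eq:deg9}, and then apply $\tr_{n+1}$, invoking Lemma~\ref{lem:Nakayama-fix-x} and Corollary~\ref{cor:subseqtraces} to rewrite the traces and the duality $\langle s_n \dotsm s_i y_{i,a}, x_j^b s_j \dotsm s_n\rangle_{n+1} = \delta_{i,j}\delta_{a,b}$ of Proposition~\ref{prop:dual} to collapse the sum; the single surviving term should again be $\pl{s_{n-1}}$.

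The main obstacle is this second computation. After inserting the crossing into the sum defining $\eta_\rL$ and applying $\tr_{n+1}$, one must use the decomposition \eqref{eq:decomp1} of $H_{n+1}^\lambda$ as an $(H_n^\lambda, H_n^\lambda)$-bimodule, together with Lemma~\ref{lem:zerotrace} and Lemma~\ref{lem:inclusions}, to determine which summands lie in $H_n^\lambda s_n H_n^\lambda$ and which in $\bigoplus_j H_n^\lambda x_{n+1}^j$, so that all but one of the traces vanish; keeping careful track of the extra terms produced by \eqref{eq:deg5} and \eqref{eq:deg7}--\eqref{eq:deg9} when the crossing slides past the dual-basis elements $x_i^k s_i \dotsm s_n$ is the delicate bookkeeping. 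Once this is done, matching the surviving term against $\pl{s_{n-1}}$ is immediate from \eqref{eq:deg4}, and the three diagrams have equal image under $\bF$.
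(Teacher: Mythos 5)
Your overall route is the same as the paper's (which refers to \cite[Lem.~7.1]{RS17}): read each rotated diagram as a composite of cups, caps and one upward crossing, apply $\bF$ atom by atom, collapse the right-adjunction side directly, and collapse the left-adjunction side using the dual bases of Proposition~\ref{prop:dual} and trace properties. However, two steps in your sketch fail as written. First, in both rotated diagrams the inner upward crossing sits in a region whose label, read from the rightmost region $n$ across the two downward strands, is $n-2$; its image under $\bF$ is therefore $\pr{s_{n-1}}$, not $\pr{s_{n+1}}$ or ``multiplication by $s_n$''. Consequently the move where you pass from ``multiplication by $s_n$'' to $\pl{s_{n-1}}$ via \eqref{eq:deg4} is not valid: \eqref{eq:deg4} concerns $s_j$ commuting with the $x_i$, no relation interchanges $s_n$ and $s_{n-1}$, and right multiplication does not become left multiplication by commuting. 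The correct argument for that side is simply to trace $1_n$ through the composite ${}_{n-2}(n)\cong(n-2)_{n-2}(n)\to{}_{n-2}(n)_{n-2}(n)\to{}_{n-2}(n)$ built from two applications of $\eta_\rR$, then $\pr{s_{n-1}}\otimes\id$, then two applications of $\varepsilon_\rR$: one gets $1_n\mapsto s_{n-1}$, and since the composite is a bimodule map it equals $\pl{s_{n-1}}$.

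Second, the left-adjunction side lives at the wrong level in your sketch: the diagram is an endomorphism of ${}_{n-2}(n)$, so $H_{n+1}^\lambda$, $\tr_{n+1}$ and the bases $\{x_i^k s_i\dotsm s_n\}$, $\{s_n\dotsm s_i y_{i,k}\}$ of $H_{n+1}^\lambda$ over $H_n^\lambda$ never enter. One expands $\eta_\rL$ once for $H_{n-1}^\lambda\subseteq H_n^\lambda$ and once for $H_{n-2}^\lambda\subseteq H_{n-1}^\lambda$, and the two evaluations compose to $\tr_{n-1}\circ\tr_n$. After inserting the crossing $\pr{s_{n-1}}$, you do not need the delicate term-by-term analysis with \eqref{eq:decomp1}, Lemma~\ref{lem:zerotrace} and Lemma~\ref{lem:inclusions} that you propose (nor Lemma~\ref{lem:Nakayama-fix-x}, which is only needed for dot cyclicity): the products $x_i^k s_i\dotsm s_{n-1}\,x_j^\ell s_j\dotsm s_{n-2}$ form a basis of $H_n^\lambda$ as a right $H_{n-2}^\lambda$-module whose left dual basis under $\tr_{n-1}\circ\tr_n$ is $s_{n-2}\dotsm s_j y_{j,\ell}\, s_{n-1}\dotsm s_i y_{i,k}$, and the whole sum collapses by the dual-basis identity $\sum_b \tr(ab)\,b^\vee = a$ applied with $a=s_{n-1}$. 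With these corrections your plan coincides with the paper's (omitted) computation.
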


\begin{proof}
  The proof of this lemma is completely analogous to that of \cite[Lem.~7.1]{RS17} and is therefore omitted.
  \details{
    The image of the first diagram is the composition
    \[
      \prescript{}{n-2}(n)
      \cong (n-2)_{n-2}(n)
      \xrightarrow{\eta_\rR^2 \otimes \id} \prescript{}{n-2}(n)_{n-2}(n)
      \xrightarrow{\pr{s_{n-1}} \otimes \id} \prescript{}{n-2}(n)_{n-2}(n)
      \xrightarrow{\varepsilon_\rR^2} \prescript{}{n-2}(n).
    \]
    This map is uniquely determined by the image of $1_n$ and we compute
    \[
      1_n
      \mapsto 1_{n-2} \otimes 1_n
      \mapsto 1_n \otimes 1_n
      \mapsto s_{n-1} \otimes 1_n
      \mapsto s_{n-1}
      = \pl{s_{n-1}}(1_n).
    \]
    On the other hand, the image of the second diagram is the composition
    \begin{multline*}
      \prescript{}{n-2}(n)
      \xrightarrow{\eta_\rL} \prescript{}{n-2}(n)_{n-1}(n)
      \cong \prescript{}{n-2}(n)_{n-1}(n-1)_{n-1}(n)
      \xrightarrow{\id \otimes \eta_\rL \otimes \id} \prescript{}{n-2}(n)_{n-1}(n-1)_{n-2}(n-1)_{n-1}(n) \\
      \cong \prescript{}{n-2}(n)_{n-2}(n)
      \xrightarrow{\pr{s_{n-1}} \otimes \id} \prescript{}{n-2}(n)_{n-2}(n)
      \xrightarrow{\varepsilon_\rL^2 \otimes \id} (n-2)_{n-2}(n)
      \cong \prescript{}{n-2}(n).
    \end{multline*}
    This map is also uniquely determined by the image of $1_n$ and we compute
    \begin{align*}
      1_n
      &\mapsto \sum_{\substack{i = 1,\dotsc,n \\ k=0,\dotsc,d-1}} x_i^k s_i \dotsm s_{n-1} \otimes s_{n-1} \dotsm s_i y_{i,k} \\
      &\mapsto \sum_{\substack{i = 1,\dotsc,n \\ j = 1,\dotsc,n-1 \\ k,\ell = 0,\dotsc,d-1}} x_i^k s_i \dotsm s_{n-1} \otimes x_j^\ell s_j \dotsm s_{n-2} \otimes s_{n-2} \dotsm s_j y_{j,\ell} \otimes s_{n-1} \dotsm s_i y_{i,k} \\
      &\mapsto \sum_{\substack{i = 1,\dotsc,n \\ j = 1,\dotsc,n-1 \\ k,\ell = 0,\dotsc,d-1}} x_i^k s_i \dotsm s_{n-1} x_j^\ell s_j \dotsm s_{n-2} \otimes s_{n-2} \dotsm s_j y_{j,\ell} s_{n-1} \dotsm s_i y_{i,k} \\
      &\mapsto \sum_{\substack{i = 1,\dotsc,n \\ j = 1,\dotsc,n-1 \\ k,\ell = 0,\dotsc,d-1}} x_i^k s_i \dotsm s_{n-1} x_j^\ell s_j \dotsm s_{n-2} s_{n-1} \otimes s_{n-2} \dotsm s_j y_{j,\ell} s_{n-1} \dotsm s_i y_{i,k} \\
      &\mapsto
      \sum_{\substack{i = 1,\dotsc,n \\ j = 1,\dotsc,n-1 \\ k,\ell = 0,\dotsc,d-1}} \tr_{n-1} \circ \tr_n \left( x_i^k s_i \dotsm s_{n-1} x_j^\ell s_j \dotsm s_{n-2} s_{n-1} \right) s_{n-2} \dotsm s_j y_{j,\ell} s_{n-1} \dotsm s_i y_{i,k} \\
      &= s_{n-1} = \pl{s_{n-1}}(1_n),
    \end{align*}
    where the second-to-last equality uses the fact that, by Proposition~\ref{prop:dual}, the set
    \[
      \left\{ x_i^k s_i \dotsm s_{n-1} x_j^\ell s_j \dotsm s_{n-2} \mid i=1,\dotsc,n,\ j=1,\dotsc,n-1,\ k,\ell = 0,\dotsc,d-1 \right\}
    \]
    is a basis for $H_n^\lambda$ as a right $H_{n-2}^\lambda$-module and, under the trace map $\tr_{n-1} \circ \tr_n \colon H_n^\lambda \to H_{n-2}^\lambda$, the corresponding left dual basis is
    \[
      \left\{ s_{n-2} \dotsm s_j y_{j,\ell} s_{n-1} \dotsm s_i y_{i,k} \mid i=1,\dotsc,n,\ j=1,\dotsc,n-1,\ k,\ell = 0,\dotsc,d-1 \right\}.
    \]
    Here we use the key property \cite[(3.1)]{RS17} of dual bases.  Note that the notation $\vee$ indicates \emph{right} dual basis in \cite{RS17}.
  }
\end{proof}

\begin{lem} \label{lem:rightcross-invariance}
  The images of the diagrams
  \[
    \begin{tikzpicture}[anchorbase]
    \draw[->] (1,0) .. controls (1,0.5) and (0,0.5) .. (0,1);
    \draw[->] (-0.5,1) .. controls (-0.5,0) and (0,0) .. (0.5,0.5) .. controls (1,1) and (1.5,1) .. (1.5,0);
    \draw (1.8,0.5) node {\regionlabel{n}};
    \end{tikzpicture}
    \quad, \quad
    \begin{tikzpicture}[anchorbase]
      \draw[<-] (-1,0) .. controls (-1,0.5) and (0,0.5) .. (0,1);
      \draw[<-] (0.5,1) .. controls (0.5,0) and (0,0) .. (-0.5,0.5) .. controls (-1,1) and (-1.5,1) .. (-1.5,0);
      \draw (0.8,0.5) node {\regionlabel{n}};
    \end{tikzpicture}
    \quad, \text{ and} \quad
    \begin{tikzpicture}[anchorbase]
      \draw[->] (0,0) -- (1,1);
      \draw[<-] (1,0) -- (0,1);
      \draw (1,0.5) node {\regionlabel{n}};
    \end{tikzpicture}
  \]
  under the functor $\bF$ are equal.
\end{lem}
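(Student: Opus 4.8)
The plan is to compute the image under $\bF$ of each of the three diagrams as an explicit composite of bimodule homomorphisms and then check that all three agree. For the third diagram (the plain crossing) this is immediate: by \eqref{eq:right-crossing-map} its image is the map $(n)_{n-1}(n)\to {}_n(n+1)_n$ given by $a\otimes a'\mapsto as_na'$, where we use that $\bF(\sQ_+\sQ_-)=(n)_{n-1}(n)$ and $\bF(\sQ_-\sQ_+)={}_n(n+1)_n$. So it suffices to show that the other two diagrams also induce $a\otimes a'\mapsto as_na'$.

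For the first diagram I would slice it horizontally to exhibit it as a composition of elementary pieces: a cup $\one\to\sQ_-\sQ_+$ inserted on the left, an up--up crossing in the middle, and a cap $\sQ_+\sQ_-\to\one$ on the right. Applying $\bF$ piece by piece via \eqref{eq:right-cap-cup-maps}, \eqref{eq:up-crossing-map}, and Proposition~\ref{prop:adjunction-maps}, the cup becomes the inclusion $\eta_\rR\colon H_n^\lambda\hookrightarrow H_{n+1}^\lambda$, the middle crossing becomes right multiplication by the appropriate Coxeter generator, and the cap becomes the multiplication map $\varepsilon_\rR$. One then traces a general element $a\otimes a'\in(n)_{n-1}(n)$ through the composite: the cup produces $1_{n+1}\otimes a\otimes a'$, the crossing inserts an $s_n$ into the middle tensor factor after moving it into place using the relations \eqref{eq:deg5}, \eqref{eq:deg7}--\eqref{eq:deg9} in $H_{n+1}^\lambda$, and the cap multiplies the last two factors, leaving $as_na'$ as required. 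This is the same Khovanov-style ``rotation of a parallel crossing'' computation that appears in the proof of Lemma~\ref{lem:downcross-invariance} and its analogue \cite[Lem.~7.1]{RS17}.

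The second diagram is the horizontal mirror of the first, so the identical argument applies with the up--up crossing replaced by a down--down crossing (whose image is left multiplication by a Coxeter generator, by the displayed formula following \eqref{eq:up-crossing-map}); tracing $a\otimes a'$ through once more yields $as_na'$. Alternatively, once the first case is established, the second follows from it together with the triangle identities \eqref{eq:right-unit-counit}, \eqref{eq:left-unit-counit} of Proposition~\ref{prop:adjunction-maps}, i.e.\ the already-established cyclicity of the cups and caps (cf.\ Lemmas~\ref{lem:dot-cyclicity} and~\ref{lem:downcross-invariance}).

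I expect the only real work to be the bimodule bookkeeping in the middle step: keeping track of which tensor factor each element and each inserted $s$ lands in after the cup and the parallel crossing, and commuting that $s$ past the polynomial and Coxeter generators using \eqref{eq:deg5} and \eqref{eq:deg8}--\eqref{eq:deg9}. As in the other cyclicity lemmas, the cyclotomic relation never enters, so the argument is valid over an arbitrary commutative ring $\kk$; the decomposition of each of the two diagrams into its elementary pieces, by contrast, is routine.
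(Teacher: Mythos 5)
Your proposal is correct and takes essentially the same route as the paper's (omitted, hidden-details) proof: both slice each rotated diagram into a cup of type $\eta_\rR$, a parallel (up--up, resp.\ down--down) crossing, and a cap of type $\varepsilon_\rR$, then trace $a \otimes a'$ through the composite to get $a s_n a'$, which is the image of the right crossing under \eqref{eq:right-crossing-map}. One small quibble: no appeal to \eqref{eq:deg5} or \eqref{eq:deg7}--\eqref{eq:deg9} is needed, since the middle crossing simply acts by right (resp.\ left) multiplication by $s_n$ on the relevant tensor factor, so the trace $a\otimes a' \mapsto a\otimes a' \mapsto as_n\otimes a'$ (resp.\ $a\otimes s_na'$) $\mapsto as_na'$ is immediate.
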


\begin{proof}
  This follows from a straightforward computation, which will be omitted.  (See also \cite[Lem.~7.2]{RS17}.)
  \details{
    The image under $\bF$ of the first diagram is the composition
    \[
      (n)_{n-1}(n) \xrightarrow{\eta_\rR \otimes \id} \prescript{}{n}(n+1)_{n-1}(n) \xrightarrow{\pr{s_n} \otimes \id} \prescript{}{n}(n+1)_{n-1}(n) \xrightarrow{\id \otimes \varepsilon_\rR} \prescript{}{n}(n+1)_n.
    \]
    For $a \in (n)_{n-1}$ and $a' \in \prescript{}{n-1}(n)$, we have
    \[
      a \otimes a' \mapsto a \otimes a' \mapsto as_n \otimes a' \mapsto a s_n a'.
    \]
    Similarly, the image under $\bF$ of the second diagram is the composition
    \[
      (n)_{n-1}(n) \xrightarrow{\id \otimes \eta_\rR} (n)_{n-1}(n+1)_n \xrightarrow{\id \otimes \pl{s_n}} (n)_{n-1}(n+1)_n \xrightarrow{\varepsilon_\rR \otimes \id} \prescript{}{n}(n+1)_n.
    \]
    For $a \in (n)_{n-1}$ and $a' \in \prescript{}{n-1}(n)$, we have
    \[
      a \otimes a' \mapsto a \otimes a' \mapsto a \otimes s_n a' \mapsto a s_n a'. \qedhere
    \]
  }
\end{proof}

\begin{lem} \label{lem:leftcross-invariance}
  The images of the diagrams
  \[
    \begin{tikzpicture}[anchorbase]
      \draw[<-] (1,0) .. controls (1,0.5) and (0,0.5) .. (0,1);
      \draw[<-] (-0.5,1) .. controls (-0.5,0) and (0,0) .. (0.5,0.5) .. controls (1,1) and (1.5,1) .. (1.5,0);
      \draw (2,0.5) node {$n$};
    \end{tikzpicture}
    \quad, \quad
    \begin{tikzpicture}[anchorbase]
      \draw[->] (-1,0) .. controls (-1,0.5) and (0,0.5) .. (0,1);
      \draw[->] (0.5,1) .. controls (0.5,0) and (0,0) .. (-0.5,0.5) .. controls (-1,1) and (-1.5,1) .. (-1.5,0);
      \draw (1,0.5) node {$n$};
    \end{tikzpicture}
    \quad, \text{ and} \quad
    \begin{tikzpicture}[anchorbase]
      \draw[<-] (0,0) -- (1,1);
      \draw[->] (1,0) -- (0,1);
      \draw (1,0.5) node {\regionlabel{n}};
    \end{tikzpicture}
  \]
  under the functor $\bF$ are equal.
\end{lem}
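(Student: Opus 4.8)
The plan is to show that $\bF$ sends each of the first two (curled) diagrams to the same bimodule homomorphism $\prescript{}{n}(n+1)_n \to (n)_{n-1}(n)$ that it attaches to the third diagram, namely the map of \eqref{eq:left-crossing-map} sending $s_n \mapsto 1_n \otimes 1_n$ and $x_{n+1}^j \mapsto 0$ for $0 \le j \le d-1$. This statement is the orientation-reversal of Lemma~\ref{lem:rightcross-invariance}, so the argument runs in parallel with that one (compare \cite[Lem.~7.3]{RS17}).

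First I would read off, from the region labelling described before \eqref{eq:right-cap-cup-maps}, the explicit composite of structure morphisms that $\bF$ assigns to each of the first two diagrams. Reversing all orientations in the two diagrams of Lemma~\ref{lem:rightcross-invariance} turns the right cups and caps into left ones, so that $\eta_\rR$ is replaced by $\eta_\rL$ and $\varepsilon_\rR$ by $\varepsilon_\rL = \tr_{n+1}$, while the crossing region still contributes multiplication by $s_n$. Concretely, the first curled diagram becomes a composite of the shape
\[
  \prescript{}{n}(n+1)_n \xrightarrow{\eta_\rL \otimes \id} \cdots \xrightarrow{\pl{s_n} \otimes \id} \cdots \xrightarrow{\id \otimes \varepsilon_\rL} (n)_{n-1}(n),
\]
and the second is the analogous composite with $\eta_\rL$ and $\varepsilon_\rL$ on the opposite tensor factors.

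Next, since $\bF(\sQ_{-+}) \cong \prescript{}{n}(n+1)_n \cong H_{n+1}^\lambda$ is generated as an $(H_n^\lambda, H_n^\lambda)$-bimodule by $s_n$ together with the powers $x_{n+1}^j$, $0 \le j \le d-1$ (decomposition \eqref{eq:decomp1} of Lemma~\ref{lem:decomp(n+1)}), and all maps in sight are bimodule homomorphisms, it suffices to evaluate each composite on these generators. This is where the computation happens: $\eta_\rL$ inserts the sum $\sum_{i,k} x_i^k s_i \dotsm s_n \otimes s_n \dotsm s_i y_{i,k}$ over the dual bases of Proposition~\ref{prop:dual}; after sliding an $s_n$ (and, where an $x_{n+1}$ must be moved past the trace, invoking Lemma~\ref{lem:Nakayama-fix-x}) into place, the outer $\varepsilon_\rL = \tr_{n+1}$ collapses the sum using the duality $\langle s_n \dotsm s_i y_{i,a}, x_j^b s_j \dotsm s_n \rangle_{n+1} = \delta_{i,j}\delta_{a,b}$, yielding $1_n \otimes 1_n$ on the generator $s_n$; and on the generators $x_{n+1}^j$ one first pushes the dot through $s_n$ via \eqref{eq:deg8} and \eqref{eq:deg9}, landing in $\bigoplus_m H_n^\lambda x_{n+1}^m$, on which $\tr_{n+1}(s_n \,\cdot\,) = \tr_{n+1}(\,\cdot\, s_n) = 0$ by Lemma~\ref{lem:zerotrace}, whence $x_{n+1}^j \mapsto 0$. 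Both composites therefore coincide with the map of \eqref{eq:left-crossing-map}.

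The main obstacle is bookkeeping: one must track the bimodule tensor factors and the region labels carefully enough that the $\eta_\rL$-sum is paired against exactly the right dual basis and telescopes, and must verify that the \emph{polynomial} generators $x_{n+1}^j$ are annihilated. The latter point is precisely where the vanishing Lemma~\ref{lem:zerotrace}, the bimodule decomposition of Lemma~\ref{lem:decomp(n+1)}, and the dual dots $y_{i,k}$ enter essentially. Since the remainder is routine, in the paper we would (as with Lemma~\ref{lem:rightcross-invariance}) either record this short computation in the hidden details or simply refer to \cite[Lem.~7.3]{RS17}.
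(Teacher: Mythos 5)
Your proposal is essentially the paper's proof: reduce to the bimodule generators $s_n$ and $x_{n+1}^j$ of ${}_n(n+1)_n$ via Lemma~\ref{lem:decomp(n+1)}, write each curled diagram as a composite of $\eta_\rL$, multiplication by $s_n$, and $\varepsilon_\rL=\tr_{n+1}$, and evaluate on those generators using Proposition~\ref{prop:dual} (giving $s_n\mapsto 1_n\otimes 1_n$) and Lemma~\ref{lem:zerotrace} (giving $x_{n+1}^j\mapsto 0$); the paper does exactly this, additionally invoking Corollary~\ref{cor:subseqtraces} for the trace evaluation in the second diagram.

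One bookkeeping slip to fix when carrying this out: the left cups in these two diagrams have rightmost region labelled $n$, so the unit they contribute is $\eta_\rL\colon (n)\to (n)_{n-1}(n)$, inserting $\sum_{i=1}^{n}\sum_{k=0}^{d-1} x_i^k s_i\dotsm s_{n-1}\otimes s_{n-1}\dotsm s_i y_{i,k}$ (dual bases of $H_n^\lambda$ over $H_{n-1}^\lambda$), not the level-$(n+1)$ sum $\sum x_i^k s_i\dotsm s_n\otimes s_n\dotsm s_i y_{i,k}$ you quote --- with the latter the composite would not even land in $(n)_{n-1}(n)$. The level-$(n+1)$ duality of Proposition~\ref{prop:dual} enters only at the final cap $\tr_{n+1}$, after the crossing has multiplied by $s_n$ and turned $s_{n-1}\dotsm s_i y_{i,k}$ into the dual basis element $s_n s_{n-1}\dotsm s_i y_{i,k}$ paired against the generator. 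Two smaller points: in the middle step the $s_n$ acts on the $(n+1)$-factor, i.e.\ as $\id\otimes\pl{s_n}$ for one diagram and $\pr{s_n}\otimes\id$ for the other (not as $\pl{s_n}\otimes\id$ on an $H_n^\lambda$-factor); and Lemma~\ref{lem:Nakayama-fix-x} is not needed here --- it is used for dot cyclicity (Lemma~\ref{lem:dot-cyclicity}), while the vanishing on $x_{n+1}^j$ follows directly from Lemma~\ref{lem:zerotrace} since $x_{n+1}$ commutes with $H_n^\lambda$.
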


\begin{proof}
  By Lemma~\ref{lem:decomp(n+1)}, ${}_n(n+1)_n$ is generated, as an $(H_n^\lambda,H_n^\lambda)$-bimodule, by $s_n$ and $x_{n+1}^j$, $j = 0,\dotsc,d-1$.  Thus, it suffices to compute the images of these elements.  The image under $\bF$ of the first diagram is the composition
  \[
    \prescript{}{n}(n+1)_n \cong (n)_n(n+1)_n \xrightarrow{\eta_\rL \otimes \id} (n)_{n-1}(n+1)_n \xrightarrow{\id \otimes \pl{s_n}} (n)_{n-1}(n+1)_n \xrightarrow{\id \otimes \varepsilon_\rL} (n)_{n-1}(n).
  \]
  For $j \in \{0,\dotsc,d-1\}$, we have
  \begin{multline*}
    x_{n+1}^j
    \mapsto \sum_{\substack{i=1,\dotsc,n \\ k=0,\dotsc,d-1}} x_i^k s_i \dotsm s_{n-2} s_{n-1} \otimes s_{n-1} s_{n-2} \dotsm s_i y_{i,k} x_{n+1}^j \\
    \mapsto \sum_{\substack{i=1,\dotsc,n \\ k=0,\dotsc,d-1}} x_i^k s_i \dotsm s_{n-2} s_{n-1} \otimes s_n s_{n-1} s_{n-2} \dotsm s_i y_{i,k} x_{n+1}^j
    \mapsto 0,
  \end{multline*}
  and
  \begin{multline*}
    s_n
    \mapsto \sum_{\substack{i=1,\dotsc,n \\ k=0,\dotsc,d-1}} x_i^k s_i \dotsm s_{n-2} s_{n-1} \otimes s_{n-1} s_{n-2} \dotsm s_i y_{i,k} s_n \\
    \mapsto \sum_{\substack{i=1,\dotsc,n \\ k=0,\dotsc,d-1}} x_i^k s_i \dotsm s_{n-2} s_{n-1} \otimes s_n s_{n-1} s_{n-2} \dotsm s_i y_{i,k} s_n
    \mapsto 1_n \otimes 1_n,
  \end{multline*}
  where we used Proposition~\ref{prop:dual} in the last map.

  The image under $\bF$ of the second diagram is the composition
  \[
    \prescript{}{n}(n+1)_n
    \cong \prescript{}{n}(n+1)_n(n)
    \xrightarrow{\id \otimes \eta_\rL} \prescript{}{n}(n+1)_{n-1}(n)
    \xrightarrow{\pr{s_n} \otimes \id} \prescript{}{n}(n+1)_{n-1}(n)
    \xrightarrow{\varepsilon_\rL \otimes \id} (n)_{n-1}(n).
  \]
  For $j = \{0,\dotsc,d-1\}$, we have
  \begin{multline*}
    x_{n+1}^j
    \mapsto x_{n+1}^j \sum_{\substack{i=1,\dotsc,n \\ k=0,\dotsc,d-1}} x_i^k s_i \dotsm s_{n-2} s_{n-1} \otimes s_{n-1} s_{n-2} \dotsm s_i y_{i,k} \\
    \mapsto x_{n+1}^j \sum_{\substack{i=1,\dotsc,n \\ k=0,\dotsc,d-1}} x_i^k s_i \dotsm s_{n-2} s_{n-1} s_n \otimes s_{n-1} s_{n-2} \dotsm s_i y_{i,k}
    \mapsto 0,
  \end{multline*}
  and
  \begin{multline*}
    s_n
    \mapsto s_n \sum_{\substack{i=1,\dotsc,n \\ k=0,\dotsc,d-1}} x_i^k s_i \dotsm s_{n-2} s_{n-1} \otimes s_{n-1} s_{n-2} \dotsm s_i y_{i,k} \\
    \mapsto s_n \sum_{\substack{i=1,\dotsc,n \\ k=0,\dotsc,d-1}} x_i^k s_i \dotsm s_{n-2} s_{n-1} s_n \otimes s_{n-1} s_{n-2} \dotsm s_i y_{i,k}
    \mapsto 1_n \otimes 1_n,
  \end{multline*}
  where, in the last map, we used the fact that
  \[
    \tr_{n+1} \left( s_n x_i^k s_i \dotsm s_{n-2} s_{n-1} s_n \right)
    = \tr_n \left( x_i^k s_i \dotsm s_{n-2} s_{n-1} \right)
  \]
  by Corollary~\ref{cor:subseqtraces}, and that
  \begin{multline*}
    \sum_{\substack{i=1,\dotsc,n \\ k=0,\dotsc,d-1}} \tr_n \left( x_i^k s_i \dotsm s_{n-2} s_{n-1} \right) \otimes_{H_{n-1}^\lambda} s_{n-1} s_{n-2} \dotsm s_i y_{i,k} \\
    = 1_n \otimes_{H_{n-1}^\lambda} \sum_{\substack{i=1,\dotsc,n \\ k=0,\dotsc,d-1}} \tr_n \left( x_i^k s_i \dotsm s_{n-2} s_{n-1} \right) s_{n-1} s_{n-2} \dotsm s_i y_{i,k}
    = 1_n \otimes_{H_{n-1}^\lambda} 1_n
  \end{multline*}
  by Proposition~\ref{prop:dual}.
  \details{
    We use here the property \cite[(3.1)]{RS17} of dual bases.
  }
\end{proof}

\subsection{The action functors are well-defined}

We now prove one of our main results: that the functors $\bF_n$ are well defined.  We assume some knowledge of cyclic biadjointness and its relation to planar diagrammatics for bimodules.  We refer the reader to \cite{Kho10} for an overview of this topic.

\begin{theo} \label{theo:action-functor}
  The above maps give a well-defined additive functor
  \[
    \bF_n \colon \cH^\lambda \to \cB_n^\lambda
  \]
  for each $n \in \N$ and hence define an action of $\cH^\lambda$ on $\bigoplus_{n \in \N} H_n^\lambda\md$.
\end{theo}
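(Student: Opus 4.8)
The plan is to check that the assignments defining each $\bF_n$ on the generating morphisms (cups, caps, dots, crossings) respect all of the defining local relations of $\cH^\lambda$, and that they are compatible with the cyclic (isotopy) structure, so that $\bF_n$ descends to a well-defined functor on $\tcH^\lambda$ and then, by the universal property of the idempotent completion, extends to $\cH^\lambda$. The functor then automatically gives an action of $\cH^\lambda$ on $\bigoplus_{n} H_n^\lambda\md$ via the standard identification of the bimodule categories $\cB_n^\lambda$ with (adjoint pairs of) functors between module categories: a bimodule ${}_n M_m$ acts as $M \otimes_{H_m^\lambda} (-)$.

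First I would verify cyclicity. The lemmas immediately preceding the theorem---Lemma~\ref{lem:dot-cyclicity}, Lemma~\ref{lem:downcross-invariance}, Lemma~\ref{lem:rightcross-invariance}, Lemma~\ref{lem:leftcross-invariance}---together with Proposition~\ref{prop:adjunction-maps} establish exactly the invariance under the moves that generate planar isotopy: the snake (zig-zag) identities come from \eqref{eq:right-unit-counit} and \eqref{eq:left-unit-counit}, and the four lemmas state that dots and crossings can be slid around cups and caps. Since $(n+1)_n$ is both left and right adjoint to ${}_n(n+1)$ (last sentence of Proposition~\ref{prop:adjunction-maps}), the general formalism of cyclic biadjointness (as in \cite{Kho10}) guarantees that any two diagrams related by an isotopy fixing the boundary are sent to the same bimodule map. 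I would cite this formalism rather than re-derive it.

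Next I would check the local relations one by one, each reducing to an identity in degenerate affine/cyclotomic Hecke algebras. Relations \eqref{rel:braid}, \eqref{rel:s-squared}, \eqref{rel:dotslide} become, under $\bF$, exactly the defining relations \eqref{eq:deg3}, \eqref{eq:deg1}, and \eqref{eq:deg8}/\eqref{eq:deg9} of $H_n$ applied to right (or left) multiplication operators; these are the relations making \eqref{eq:Hn-elements-as-diagrams} well defined. Relation \eqref{rel:up-down-double-crossing} becomes the statement $\varepsilon_\rR \circ (\text{twist}) \circ \eta_\rR = \id$ on ${}_n(n+1)_n$, which follows from $s_n^2 = 1$. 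Relation \eqref{rel:cc-bubble} becomes the computation of $\tr_{n+1}(x_{n+1}^j)$: the vanishing for $j < d-1$ and the value $1$ for $j = d-1$ are immediate from the definition of $\tr_{n+1}$ in Lemma~\ref{lem:Frob} together with the basis \eqref{eq:right-module-basis}, while the $j = d$ case is precisely Lemma~\ref{lem:tr_x^d}. Relation \eqref{rel:left-curl-zero} says a left curl is zero; its image is $\varepsilon_\rL \circ \eta_\rR$ applied after a crossing, which one computes directly to be the zero map (the crossing map \eqref{eq:left-crossing-map} kills $x_{n+1}^j$ and sends $s_n$ to $1\otimes 1$, and composing with $\tr_{n+1}$ on the relevant strand gives $0$).

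The main obstacle will be relation \eqref{rel:down-up-double-crossing}, the $\sQ_-\sQ_+ \cong \sQ_+\sQ_- \oplus \one^{\oplus d}$ decomposition, because it is the one relation where the dual dots---hence the subtle elements $y_{n+1,j}$ and the determinantal coefficients $c_s$---genuinely enter. Under $\bF$ this becomes the Mackey-type decomposition of Lemma~\ref{lem:decomp(n+1)}: ${}_n(n+1)_n \cong {}_n(n+1)\otimes_{H_n^\lambda}(n+1)_n \oplus \bigoplus_{j=0}^{d-1}{}_n(n)_n$. The point is that $\eta_\rL$ is defined (in Proposition~\ref{prop:adjunction-maps}) precisely in terms of the dual basis $\{s_n\cdots s_i y_{i,k}\}$, and the left-duality of this basis to \eqref{eq:right-module-basis} is Proposition~\ref{prop:dual}. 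So the right-hand side of \eqref{rel:down-up-double-crossing}, interpreted via $\bF$, is exactly the projection onto the $\bigoplus_j {}_n(n)_n$ summands expressed through the Frobenius trace, and Lemma~\ref{lem:dot-vee-maps-up}/\eqref{eq:dot-vee-maps-up} identifies the dual-dot diagrams with multiplication by $y_{n+1,j}$; matching the two then comes down to Proposition~\ref{prop:dual} (the $\delta_{i,j}\delta_{a,b}$ orthogonality) and Lemma~\ref{lem:zerotrace}. I would write out this verification carefully, since it is the heart of why the dual dots were defined the way they were, and leave the remaining relation-checks as routine computations in $H_{n+1}^\lambda$. Once all local relations and cyclicity are verified, $\bF_n$ is a well-defined additive monoidal functor on $\tcH^\lambda$; it extends uniquely to the Karoubi envelope $\cH^\lambda$, and the resulting collection $\{\bF_n\}_{n\in\N}$ is, by construction, an action of $\cH^\lambda$ on $\bigoplus_{n\in\N}H_n^\lambda\md$.
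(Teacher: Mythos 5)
Your proposal follows the paper's proof essentially step for step: reduce to $\tcH^\lambda$ (the target $\cB_n^\lambda$ being idempotent complete by construction), obtain isotopy invariance from Proposition~\ref{prop:adjunction-maps} together with Lemmas~\ref{lem:dot-cyclicity}--\ref{lem:leftcross-invariance} and the cyclic-biadjointness formalism of \cite{Kho10}, then verify the local relations one at a time, with the crucial relation \eqref{rel:down-up-double-crossing} checked exactly as in the paper by evaluating both sides on the bimodule generators $s_n$ and $x_{n+1}^k$ of ${}_n(n+1)_n$ and invoking Proposition~\ref{prop:dual}, Lemma~\ref{lem:zerotrace}, and the identification \eqref{eq:dot-vee-maps-up} of dual dots with the elements $y_{n+1,j}$; the passage to $\cH^\lambda$ and the action by tensoring are the same as well.

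Two of your routine checks are misidentified, though neither affects the strategy and both are fixed by a two-line computation. First, relation \eqref{rel:up-down-double-crossing} lives on $\sQ_+\sQ_-$, whose image is $(n)_{n-1}(n)$, not ${}_n(n+1)_n$, and its preservation is not an instance of $s_n^2=1$: it is the statement that the composite of the sideways crossing maps \eqref{eq:right-crossing-map} and \eqref{eq:left-crossing-map} sends $a\otimes b\mapsto a s_n b\mapsto a\otimes b$, i.e.\ is the identity precisely because of how \eqref{eq:left-crossing-map} is defined (via the decomposition of Lemma~\ref{lem:decomp(n+1)}). Second, the left curl \eqref{rel:left-curl-zero} does not factor through \eqref{eq:left-crossing-map}; its image is $\varepsilon_\rL\circ\pr{s_{n+1}}\circ\eta_\rR$ on $(n+1)_n$, which vanishes because $\tr_{n+2}$ kills $H_{n+1}^\lambda s_{n+1}$ (Lemma~\ref{lem:zerotrace}, or directly from \eqref{eq:decomp1}). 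With these corrections your argument coincides with the paper's.
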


\begin{proof}
  The action comes from the standard action of bimodules on categories of modules, via the tensor product.  See, for example, \cite[(7.1)]{RS17}.

  By definition, the category $\cB_n^\lambda$ is idempotent complete for all $n \in \N$.  Thus, any functor from $\tcH^\lambda$ to $\cB_n^\lambda$ naturally induces a functor $\cH^\lambda \to \cB^\lambda$.  Therefore, it suffices to consider the category $\tcH^\lambda$.

  By Proposition~\ref{prop:adjunction-maps}, the images $(n+1)_n$ and $\prescript{}{n}(n+1)$ of the objects $\sQ_+$ and $\sQ_-$ under $\bF$ are biadjoint.  Thus, the zigzag identities
  \[
    \begin{tikzpicture}[anchorbase]
      \draw[->] (-1,1) .. controls (-1,-1) and (0,-1) .. (0,0) .. controls (0,1) and (1,1) .. (1,-1);
    \end{tikzpicture}
    \ =\
    \begin{tikzpicture}[anchorbase]
      \draw[->] (1,1) .. controls (1,-1) and (0,-1) .. (0,0) .. controls (0,1) and (-1,1) .. (-1,-1);
    \end{tikzpicture}
    \ =\
    \begin{tikzpicture}[anchorbase]
      \draw[->] (0,1) -- (0,-1);
    \end{tikzpicture}
    \quad, \qquad
    \begin{tikzpicture}[anchorbase]
      \draw[<-] (-1,1) .. controls (-1,-1) and (0,-1) .. (0,0) .. controls (0,1) and (1,1) .. (1,-1);
    \end{tikzpicture}
    \ =\
    \begin{tikzpicture}[anchorbase]
      \draw[<-] (1,1) .. controls (1,-1) and (0,-1) .. (0,0) .. controls (0,1) and (-1,1) .. (-1,-1);
    \end{tikzpicture}
    \ =\
    \begin{tikzpicture}[anchorbase]
      \draw[<-] (0,1) -- (0,-1);
    \end{tikzpicture}
    \quad ,
  \]
  are preserved by $\bF$.  The fact that $\bF$ preserves invariance under local isotopies then follows from Lemmas~\ref{lem:dot-cyclicity}, \ref{lem:downcross-invariance}, \ref{lem:rightcross-invariance}, and \ref{lem:leftcross-invariance}.

  It remains to show that $\bF$ preserves the relations \eqref{rel:braid}--\eqref{rel:left-curl-zero}.  The fact that $\bF$ preserves relations \eqref{rel:braid}, \eqref{rel:s-squared}, and \eqref{rel:dotslide} follows immediately from \eqref{eq:deg3}, \eqref{eq:deg1}, and \eqref{eq:deg5}, respectively.

  If the rightmost region is labeled $n$, then the image under $\bF$ of the left side of \eqref{rel:left-curl-zero} is the map
  \begin{gather*}
    (n+1)_n
    \xrightarrow{\eta_\rR} {}_{n+1}(n+2)_n
    \xrightarrow{\pr{s_{n+1}}} {}_{n+1}(n+2)_n
    \xrightarrow{\varepsilon_\rL} (n+1)_n,\\
    a
    \mapsto a
    \mapsto a s_{n+1}
    \mapsto 0.
  \end{gather*}
  Thus, $\bF$ preserves \eqref{rel:left-curl-zero}.

  If the outside region is labeled $n$, then the image under $\bF$ of the left side of \eqref{rel:cc-bubble} is the map
  \begin{gather*}
    (n)
    \xrightarrow{\eta_\rR} {}_n(n+1)_n
    \xrightarrow{\pl{\left(x_{n+1}^j\right)}}  {}_n(n+1)_n
    \xrightarrow{\varepsilon_\rL} (n),\\
    a
    \mapsto a
    \mapsto x_{n+1}^j a = a x_{n+1}^j
    \mapsto
    \begin{cases}
      0 & \text{if } j < d-1, \\
      a & \text{if } j=d-1, \\
      a \sum_{i \in I} i \lambda_i & \text{if } j=d,
    \end{cases}
  \end{gather*}
  where the last case follows from Lemma~\ref{lem:tr_x^d}.  Thus, $\bF$ preserves \eqref{rel:cc-bubble}.

  Now, if the rightmost region is labeled $n$, then the image under $\bF$ of the first term on the right side of \eqref{rel:down-up-double-crossing} (i.e.\ the double crossing) is the map
  \[
    {}_n(n+1)_n
    \xrightarrow{\eqref{eq:left-crossing-map}} (n)_{n-1}(n)
    \xrightarrow{\eqref{eq:right-crossing-map}} {}_n(n+1)_n.
  \]
  This map is uniquely determined by the images of $s_n$ and $x_{n+1}^k$, $k=0,\dotsc,d-1$.  We compute
  \[
    s_n
    \mapsto 1_n \otimes 1_n
    \mapsto s_n
    \quad \text{and} \quad
    x_{n+1}^k
    \mapsto 0.
  \]
  On the other hand, the image under $\bF$ of the sum in \eqref{rel:down-up-double-crossing} of diagrams over $j$, is (using \eqref{eq:dot-vee-maps-down})
  \[
    \sum_{j=0}^{d-1} \pr{x_{n+1}^j} \circ \eta_\rR \circ \varepsilon_\rL \circ \pl{y_{n+1,j}}.
  \]
  This acts as
  \[
    s_n
    \mapsto \sum_{j=0}^{d-1} \tr_{n+1} (y_{n+1,j} s_n) x_{n+1}^j
    = 0,
    \quad
    x_{n+1}^k
    \mapsto \sum_{j=0}^{d-1} \tr_{n+1} \left( y_{n+1,j} x_{n+1}^k \right) x_{n+1}^j
    = x_{n+1}^k,
  \]
  where the equalities follow from Proposition~\ref{prop:dual}.  It follows that $\bF$ preserves \eqref{rel:down-up-double-crossing}.

  Finally, if the rightmost region is labeled $n$, then the image under $\bF$ of the left side of \eqref{rel:up-down-double-crossing} is the map
  \begin{gather*}
    (n)_{n-1}(n)
    \xrightarrow{\eqref{eq:right-crossing-map}} {}_n(n+1)_n
    \xrightarrow{\eqref{eq:left-crossing-map}} (n)_{n-1}(n),
    \\
    a \otimes b
    \mapsto a s_n b
    \mapsto a \otimes b.
  \end{gather*}
  Thus, $\bF$ preserves \eqref{rel:up-down-double-crossing}.
\end{proof}

\subsection{Categorification of Fock space} \label{subsec:Fock-space}

We assume in this subsection that $\kk$ is a field of characteristic zero.  For $n \in \N$, let $K_0(H_n^\lambda\pmd)$ be the split Grothendieck group of the category $H_n^\lambda\pmd$ of finitely-generated projective $H_n^\lambda$-modules.  By Theorems~\ref{theo:main} and~\ref{theo:action-functor}, we have ring homomorphisms
\[
  \fh_d \to K_0(\cH^\lambda) \xrightarrow{\bigoplus_{n \in \N} K_0(\bF_n)} \End \left( \bigoplus_{n \in \N} K_0(H_n^\lambda\pmd) \right),
\]
yielding an action of $\fh_d$ on $\bigoplus_{n \in \N} K_0(H_n^\lambda\pmd)$.  (Note that we do not use the injectivity statement in Theorem~\ref{theo:main}, which we have yet to prove.)

If $\kk_0$ denotes the trivial one-dimensional $H_0^\lambda$-module, then, for any partition $\mu$, we have
\[
  \bF_0([\sQ_-^{\mu}]) \cdot [\kk_0] = 0,\qquad
  \bF_0([\sQ_+^{\mu}]) \cdot [\kk_0] = [H_n^\lambda e_\mu].
\]
It follows from the Stone--von Neumann Theorem that the submodule of $\bigoplus_{n \in \N} K_0(H_n^\lambda\pmd)$ generated by $[\kk_0]$ is the Fock space representation of $\fh_d$.  (See \cite[Th.~2.11(c)]{SY15} for a version of the Stone--von Neumann Theorem in the general setting of Heisenberg doubles.)

\begin{prop} \label{prop:h-KH-injective}
  The ring homomorphism $\fh_d \to K_0(\cH^\lambda)$ of Theorem~\ref{theo:main} is injective.
\end{prop}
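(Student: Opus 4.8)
The plan is to deduce injectivity from the faithfulness of the Fock space representation, using the categorical action established in Theorem~\ref{theo:action-functor}. Explicitly, I would start from the composite ring homomorphism
\[
  \fh_d \to K_0(\cH^\lambda) \xrightarrow{\ \bigoplus_{n \in \N} K_0(\bF_n)\ } \End\left( \bigoplus_{n \in \N} K_0(H_n^\lambda\pmd) \right),
\]
and suppose $z \in \fh_d$ maps to $0$ in $K_0(\cH^\lambda)$. Then $z$ acts as zero on $\bigoplus_{n \in \N} K_0(H_n^\lambda\pmd)$, hence in particular on the $\fh_d$-submodule $F$ generated by $[\kk_0]$. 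As recalled in the discussion above, $F$ is isomorphic, as an $\fh_d$-module, to the Fock space representation of $\fh_d$. So the whole statement follows once we know that the Fock space representation of $\fh_d$ is faithful: that forces $z = 0$.

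To verify that faithfulness, I would reduce to the case of rational coefficients. Since $\fh_d$ is a free $\Z$-module --- for instance on the products $s_\mu^+ s_\nu^-$ of Schur functions, or the analogous products of complete homogeneous symmetric functions --- it embeds into $\fh_d \otimes_\Z \Q$, and an element of $\fh_d$ annihilating the Fock space annihilates $F \otimes_\Z \Q$ as well; so it suffices to show that the Fock space representation of $\fh_d \otimes_\Z \Q$ is faithful. A rescaling of the power-sum generators $p_n^\pm$ gives an isomorphism of $\Q$-algebras $\fh_d \otimes_\Z \Q \xrightarrow{\ \sim\ } \fh_1 \otimes_\Z \Q$ carrying Fock space to Fock space, so one may take $d = 1$; and the Fock space representation of $\fh_1 \otimes_\Z \Q$ is the classical bosonic representation on $\Q[p_1^+, p_2^+, \dotsc]$, in which $p_n^+$ acts by multiplication by $p_n^+$ and $p_n^-$ acts, up to a nonzero scalar, by $\partial / \partial p_n^+$. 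Because $\fh_1 \otimes_\Z \Q$ carries a Heisenberg-double PBW basis that maps to a linearly independent family of polynomial differential operators, this action identifies $\fh_1 \otimes_\Z \Q$ with the algebra of all polynomial differential operators on $\Q[p_1^+, p_2^+, \dotsc]$, and that algebra acts faithfully on polynomials.

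The substantive content here lies not in this argument but in the machinery it rests on: the construction of the action functors $\bF_n$ (Theorem~\ref{theo:action-functor}), the explicit values of $\bF_0$ on the objects $\sQ_\pm^\mu$, and the Stone--von Neumann identification of the cyclic submodule generated by $[\kk_0]$ with Fock space. Granting those, the only points that require a little care are checking that the power-sum rescaling genuinely matches up the two Fock space representations and that faithfulness descends from $\Q$ back to $\Z$; both are routine, and I do not expect a real obstacle at this stage.
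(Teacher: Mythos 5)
Your proof is correct and takes essentially the same route as the paper: injectivity is deduced from the composite $\fh_d \to K_0(\cH^\lambda) \to \End\bigl(\bigoplus_{n \in \N} K_0(H_n^\lambda\pmd)\bigr)$, the Stone--von Neumann identification of the cyclic submodule generated by $[\kk_0]$ with the Fock space representation, and the faithfulness of that representation. The only difference is that the paper simply cites \cite[Th.~2.11(d)]{SY15} for this faithfulness, while you verify it directly (and correctly) by reducing to $\Q$, rescaling the power sums to pass to $\fh_1 \otimes_\Z \Q$, and realizing the latter faithfully by polynomial differential operators on the Fock space.
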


\begin{proof}
  This follows from the above discussion and the fact that the Fock space representation is faithful.  (See, for example, \cite[Th.~2.11(d)]{SY15} for this statement in the setting of Heisenberg doubles.)
\end{proof}

The action of $i$-induction and $i$-restriction on $\bigoplus_{n \in \N} K_0(H_n^\lambda\pmd)$ realizes the irreducible highest weight module $V(\lambda)$ of $\fg$ of highest weight $\lambda$.  (See, for example, \cite[Ch.~9]{Kle05}.)  In general, the $\fh_d$-module $\bigoplus_{n \in \N} K_0(H_n^\lambda\pmd)$ is a direct sum of Fock space representations.  This corresponds to the fact that the restriction of $V(\lambda)$ to the principal Heisenberg subalgebra of $\fg$ is a direct sum of Fock spaces.  If $\lambda$ is of level one, then $V(\lambda)$ remains irreducible as a module over the Heisenberg algebra, and we are in the setting of \cite{QSY18}.  See Section~\ref{subsec:truncations} for further comments in this direction.  In general, the projective modules $H_n^\lambda e_\mu$ are not indecomposable.  An explicit description of the indecomposable projective $H_n^\lambda$-modules is not currently known for arbitrary level.  (See \cite{Bru12} for the level two case.)

%
\section{Properties of the action functors} \label{sec:action-properties}
%

In this section, $\kk$ is an arbitrary commutative ring until after Proposition~\ref{prop:cc-power-sums}.  At that point, we assume $\kk$ is a field of characteristic zero for the remainder of the section.

Note that $\bF_n$ maps closed diagrams, which are endomorphisms of the identity object $\one$, to $\End_{\cB_n^\lambda} (n)$, the algebra of bimodule endomorphisms of $(n)$.  We have a natural isomorphism of algebras
\[
  Z(H_n^\lambda) \xrightarrow{\cong} \End_{\cB_n^\lambda} (n),\quad a \mapsto \pl{a},
\]
where we recall that $\pl{a}$ denotes left multiplication by $a$.  In what follows, we will often identify $Z(H_n^\lambda)$ and $\End_{\cB_n^\lambda} (n)$ via this isomorphism.

\begin{prop} \label{prop:cc-power-sums}
  For all $n \in \N$ and $t \ge 3$, we have
  \[
    \bF_n
    \left(
      \begin{tikzpicture}[anchorbase]
        \draw [->](0,0) arc (180:360:0.4);
        \draw (0.8,0) arc (0:180:0.4);
        \bluedot{(0.7,-0.3)} node [anchor=north,color=black] {\dotlabel{d-1+t}};
      \end{tikzpicture}
    \right)
    = x_1^{t-2} + x_2^{t-2} + \dotsb + x_n^{t-2} + \text{l.o.t.},
  \]
  where ``l.o.t.'' is polynomial in $x_1,\dotsc,x_n$ of degree less than or equal to $t-3$.  In other words, $\bF_n$ maps a counterclockwise circle with $d-1+t$ dots, $t \ge 3$, to the $(t-2)$-nd power sum in the generators $x_1,\dotsc,x_n$, up to terms of lower degree.
\end{prop}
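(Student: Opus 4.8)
The plan is to compute $\bF_n$ of a counterclockwise bubble with $d-1+t$ dots directly, using the description of $\bF_n$ on the relevant atoms. Recall that a counterclockwise bubble with $d-1+t$ dots, placed in the region labelled $n$, is the composite of a right cup (mapped to $\eta_\rR$), then $d-1+t$ dots on the resulting strand (mapped to left multiplication by $x_{n+1}^{d-1+t}$), then a right cap (mapped to $\varepsilon_\rL = \tr_{n+1}$); the net effect on $H_n^\lambda$ is left (equivalently right) multiplication by $\tr_{n+1}(x_{n+1}^{d-1+t})$, an element of $Z(H_n^\lambda)$. So the proposition amounts to the purely algebraic statement
\[
  \tr_{n+1}(x_{n+1}^{d-1+t}) = x_1^{t-2} + x_2^{t-2} + \dotsb + x_n^{t-2} + \text{l.o.t.}
\]
in $Z(H_n^\lambda)$, where ``l.o.t.'' has degree $\le t-3$ in the $x_i$, and I would prove this by induction on $n$.

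First I would establish the base of a grading/filtration argument: filter $H_{n+1}^\lambda$ by the degree in the generators $x_1,\dotsc,x_{n+1}$ (the $s_i$ in degree $0$), noting that this is a well-defined filtration because relations \eqref{eq:deg5}, \eqref{eq:deg8}, \eqref{eq:deg9} only lower degree when moving an $s_i$ past an $x_j$, and the cyclotomic relation $\prod_i(x_1-i)^{\lambda_i}=0$ expresses $x_1^d$ in terms of lower powers. Then I would compute $\tr_{n+1}(x_{n+1}^{d-1+t})$ recursively. The key tool is Corollary~\ref{cor:subseqtraces}, $\tr_{n+1}(s_n y s_n) = \tr_n(y)$, together with \eqref{eq:deg8}: from $s_n x_n^{d-1+t} s_n = x_{n+1}^{d-1+t} - \sum_{a+b=d-2+t} x_n^a x_{n+1}^b s_n$ and Lemma~\ref{lem:zerotrace} (traces of things times $s_n$ vanish on the relevant pieces — one has to be a little careful here, since $x_n^a x_{n+1}^b s_n$ is not quite of the form covered by Lemma~\ref{lem:zerotrace} as $a$ may be large, so one first uses $s_n x_n^a s_n$ and the filtration to reduce), I get
\[
  \tr_{n+1}(x_{n+1}^{d-1+t}) = \tr_n(x_n^{d-1+t}) + \text{l.o.t.}
\]
This is exactly the computation carried out in the proof of Lemma~\ref{lem:tr_x^d} for the $t=1$ case ($\tr_n(x_n^d) = \tr_{n+1}(x_{n+1}^d)$ exactly), and for $t\ge 2$ the correction terms $\sum \tr_{n+1}(x_n^{\bullet} x_{n+1}^{\bullet})$ are of degree $\le t-3$ by the filtration, hence absorbed into l.o.t. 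Iterating, $\tr_{n+1}(x_{n+1}^{d-1+t}) = \tr_{n-1}(x_{n-1}^{d-1+t}) + \text{l.o.t.} = \dotsb$, but this alone does not produce the power sum; I instead need to be more careful and keep one leading term at each stage.

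The cleaner inductive route is: assume $\tr_n(x_n^{d-1+t}) = x_1^{t-2}+\dotsb+x_{n-1}^{t-2} + \text{l.o.t.}$ (this is the $n-1$ case of the statement, applied inside $H_n^\lambda\supseteq H_{n-1}^\lambda$, valid since $H_n^\lambda$ is free over $H_{n-1}^\lambda$ and the trace $\tr_n$ restricted appropriately agrees), and separately compute the ``missing'' top term $x_n^{t-2}$. For the latter, I would use \eqref{eq:decomp1}: write $x_{n+1}^{d-1+t} = \sum_j h_j s_n h_j' + \sum_{k=0}^{d-1} h_k x_{n+1}^k$ in $H_{n+1}^\lambda$; the trace picks out $h_{d-1}$, and tracking the leading term through \eqref{eq:deg8}–\eqref{eq:deg9} as one reduces $x_{n+1}^{d-1+t}$ modulo the cyclotomic relation on $x_{n+1}$ (which reads $x_{n+1}^d = \sum_{i} c_i' x_{n+1}^i + (\text{lower, involving }x_n,\dotsc)$ — actually the cyclotomic relation is only on $x_1$, so one must instead use the Frobenius structure directly), one finds the coefficient of $x_{n+1}^{d-1}$ in $x_{n+1}^{d-1+t}$ reduced to $H_n^\lambda$-degree $\le$ something has leading part $x_n^{t-2}$. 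The base case $n=0$ is trivial since $H_0^\lambda = \kk$ and there is nothing of positive degree, matching the empty sum. The base case $n=1$: $\tr_1(x_1^{d-1+t})$ is the coefficient of $x_1^{d-1}$ in the reduction of $x_1^{d-1+t}$ modulo $\prod_i(x_1-i)^{\lambda_i}$, which equals $x_1^{t-2}+\text{l.o.t.}$ by a direct computation (one extra power of the companion matrix per step).

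The main obstacle, I expect, is the bookkeeping of lower-order terms when commuting $s_n$ past powers of $x_n, x_{n+1}$ via \eqref{eq:deg8}–\eqref{eq:deg9}: each such move generates a sum $\sum_{a+b=\bullet} x_n^a x_{n+1}^b$, and after further reductions these feed back through the trace, so one must verify that the filtration really does control all of them (i.e.\ that no cancellation-free term of degree $>t-2$ survives, and that exactly one term of degree $t-2$, namely $x_n^{t-2}$, is produced). Setting up the filtration on $H_{n+1}^\lambda$ carefully at the start — in particular checking it descends to $H_n^\lambda$-module/bimodule structure compatibly with $\tr_{n+1}$ in the sense that $\tr_{n+1}$ has filtered degree $-(d-1)$ — is the technical heart, after which the inductive computation of the leading term is a finite, if slightly tedious, verification. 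I would also double-check that identifying the bubble's image with multiplication by $\tr_{n+1}(x_{n+1}^{d-1+t})\in Z(H_n^\lambda)$ is legitimate, which follows from \eqref{eq:right-cap-cup-maps}–\eqref{eq:left-cap-cup-maps}, \eqref{eq:dot-maps-up}, Lemma~\ref{lem:Nakayama-fix-x}, and the fact established in Section~\ref{sec:action-properties} that $\End_{\cB_n^\lambda}(n)\cong Z(H_n^\lambda)$.
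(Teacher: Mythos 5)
Your reduction is correct, and it is a genuinely different route from the paper's. The image of the counterclockwise bubble under $\bF_n$ is indeed multiplication by the central element $\tr_{n+1}(x_{n+1}^{d-1+t})$ (this is exactly the computation used to verify \eqref{rel:cc-bubble} in the proof of Theorem~\ref{theo:action-functor}), so the proposition is equivalent to the trace identity you state. The paper never evaluates this trace directly: it slides the bubble through $n$ parallel upward strands using Lemma~\ref{lem:bubble_slide} and then invokes the commutative square comparing $\bF_n$ on $\End_{\cH^\lambda}\one$ with $\bF_0$ on $\End_{\cH^\lambda}\sQ_+^n$, where a strand carrying $t-2$ dots becomes $x_j^{t-2}$.

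The gap is that the crux of your induction --- that passing from $\tr_n(x_n^{d-1+t})$ to $\tr_{n+1}(x_{n+1}^{d-1+t})$ contributes exactly one new top term $x_n^{t-2}$ --- is never computed (you defer it, while yourself noting that there is no cyclotomic relation on $x_{n+1}$ and that Lemma~\ref{lem:zerotrace} does not directly apply), and as asserted it is false. Take $d=1$, so $H_m^\lambda\cong\kk S_m$ with $x_j$ the Jucys--Murphy elements, and $t=3$: in $H_3^\lambda$ one computes from $x_3=s_2s_1s_2+s_2$ that $x_3^3=3s_2s_1s_2+3s_2+2s_1$, hence $\tr_3(x_3^3)=2s_1=2x_2$, whereas $\tr_2(x_2^3)=\tr_2(s_1)=0$; the increment is $2x_2$, not $x_2$ plus a constant, and $2x_2$ is not of the form $x_1+x_2+(\text{degree }\le t-3)$. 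In general the leading term is $(t-1)(x_1^{t-2}+\dotsb+x_n^{t-2})$: the factor $t-1$ is precisely the coefficient of the $b=0$ term of Lemma~\ref{lem:bubble_slide} (a counterclockwise bubble with $d-1$ dots being $1$), and the same factor appears to have been dropped in the displayed equation of the paper's own proof; since $t-1\neq 0$ in characteristic zero this is harmless for Corollary~\ref{cor:center-surjection}, but it shows that the ``finite, slightly tedious verification'' you postpone is exactly where the content lies, and carried out correctly it does not return the displayed formula. Relatedly, the filtration bookkeeping you invoke (that $\tr_{n+1}$ is compatible with the $x$-degree filtration so that all cross terms $x_n^ax_{n+1}^b$ produced by \eqref{eq:deg8}--\eqref{eq:deg9} can be discarded into lower order) is asserted rather than proved, and it cannot hold as stated: in the example above all the copies of $x_n^{t-2}$ in the increment arise from exactly those cross terms, since the ``leading'' term $s_nx_n^{d-1+t}s_n$ only reproduces $\tr_n(x_n^{d-1+t})$ via Corollary~\ref{cor:subseqtraces}. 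So as written the proposal does not establish the statement; to salvage your approach you would need to carry out the leading-term computation honestly (for instance via the recursion obtained by applying $\bF_n$ to Lemma~\ref{lem:bubble_slide}), keeping track of the resulting scalar.
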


\begin{proof}
  It follows from Lemma~\ref{lem:bubble_slide} that, for $t \ge 3$,
  \[
    \begin{tikzpicture}[anchorbase]
      \draw [->](0,0) arc (180:360:0.4);
      \draw (0.8,0) arc (0:180:0.4);
      \bluedot{(0.7,-0.3)} node [anchor=north,color=black] {\dotlabel{d-1+t}};
      \draw[->] (1.3,-0.6) to (1.3,0.6);
    \end{tikzpicture}
    \ =\
    \begin{tikzpicture}[anchorbase]
      \draw[->] (-0.3,-0.6) to (-0.3,0.6);
      \draw [->](0,0) arc (180:360:0.4);
      \draw (0.8,0) arc (0:180:0.4);
      \bluedot{(0.7,-0.3)} node [anchor=north,color=black] {\dotlabel{d-1+t}};
    \end{tikzpicture}
    \ +\
    \begin{tikzpicture}[anchorbase]
      \draw[->] (0,-0.6) to (0,0.6);
      \bluedot{(0,0)} node [anchor=west,color=black] {\dotlabel{t-2}};
    \end{tikzpicture}
    \ +\
    \sum_{k=0}^{t-3} \
    \begin{tikzpicture}[anchorbase]
      \draw[->] (0,-0.6) to (0,0.6);
      \bluedot{(0,0)} node [anchor=east,color=black] {\dotlabel{k}};
    \end{tikzpicture}
    \ \alpha_k,
  \]
  where $\alpha_k \in \End_{\cH^\lambda}(\one)$, $k \in \{0,\dotsc,t-3\}$, is a linear combination of counterclockwise circles with at most $d-1+t-2$ dots.  It follows that
  \begin{equation} \label{eq:power-sums-from-bubbles}
    \begin{tikzpicture}[anchorbase]
      \draw [->](0,0) arc (180:360:0.4);
      \draw (0.8,0) arc (0:180:0.4);
      \bluedot{(0.7,-0.3)} node [anchor=north,color=black] {\dotlabel{d-1+t}};
      \draw[->] (1.3,-0.6) to (1.3,0.6);
      \draw[->] (1.6,-0.6) to (1.6,0.6);
      \node at (2,0) {$\cdots$};
      \draw[->] (2.4,-0.6) to (2.4,0.6);
    \end{tikzpicture}
    \ =\
    \begin{tikzpicture}[anchorbase]
      \draw[->] (0,-0.6) to (0,0.6);
      \bluedot{(0,0)} node [anchor=west,color=black] {\dotlabel{t-2}};
      \draw[->] (0.8,-0.6) to (0.8,0.6);
      \node at (1.2,0) {$\cdots$};
      \draw[->] (1.6,-0.6) to (1.6,0.6);
    \end{tikzpicture}
    \ +\
    \begin{tikzpicture}[anchorbase]
      \draw[->] (-0.3,-0.6) to (-0.3,0.6);
      \draw[->] (0,-0.6) to (0,0.6);
      \bluedot{(0,0)} node [anchor=west,color=black] {\dotlabel{t-2}};
      \draw[->] (0.8,-0.6) to (0.8,0.6);
      \node at (1.2,0) {$\cdots$};
      \draw[->] (1.6,-0.6) to (1.6,0.6);
    \end{tikzpicture}
    \ + \dotsb + \
    \begin{tikzpicture}[anchorbase]
      \draw[->] (0.8,-0.6) to (0.8,0.6);
      \node at (1.2,0) {$\cdots$};
      \draw[->] (1.6,-0.6) to (1.6,0.6);
      \draw[->] (1.9,-0.6) to (1.9,0.6);
      \bluedot{(1.9,0)} node [anchor=west,color=black] {\dotlabel{t-2}};
    \end{tikzpicture}
    \ + \text{l.o.t.},
  \end{equation}
  where ``l.o.t.'' denotes a linear combination of lower order terms---diagrams consisting of upwards pointing strands carrying fewer than $t-2$ total dots, with a closed diagram in the rightmost region.

  Now, consider the commutative diagram
  \[
    \xymatrix{
      \End_{\cH^\lambda} \one \ar[r]^{\bF_n} \ar[d] & \End_{\cB_n^\lambda} (n) \ar@{^{(}->}[d]^\iota \\
      \End_{\cH^\lambda} \sQ_+^n \ar[r]^{\bF_0} & \End_{\cB_0^\lambda} (n)_0
    }
  \]
  where the leftmost vertical arrow is given by horizontal composition on the right by the identity morphism of $\sQ_+^n$, and $\iota$ is the natural inclusion.  (Diagrammatically, the leftmost vertical arrow takes a closed diagram and places $n$ upwards pointing arrows to its right.)  It follows from \eqref{eq:power-sums-from-bubbles} that
  \[
    \iota \circ \bF_n \left(
    \begin{tikzpicture}[anchorbase]
      \draw [->](0,0) arc (180:360:0.4);
      \draw (0.8,0) arc (0:180:0.4);
      \bluedot{(0.7,-0.3)} node [anchor=north,color=black] {\dotlabel{d-1+t}};
    \end{tikzpicture}
    \right)
    = x_1^{t-2} + x_2^{t-2} + \dotsb + x_n^{t-2} + \text{l.o.t.},
  \]
  where ``l.o.t.'' is polynomial in $x_1,\dotsc,x_n$ of degree less than or equal to $t-3$.
\end{proof}

For the remainder of the section, we assume that $\kk$ is a field of characteristic zero.

\begin{cor} \label{cor:center-surjection}
  For all $n \in \N$, the functor $\bF_n$ induces a surjective homomorphism of algebras $\End_{\cH^\lambda} \one \twoheadrightarrow \End_{\cB_n^\lambda} (n) \cong Z(H_n^\lambda)$.
\end{cor}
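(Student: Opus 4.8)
The functor $\bF_n$ is monoidal, so it restricts to a unital $\kk$-algebra homomorphism $\End_{\cH^\lambda}\one \to \End_{\cB_n^\lambda}(n)$; identifying the target with $Z(H_n^\lambda)$ as above, this is the map in the statement, and only surjectivity needs to be proved. The plan is to use two inputs on the centre. First, by a theorem of Brundan on centres of degenerate cyclotomic Hecke algebras, $Z(H_n^\lambda)$ is the image of the ring $\kk[x_1,\dotsc,x_n]^{S_n}$ of symmetric polynomials under the quotient map $\pi\colon\kk[x_1,\dotsc,x_n]\to H_n^\lambda$; since $\kk$ has characteristic zero, the power sums $p_k=x_1^k+\dotsb+x_n^k$ generate $\kk[x_1,\dotsc,x_n]^{S_n}$, so $Z(H_n^\lambda)$ is generated as a $\kk$-algebra by the elements $\bar p_k:=\pi(p_k)$. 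Hence it is enough to show that every $\bar p_k$ lies in $A:=\bF_n\big(\End_{\cH^\lambda}\one\big)$.

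Write $b_j\in\End_{\cH^\lambda}\one$ for the counterclockwise circle carrying $j$ dots. By \eqref{rel:cc-bubble} we have $b_{d-1}=1$, so $\bF_n(b_{d-1})=1\in A$, while Proposition~\ref{prop:cc-power-sums} gives, for each $k\ge 1$,
\[
  \bF_n(b_{d+1+k})=\bar p_k+\bar q_k,
\]
where $q_k\in\kk[x_1,\dotsc,x_n]$ has degree at most $k-1$ and $\bar q_k:=\pi(q_k)$; as $\bF_n(b_{d+1+k})$ and $\bar p_k$ are central, so is $\bar q_k$. I would then prove by induction on $k$ that $\bar p_1,\dotsc,\bar p_k\in A$. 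The case $k=1$ is immediate, since $q_1$ is a scalar and so $\bar q_1\in\kk\cdot 1\subseteq A$, giving $\bar p_1=\bF_n(b_{d+2})-\bar q_1\in A$. For the inductive step the key point is the claim that \emph{a central element of $H_n^\lambda$ that is the image under $\pi$ of a polynomial of degree at most $k-1$ lies in the subalgebra of $Z(H_n^\lambda)$ generated by $\bar p_1,\dotsc,\bar p_{k-1}$}; granting this, $\bar q_k$ lies in that subalgebra, which is contained in $A$ by the inductive hypothesis, whence $\bar p_k=\bF_n(b_{d+1+k})-\bar q_k\in A$. Since the $\bar p_k$ generate $Z(H_n^\lambda)$ and $A\subseteq Z(H_n^\lambda)$, this yields $A=Z(H_n^\lambda)$.

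The emphasised claim is the crux, and I expect it to be the main obstacle. It is a statement about the interaction of two structures on $Z(H_n^\lambda)$: Brundan's description as (the image of) symmetric polynomials, and the filtration $F_\bullet$ of $H_n^\lambda$ by total degree in the $x_i$ (for which $F_mH_n^\lambda$ is the span of the PBW basis elements $x_1^{a_1}\dotsm x_n^{a_n}s_w$, $0\le a_i<d$, with $\sum a_i\le m$). Concretely, one shows $Z(H_n^\lambda)\cap F_mH_n^\lambda=\mathcal P_m$ for all $m$, where $\mathcal P_m$ is the span of the monomials $\bar p_{j_1}\dotsm\bar p_{j_r}$ with $\sum j_i\le m$ (taking $m=k-1$ then gives the claim above, as $\mathcal P_{k-1}$ lies in the subalgebra generated by $\bar p_1,\dotsc,\bar p_{k-1}$). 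I would prove this by induction on top filtration degree: given a nonzero central $z\in F_mH_n^\lambda$ of top degree $m'\le m$, Brundan's theorem places $z$ in the polynomial part of $H_n^\lambda$, so its leading term lies in the polynomial part of the associated graded algebra $\gr H_n^\lambda\cong\big(\kk[x_1,\dotsc,x_n]/(x_1^d,\dotsc,x_n^d)\big)\rtimes\kk S_n$ (which follows from the PBW basis, the associated graded of \eqref{eq:deg5} being $s_ix_i=x_{i+1}s_i$ and that of the cyclotomic ideal being $(x_1^d,\dotsc,x_n^d)$); centrality forces this leading term to be $S_n$-invariant, hence a homogeneous symmetric polynomial of degree $m'$, which in characteristic zero is a combination of monomials in $p_1,\dotsc,p_{m'}$. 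Subtracting the corresponding element of $\mathcal P_{m'}\subseteq\mathcal P_m$ lowers the top degree, so one concludes by induction. The substantive work is verifying this compatibility between Brundan's description of $Z(H_n^\lambda)$ and the polynomial-degree filtration; the two inductions themselves are routine.
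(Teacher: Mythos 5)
Your overall route is the same as the paper's: Proposition~\ref{prop:cc-power-sums}, Brundan's theorem that $Z(H_n^\lambda)$ consists of the symmetric polynomials in $x_1,\dotsc,x_n$, and the fact that power sums generate symmetric polynomials in characteristic zero; the paper leaves implicit the induction that strips off the lower-order terms, and you are right that the heart of the matter is to show that the central correction $\bar q_k=\pi(q_k)$, with $\deg q_k\le k-1$, lies in $\kk[\bar p_1,\dotsc,\bar p_{k-1}]$.

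However, your proof of that step rests on a false claim. The strengthened statement $Z(H_n^\lambda)\cap F_mH_n^\lambda=\mathcal{P}_m$ is not true, and neither is the assertion that Brundan's theorem places a central element in the ``polynomial part'' (the span of the PBW monomials $x_1^{a_1}\dotsm x_n^{a_n}$ with $a_i<d$). Brundan's theorem says a central element equals $f(x_1,\dotsc,x_n)$ for a symmetric $f$, but rewriting $f(x_1,\dotsc,x_n)$ in the PBW basis produces terms with nontrivial $s_w$, since $x_j^t$ for $j\ge 2$ and $t\ge d$ reduces to lower-degree terms involving crossings; consequently the top symbol of a central element need not lie in the polynomial part of $\gr H_n^\lambda\cong\bigl(\kk[x_1,\dotsc,x_n]/(x_1^d,\dotsc,x_n^d)\bigr)\rtimes\kk S_n$, whose centre is strictly larger than the truncated symmetric polynomials. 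Concretely: for $d=1$ one has $F_0H_n^\lambda=H_n^\lambda\cong\kk S_n$, so $Z\cap F_0=Z(\kk S_n)\neq\kk=\mathcal{P}_0$; and for $\lambda=2\omega_0$, $n=2$ (so $d=2$ and $x_1^2=0$) one computes $x_2^2=(x_1+x_2)s_1$, so $\bar p_2=(x_1+x_2)s_1$ is a central element lying in $F_1$ whose degree-one symbol is not a polynomial, and $\bar p_2\notin\mathcal{P}_1=\mathrm{span}\{1,\,x_1+x_2\}$, so $Z\cap F_1\neq\mathcal{P}_1$. Thus your induction on top filtration degree fails at the step ``centrality forces the leading term to be a symmetric polynomial''. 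What you actually need is the weaker claim you stated first---if $z$ is central and $z=\pi(f)$ with $\deg f\le m$, then $z=\pi(g)$ for a symmetric $g$ of degree $\le m$---and this does not follow from your $F_\bullet$-argument, precisely because $\pi(\kk[x_1,\dotsc,x_n]_{\le m})$ is a proper subspace of $F_mH_n^\lambda$; it requires a different justification (for instance an analysis of the degree filtration on the subalgebra $\pi(\kk[x_1,\dotsc,x_n])$ itself, or the finer information from Brundan's paper such as the basis of $Z(H_n^\lambda)$ quoted in the proof of Proposition~\ref{prop:psi0-injective}). As written, the key lemma of your proposal is unproved, so there is a genuine gap.
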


\begin{proof}
  Since the power sums generate the ring of symmetric polynomials over a field of characteristic zero, the result follows from Proposition~\ref{prop:cc-power-sums} and \cite[Th.~1]{Bru08}, which states that $Z(H_n^\lambda)$ consists of all symmetric polynomials in the $x_1,\dotsc,x_n$.
\end{proof}

\begin{prop} \label{prop:psi0-injective}
  The homomorphism $\psi_0$ of \eqref{eq:bubble-iso} is injective.
\end{prop}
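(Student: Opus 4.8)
The plan is to detect nonzero elements of $\Pi$ by pushing them through the action functors $\bF_n$ of Section~\ref{sec:action}. Since $\psi_0$ is an algebra homomorphism and $\bF_n$ restricts to an algebra homomorphism $\End_{\tcH^\lambda}(\one)\to\End_{\cB_n^\lambda}((n))\cong Z(H_n^\lambda)$, for each $n$ we get an algebra homomorphism $\phi_n:=\bF_n\circ\psi_0\colon\Pi\to Z(H_n^\lambda)$, and $\ker\psi_0\subseteq\ker\phi_n$; hence it suffices to prove $\bigcap_{n\in\N}\ker\phi_n=0$. Since $\kk$ has characteristic zero, Corollary~\ref{cor:center-surjection} identifies $Z(H_n^\lambda)$ with $\kk[x_1,\dotsc,x_n]^{S_n}$, and under this identification $\phi_n(y_k)=q_k^{(n)}$, where $q_k^{(n)}$ denotes the image under $\bF_n$ of the counterclockwise circle with $d+k$ dots.

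The first step is to record the shape of the $q_k^{(n)}$. For $k\ge 2$, Proposition~\ref{prop:cc-power-sums} (applied with $t=k+1\ge 3$) gives $q_k^{(n)}=p_{k-1}(x_1,\dotsc,x_n)+(\text{symmetric polynomial of degree}\le k-2)$, where $p_{k-1}$ is the $(k-1)$-st power sum. For $k=1$ this does not apply, and here I would show separately that $q_1^{(n)}$ is a \emph{scalar} depending on $n$: applying $\bF_n$ to the $t=2$ instance of the bubble slide of Lemma~\ref{lem:bubble_slide} (which, since a circle with $d-1$ dots equals $1$ by \eqref{rel:cc-bubble}, says that a circle with $d+1$ dots slides past an upward strand at the cost of subtracting that strand), and evaluating the resulting bimodule identity at $1$, yields the recursion $q_1^{(n+1)}=q_1^{(n)}-1$; since $q_1^{(0)}\in Z(H_0^\lambda)=\kk$, induction gives $q_1^{(n)}=q_1^{(0)}-n=\tr_1(x_1^{d+1})-n\in\kk$ for all $n$.

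Now fix $0\ne P\in\Pi$ and choose $N$ with $P\in\kk[y_1,\dotsc,y_N]$; I would show $P\notin\ker\phi_n$ for suitable $n$. Take any $n\ge N-1$. The power sums $p_1,\dotsc,p_{N-1}$ are algebraically independent in $\kk[x_1,\dotsc,x_n]^{S_n}$, so by the triangularity of their leading terms the elements $q_2^{(n)},\dotsc,q_N^{(n)}$ are also algebraically independent there; hence the subalgebra homomorphism $\kk[y_2,\dotsc,y_N]\to\kk[x_1,\dotsc,x_n]^{S_n}$, $y_k\mapsto q_k^{(n)}$, is injective. Since $\phi_n(y_1)=c_n:=\tr_1(x_1^{d+1})-n$ is a scalar, expanding any $P$ in powers of $y_1-c_n$ shows that $\ker\phi_n\cap\kk[y_1,\dotsc,y_N]$ is exactly the ideal $(y_1-c_n)$ of $\kk[y_1,\dotsc,y_N]$. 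As $\kk[y_1,\dotsc,y_N]$ is a unique factorization domain and the scalars $c_n$ ($n\ge N-1$) are pairwise distinct (again using characteristic zero), a nonzero polynomial cannot be divisible by all of the pairwise non-associate irreducibles $y_1-c_n$. Therefore $\bigcap_{n\ge N-1}\bigl(\ker\phi_n\cap\kk[y_1,\dotsc,y_N]\bigr)=0$, so $P\notin\bigcap_n\ker\phi_n$; since $P$ was an arbitrary nonzero element of $\Pi$, we conclude $\ker\psi_0=0$.

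The step I expect to be the main obstacle is pinning down $q_1^{(n)}$: Proposition~\ref{prop:cc-power-sums} controls only circles with at least $d+2$ dots, so the circle with exactly $d+1$ dots must be handled by hand, and it is precisely the fact that $\bF_n$ sends it to a scalar that varies (affinely, hence in characteristic zero non-trivially) with $n$ that lets the variable $y_1$ be separated from the polynomial subalgebra generated by the remaining $y_k$. The other ingredients — identifying $Z(H_n^\lambda)$ with symmetric polynomials, the triangularity/algebraic-independence argument, and the unique-factorization argument — are routine.
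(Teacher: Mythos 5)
Your overall strategy -- composing $\psi_0$ with the functors $\bF_n$, using Proposition~\ref{prop:cc-power-sums} to control the images of circles with at least $d+2$ dots, and treating the circle with $d+1$ dots separately as a scalar varying with $n$ -- is the same basic route the paper takes. But there is a genuine gap in the execution: you assert that Corollary~\ref{cor:center-surjection} ``identifies $Z(H_n^\lambda)$ with $\kk[x_1,\dotsc,x_n]^{S_n}$'' and then argue that $q_2^{(n)},\dotsc,q_N^{(n)}$ are algebraically independent in $Z(H_n^\lambda)$, so that $\kk[y_2,\dotsc,y_N]\to Z(H_n^\lambda)$ is injective and $\ker\phi_n\cap\kk[y_1,\dotsc,y_N]=(y_1-c_n)$. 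This is false for every fixed $n$: $H_n^\lambda$ is finite dimensional (of dimension $d^n n!$, by \eqref{eq:left-module-basis}), so $Z(H_n^\lambda)$ is a finite-dimensional \emph{quotient} of the ring of symmetric polynomials, not a copy of it. Brundan's theorem (\cite[Th.~1]{Bru08}, which is what Corollary~\ref{cor:center-surjection} actually invokes) says the center consists of the elements of $H_n^\lambda$ that are symmetric polynomials in the $x_i$; distinct symmetric polynomials can and do become equal there, no element is transcendental over $\kk$, and in particular no polynomial ring injects into $Z(H_n^\lambda)$. Consequently $\ker\phi_n$ is much larger than $(y_1-c_n)$, and your unique-factorization step collapses.

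The repair is exactly the point where the paper's proof differs from yours: one must argue \emph{asymptotically} rather than at a fixed $n$. Grade $\Pi$ by $\deg y_i=i$; the top-degree part of a nonzero $f$ maps, by Proposition~\ref{prop:cc-power-sums}, to a nonzero symmetric \emph{polynomial} of bounded degree (nonzero because the power sums are algebraically independent in the honest polynomial ring), and then \cite[Th.~3.2]{Bru08} -- the monomial symmetric polynomials $m_\mu(x_1,\dotsc,x_n)$ with $\ell(\mu)+|\mu/d|\le n$ form a basis of $Z(H_n^\lambda)$ -- guarantees that a nonzero symmetric polynomial of bounded degree remains nonzero in $Z(H_n^\lambda)$ once $n$ is sufficiently large. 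This asymptotic linear-independence input is what your argument is missing, and without it the fixed-$n$ kernel computation cannot be made. (Two minor further points: your recursion for the circle with $d+1$ dots has the wrong sign -- sliding the bubble left across a strand via Lemma~\ref{lem:bubble_slide} with $t=2$ gives $q_1^{(n+1)}=q_1^{(n)}+1$, not $-1$ -- though this does not affect your use of it; and in characteristic zero one does not actually need the scalars $c_n$ to separate $y_1$, since the degree argument above handles all generators uniformly.)
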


\begin{proof}
  Define a grading on $\Pi$ by setting $\deg y_i = i$ for all $i \in \N_+$.  Suppose $f \in \Pi$ is a nonzero element of top degree $a$ (i.e.\ $f$ is a sum of a nonzero element of degree $a$ and elements of lower degree).  Then $\bF_n(\psi_0(f)) \in Z(H_n^\lambda)$ and so, by \cite[Th.~1]{Bru08}, $\bF_n(\psi_0(f))$ is a symmetric polynomial in $x_1,\dotsc,x_n$.  By Proposition~\ref{prop:cc-power-sums}, the top degree of the monomials appearing in $\bF_n(\psi_0(f))$ is $a$.  Then, by \cite[Th.~3.2]{Bru08}, $\bF_n(\psi_0(f))$ is nonzero for sufficiently large $n$.
  \details{
    By \cite[Th.~3.2]{Bru08}, the elements
    \[
      p_n(\mu) := \sum_{\nu \sim \mu} x_1^{\nu_1} \dotsm x_n^{\nu_n},\quad
      \ell(\mu) + |\mu/d| \le n,
    \]
    form a basis of $Z(H_n^\lambda)$, where the sum is over all distinct rearrangements of $\mu$ (so that the polynomials above are simply the monomial symmetric polynomials).  In the above, if $\mu = (\mu_1 \ge \mu_2 \ge \dotsb)$, then $\mu/d = (\lfloor \mu_1/d \rfloor, \lfloor \mu_2/d \rfloor,\dotsc)$.  Hence $|\mu/d| \le |\mu|/d$.  So the elements
    \[
      p_n(\mu),\quad \ell(\mu) + |\mu|/d \le n,
    \]
    are linearly independent.  Now, the symmetric polynomials of degree $\le a$ are spanned by a finite number of the $p_n(\mu)$.  Hence $\bF_n(\psi_0(f))$ is a nonzero linear combination of the $p_n(\mu)$, and these are linearly independent for sufficiently large $n$.  Thus $\bF_n(\psi_0(f)) \ne 0$ for sufficiently large $n$.
  }
  It follows that $\psi_0(f) \ne 0$.
\end{proof}

Consider the $\kk$-algebra
\[
  (H_{n+k}^\lambda)^{H_n^{\lambda}} := \{ a \in H_{n+k}^\lambda \mid ha=ah \text{ for all } h \in H_n^\lambda \}.
\]
This algebra is canonically isomorphic to the endomorphism ring of the bimodule ${_n}(n+k)$, and, therefore, to the endomorphism ring of the restriction functor, via the map that assigns to $a\in (H_{n+k}^\lambda)^{H_n^{\lambda}}$ the endomorphism $\pl{a}$.  Likewise, the opposite algebra of $(H_{n+k}^\lambda)^{H_n^{\lambda}}$ is canonically isomorphic to the endomorphism ring of the bimodule $(n+k)_n$ and, therefore, to that of the induction functor, via the map that assigns to $a\in (H_{n+k}^\lambda)^{H_n^{\lambda}}$ the endomorphism $\pr{a}$.

\begin{prop} \label{prop:psim-injective}
  The homomorphism $\psi_m$ of \eqref{eq:psi_m-def} is injective.
\end{prop}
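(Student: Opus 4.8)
The plan is to compose $\psi_m$ with the action functors $\bF_n$ and let $n$ grow, thereby reducing the injectivity of $\psi_m$ to an explicit linear independence statement inside the finite-dimensional algebras $H_{n+m}^\lambda$, which one then settles using the module bases of Section~\ref{sec:DCHA} together with Brundan's description of the centres $Z(H_n^\lambda)$.

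First I would record the effect of $\bF_n\circ\psi_m$. As observed just before the statement, $\bF_n(\sQ_+^m)$ is $H_{n+m}^\lambda$ regarded as an $(H_{n+m}^\lambda,H_n^\lambda)$-bimodule, so $\End_{\cB_n^\lambda}(\bF_n\sQ_+^m)$ is identified (up to passing to the opposite algebra, which is irrelevant for kernels) with $(H_{n+m}^\lambda)^{H_n^\lambda}$ via $a\mapsto\pr{a}$; under this identification the generators of $H_m\otimes\Pi$ go to
\[
  s_i\otimes1\mapsto s_{n+i},\qquad x_j\otimes1\mapsto x_{n+j},\qquad 1\otimes f\mapsto\phi_n(f)\in Z(H_n^\lambda),
\]
where $\phi_n=\bF_n\circ\psi_0$ is the surjection of Corollary~\ref{cor:center-surjection} (for the last map one uses \eqref{eq:bubble-iso}, \eqref{eq:c-def} and the analysis of $\bF$ on bubbles carried out in the proof of Theorem~\ref{theo:action-functor}). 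Since $\ker\psi_m\subseteq\bigcap_{n}\ker(\bF_n\circ\psi_m)$, it suffices to prove $\bigcap_n\ker(\bF_n\circ\psi_m)=0$. By the surjectivity half of Proposition~\ref{prop:DH-Q-iso}, $\End_{\tcH^\lambda}(\sQ_+^m)$ is spanned by the $\psi_m(x^a\,w\otimes y^b)$, indexed by the PBW-type basis of $H_m\otimes\Pi$ (with $w\in S_m$, $x^a=x_1^{a_1}\cdots x_m^{a_m}$, and $y^b$ a monomial in the $y_k$), so the claim becomes: for any finite nonzero combination $\sum\kappa_{w,a,b}\,x^a w\otimes y^b$ there is $N$ such that
\[
  \sum_{w,a,b}\kappa_{w,a,b}\,\phi_n(y^b)\,\sigma_{n,w}\,x_{n+1}^{a_1}\cdots x_{n+m}^{a_m}\neq0\quad\text{in }H_{n+m}^\lambda\ \text{for all }n\ge N,
\]
where $\sigma_{n,w}$ denotes the image of $w$ under $s_i\mapsto s_{n+i}$.

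Next I would disentangle the three ``directions.'' Iterating the module basis \eqref{eq:left-module-basis} presents $H_{n+m}^\lambda$ as a free left $H_n^\lambda$-module whose ``top-$m$-strand part'' consists exactly of the straightened words in $s_{n+1},\dots,s_{n+m-1},x_{n+1},\dots,x_{n+m}$ with $x$-exponents $<d$; these are in particular left-linearly independent over $H_n^\lambda$, and using \eqref{eq:deg8} (and finite-dimensionality of $H_{n+m}^\lambda$) one rewrites each $\sigma_{n,w}\,x_{n+1}^{a_1}\cdots x_{n+m}^{a_m}$ in this basis in a way compatible with the filtration by $x$-degree, along which the central factor $\phi_n(y^b)$ commutes past everything, so that for $n$ large the finitely many elements $\sigma_{n,w}\,x_{n+1}^{a_1}\cdots x_{n+m}^{a_m}$ occurring remain left-linearly independent over $Z(H_n^\lambda)$. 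Grouping the displayed sum by $(w,a)$ and pulling out the central $\phi_n(y^b)$, we are reduced to showing, for each fixed $(w,a)$, that $\sum_b\kappa_{w,a,b}\,\phi_n(y^b)=0$ in $Z(H_n^\lambda)$ forces all those $\kappa_{w,a,b}$ to vanish once $n$ is large. For this, Proposition~\ref{prop:cc-power-sums} gives $\phi_n(y_k)=p_{k-1}(x_1,\dots,x_n)+(\text{lower-degree symmetric terms})$ for $k\ge2$, while $\phi_n(y_1)=\tr_{n+1}(x_{n+1}^{d+1})$ is a scalar which, by Corollary~\ref{cor:subseqtraces}, Lemma~\ref{lem:tr_x^d} and \eqref{eq:deg8}, equals a nonconstant affine function of $n$; combining the linear independence, for $n$ large, of the finitely many products of power sums involved (which is \cite[Th.~3.2]{Bru08}, exactly as in the proof of Proposition~\ref{prop:psi0-injective}) with the observation that a nonzero polynomial in $\phi_n(y_1)$ vanishes for only finitely many $n$, one obtains the required independence of the $\phi_n(y^b)$, hence $\psi_m$ is injective.

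The main obstacle is the middle paragraph: one must verify that no cancellation can occur between terms attached to different permutation/monomial data $(w,a)$ even though their ``bubble coefficients'' are allowed to be arbitrary elements of $Z(H_n^\lambda)$. This is where the structure of $H_{n+m}^\lambda$ as an iterated free module over $H_n^\lambda$ enters, together with the facts that central elements commute past the shifted crossings $\sigma_{n,w}$ while the polynomial generators $x_{n+j}$ do not, and that the straightening of high powers of the $x_{n+j}$ is controlled by the $x$-degree filtration; once this bookkeeping is carried out, the power-sum computation and Brundan's basis complete the argument.
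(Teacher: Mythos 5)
Your opening move --- compose $\psi_m$ with the functors $\bF_n$ and reduce injectivity to asymptotic nonvanishing of the image of a fixed nonzero element of $H_m\otimes\Pi$ in $H_{n+m}^\lambda$ --- is the same first step as the paper's. But the paper then leaves the cyclotomic world: it applies the shift isomorphism $H_{n+m}^\lambda\cong H_{n+m}^\mu$ (with $\mu_0\neq 0$) followed by the Jucys--Murphy surjection $H_{n+m}^\mu\twoheadrightarrow\kk S_{n+m}$, identifies the composite with Khovanov's homomorphism $\psi'_{m,n}$, and quotes the asymptotic injectivity proved in \cite[\S4]{Kho14}. You instead try to establish the asymptotic statement directly inside $H_{n+m}^\lambda$, and the pivotal step there does not hold.

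Concretely, your reduction ``group the sum by $(w,a)$, pull out the central factors $\phi_n(y^b)$, and treat each group separately'' needs the elements $\sigma_{n,w}\,x_{n+1}^{a_1}\cdots x_{n+m}^{a_m}$ (finitely many $(w,a)$, with unbounded exponents coming from the PBW basis of $H_m$) to be linearly independent over $Z(H_n^\lambda)$ for large $n$. This is false, already for $m=1$ and $\lambda=\omega_i$ of level one: identifying $H_k^{\omega_i}\cong\kk S_k$ with $x_j$ mapping to the shifted Jucys--Murphy element $L_j+i$, the central idempotent $e_{(n)}\in Z(\kk S_n)$ satisfies $e_{(n)}\,\bigl(x_{n+1}-(n+i)\bigr)\bigl(x_{n+1}-(i-1)\bigr)=0$ in $\kk S_{n+1}$, because on an irreducible $\kk S_{n+1}$-module $V_\nu$ the trivial $S_n$-isotypic component is nonzero only for $\nu\in\{(n+1),(n,1)\}$, where $x_{n+1}$ acts by the scalars $n+i$ and $i-1$ respectively. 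So $1,x_{n+1},x_{n+1}^2$ satisfy a nontrivial $Z(H_n^\lambda)$-linear relation for \emph{every} $n$, and no filtration or straightening bookkeeping can restore the claimed independence; relatedly, for exponents $\geq d$ the straightening via \eqref{eq:decomp1} produces cross terms between different $(w,a)$ with coefficients in $H_{n+m-1}^\lambda$ that are not central, so the separation cannot even be set up cleanly. What is actually true --- and is the proposition itself --- is the weaker statement that the specific combinations with $n$-independent scalars $\kappa_{w,a,b}$ cannot vanish for all large $n$; proving this requires handling all the $(w,a,b)$ data simultaneously (for instance by evaluating on suitable modules and doing a leading-term/content analysis), which is exactly the content of Khovanov's asymptotic-injectivity argument that the paper imports after reducing to the symmetric-group case. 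As written, your proof therefore has a genuine gap at its central step; your treatment of the bubble images $\phi_n(y^b)$ (including the observation that $\phi_n(y_1)$ is a scalar, affine and nonconstant in $n$) is fine but only becomes usable once that separation problem is solved by other means.
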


\begin{proof}
  Fix $j \in I$ such that $\lambda_j \ne 0$.  Define $\mu = \sum_{i \in I} \mu_i \omega_i \in P_+$ by $\mu_i = \lambda_{i-j}$ for $i \in I$.  Thus $\mu_0 \ne 0$.  We have an isomorphism of algebras $H_n^\lambda \cong H_n^\mu$ that fixes $s_1,\dotsc,s_{n-1}$ and maps $x_i$ to $x_i-j$ for $i \in \{1,2,\dotsc,n\}$. We also have a surjective homomorphism of algebras $H_n^\mu \twoheadrightarrow \kk S_n$ given by mapping $x_i$ to the $i$-th Jucys--Murphy element.

  For $n \in \N$, consider the composition of algebra homomorphisms
  \[
    H_m \otimes \Pi
    \xrightarrow{\psi_m} \End_{\cH^\lambda} (\sQ_+^m)
    \xrightarrow{\bF_n} \End (m+n)_n
    \hookrightarrow \left( H_{m+n}^\lambda \right)^\op
    \cong \left( H_{m+n}^\mu \right)^\op
    \twoheadrightarrow (\kk S_{m+n})^\op.
  \]
  This composition is precisely the homomorphism $\psi_{m,n}'$ of \cite[\S4]{Kho14}.  It is shown there that these maps are asymptotically injective, in the sense that if $z$ is some nonzero element of $H_m \otimes \Pi$, then $\psi_{m,n}'(z) \ne 0$ for sufficiently large $n$.  It follows that $\psi_m$ is injective.
\end{proof}

\begin{prop} \label{prop:full-Q+k}
  For all $n,k \in \N$, the functor $\bF_n$ induces a surjective homomorphism of algebras
  \[
    \End_{\cH^\lambda} \sQ_+^k \twoheadrightarrow \End_{\cB_n^\lambda} (n+k)_n.
  \]
\end{prop}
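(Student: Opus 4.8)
\emph{Overview of the plan.} I would translate the statement into an assertion about centralizers in the algebras $H_m^\lambda$, and then prove that by an Olshanski-type induction. The first reduction is routine given the structural results already established; the induction is the real content.

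\emph{Identifying the two sides.} By Propositions~\ref{prop:DH-Q-iso} and~\ref{prop:psim-injective}, $\psi_k\colon H_k\otimes\Pi\to\End_{\cH^\lambda}(\sQ_+^k)$ is an isomorphism, so $\End_{\cH^\lambda}(\sQ_+^k)$ is generated by the diagrammatic avatars of $s_1,\dots,s_{k-1}$, of $x_1,\dots,x_k$, and of the closed bubbles. On the other side, a bimodule endomorphism of $(n+k)_n=H_{n+k}^\lambda$, regarded as an $(H_{n+k}^\lambda,H_n^\lambda)$-bimodule, is right multiplication $\pr{a}$ by an element $a$ of the centralizer
\[
  (H_{n+k}^\lambda)^{H_n^\lambda}:=\{a\in H_{n+k}^\lambda\mid ah=ha\text{ for all }h\in H_n^\lambda\},
\]
so that $\End_{\cB_n^\lambda}\bigl((n+k)_n\bigr)\cong\bigl((H_{n+k}^\lambda)^{H_n^\lambda}\bigr)^{\op}$ via $a\mapsto\pr{a}$. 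Note that the shifted copy of $H_k$ in $H_{n+k}^\lambda$ (with $s_i\mapsto s_{n+i}$ and $x_j\mapsto x_{n+j}$) and the center $Z(H_n^\lambda)$ both lie in this centralizer.

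\emph{Computing the image of $\bF_n$.} Unwinding the definition of $\bF_n$ on dots and crossings, the composite $\bF_n\circ\psi_k$ sends $s_i\mapsto\pr{s_{n+i}}$ and $x_j\mapsto\pr{x_{n+j}}$, while a closed diagram placed in the rightmost region goes to $\pr{z}$ for some $z\in Z(H_n^\lambda)$; by Corollary~\ref{cor:center-surjection} every $z\in Z(H_n^\lambda)$ is obtained this way. Hence $\im\bF_n=\pr{A}$, where $A\subseteq H_{n+k}^\lambda$ is the subalgebra generated by the shifted copy of $H_k$ together with $Z(H_n^\lambda)$. So the proposition is equivalent to the algebraic assertion $A=(H_{n+k}^\lambda)^{H_n^\lambda}$, which is also the content of Corollary~\ref{cor:centralizers}. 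I would prove this by induction on $k$ (for all $n$); equivalently, ``adding one strand on the left'' and using the inductive hypothesis $\bF_n\colon\End_{\cH^\lambda}(\sQ_+^{k-1})\twoheadrightarrow\End_{\cB_n^\lambda}((n+k-1)_n)$ reduces the inductive step to proving
\[
  (H_{m+1}^\lambda)^{H_N^\lambda}=\langle(H_m^\lambda)^{H_N^\lambda},\,x_{m+1},\,s_m\rangle\ \ (N<m),\qquad
  (H_{N+1}^\lambda)^{H_N^\lambda}=\langle Z(H_N^\lambda),\,x_{N+1}\rangle .
\]
To prove these, take $a$ in the left-hand side and decompose it via the $(H_m^\lambda,H_m^\lambda)$-bimodule decomposition $H_{m+1}^\lambda=H_m^\lambda s_m H_m^\lambda\oplus\bigoplus_{j=0}^{d-1}H_m^\lambda x_{m+1}^j$ of Lemma~\ref{lem:decomp(n+1)}. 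Since $x_{m+1}$ commutes with $H_N^\lambda$ and this is a decomposition of $H_N^\lambda$-bimodules, each component of $a$ is again centralized by $H_N^\lambda$: the $x_{m+1}^j$-coefficients lie in $(H_m^\lambda)^{H_N^\lambda}$ and so contribute terms already in the right-hand side, while the $H_m^\lambda s_m H_m^\lambda$-component lies in $(H_m^\lambda s_m H_m^\lambda)^{H_N^\lambda}$.

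\emph{The main obstacle.} It remains to show
\[
  (H_m^\lambda s_m H_m^\lambda)^{H_N^\lambda}\subseteq\langle(H_m^\lambda)^{H_N^\lambda},\,x_{m+1},\,s_m\rangle ,
\]
and this is the crux. One cannot simply discard the ``$s_m$-part'' of $a$: this space is genuinely nonzero, so one must show that each of its elements is a noncommutative polynomial in $s_m$, $x_{m+1}$ and elements of $(H_m^\lambda)^{H_N^\lambda}$. I would attack it via the isomorphism $H_m^\lambda s_m H_m^\lambda\cong H_m^\lambda\otimes_{H_{m-1}^\lambda}H_m^\lambda$ of Lemma~\ref{lem:decomp(n+1)}, further decomposing the two factors $H_m^\lambda$ over $H_{m-1}^\lambda$, and using the trace maps $\tr_{m+1}$ and $\tr_m$, the explicit left-dual basis of Proposition~\ref{prop:dual}, Corollary~\ref{cor:subseqtraces}, and Brundan's description $Z(H_N^\lambda)=\kk[x_1,\dots,x_N]^{S_N}$ (\cite[Th.~1]{Bru08}), all fed into a nested induction on $m$. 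The delicate point is the bookkeeping that shows the ``$s$''-summands of a centralizing element are forced to be assembled from the generators $s_m$, $x_{m+1}$ and $(H_m^\lambda)^{H_N^\lambda}$; this is where the Frobenius/trace technology of Section~\ref{sec:DCHA} does the real work, and everything outside this step is routine.
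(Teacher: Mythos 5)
Your reduction is correct as far as it goes: identifying $\End_{\cB_n^\lambda}\bigl((n+k)_n\bigr)$ with $\bigl((H_{n+k}^\lambda)^{H_n^\lambda}\bigr)^{\op}$, computing that the image of $\bF_n$ on $\End_{\cH^\lambda}(\sQ_+^k)$ is generated by the shifted copy of $H_k$ together with $Z(H_n^\lambda)$ (using Proposition~\ref{prop:DH-Q-iso} and Corollary~\ref{cor:center-surjection}), and splitting a centralizing element along the bimodule decomposition \eqref{eq:decomp1}, whose $x_{m+1}^j$-components are handled exactly as in the paper. But there is a genuine gap at the point you yourself flag as ``the crux'': the inclusion $(H_m^\lambda s_m H_m^\lambda)^{H_n^\lambda}\subseteq\langle (H_m^\lambda)^{H_n^\lambda},\,x_{m+1},\,s_m\rangle$ is never proved --- you only list the tools you would feed into ``a nested induction on $m$'' and defer the ``bookkeeping''. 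After your (correct) observation that the proposition is equivalent to the centralizer statement, this deferred step \emph{is} the entire content of the result: it is precisely Corollary~\ref{cor:centralizers}, which the paper emphasizes is new and obtains only as a consequence of the fullness theorem. A proof plan that assumes the hard algebraic statement can be pushed through by direct trace/dual-basis manipulations, without exhibiting those manipulations, does not establish the proposition, and it is not at all clear that such a direct argument closes.

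The paper's inductive step avoids this difficulty entirely by a categorical move you do not use. Given $a\in\bigl(H_m^\lambda s_m H_m^\lambda\bigr)^{H_n^\lambda}$, it writes $a$ via \eqref{eq:decomp2} as the image of some $a'\in(m)_{m-1}(m)$ commuting with $H_n^\lambda$, i.e.\ as a bimodule map $(n)\to{}_n(m)_{m-1}(m)_n$. Biadjointness of induction and restriction (whose unit/counit are the images of cups and caps) identifies this hom-space with an endomorphism space of the form $\End\bigl((m)_n\bigr)$-type already covered by the inductive hypothesis, so $a'$ is realized as $\bF_n$ of an explicit diagram in $\Hom_{\cH^\lambda}(\one,\sQ_-^{m-n}\sQ_+\sQ_-\sQ_+^{m-n})$; composing with the crossing \eqref{eq:right-crossing-map}, whose image is $u\otimes v\mapsto u s_m v$, and bending strands produces a preimage of $\pl{a}$. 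In particular, one never needs to show that $a$ is a noncommutative polynomial in $s_m$, $x_{m+1}$ and $(H_m^\lambda)^{H_n^\lambda}$ --- that statement is deduced \emph{afterwards}, from fullness. To repair your proposal you would either have to actually carry out the algebraic core you sketch (in effect giving an independent proof of Corollary~\ref{cor:centralizers}), or replace it with an adjunction argument of this kind.
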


\begin{proof}
  We proceed by induction on $k$.  For $k=0$, the result is Corollary~\ref{cor:center-surjection}.

  Let $m=n+k$.  Assume that the result holds for some $m \ge n$.  We must prove that $\pr{a}$ is in the image of $\bF_n$ for all $a \in (H_{m+1}^\lambda)^{H_n^\lambda}$.  Now, it follows from the bimodule decomposition \eqref{eq:decomp1} that
  \[
    (H_{m+1}^\lambda)^{H_n^\lambda}
    = \left( H_m^\lambda s_m H_m^\lambda \right)^{H_n^\lambda} \oplus \bigoplus_{j=0}^{d-1} (H_m^\lambda)^{H_n^\lambda} x_{m+1}^j.
  \]
  Therefore, it suffices to consider the cases where $a \in (H_m^\lambda)^{H_n^\lambda} x_{m+1}^j$ for some $j \in \{0,\dotsc,d-1\}$ and $a \in \left( H_m^\lambda s_m H_m^\lambda \right)^{H_n^\lambda}$.

  First suppose that $a = a' x_{m+1}^j = x_{m+1}^j a'$ for some $a' \in (H_m^\lambda)^{H_n^\lambda}$ and $j \in \{0,\dotsc,d-1\}$.  By the inductive hypothesis, there exists a $D \in \End_{\cH^\lambda} \sQ_+^{m-n}$ such that $\bF_n(D) = a'$.  Then
  \[
    \bF_n \left(
    \begin{tikzpicture}[anchorbase]
      \draw[->] (0,0) -- (0,1);
      \bluedot{(0,0.5)} node[anchor=east, color=black] {\dotlabel{j}};
      \draw (0.5,0.5) node {$D$};
    \end{tikzpicture}
    \right) = \pl{a}.
  \]

  Now suppose $a \in \left( H_m^\lambda s_m H_m^\lambda \right)^{H_n^\lambda}$.  By \eqref{eq:decomp2}, $a$ is the image under the map
  \[
    (m)_{m-1}(m) \to {}_m(m+1)_m,\quad u \otimes v \mapsto u s_m v,
  \]
  of some element $a' \in (m)_{m-1}(m)$ satisfying $ba'=a'b$ for all $b \in H_n^\lambda$.  Consider the bimodule homomorphism
  \[
    \pl{a'} = \pr{a'} \colon (n) \to {}_n(m)_{m-1}(m)_n.
  \]
  By adjunction and the induction hypothesis, we have an element of $\Hom_{\cH^\lambda} (\one, \sQ_-^{m-n} \sQ_+ \sQ_- \sQ_+^{m-n})$, which we will depict as
  \[
    \begin{tikzpicture}[anchorbase]
      \draw[<-] (0,0.1) -- (0,0) arc (180:360:.3) -- (0.6,0.1);
      \draw[->,dashed] (-0.2,0.1) -- (-0.2,0) arc (180:360:.5) -- (0.8,0.1);
      \filldraw[green,xshift=-2pt,yshift=-5pt] (0.3,-0.4) rectangle ++(4pt,10pt);
    \end{tikzpicture}
    \ \colon \one \to \sQ_-^{m-n} \sQ_+ \sQ_- \sQ_+^{m-n},
  \]
  (where the dashed line represents $m-n$ lines) such that
  \[
    \bF_n \left(\
    \begin{tikzpicture}[anchorbase]
      \draw[<-] (0,0.1) -- (0,0) arc (180:360:.3) -- (0.6,0.1);
      \draw[->,dashed] (-0.2,0.1) -- (-0.2,0) arc (180:360:.5) -- (0.8,0.1);
      \filldraw[green,xshift=-2pt,yshift=-5pt] (0.3,-0.4) rectangle ++(4pt,10pt);
    \end{tikzpicture}\
    \right) = \pl{a'}.
  \]
  \details{
    The bimodule ${}_n(m)_{m-1}$ is biadjoint to the bimodule $_{m-1}(m)_n$, so
    \[
      \Hom \big( (n), {}_n(m)_{m-1}(m)_n \big)
      \cong \Hom \big( {}_{m-1}(m)_n, {}_{m-1}(m)_n \big)
      \subseteq \Hom \big( (m)_n, (m)_n \big).
    \]
    Since the adjunction maps are the images under $\bF_n$ of the cups and caps (for an appropriate value of $n$), it follows from the induction hypothesis that the functor is surjective on $\Hom \big( (n), {}_n(m)_{m-1}(m)_n \big)$.
  }
  It follows that
  \[
    \bF_n \left(\
    \begin{tikzpicture}[anchorbase]
      \draw[->] (0,0) -- (0,0.8) arc (180:0:0.6) -- (1.2,0.6) arc (360:180:0.3) .. controls (0.6,1) and (1.2,1) .. (1.2,2);
      \draw[->,dashed] (0.15,0) -- (0.15,0.8) arc (180:0:0.15) -- (0.45,0.6) arc (180:360:0.45) -- (1.35,2);
      \filldraw[green,xshift=-2pt,yshift=-5pt] (0.9,0.25) rectangle ++(4pt,10pt);
    \end{tikzpicture}\
    \right) = \pl{a}.
  \]
  This completes the proof.
\end{proof}

\begin{theo} \label{theo:Fn-full}
  For all $n \in \N$, the functor $\bF_n$ is full.
\end{theo}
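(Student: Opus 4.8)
The plan is to reduce the fullness of $\bF_n$ to Proposition~\ref{prop:full-Q+k}, which is exactly the surjectivity of $\bF_n$ on the endomorphism algebras $\End_{\cH^\lambda}(\sQ_+^k)$; everything else is formal bookkeeping with biadjointness. First, since $\cB_n^\lambda$ is idempotent complete and $\bF_n$ is the functor induced by a functor $\tcH^\lambda\to\cB_n^\lambda$, it suffices to prove that the latter is full (using the identity $\Hom_{\cH^\lambda}((\sQ_\epsilon,e),(\sQ_{\epsilon'},e'))=e'\Hom_{\tcH^\lambda}(\sQ_\epsilon,\sQ_{\epsilon'})e$ from the definition of the Karoubi envelope). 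Since $\bF_n$ is additive and, by Corollary~\ref{cor:tcH-basic-objects}, every object of $\tcH^\lambda$ is isomorphic to a finite direct sum of objects $\sQ_+^a\sQ_-^b$, it then suffices to prove that
\[
  \bF_n\colon\Hom_{\cH^\lambda}(\sQ_+^a\sQ_-^b,\sQ_+^c\sQ_-^e)\to\Hom_{\cB_n^\lambda}\big(\bF_n(\sQ_+^a\sQ_-^b),\bF_n(\sQ_+^c\sQ_-^e)\big)
\]
is surjective for all $a,b,c,e\in\N$. If $a-b\ne c-e$, or if $\bF_n(\sQ_+^a\sQ_-^b)=0$, or if $\bF_n(\sQ_+^c\sQ_-^e)=0$, the target space vanishes and there is nothing to prove; so we may assume $a-b=c-e$ and $n\ge\max\{b,e\}$, which guarantees that all the objects appearing below are sent by $\bF_n$ to nonzero bimodules.

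The key input is biadjointness. By Proposition~\ref{prop:adjunction-maps} the bimodules $\bF_n(\sQ_+)$ and $\bF_n(\sQ_-)$ are biadjoint, and the relevant unit/counit morphisms are, by the definitions~\eqref{eq:right-cap-cup-maps}--\eqref{eq:left-cap-cup-maps} of $\bF_n$ on cups and caps, precisely the images under $\bF_n$ of the cup and cap morphisms of $\cH^\lambda$. Hence the ``bending'' isomorphisms of Hom-spaces in $\cH^\lambda$ (obtained by composing with a cup or cap and invoking a zigzag identity) are intertwined by $\bF_n$ with the corresponding bending isomorphisms on the $\cB_n^\lambda$-side, so surjectivity of $\bF_n$ is unaffected by bending a strand from domain to codomain or back. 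Likewise, since $\bF_n$ is a functor it sends the isomorphism $\sQ_-\sQ_+\cong\sQ_+\sQ_-\oplus\one^{\oplus d}$ of Corollary~\ref{cor:Q+Q-commutation-relation}, and every isomorphism of objects assembled from diagrams, to an isomorphism of bimodules; conjugation by isomorphisms does not affect surjectivity on Hom-spaces.

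With these tools the reduction runs as follows. Bending the $e$ downward strands of the codomain across to the domain, then repeatedly applying Corollary~\ref{cor:Q+Q-commutation-relation} to rewrite the resulting subword $\sQ_-^b\sQ_+^e$ as a direct sum of objects $\sQ_+^i\sQ_-^j$, and finally bending the remaining downward strands downward, one obtains an isomorphism compatible with $\bF_n$ throughout,
\[
  \Hom_{\cH^\lambda}(\sQ_+^a\sQ_-^b,\sQ_+^c\sQ_-^e)\;\cong\;\bigoplus_k\End_{\cH^\lambda}(\sQ_+^{k})^{\oplus N_k},
\]
for suitable $k$ and multiplicities $N_k\ge0$; here the hypothesis $a-b=c-e$ is exactly what forces only genuine endomorphism spaces $\End_{\cH^\lambda}(\sQ_+^k)$ (rather than $\Hom$-spaces between distinct powers of $\sQ_+$) to occur. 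By Proposition~\ref{prop:full-Q+k}, $\bF_n$ is surjective on each summand $\End_{\cH^\lambda}(\sQ_+^k)$, hence on the whole Hom-space, which completes the argument.

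The substantive work — surjectivity onto $\End_{\cB_n^\lambda}((n+k)_n)$, which rests ultimately on Brundan's description of $Z(H_n^\lambda)$ via Corollary~\ref{cor:center-surjection} — has already been done in Proposition~\ref{prop:full-Q+k}, so there is no genuinely hard step remaining. The only points that need care are therefore (i) checking that each reduction isomorphism above is intertwined by $\bF_n$ with its bimodule-side counterpart, which follows because cups and caps map to the biadjunction data of Proposition~\ref{prop:adjunction-maps} and the object isomorphisms used are realized by honest diagrams, and (ii) cleanly excluding the degenerate cases in which $\bF_n$ annihilates one of the objects (where the target is zero and surjectivity is automatic), so that the bending manipulations are only ever performed among nonzero bimodules.
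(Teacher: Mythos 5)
Your argument is correct and takes essentially the same route as the paper: use the fact that cups and caps are sent to the biadjunction data of Proposition~\ref{prop:adjunction-maps} so that $\bF_n$ intertwines the bending isomorphisms, reduce every hom-space to endomorphism spaces of powers of $\sQ_+$ (with the mismatched cases vanishing on both sides), and then invoke Proposition~\ref{prop:full-Q+k}. The only difference is cosmetic: the paper decomposes objects as $\sQ_-^k\sQ_+^\ell$ and bends directly, which avoids your extra intermediate application of Corollary~\ref{cor:Q+Q-commutation-relation}.
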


\begin{proof}
  It follows from Corollary~\ref{cor:Q+Q-commutation-relation} that
  every object of $\cH^\lambda$ is a direct sum of objects of the form $\sQ_-^k \sQ_+^\ell$ for some $k,\ell \in \N$.  By adjunction, we have
  \[
    \Hom_{\cH^\lambda} (\sQ_-^k \sQ_+^\ell, \sQ_-^a \sQ_+^b)
    \cong \Hom_{\cH^\lambda} (\sQ_+^\ell, \sQ_+^k \sQ_-^a \sQ_+^b).
  \]
  Thus, it suffices to consider hom-spaces with domain $\sQ_+^\ell$, $\ell \in \N$.  Again, by adjunction, we have
  \[
    \Hom_{\cH^\lambda} (\sQ_+^\ell, \sQ_-^a \sQ_+^b)
    \cong \Hom_{\cH^\lambda} (\sQ_+^{\ell+a}, \sQ_+^b).
  \]
  Since this hom-space is zero unless $\ell + a = b$, the theorem follows from Proposition~\ref{prop:full-Q+k}.
\end{proof}

It follows from Theorem~\ref{theo:Fn-full} that the category $\cH^\lambda$ yields a graphical calculus for the representation theory of the $H_n^\lambda$, $n \in \N$.  In the remainder of this section, we use this fact to deduce algebraic properties of these algebras.

There is an natural inclusion of algebras $H_n \otimes H_k \hookrightarrow H_{n+k}$.  This induces an inclusion of algebras
\begin{equation} \label{eq:degCHA-inclusion}
  H_n^\lambda \otimes H_k \hookrightarrow H_{n+k}^\lambda,
\end{equation}

\begin{cor} \label{cor:centralizers}
  Under the inclusion \eqref{eq:degCHA-inclusion}, the centralizer of $H_n^\lambda$ in $H_{n+k}^\lambda$ is generated by $H_k$ and the center of $H_n^\lambda$.
\end{cor}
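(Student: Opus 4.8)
The plan is to translate the statement into the diagrammatic calculus using fullness of the action functor and then to read off a generating set. Write $C := (H_{n+k}^\lambda)^{H_n^\lambda}$ for the centralizer. As recorded just before Proposition~\ref{prop:psim-injective}, the assignment $a \mapsto \pr a$ is an isomorphism of algebras $\iota \colon C^{\op} \xrightarrow{\ \cong\ } \End_{\cB_n^\lambda}\big((n+k)_n\big)$, where $(n+k)_n = \bF_n(\sQ_+^k)$. Since, for any subset $S$ of $C$, the subalgebra of $C$ generated by $S$ coincides as a set with the subalgebra of $C^{\op}$ generated by $S$, and since both the image of $H_k$ under \eqref{eq:degCHA-inclusion} and the center $Z(H_n^\lambda)$ lie in $C$ (their generators commute with those of $H_n^\lambda$ inside $H_{n+k}^\lambda$), it will suffice to prove that $\End_{\cB_n^\lambda}\big((n+k)_n\big)$ is generated as an algebra by $\iota(H_k)$ and $\iota\big(Z(H_n^\lambda)\big)$.

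First I would invoke the (easy) surjectivity half of Proposition~\ref{prop:DH-Q-iso}: $\End_{\cH^\lambda}(\sQ_+^k) = \End_{\tcH^\lambda}(\sQ_+^k)$ is generated as an algebra by the diagrams built out of crossings and dots on the $k$ strands, i.e.\ $\psi_k(H_k \otimes 1)$, together with the morphisms $\id_{\sQ_+^k} \otimes \beta$ for $\beta$ a counterclockwise bubble, i.e.\ $\beta \in \psi_0(\Pi) = \End_{\cH^\lambda}(\one)$. By Proposition~\ref{prop:full-Q+k}, $\bF_n$ restricts to a surjection of $\End_{\cH^\lambda}(\sQ_+^k)$ onto $\End_{\cB_n^\lambda}\big((n+k)_n\big)$, so the latter is generated by the $\bF_n$-images of these two families, and it remains to identify those images. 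For the first family, \eqref{eq:up-crossing-map} and \eqref{eq:dot-maps-up}, together with the right-to-left numbering of strands used in \eqref{eq:Hn-elements-as-diagrams}, show that $\bF_n$ sends the crossing of the $i$-th and $(i+1)$-st strand to $\pr{s_{n+i}}$ and a dot on the $j$-th strand to $\pr{x_{n+j}}$; closing the subalgebra generated by these under products gives $\bF_n\big(\psi_k(H_k \otimes 1)\big) = \iota(H_k)$. For the second family, using that $\bF_n$ is monoidal and the canonical identification $(n+k)_n \otimes_{H_n^\lambda} (n)_n \cong (n+k)_n$, the morphism $\id_{\sQ_+^k} \otimes \beta$ maps to right multiplication by the element $z \in Z(H_n^\lambda)$ characterized by $\bF_n(\beta) = \pl{z} \in \End_{\cB_n^\lambda}(n) \cong Z(H_n^\lambda)$; that is, $\bF_n(\id_{\sQ_+^k} \otimes \beta) = \pr{z} = \iota(z)$. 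Since Corollary~\ref{cor:center-surjection} says that $\bF_n$ carries $\End_{\cH^\lambda}(\one)$ onto $Z(H_n^\lambda)$ under this identification, we obtain $\bF_n\big(\{\id_{\sQ_+^k} \otimes \beta : \beta \in \End_{\cH^\lambda}(\one)\}\big) = \iota\big(Z(H_n^\lambda)\big)$, which finishes the proof.

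The step I expect to require the most care is the last one: verifying that a bubble in the far-right region of a diagram on $\sQ_+^k$ really corresponds, under $\bF_n$, to \emph{right} multiplication on $(n+k)_n$ by an element of $Z(H_n^\lambda)$ — in particular that the resulting operator is a genuine $(H_{n+k}^\lambda, H_n^\lambda)$-bimodule endomorphism — and, relatedly, checking compatibility of the identifications $\bF_n(\sQ_+)=(n+1)_n$ and $\bF_n(\one)=(n)$ with the monoidal structure when the bubble is appended on the right. Everything else is a formal consequence of the fullness statement in Proposition~\ref{prop:full-Q+k}, the surjectivity onto the center in Corollary~\ref{cor:center-surjection}, and the presentation of $\End_{\cH^\lambda}(\sQ_+^k)$ from Proposition~\ref{prop:DH-Q-iso}.
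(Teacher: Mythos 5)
Your proposal is correct and follows essentially the same route as the paper: identify the opposite of the centralizer with $\End_{\cB_n^\lambda}\big((n+k)_n\big)$, use the fullness of $\bF_n$ on $\End_{\cH^\lambda}(\sQ_+^k)$ together with the surjectivity of $\psi_k$ from Proposition~\ref{prop:DH-Q-iso}, and then identify $\bF_n(\psi_k(H_k\otimes 1))$ with the image of $H_k$ and $\bF_n(\psi_k(1\otimes\Pi))$ with $Z(H_n^\lambda)$ via Corollary~\ref{cor:center-surjection}. The only difference is that you spell out details (the opposite-algebra bookkeeping and the verification that a right-hand bubble acts by right multiplication by a central element) that the paper leaves implicit.
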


\begin{proof}
  The opposite algebra of the centralizer of $H_n^\lambda$ in $H_{n+k}^\lambda$ is precisely $\End \big( (n+k)_n \big)$.  Thus, by Theorem~\ref{theo:Fn-full} and Proposition~\ref{prop:DH-Q-iso}, the centralizer of $H_n^\lambda$ in $H_{n+k}^\lambda$ is generated by $\bF_n(\psi_k(H_k \otimes \Pi))$.  Since $\bF_n(\psi_k(\Pi))$ is the center of $H_n^\lambda$ and $\bF_n(\psi_k(H_k))$ is the image of $H_k$ in $H_{n+k}^\lambda$ under the inclusion \eqref{eq:degCHA-inclusion}, the result follows.
\end{proof}

The level one case of Corollary~\ref{cor:centralizers}, describing centralizers for group algebras of symmetric groups was proved by Olshanski (see \cite{Ols87}, or \cite[Th.~3.2.6]{CST10}).  Corollary~\ref{cor:centralizers}, which is a generalization of Olshanski's result, seems to be new.  The analogue for the degenerate affine Hecke algebra (before taking cyclotomic quotients) is mentioned in \cite[p.~577]{Che87}.

\begin{cor}
  The four bimodule maps $\varepsilon_\rR$, $\eta_\rR$, $\varepsilon_\rL$, and $\eta_\rL$ of Proposition~\ref{prop:adjunction-maps} turn the induction and restriction functors $\Ind_{H_n^\lambda}^{H_{n+1}^\lambda}$ and $\Res_{H_n^\lambda}^{H_{n+1}^\lambda}$ into a cyclic biadjoint pair.
\end{cor}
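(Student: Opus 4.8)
The plan is to read this off from the two main facts we have already established about the action functors, namely that $\bF_n\colon \cH^\lambda \to \cB_n^\lambda$ is a well-defined monoidal functor (Theorem~\ref{theo:action-functor}) and that it is full (Theorem~\ref{theo:Fn-full}). By construction $\bF_n$ sends $\sQ_+$ to the induction bimodule $(n+1)_n$, sends $\sQ_-$ to the restriction bimodule ${}_n(n+1)$, and, by the defining rules \eqref{eq:right-cap-cup-maps}--\eqref{eq:left-cap-cup-maps} together with Proposition~\ref{prop:adjunction-maps}, sends the right cap, right cup, left cap, and left cup of $\cH^\lambda$ to $\varepsilon_\rR$, $\eta_\rR$, $\varepsilon_\rL$, and $\eta_\rL$. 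In $\cH^\lambda$ itself the pair $(\sQ_+,\sQ_-)$ together with these four cups and caps forms a cyclic biadjoint pair essentially by fiat: morphisms of $\cH^\lambda$ are planar diagrams taken up to boundary-fixing isotopy, and this is exactly the statement that the biadjunction of $\sQ_\pm$ is cyclic in the sense of \cite{Kho10}. Since a monoidal functor carries a cyclic biadjoint pair to a cyclic biadjoint pair, $\bF_n$ transports this structure to $\bigl(\Ind_{H_n^\lambda}^{H_{n+1}^\lambda},\Res_{H_n^\lambda}^{H_{n+1}^\lambda}\bigr)$ with the stated unit/counit data, which is the claim.

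To make this precise I would proceed in three steps. First, Proposition~\ref{prop:adjunction-maps} already gives that $\varepsilon_\rR,\eta_\rR,\varepsilon_\rL,\eta_\rL$ are bimodule homomorphisms satisfying the two pairs of snake identities \eqref{eq:right-unit-counit}--\eqref{eq:left-unit-counit}, so $(\Ind,\Res)$ is a genuine biadjoint pair. Second, the remaining content of cyclicity is that rotating a diagram past a cup or a cap is unambiguous; on the generating morphisms of $\cH^\lambda$ (dots and crossings, in both orientations) this is precisely the isotopy-invariance established in Lemmas~\ref{lem:dot-cyclicity}, \ref{lem:downcross-invariance}, \ref{lem:rightcross-invariance}, and \ref{lem:leftcross-invariance}, all of which were already used in the proof of Theorem~\ref{theo:action-functor}; hence every isotopy relation among diagrams built from cups, caps, dots, and crossings is preserved by $\bF_n$. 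Third, to obtain cyclicity in the strong form --- that every bimodule endomorphism of $(n+1)_n$, and of ${}_n(n+1)$, has a well-defined adjoint mate independent of the direction of rotation --- I would invoke fullness (Theorem~\ref{theo:Fn-full}): any such endomorphism is $\bF_n$ of some morphism $\sQ_+\to\sQ_+$ (resp.\ $\sQ_-\to\sQ_-$) in $\cH^\lambda$, and the equality of its two mates is the $\bF_n$-image of the corresponding isotopy identity, which holds because $\bF_n$ respects isotopy.

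I do not expect a genuine obstacle here; the corollary is a formal consequence of the machinery already built. The one point that deserves care is matching the particular normalization of $\eta_\rL$ chosen in Proposition~\ref{prop:adjunction-maps} --- built from the dual bases $y_{i,k}$ of Proposition~\ref{prop:dual} --- with the ``rotated'' right-adjunction data; but this matching is exactly what Lemmas~\ref{lem:dot-cyclicity}--\ref{lem:leftcross-invariance} accomplish, so no new computation is required. Note also that the first two steps are valid over an arbitrary commutative ring, and only the third step, through Theorem~\ref{theo:Fn-full}, uses the standing assumption that $\kk$ is a field of characteristic zero.
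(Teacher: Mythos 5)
Your argument is correct and follows the same route as the paper, whose proof of this corollary simply invokes the isotopy invariance in $\cH^\lambda$ (established via Lemmas~\ref{lem:dot-cyclicity}--\ref{lem:leftcross-invariance} in the proof of Theorem~\ref{theo:action-functor}) together with fullness of $\bF_n$ (Theorem~\ref{theo:Fn-full}). Your three-step elaboration, including the remark that only the fullness step needs characteristic zero, is just a more detailed write-up of exactly that argument.
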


\begin{proof}
  This follows immediately from the isotopy invariance in $\cH^\lambda$ and Theorem~\ref{theo:Fn-full}.
\end{proof}

%
\section{Further directions} \label{sec:further-directions}
%

The results of the current paper suggest some natural interesting directions of future research.
In this final section, we briefly outline some of these directions.

\subsection{Truncations and categorified quantum groups} \label{subsec:truncations}

In \cite{QSY18}, it is shown that a certain truncation of a 2-category version of Khovanov's Heisenberg category is equivalent to a truncation of the Khovanov--Lauda categorified quantum group of type $A_\infty$, introduced in \cite{KL3}.  (See also the related work \cite{Rou08}.)  This equivalence is a categorification of the principal realization of the basic representation (i.e.\ the irreducible representation of highest weight $\omega_0$) of $\fsl_\infty$.  It yields an explicit action of categorified quantum groups on categories of modules for symmetric groups.

Replacing Khovanov's Heisenberg category by the more general categories $\cH^\lambda$ of the current paper should yield a generalization of the results of \cite{QSY18}.  The 2-category version $\mathscr{H}^\lambda$ of $\cH^\lambda$ has objects labelled by integers.  For $k,\ell \in \Z$, the category $\Mor_{\mathscr{H}^\lambda} (k,\ell)$ of morphisms from $k$ to $\ell$ is the full subcategory of $\cH^\lambda$ on direct sums of summands of objects $\sQ_+^m \sQ_-^n$ with $\ell - k = m - n$.  The horizontal composition in $\mathscr{H}^\lambda$ comes from the monoidal structure of $\cH^\lambda$.  One can truncate $\mathscr{H}^\lambda$ by killing all morphisms factoring through negative integers.  Then taking the idempotent completion as defined in \cite[Def.~5.1]{QSY18} yields a 2-category $\mathscr{H}^{\lambda,\textup{tr}}$.  We expect that the 2-category $\mathscr{H}^{\lambda,\textup{tr}}$ is equivalent to the truncation of the Khovanov--Lauda categorified quantum group obtained by killing all morphisms factoring through weights not appearing in the irreducible highest weight representation of $\fg$ of highest weight $\lambda$. This would yield actions of categorified quantum groups on categories of modules for degenerate cyclotomic Hecke algebras as in \cite{BK09b}.

\subsection{Wreath product algebras} \label{subsec:wreath-generalization}

In \cite{RS17}, a general Heisenberg category $\cH_B$ was introduced that depends on a graded Frobenius superalgebra $B$.  When $B$ is the ground field, this category reduces to the Heisenberg category of Khovanov.  The inspiration behind the definition of the categories $\cH_B$ is the passage from the group algebra of the symmetric group to the wreath product algebras $B^{\otimes n} \rtimes S_n$.  Just as the degenerate affine Hecke algebra appears naturally in the endomorphism algebra of $\sQ_+^n$ in Khovanov's category, the endomorphism algebra of $\sQ_+^n$ in $\cH_B$ contains an algebra denoted $D_n$ in \cite[Def.~8.12]{RS17}.  These algebras have been studied in \cite{Sav17}, where they are called \emph{affine wreath product algebras}, and denoted $\mathcal{A}_n(B)$.  In particular, cyclotomic quotients $\mathcal{A}_n^\mathbf{C}(B)$ of these algebras are defined \cite[\S6]{Sav17}, and it is shown that level one cyclotomic quotients are isomorphic to $B^{\otimes n} \rtimes S_n$ (see \cite[Cor.~6.13]{Sav17}).    On the other hand, taking $B = \kk$ yields $H_n^\lambda$, where $\lambda$ depends on the quotient parameter $\mathbf{C}$.
\[
  \xymatrix{
    & \mathcal{A}_n^\mathbf{C}(B) \ar@{~>}[dl]_{B = \kk} \ar@{->>}[dr]^{\text{ level one}} & \\
    H_n^\lambda \ar@{->>}[dr]_{\text{level one}} & & B^{\otimes n} \rtimes S_n \ar@{~>}[dl]^{B = \kk} \\
    & \kk S_n &
  }
\]
We expect that examination of the representation theory of the algebras $\mathcal{A}_n(B)$, together with their cyclotomic quotients, should lead to even more general Heisenberg categories $\cH_B^\mathbf{C}$ that specialize to both the categories $\cH^\lambda$ of the current paper and the categories $\cH_B$ of \cite{RS17}.\footnote{Since the writing of the current paper, these categories have been defined in \cite{Sav18}.}  One of the advantages of the presence of the graded Frobenius superalgebra $B$ is that, provided it is not concentrated in degree zero, it allows one to use grading arguments to prove the analogue of Conjecture~\ref{conj:categorification}.

\subsection{Deformations}

A $q$-deformed version of Khovanov's Heisenberg algebra was introduced in \cite{LS13}.  This deformation corresponds to replacing group algebras of symmetric groups with Iwahori--Hecke algebras of type $A$.  We expect that the results of the current paper can also be $q$-deformed by replacing cyclotomic degenerate Hecke algebras by cyclotomic Hecke algebras (otherwise known as Ariki--Koike algebras).  This would lead to a higher level generalization of the results of \cite{LS13}.

\subsection{Traces and diagrammatic pairings}

In \cite{CLLS15,CLLSS16}, the traces of Khovanov's Heisenberg category and the $q$-deformed version introduced in \cite{LS13} have been related to $W$-algebras and the elliptic Hall algebra.  The trace decategorification has also been used in \cite{LRS18} to categorify the Jack inner pairing on symmetric functions.  We expect that these results have higher level versions based on the categories $\cH^\lambda$ of the current paper or the more general categories $\cH_B^\lambda$ mentioned above.


\bibliographystyle{alphaurl}
\bibliography{MackaaySavage-biblist}

\end{document}